\documentclass[11pt,twoside]{amsart}


\textwidth=167mm
\textheight=220mm
\topmargin=0mm
\topskip 0mm 
\oddsidemargin=-1mm
\evensidemargin=-1mm
\headheight=0mm
\headsep=5mm 
\footskip 5mm
\numberwithin{equation}{section}
\hyphenation{semi-stable}
\setlength{\parskip}{3pt}

\usepackage[table]{xcolor}
\usepackage{latexsym,amssymb,amsmath,amsthm,amsfonts}
\usepackage[cmtip,arrow]{xy}
\usepackage{pb-diagram,pb-xy}
\usepackage{graphicx}
\usepackage{tikz}
\usepackage{multicol}
\usepackage{multirow}
\usepackage{enumerate}
\usepackage{url}
\usetikzlibrary{matrix,arrows}
\definecolor{VerdeOlivo}{rgb}{0.3,0.5,0.1}
\definecolor{Magenta}{rgb}{.65,0.15,.2}
\definecolor{Gris}{gray}{0.7}


\newtheorem{Theorem}{Theorem}[section] 
\newtheorem{Definition}[Theorem]{Definition}

\newtheorem{Remark}[Theorem]{Remark}

\newtheorem{Proposition}[Theorem]{Proposition} 
\newtheorem{Claim}[Theorem]{Claim}

\theoremstyle{definition}

\newcommand{\Gaa}{\sf P_5}
\newcommand{\Gab}{\sf G_{6,1}}
\newcommand{\Gac}{\sf G_{6,2}}
\newcommand{\Gad}{\sf G_{6,3}}
\newcommand{\Gae}{\sf G_{6,4}}
\newcommand{\Gaf}{\sf G_{6,5}}
\newcommand{\Gag}{\sf G_{6,6}}
\newcommand{\Gah}{\sf G_{6,7}}
\newcommand{\Gai}{\sf G_{6,8}}
\newcommand{\Gaj}{\sf G_{6,9}}
\newcommand{\Gak}{\sf G_{6,10}}
\newcommand{\Gal}{\sf G_{6,11}}
\newcommand{\Gam}{\sf G_{6,12}}
\newcommand{\Gan}{\sf G_{6,13}}
\newcommand{\Gao}{\sf G_{6,14}}
\newcommand{\Gap}{\sf G_{6,15}}
\newcommand{\Gaq}{\sf G_{6,16}}
\newcommand{\Gar}{\sf G_{6,17}}
\newcommand{\Gas}{\sf G_{6,18}}
\newcommand{\Gat}{\sf G_{6,19}}
\newcommand{\Gau}{\sf G_{6,20}}
\newcommand{\Gav}{\sf G_{6,21}}
\newcommand{\Gaw}{\sf G_{6,22}}
\newcommand{\Gax}{\sf G_{6,23}}
\newcommand{\Gay}{\sf G_{6,24}}
\newcommand{\Gaz}{\sf G_{6,25}}
\newcommand{\Gba}{\sf G_{6,26}}
\newcommand{\Gbb}{\sf G_{6,27}}
\newcommand{\Gbc}{\sf G_{7,1}}
\newcommand{\Gbd}{\sf G_{7,2}}
\newcommand{\Gbe}{\sf G_{7,3}}
\newcommand{\Gbf}{\sf G_{7,4}}
\newcommand{\Gbg}{\sf G_{7,5}}
\newcommand{\Gbh}{\sf G_{7,6}}
\newcommand{\Gbi}{\sf G_{7,7}}
\newcommand{\Gbj}{\sf G_{7,8}}
\newcommand{\Gbk}{\sf G_{7,9}}
\newcommand{\Gbl}{\sf G_{7,10}}
\newcommand{\Gbm}{\sf G_{7,11}}
\newcommand{\Gbn}{\sf G_{7,12}}
\newcommand{\Gbo}{\sf G_{7,13}}
\newcommand{\Gbp}{\sf G_{7,14}}
\newcommand{\Gbq}{\sf G_{7,15}}
\newcommand{\Gbr}{\sf G_{7,16} }
\newcommand{\Gbs}{\sf G_{7,17}}
\newcommand{\Gbt}{\sf G_{8,1}}
\newcommand{\Gbu}{\sf G_{8,2}}
\newcommand{\Gbv}{\sf G_{8,3}}
\newcommand{\Gbw}{\sf G_{8,4}}


\begin{document} 


\title[Small clique number graphs with three trivial critical ideals]{Small clique number graphs with three trivial critical ideals}


\author{Carlos A.  Alfaro}
\author{Carlos E. Valencia}
\address{
Departamento de
Matem\'aticas\\
Centro de Investigaci\'on y de Estudios Avanzados del
IPN\\
Apartado Postal
14--740 \\
07000 Mexico City, D.F. 
} 
\email{alfaromontufar@gmail.com,cvalencia@math.cinvestav.edu.mx}
\thanks{Both authors were partially supported by CONACyT grant 166059, the first author was partially supported by CONACyT and the second author was partially supported by SNI}
\keywords{Critical ideal, Critical group, Laplacian matrix, Forbidden induced subgraph.}
\subjclass[2000]{Primary 05C25; Secondary 05C50, 05E99.} 


\maketitle

\begin{abstract}
The critical ideals of a graph are the determinantal ideals of the generalized Laplacian matrix associated to a graph.
In this article we provide a set of minimal forbidden graphs for the set of graphs with at most three trivial critical ideals.
Then we use these forbidden graphs to characterize the graphs with at most three trivial critical ideals and clique number equal to $2$ and $3$.
\end{abstract}

\section{Introduction}
Given a connected graph $G=(V,E)$ and a set of indeterminates $X_G=\{x_u \, : \, u\in V(G)\}$, 
the {\it generalized Laplacian matrix} $L(G,X_G)$ of $G$
is the matrix with rows and columns indexed by the vertices of $G$ given by
\[
L(G,X_G)_{u v}=
\begin{cases}
x_u & \text{ if } u=v,\\
-m_{u v} & \text{ otherwise},
\end{cases}
\]
where $m_{u v}$ is the number of the edges between vertices $u$ and $v$ of $G$.

\begin{Definition}
For all $1\leq i \leq |V|$, the $i$-{\it th critical ideal} of $G$ is the determinantal ideal given by
\[
I_i(G,X_G)=\langle  \{ {\rm det} (m) \, : \, m \text{ is an }i\times i \text{ submatrix of }L(G,X_G)\}\rangle\subseteq \mathbb{Z}[X_G].
\]
\end{Definition}

We say that a critical ideal is {\it trivial} when it is equal to $\langle 1 \rangle$.
Critical ideals were firstly defined in \cite{corrval} and have been studied in a more general framework in \cite{alfacorrval}.
The {\it algebraic co-rank} $\gamma(G)$ of $G$ is the number of trivial critical ideals of $G$.
The algebraic co-rank allows to separate the set of all simple connected graphs in the following graph families:
\[
\Gamma_{\leq i}=\{G\, :\, G \text{ is a simple connected graph with } \gamma(G)\leq i\}.
\]
In \cite{corrval} it was proven that if $H$ is an induced subgraph of $G$, then $I_i(H,X_H)\subseteq I_i(G,X_G)$  for all $i\leq |V(H)|$.
Thus $\gamma(H)\leq \gamma(G)$.
This implies that the set $\Gamma_{\leq i}$ is closed under induced subgraphs. 
Therefore we can say that a graph $G$ is \textit{forbidden} for $\Gamma_{\leq k}$ when $\gamma(G)\geq k+1$.
Moreover, we define the set of minimal forbidden as follows:
\begin{Definition}
Let ${\bf Forb}(\Gamma_{\leq k})$ be the set of minimal (under induced subgraphs property) forbidden graphs for $\Gamma_{\leq k}$.
\end{Definition}
A graph $G$ is called $\gamma$-\textit{critical} if $\gamma(G- v)< \gamma(G)$ for all $v\in V(G)$.
Then ${\bf Forb}(\Gamma_{\leq k})$ is equal to the set of $\gamma$-critical graphs with $\gamma(G)\geq k+1$ and $\gamma(G- v)\leq k$ for all $v\in V(G)$.
Given a family $\mathcal F$ of graphs, a graph $G$ is called $\mathcal F$-free if no induced subgraph of $G$ is isomorphic to a member of $\mathcal F$.
Thus $G\in\Gamma_{\leq k}$ if and only if $G$ is ${\bf Forb}(\Gamma_{\leq k})$-free. 
Therefore, characterizing ${\bf Forb}(\Gamma_{\leq k})$ leads to a characterization of $\Gamma_{\leq k}$. 
 
In \cite{alfaval}, these ideas were used to obtain a characterization of $\Gamma_{\leq 1}$ and $\Gamma_{\leq 2}$.
More precisely, it was found that ${\bf Forb}(\Gamma_{\leq 1})=\{P_3\}$ and thus $\Gamma_{\leq 1}$ consists of the complete graphs.
On the other hand, ${\bf Forb}(\Gamma_{\leq 2})$ consists of 5 graphs: $P_4$, $K_2\setminus P_2$, $K_6\setminus M_2$, {\sf cricket} and {\sf dart}.
And $\Gamma_{\leq 2}$ consists of the graphs isomorphic to an induced subgraph of one of the following graphs: {\it tripartite complete graph} $K_{n_1,n_2,n_3}$ or $T_{n_1}\vee(K_{n_2}+K_{n_3})$,
where $G+H$ denote the {\it disjoint union} of the graphs of $G$ and $H$, and $G\vee H$ denote the {\it join} of $G$ and $H$. 

The main goal of this paper is to provide a set of minimal forbidden graphs for $\Gamma_{\leq3}$ and a partial description of $\Gamma_{\leq3}$.
Specifically, we will characterize the graphs in $\Gamma_{\leq3}$ with clique number equal to $2$ and $3$.
Therefore, we prove that if a graph $G\in\Gamma_{\leq3}$ has clique number at most 3, then $G$ is an induced subgraph of one graph in the family $\mathcal F_1$ (see Figure \ref{figure:omega3}). 
The converse is stronger, each induced subgraph of a graph in $\mathcal F_1$ belongs to $\Gamma_{\leq3}$, but not all graphs in $\mathcal F_1^1$ have clique number less than 4.

\begin{figure}[h!]
\begin{center}
\begin{tabular}{c@{\extracolsep{2cm}}c@{\extracolsep{2cm}}c}
\begin{tikzpicture}[scale=1.2]
 	\tikzstyle{every node}=[minimum width=0pt, inner sep=1pt, circle]
 	\draw (90:.4) node[draw, fill=gray] (1) {};
 	\draw (210:.4) node[draw, fill=gray] (2) {};
 	\draw (330:.4) node[draw, fill=gray] (3) {};
 	\draw (90:1) node[draw, fill=gray] (4) {};
 	\draw (210:1) node[draw, fill=gray] (5) {};
 	\draw (330:1) node[draw, fill=gray] (6) {};
 	\draw (1) -- (4) -- (5) -- (2) -- (1) -- (3) -- (6) -- (4);
 	\draw (2) -- (3);
 	\draw (5) -- (6);
 \end{tikzpicture}
 &
\begin{tikzpicture}[scale=1]
 	\tikzstyle{every node}=[minimum width=0pt, inner sep=1pt, circle]
 	\draw (30:1) node[draw, fill=white] (2) {\tiny $n_1$};
 	\draw (150:1) node[draw, fill=white] (3) {\tiny $n_2$};
 	\draw (270:1) node[draw, fill=white] (7) {\tiny $n_3$};
 	\draw (330:1) node[draw, fill] (5) {\bf\tiny\color{white} $n_4$};
 	\draw (210:1) node[draw, fill] (6) {\bf\tiny\color{white} $n_6$};
 	\draw (90:1) node[draw, fill] (1) {\bf\tiny\color{white} $n_5$};
 	\draw (0:0) node[draw, fill=black] (4) {\bf\tiny\color{white} $n_7$};
 	\draw (1) -- (4) -- (5) -- (2) -- (1) -- (3) -- (6) -- (4);
 	\draw (5) -- (7) -- (2) -- (3) -- (7) -- (6);
 \end{tikzpicture}
 &
\begin{tikzpicture}[scale=1.1]
 	\tikzstyle{every node}=[minimum width=0pt, inner sep=1pt, circle]
 	\draw ({90+72*2}:1) node[draw, fill=gray] (1) {};
 	\draw ({90+72*3}:1) node[draw, fill=gray] (2) {};
 	\draw ({90+72*4}:1) node[draw, fill=gray] (3) {};
 	\draw (90:1) node[draw, fill=gray] (4) {};
 	\draw ({90+72}:1) node[draw, fill=gray] (5) {};
 	\draw (0,0) node[draw, fill=white] (6) {\tiny ${n_1}$};
 	\draw (1) -- (2) -- (3) -- (4) -- (5) -- (1) -- (6);
 	\draw (2) -- (6) -- (3);
 	\draw (4) -- (6) -- (5);
 \end{tikzpicture}
 \\
 (i) graph $G_1$ & (ii) family of graphs $\mathcal F_1^1$ & (iii) family of graphs $\mathcal F_1^2$ \\
\end{tabular}
\end{center}
\caption{ The family of graphs $\mathcal F_1$.
A black vertex represents a clique of cardinality $n_v$, a white vertex represents a stable set of cardinality $n_v$ and a gray vertex represents a single vertex.}
\label{figure:omega3}
\end{figure}
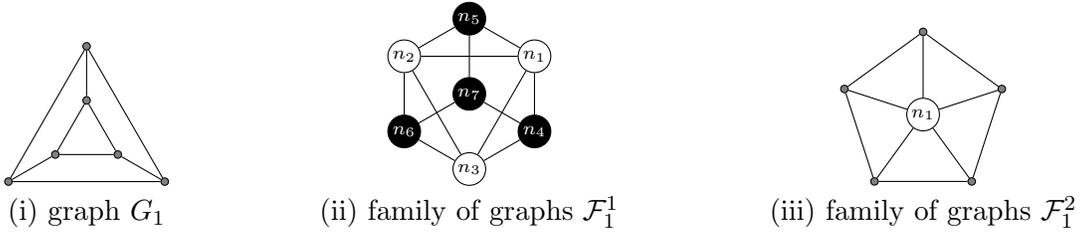

This article is divided as follows.
In Section 1.1, we will show how the characterization of $\Gamma_{\leq i}$ leads to a characterization of the graphs having critical group with $i$ invariant factors equal to 1.
In Section 2, we will construct a graph $G^{\bf d}$ by replacing its vertices by cliques and stable sets.
And we will show a novel method to verify whether the $k$-{\it th} critical ideal of $G^{\bf d}$ is trivial or not.
This method will be applied to prove that each induced subgraph of a graph in the family $\mathcal F_1$ belongs to $\Gamma_{\leq3}$.
In Section 3, we will give a family of minimal forbidden graphs for $\Gamma_{\leq 3}$, and it will be used to prove that a graph $G\in\Gamma_{\leq3}$ has clique number 2 if and only if $G$ is an induced subgraph of a graph in $\mathcal F_2$ (see Figure \ref{figure:omega2}).
Section 4 is devoted to prove that if a graph $G\in\Gamma_{\leq3}$ and has clique number 3, then $G$ is an induced subgraph of one graph in the family $\mathcal F_1$.

\subsection{Applications to the critical group}

The {\it Laplacian matrix} $L(G)$ of $G$ is the evaluation of $L(G,X_G)$ at $X_G=D_G$, where $D_G$ is the degree vector of $G$.
By considering $L(G)$ as a linear map $L(G):\mathbb{Z}^V\rightarrow \mathbb{Z}^V$, the {\it cokernel} of $L(G)$ is the quotient module $\mathbb{Z}^{V}/{\rm Im}\, L(G)$. 
The torsion part of this module is the {\it critical group} $K(G)$ of $G$.
The critical group has been studied intensively on several contexts over the last 30 years: the {\it group of components} \cite{lorenzini1991,lorenzini2008}, the {\it Picard group} \cite{bhn,biggs1999}, the {\it Jacobian group} \cite{bhn,biggs1999}, the {\it sandpile group} \cite{cori},  {\it chip-firing game} \cite{biggs1999,merino}, or {\it Laplacian unimodular equivalence} \cite{gmw,merris}.

It is known \cite[Theorem 1.4]{lorenzini1989} that the critical group of a connected graph $G$ with $n$ vertices can be described as follows:
\[
K(G)\cong \mathbb{Z}_{d_1} \oplus \mathbb{Z}_{d_2} \oplus \cdots \oplus\mathbb{Z}_{d_{n-1}},
\]
where  $d_1, d_2, ..., d_{n-1}$ are positive integers with $d_i \mid d_j$ for all $i\leq j$.
These integers are called {\it invariant factors} of the Laplacian matrix of $G$.
Besides, if $\Delta_i(G)$ is the {\it greatest common divisor} of the $i$-minors of the Laplacian matrix $L(G)$ of $G$, then the $i$-{\it th} invariant factor $d_i$ is equal to $\Delta_i(G)/ \Delta_{i-1}(G)$, where $\Delta_0(G)=1$ (for details see \cite[Theorem 3.9]{jacobson}).

\begin{Definition}
Given an integer number $k$, let $f_k(G)$ be the number of invariant factors of the critical group of $G$ equal to $k$.
Also, let $\mathcal{G}_i=\{G\, :\, G \text{ is a simple connected graph with } f_1(G)=i\}$.
\end{Definition}
The study and characterization of $\mathcal G_i$ is of great interest. 
In particular, some results and conjectures on the graphs with cyclic critical group can be found in \cite[Section 4]{lorenzini2008} and \cite[Conjectures 4.3 and 4.4]{wagner}.
On the other hand, it is easy to see \cite{cori,lorenzini1991,merris} that $\mathcal G_1$ consists only of the complete graphs.
Besides, several people (see \cite{merino}) posed interest on the characterization of $\mathcal G_2$ and $\mathcal G_3$. 
In this sense, few attempts have been done.
In \cite{pan} it was characterized the graphs in $\mathcal G_2$ whose third invariant factor is equal to $n$, $n-1$, $n-2$, or $n-3$.
Later in \cite{chan} the characterizations of the graphs in $\mathcal G_2$ with a cut vertex, 
and the graphs in $\mathcal G_2$ with number of independent cycles equal to $n-2$ are given.
Recently, a complete characterization of $\mathcal G_2$ was given in \cite{alfaval}.
However, nothing is known about $\mathcal G_3$.

A crucial result linking critical groups and critical ideals is \cite[Theorem 3.6]{corrval}, which states if $D_G$ is the degree vector of $G$, and $d_1\mid\cdots\mid d_{n-1}$ are the invariant factors of $K(G)$, then
\[
I_i(G,D_G)=\left< \prod_{j=1}^{i} d_j \right> =\left< \Delta_i(G)\right>\text{ for all }1\leq i\leq n-1.
\]
Thus if the critical ideal $I_i(G,X_G)$ is trivial, then $\Delta_i(G)$ and $d_i$ are equal to $1$.
Equivalently, if $\Delta_i(G)$ and $d_i$ are not equal to $1$, then the critical ideal $I_i(G, X_G)$ is not trivial.

The critical ideals behave better than critical group under induced subgraph property.
It is not difficult to see that unlike of $\Gamma_{\leq k}$, $\mathcal G_k$ is not closed under induced subgraphs.
For instance, the cone of the claw graph $c(K_{1,3})$ belongs to $\mathcal G_2$, but the claw graph $K_{1,3}$ belongs to $\mathcal G_3$. 
Also, $K_6\setminus \{ 2P_2\}$ belongs to $\mathcal G_3$, meanwhile $K_5\setminus \{ 2P_2\}$ belongs to $\mathcal G_2$.
Moreover, if $H$ is an induced subgraph of $G$, then it is not always true that $K(H)\trianglelefteq K(G)$.
For example, $K(K_4)\cong \mathbb{Z}_4^2\ntrianglelefteq K(K_5)\cong \mathbb{Z}_5^3$.

On the other hand, a consequence of \cite[Theorem 3.6]{corrval} is that $\mathcal{G}_i\subseteq \Gamma_{\leq i}$ for all $i\geq 0$.
Therefore, after an analysis of the $i$-{\it th} invariant factor of the Laplacian matrix of the graphs in $\Gamma_{\leq i}$, the characterization of $\mathcal G_i$ can be obtained.
See for instance \cite{alfaval} for the characterizations of $\mathcal G_1$ and $\mathcal G_2$.

\section{Cliques, stable sets and critical ideals}

Let $G=(V,E)$ a simple graph.
Suppose that $V'$ is a subset of $V$, the {\it induced subgraph} $G[V']$ of $G$ is the subgraph of $G$ whose vertex set is $V'$ and whose edge set is the set of those edges of $G$ that have both ends in $V'$.
If $E'$ is a subset of $E$, the {\it edge-induced subgraph} $G[E']$ is the subgraph of $G$ whose edge set is $E'$ and whose vertex set consists of all ends of edges of $E'$. 
Let $P,Q$ be two subsets of $V$, we denote by $E(P, Q)$ the set of edges of $G$ with one end in $P$ and the other end in $Q$.
A {\it clique} of $G$ is a subset $S$ of $V$ of mutually adjacent vertices, and the maximum size of a clique of $G$ is the {\it clique number} $\omega(G)$ of $G$.
A subset $S$ of $V$ is called an {\it independent set}, or {\it stable set}, of $G$ if no two vertices of $S$ are adjacent in $G$.
The graph with $n$ vertices whose vertex set induces a stable set is called the trivial graph, and denoted by $T_n$.
The cardinality of a maximum stable set in $G$ is called the {\it stability number} of $G$ and is denoted by $\alpha(G)$.

Given a simple graph $G=(V,E)$ and a vector ${\bf d}\in {\mathbb Z}^V$, the graph $G^{\bf d}$ is constructed as follows.
For each vertex $u\in V$ is associated a new vertex set $V_u$, where $V_u$ is a clique of cardinality $-{\bf d}_u$ if ${\bf d}_u$ is negative, and $V_u$ is a stable set of cardinality ${\bf d}_u$ if ${\bf d}_u$ is positive.
And each vertex in $V_u$ is adjacent with each vertex in $V_v$ if and only if $u$ and $v$ are adjacent in $G$.
Then the graph $G$ is called the {\it underlying graph} of $G^{\bf d}$.

A convenient way to visualize $G^{\bf d}$ is by means of a drawing of $G$, where the vertex $u$ is colored in black if ${\bf d}_u$ is negative, and colored in white if ${\bf d}_u$ is positive.
We indicate the cardinality of $V_u$ by writing it inside the drawing of vertex $u$.
When $|{\bf d}_u|=1$, we may color $u$ in gray and avoid writing the cardinality (see Figure \ref{figure:omega3}).
On the other hand, it will be useful to avoid writing the cardinality when $|{\bf d}_u|=2$ (see Figure \ref{figure:FamilyGraphF3}).

In general, the computation of the Gr\"{o}bner bases of the critical ideals is more than complicated.
However, in the rest of this section we will show a novel method, developed in \cite{alfacorrval}, to decide for $i\leq|V(G)|$ whether the $i$-{\it th} critical ideal of $G^{\bf d}$ is trivial or not.

For $V'\subseteq V(G)$ and ${\bf d}\in {\mathbb Z}^{V'}$, we define $\phi({\bf d})$ as follows:
\[
\phi({\bf d})_v =
\begin{cases}
0 & \text{ if }{\bf d}_v>0,\\
-1 & \text{ if }{\bf d}_v<0.
\end{cases}
\]

\begin{Theorem}\cite[Theorem 3.7]{alfacorrval}\label{Theo:trivialcriticalideal}
Let $n\geq 2$ and $G$ be a graph with $n$ vertices.
For $V'\subseteq V(G)$, $1\leq j\leq n$ and ${\bf d}\in \mathbb{Z}^{V'}$,  the critical ideal $I_j(G^{\bf d},X_{G^{\bf d}})$ is trivial if and only if the evaluation of $I_j(G,X_G)$ at $X_G=\phi({\bf d})$ is trivial.
\end{Theorem}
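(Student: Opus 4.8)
The plan is to reduce triviality of a determinantal ideal to a rank condition and then compare the ranks of the two matrices under all specializations. Recall the standard fact that for a matrix $M$ over $R=\mathbb{Z}[Y]$ one has $I_j(M)=\langle 1\rangle$ precisely when, for every field $\mathbb{K}$ and every ring homomorphism $\psi\colon R\to\mathbb{K}$, the matrix $\psi(M)$ has rank at least $j$ over $\mathbb{K}$: the vanishing of all $j\times j$ minors at $\psi$ means exactly $I_j(M)\subseteq\ker\psi$, and an ideal is the unit ideal iff it lies in the kernel of no such $\psi$. I would record this criterion and apply it to both $M=L(G^{\bf d},X_{G^{\bf d}})$ and to the evaluated matrix $N:=L(G,X_G)|_{X_G=\phi({\bf d})}$. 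Thus the theorem becomes the assertion that $\operatorname{rank}\psi(L(G^{\bf d},X_{G^{\bf d}}))\ge j$ for every specialization $\psi$ of the variables $X_{G^{\bf d}}$ if and only if $\operatorname{rank}\eta(N)\ge j$ for every specialization $\eta$ of the variables $x_v$ with $v\in V\setminus V'$.

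The role of $\phi$ is captured by a \emph{critical specialization}: call a block $V_u$ critical under $\psi$ if every copy $w\in V_u$ is sent to $\phi({\bf d})_u$, that is $-1$ when $V_u$ is a clique and $0$ when $V_u$ is a stable set. When $V_u$ is critical, all the rows (and all the columns) of $\psi(L(G^{\bf d},X_{G^{\bf d}}))$ indexed by $V_u$ coincide, since inside the block the entries are constantly $-1$ (clique) or $0$ (stable) and the entries toward any other block depend only on adjacency in $G$. Hence, if every block is critical and the vertices of $V\setminus V'$ carry the values of some $\eta$, then deleting the repeated rows and columns identifies $\psi(L(G^{\bf d},X_{G^{\bf d}}))$ with $\eta(N)$, and this deletion preserves rank. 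This already gives one direction: from any $\eta$ with $\operatorname{rank}\eta(N)<j$, the specialization making every block critical and using $\eta$ elsewhere has rank $<j$, so $I_j(L(G^{\bf d},X_{G^{\bf d}}))$ is nontrivial.

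For the converse I would prove the key inequality $\operatorname{rank}\psi(L(G^{\bf d},X_{G^{\bf d}}))\ge\operatorname{rank}\eta(N)$, with $\eta$ the restriction of $\psi$ to $V\setminus V'$, for every $\psi$. It suffices to show that making a single block $V_u$ critical, with all other values frozen, does not raise the rank; iterating over the blocks then turns $\psi$ into an all-critical specialization whose matrix collapses to $\eta(N)$. Let $F$ be the matrix with $V_u$ at its current values and $F_\ast$ the same matrix with $V_u$ made critical. The variables of $V_u$ appear only on the diagonal of the $V_u\times V_u$ block, while the internal off-diagonal entries and all entries toward other blocks are unchanged, so $F-F_\ast$ is a diagonal matrix supported on $V_u$. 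Choose a nonsingular submatrix of $F_\ast$ of size $R:=\operatorname{rank}F_\ast$. Since the rows indexed by $V_u$ are identical in $F_\ast$, and likewise the columns, I may assume it uses at most one row and at most one column from $V_u$; if it meets $V_u$ in both a row and a column I take these to be two distinct copies, which is possible because $F\neq F_\ast$ forces $|V_u|\ge 2$. Then no entry of the chosen submatrix sits on the perturbed diagonal of the block, so the same submatrix is unchanged in $F$ and remains nonsingular, giving $\operatorname{rank}F\ge R=\operatorname{rank}F_\ast$.

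The main obstacle is exactly this monotonicity step, since a diagonal perturbation can in general lower the rank; the argument must therefore exploit the degeneration at the critical value into identical rows and columns, which is what lets one relocate a witnessing minor off the perturbed diagonal by passing to a distinct copy. Once the key inequality holds it yields $\operatorname{rank}\psi(L(G^{\bf d},X_{G^{\bf d}}))\ge\operatorname{rank}\eta(N)\ge j$ whenever $N$ has rank at least $j$ at every point, which settles the remaining direction and hence the equivalence.
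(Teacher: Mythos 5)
The paper itself contains no proof of this statement (it is imported verbatim, with a citation, from \cite{alfacorrval}), so your argument has to stand on its own. Your framework is the right one: the criterion that $I_j(M)=\langle 1\rangle$ iff every specialization of the variables into a field produces a matrix of rank at least $j$ is correct, and your first direction is correct as written — under the all-critical specialization the rows (and columns) of $\psi(L(G^{\bf d},X_{G^{\bf d}}))$ within each block $V_u$ coincide, deleting duplicates preserves rank, and the collapsed matrix is exactly $\eta(N)$; hence a specialization with $\operatorname{rank}\eta(N)<j$ lifts to one witnessing nontriviality of $I_j(G^{\bf d},X_{G^{\bf d}})$. This is, incidentally, the only direction the present paper ever invokes (both applications in Section 2 deduce nontriviality of $I_4$ of the expanded graph from nontriviality of an evaluation).

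The converse has a genuine gap, located exactly at the sentence ``$F\neq F_\ast$ forces $|V_u|\ge 2$.'' That implication is a non sequitur: if $|V_u|=1$ and $\psi$ assigns the unique variable of that block any value other than $\phi({\bf d})_u$, then $F\neq F_\ast$ while $|V_u|=1$, and your relocation of the witnessing minor to a distinct copy is impossible. Moreover this cannot be patched by a cleverer argument, because for singleton blocks this direction of the statement as quoted is false. Take $G=K_2$ on $\{u,v\}$, $V'=\{u\}$, ${\bf d}_u=1$, $j=n=2$. Then $G^{\bf d}\cong K_2$, so $I_2(G^{\bf d},X_{G^{\bf d}})=\langle x_ux_v-1\rangle$ is not trivial, whereas evaluating $I_2(G,X_G)=\langle x_ux_v-1\rangle$ at $x_u=\phi({\bf d})_u=0$ gives $\langle -1\rangle=\langle 1\rangle$. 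Your key inequality fails there as well: at $x_u=x_v=1$ the matrix $L(G^{\bf d})$ has rank $1$, while the corresponding $\eta(N)$ has rank $2$. The statement, and your proof essentially verbatim, become correct under the extra hypothesis that $|{\bf d}_v|\ge 2$ for every $v\in V'$ (equivalently, every block whose variable gets evaluated has at least two copies) — which is the effective setting of the cited source, where nontrivial entries of ${\bf d}$ always create blocks with at least two vertices — or, alternatively, if $\phi$ is redefined to leave $x_v$ unevaluated when $|{\bf d}_v|=1$. With that hypothesis made explicit, your case analysis (the nonsingular minor meets $V_u$ in at most one row and at most one column; if these coincide, exchange the row for a distinct copy, which then exists) is complete and correct.
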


Thus the procedure of verify whether a family of graphs belongs to $\Gamma_{\leq i}$ becomes in an evaluation of the $i$-{\it th} critical ideal of the underlying graph of the family.

Let $G_2$ be the underlying graph of the family of graphs $\mathcal F_1^1$ (see Figure \ref{figure:omega3}.ii) with vertex set $V=\{v_1,v_2,v_3,v_4,v_5,v_6,v_7\}$.
Let ${\bf d}\in {\mathbb Z}^V$ such that ${\bf d}_1,{\bf d}_2,{\bf d}_3$ are positive integers and ${\bf d}_4,{\bf d}_5,{\bf d}_6,{\bf d}_7$ are negative integers.
Thus $\phi({\bf d})=(0,0,0,-1,-1,-1,-1)$.
By using a computer algebra system we can check that
\[
I_4(G_2,X_{G_2})=\langle2,x_1,x_2,x_3,x_4+1,x_5+1,x_6+1,x_7+1\rangle.
\]
Since the evaluation $I_4(G_2,X_{G_2})$ at $X_{G_2}=\phi({\bf d})$ is equal to $\langle2\rangle$, then by Theorem \ref{Theo:trivialcriticalideal} the critical ideal $I_4(G_2^{\bf d},X_{G_2^{\bf d}})$ is non-trivial. 
Therefore, each graph in this family of graphs has algebraic co-rank at most 3.
Now let $G_3$ be the underlying graph of the family of graphs $\mathcal F_1^2$ (see Figure \ref{figure:omega3}.iii) with vertex set $V=\{v_1,v_2,v_3,v_4,v_5,v_6\}$.
Let ${\bf d}_1$ be positive integer.
By using a computer algebra system we can check that
\[
I_4(G_3,X_{G_3})=\langle x_1^2+5x_1+5,x_1+x_2+3,x_1+x_3+3,x_1+x_4+3,x_1+x_5+3,x_1+x_6+3\rangle.
\]
Since the evaluation of $I_4(G_3,X_{G_3})$ at $x_1=0$ is non-trivial, then by Theorem \ref{Theo:trivialcriticalideal} the critical ideal $I_4(G_3^{\bf d},X_{G_3^{\bf d}})$ is non-trivial. 
Therefore, each graph in $\mathcal F_1^2$ has algebraic co-rank at most 3.

On the other hand, it can be verified that $\gamma(G_1)$, $\gamma(G_2)$ and $\gamma(G_3)$ are equal to 3.
Then the graphs in $\mathcal F_1$ have algebraic co-rank 3, and so each induced subgraph of a graph in $\mathcal F_1$ has algebraic co-rank at most 3.
\begin{Proposition}\label{theorem:F1belongstoGamma3}
Each graph in $\mathcal F_1$ belongs to $\Gamma_{\leq 3}$.
\end{Proposition}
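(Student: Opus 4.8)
The plan is to reduce the assertion $G\in\Gamma_{\le 3}$ to the non-triviality of a single critical ideal, and then to dispatch the three constituents of $\mathcal F_1$ separately. First I would record the elementary nesting $I_{j+1}(G,X_G)\subseteq I_j(G,X_G)$, which follows from expanding any $(j+1)\times(j+1)$ minor of $L(G,X_G)$ along a row as a $\mathbb Z[X_G]$-combination of $j\times j$ minors. Consequently the trivial critical ideals form an initial segment $I_1,\dots,I_{\gamma(G)}$, so that $\gamma(G)\le 3$ if and only if $I_4(G,X_G)$ is non-trivial. This converts the claim $G\in\Gamma_{\le 3}$ into the single statement that $I_4$ is not $\langle 1\rangle$.

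Next I would split $\mathcal F_1$ into the fixed graph $G_1$ and the two families $\mathcal F_1^1=\{G_2^{\bf d}\}$ and $\mathcal F_1^2=\{G_3^{\bf d}\}$, where ${\bf d}$ ranges over the admissible sign patterns read off the colourings in Figure~\ref{figure:omega3}. For the single graph $G_1$, the direct computation recorded before the statement gives $\gamma(G_1)=3$; hence $I_4(G_1,X_{G_1})$ is non-trivial and $G_1\in\Gamma_{\le 3}$.

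For the infinite families the point is to avoid recomputing a Gr\"obner basis for every ${\bf d}$ and instead to invoke Theorem~\ref{Theo:trivialcriticalideal}. From the displayed value $I_4(G_2,X_{G_2})=\langle 2,x_1,x_2,x_3,x_4+1,x_5+1,x_6+1,x_7+1\rangle$ one sees that the constant $2$ already belongs to the ideal, so its evaluation at $X_{G_2}=\phi({\bf d})=(0,0,0,-1,-1,-1,-1)$ still contains $2$ and is therefore non-trivial; Theorem~\ref{Theo:trivialcriticalideal} then promotes this to the non-triviality of $I_4(G_2^{\bf d},X_{G_2^{\bf d}})$ for every admissible ${\bf d}$, whence $\gamma(G_2^{\bf d})\le 3$. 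The family $\mathcal F_1^2$ is treated in the same way: in the displayed $I_4(G_3,X_{G_3})$ the first generator $x_1^2+5x_1+5$ specializes to the prime $5$ when $x_1=0$, the coordinate forced by $\phi({\bf d})$ on the white vertex $v_1$, so the evaluated ideal contains $5$ and is non-trivial irrespective of how the remaining (gray) coordinates are evaluated; Theorem~\ref{Theo:trivialcriticalideal} again yields $\gamma(G_3^{\bf d})\le 3$.

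Combining the three cases shows that every graph in $\mathcal F_1$ has algebraic co-rank at most $3$, which is the assertion. I do not expect a genuine obstacle: the substance lies in the computer-algebra evaluations of $I_4(G_1,X_{G_1})$, $I_4(G_2,X_{G_2})$ and $I_4(G_3,X_{G_3})$ supplied in the excerpt, together with Theorem~\ref{Theo:trivialcriticalideal}, which is precisely the device converting a single symbolic computation on the underlying graph into a statement about the whole infinite family. The only point requiring care is checking that each evaluated ideal stays non-trivial, and this is immediate once one observes that a prime constant ($2$ for $G_2$, respectively $5$ for $G_3$) survives the specialization.
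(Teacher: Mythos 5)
Your overall route coincides with the paper's: reduce membership in $\Gamma_{\leq 3}$ to the non-triviality of the fourth critical ideal via the nesting $I_{j+1}(G,X_G)\subseteq I_j(G,X_G)$, dispose of $G_1$ by the recorded computation $\gamma(G_1)=3$, and handle the two infinite families by evaluating the displayed ideals $I_4(G_2,X_{G_2})$ and $I_4(G_3,X_{G_3})$ at $\phi(\mathbf d)$ and invoking Theorem \ref{Theo:trivialcriticalideal}. However, the one step on which everything hinges is justified by an invalid inference: you argue that the evaluated ideal is non-trivial \emph{because} a prime constant ($2$, resp.\ $5$) survives the specialization. Containing a prime constant does not make an ideal of $\mathbb{Z}[X]$ (or of $\mathbb{Z}$) proper: for instance $\langle 5,\,x+2,\,x+3\rangle=\langle 1\rangle$ since $(x+3)-(x+2)=1$, and $\langle 2,\,x+1\rangle$ evaluated at $x=0$ is $\langle 2,1\rangle=\langle 1\rangle$. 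So ``a prime survives'' is not a criterion for non-triviality, and as written your final check does not prove anything.

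The repair is short and is what the paper actually does. For $G_2$ the point is not that $2$ survives but that \emph{every other generator vanishes} at $\phi(\mathbf d)=(0,0,0,-1,-1,-1,-1)$, so the evaluated ideal equals $\langle 2\rangle\subsetneq\mathbb{Z}$, which is proper. For $G_3$ only $x_1$ is specialized (note the gray vertices are not replaced by cliques or stable sets, so their variables are not evaluated at all --- they remain indeterminates, contrary to your phrasing ``irrespective of how the remaining coordinates are evaluated''); the evaluated ideal is $\langle 5,\,x_2+3,\,x_3+3,\,x_4+3,\,x_5+3,\,x_6+3\rangle$, and it is proper because the quotient
\[
\mathbb{Z}[x_2,\dots,x_6]\big/\langle 5,\,x_2+3,\dots,x_6+3\rangle\;\cong\;\mathbb{Z}_5\neq 0,
\]
equivalently because the ring homomorphism $\mathbb{Z}[x_2,\dots,x_6]\to\mathbb{Z}_5$ sending each $x_i\mapsto -3$ kills every generator but not $1$. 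With this substituted for the ``prime survives'' criterion, your argument is complete and is essentially the paper's proof.
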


\section{A description of $\Gamma_{\leq 3}$}
It is possible to compute the algebraic co-rank of all connected graphs with at most 9 vertices using the software Macaulay2 \cite{gs} and Nauty \cite{mckay}.
The computation of the algebraic co-rank of the connected graphs with at most 8 vertices required at most 3 hours on a MacBookPro with a 2.8 GHz Intel i7 quad core processor and 16 GB RAM.
Besides, the computation of the algebraic co-rank of the connected graphs with 9 vertices required 4 weeks of computation on the same computer.

Let $\mathcal F_3$ be the family of graphs shown in Figure \ref{figure:FamilyGraphF3}.
This family represents the graphs in ${\bf Forb}(\Gamma_{\leq 3})$ with at most 8 vertices.
Since there exists no minimal forbidden graph with 9 vertices for $\Gamma_{\leq 3}$, then it is likely that $\mathcal F_3={\bf Forb}(\Gamma_{\leq 3})$.
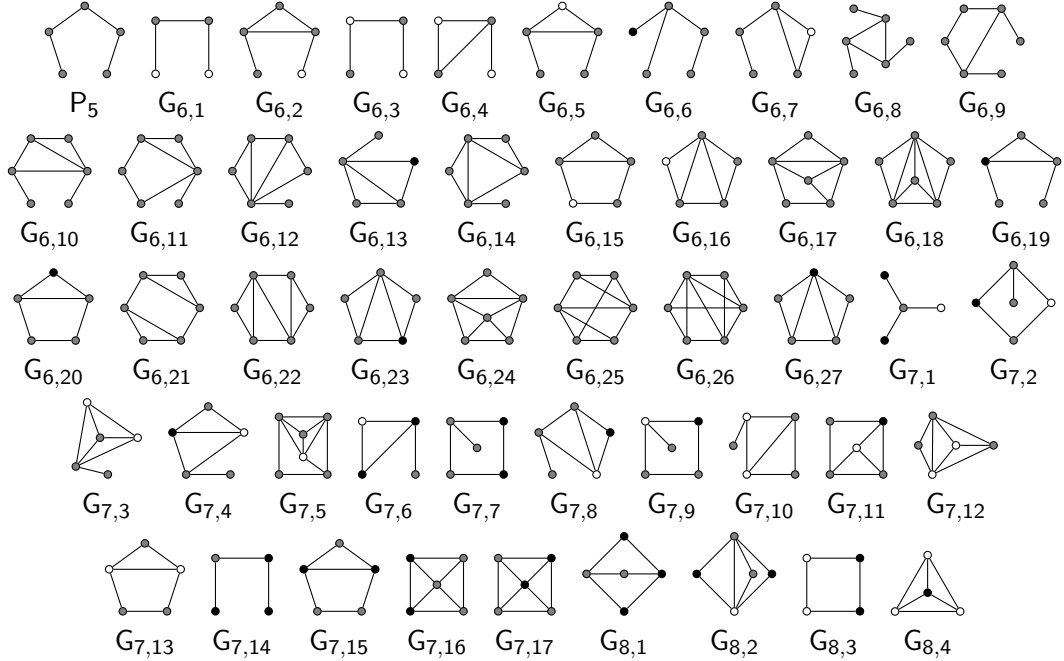
\begin{figure}[h]
\begin{center}
\begin{tabular}{cccccccccc}
\begin{tikzpicture}[scale=.5]
\tikzstyle{every node}=[minimum width=0pt, inner sep=1pt, circle]
\draw (306:1) node (v1) [draw,fill=gray] {};
\draw (18:1) node (v2) [draw,fill=gray] {};
\draw (90:1) node (v3) [draw,fill=gray] {};
\draw (162:1) node (v4) [draw,fill=gray] {};
\draw (234:1) node (v5) [draw,fill=gray] {};
\draw (v1) -- (v2) -- (v3) -- (v4) -- (v5);
\end{tikzpicture}
&
\begin{tikzpicture}[scale=.5]
\tikzstyle{every node}=[minimum width=0pt, inner sep=1pt, circle]
\draw (225:1) node (v1) [draw,fill=white] {};
\draw (315:1) node (v4) [draw,fill=white] {};
\draw (45:1) node (v6) [draw,fill=gray] {};
\draw (135:1) node (v5) [draw,fill=gray] {};
\draw (v1) -- (v5);
\draw (v4) -- (v6);
\draw (v5) -- (v6);
\end{tikzpicture}
&
\begin{tikzpicture}[scale=.5]
\tikzstyle{every node}=[minimum width=0pt, inner sep=1pt, circle]
\draw (162:1) node (v5) [draw,fill=gray] {};
\draw (234:1) node (v2) [draw,fill=gray] {};
\draw (306:1) node (v4) [draw,fill=white] {};
\draw (18:1) node (v6) [draw,fill=gray] {};
\draw (90:1) node (v1) [draw,fill=gray] {};
\draw (v1) -- (v5);
\draw (v1) -- (v6);
\draw (v2) -- (v5);
\draw (v4) -- (v6);
\draw (v5) -- (v6);
\end{tikzpicture}
&
\begin{tikzpicture}[scale=.5]
\tikzstyle{every node}=[minimum width=0pt, inner sep=1pt, circle]
\draw (225:1) node (v5) [draw,fill=gray] {};
\draw (315:1) node (v4) [draw,fill=white] {};
\draw (45:1) node (v6) [draw,fill=gray] {};
\draw (135:1) node (v1) [draw,fill=white] {};
\draw (v1) -- (v5);
\draw (v1) -- (v6);
\draw (v4) -- (v6);
\end{tikzpicture}
&
\begin{tikzpicture}[scale=.5]
\tikzstyle{every node}=[minimum width=0pt, inner sep=1pt, circle]
\draw (225:1) node (v5) [draw,fill=gray] {};
\draw (315:1) node (v4) [draw,fill=white] {};
\draw (45:1) node (v6) [draw,fill=gray] {};
\draw (135:1) node (v1) [draw,fill=white] {};
\draw (v1) -- (v5);
\draw (v1) -- (v6);
\draw (v4) -- (v6);
\draw (v5) -- (v6);
\end{tikzpicture}
&
\begin{tikzpicture}[scale=.5]
\tikzstyle{every node}=[minimum width=0pt, inner sep=1pt, circle]
\draw (162:1) node (v5) [draw,fill=gray] {};
\draw (234:1) node (v3) [draw,fill=gray] {};
\draw (306:1) node (v4) [draw,fill=gray] {};
\draw (18:1) node (v6) [draw,fill=gray] {};
\draw (90:1) node (v1) [draw,fill=white] {};
\draw (v1) -- (v5);
\draw (v1) -- (v6);
\draw (v3) -- (v5);
\draw (v4) -- (v6);
\draw (v5) -- (v6);
\end{tikzpicture}
&
\begin{tikzpicture}[scale=.5]
\tikzstyle{every node}=[minimum width=0pt, inner sep=1pt, circle]
\draw (162:1) node (v1) [draw,fill] {};
\draw (234:1) node (v3) [draw,fill=gray] {};
\draw (306:1) node (v5) [draw,fill=gray] {};
\draw (18:1) node (v2) [draw,fill=gray] {};
\draw (90:1) node (v6) [draw,fill=gray] {};
\draw (v1) -- (v6);
\draw (v2) -- (v5);
\draw (v2) -- (v6);
\draw (v3) -- (v6);
\end{tikzpicture}
&
\begin{tikzpicture}[scale=.5]
\tikzstyle{every node}=[minimum width=0pt, inner sep=1pt, circle]
\draw (162:1) node (v1) [draw,fill=gray] {};
\draw (234:1) node (v4) [draw,fill=gray] {};
\draw (306:1) node (v5) [draw,fill=gray] {};
\draw (18:1) node (v2) [draw,fill=white] {};
\draw (90:1) node (v6) [draw,fill=gray] {};
\draw (v1) -- (v4);
\draw (v1) -- (v6);
\draw (v2) -- (v5);
\draw (v2) -- (v6);
\draw (v5) -- (v6);
\end{tikzpicture}
&
\begin{tikzpicture}[scale=.5]
\tikzstyle{every node}=[minimum width=0pt, inner sep=1pt, circle]
\draw (180:.7) node (v5) [draw,fill=gray] {};
\draw (240:1) node (v2) [draw,fill=gray] {};
\draw (300:.7) node (v1) [draw,fill=gray] {};
\draw (360:1) node (v4) [draw,fill=gray] {};
\draw (420:.7) node (v6) [draw,fill=gray] {};
\draw (480:1) node (v3) [draw,fill=gray] {};
\draw (v1) -- (v4);
\draw (v1) -- (v5);
\draw (v1) -- (v6);
\draw (v2) -- (v5);
\draw (v3) -- (v6);
\draw (v5) -- (v6);
\end{tikzpicture}
&
\begin{tikzpicture}[scale=.5]
\tikzstyle{every node}=[minimum width=0pt, inner sep=1pt, circle]
\draw (180:1) node (v2) [draw,fill=gray] {};
\draw (240:1) node (v6) [draw,fill=gray] {};
\draw (300:1) node (v3) [draw,fill=gray] {};
\draw (360:1) node (v4) [draw,fill=gray] {};
\draw (420:1) node (v1) [draw,fill=gray] {};
\draw (480:1) node (v5) [draw,fill=gray] {};
\draw (v1) -- (v4);
\draw (v1) -- (v5);
\draw (v1) -- (v6);
\draw (v2) -- (v5);
\draw (v2) -- (v6);
\draw (v3) -- (v6);
\end{tikzpicture}
\\
$\Gaa$
&
$\Gab$
&
$\Gac$
&
$\Gad$
&
$\Gae$
&
$\Gaf$
&
$\Gag$
&
$\Gah$
&
$\Gai$
&
$\Gaj$
\end{tabular}
\end{center}

\begin{center}
\begin{tabular}{cccccccccc}
\begin{tikzpicture}[scale=.5]
\tikzstyle{every node}=[minimum width=0pt, inner sep=1pt, circle]
\draw (180:1) node (v1) [draw,fill=gray] {};
\draw (240:1) node (v4) [draw,fill=gray] {};
\draw (300:1) node (v3) [draw,fill=gray] {};
\draw (360:1) node (v6) [draw,fill=gray] {};
\draw (420:1) node (v2) [draw,fill=gray] {};
\draw (480:1) node (v5) [draw,fill=gray] {};
\draw (v1) -- (v4);
\draw (v1) -- (v5);
\draw (v1) -- (v6);
\draw (v2) -- (v5);
\draw (v2) -- (v6);
\draw (v3) -- (v6);
\draw (v5) -- (v6);
\end{tikzpicture}
&
\begin{tikzpicture}[scale=.5]
\tikzstyle{every node}=[minimum width=0pt, inner sep=1pt, circle]
\draw (180:1) node (v5) [draw,fill=gray] {};
\draw (240:1) node (v2) [draw,fill=gray] {};
\draw (300:1) node (v3) [draw,fill=gray] {};
\draw (360:1) node (v6) [draw,fill=gray] {};
\draw (420:1) node (v4) [draw,fill=gray] {};
\draw (480:1) node (v1) [draw,fill=gray] {};
\draw (v1) -- (v4);
\draw (v1) -- (v5);
\draw (v1) -- (v6);
\draw (v2) -- (v5);
\draw (v2) -- (v6);
\draw (v3) -- (v6);
\draw (v4) -- (v6);
\end{tikzpicture}
&
\begin{tikzpicture}[scale=.5]
\tikzstyle{every node}=[minimum width=0pt, inner sep=1pt, circle]
\draw (180:1) node (v4) [draw,fill=gray] {};
\draw (240:1) node (v6) [draw,fill=gray] {};
\draw (300:1) node (v3) [draw,fill=gray] {};
\draw (360:1) node (v2) [draw,fill=gray] {};
\draw (420:1) node (v5) [draw,fill=gray] {};
\draw (480:1) node (v1) [draw,fill=gray] {};
\draw (v1) -- (v4);
\draw (v1) -- (v5);
\draw (v1) -- (v6);
\draw (v2) -- (v5);
\draw (v2) -- (v6);
\draw (v3) -- (v6);
\draw (v4) -- (v6);
\draw (v5) -- (v6);
\end{tikzpicture}
&
\begin{tikzpicture}[scale=.5]
\tikzstyle{every node}=[minimum width=0pt, inner sep=1pt, circle]
\draw (162:1) node (v6) [draw,fill=gray] {};
\draw (234:1) node (v2) [draw,fill=gray] {};
\draw (306:1) node (v5) [draw,fill=gray] {};
\draw (18:1) node (v1) [draw,fill] {};
\draw (90:1) node (v3) [draw,fill=gray] {};
\draw (v1) -- (v5);
\draw (v1) -- (v6);
\draw (v2) -- (v5);
\draw (v2) -- (v6);
\draw (v3) -- (v6);
\draw (v5) -- (v6);
\end{tikzpicture}
&
\begin{tikzpicture}[scale=.5]
\tikzstyle{every node}=[minimum width=0pt, inner sep=1pt, circle]
\draw (180:1) node (v2) [draw,fill=gray] {};
\draw (240:1) node (v5) [draw,fill=gray] {};
\draw (300:1) node (v3) [draw,fill=gray] {};
\draw (360:1) node (v1) [draw,fill=gray] {};
\draw (420:1) node (v4) [draw,fill=gray] {};
\draw (480:1) node (v6) [draw,fill=gray] {};
\draw (v1) -- (v4);
\draw (v1) -- (v5);
\draw (v1) -- (v6);
\draw (v2) -- (v5);
\draw (v2) -- (v6);
\draw (v3) -- (v5);
\draw (v4) -- (v6);
\draw (v5) -- (v6);
\end{tikzpicture}
&
\begin{tikzpicture}[scale=.5]
\tikzstyle{every node}=[minimum width=0pt, inner sep=1pt, circle]
\draw (162:1) node (v6) [draw,fill=gray] {};
\draw (234:1) node (v2) [draw,fill=white] {};
\draw (306:1) node (v5) [draw,fill=gray] {};
\draw (18:1) node (v1) [draw,fill=gray] {};
\draw (90:1) node (v4) [draw,fill=gray] {};
\draw (v1) -- (v4);
\draw (v1) -- (v5);
\draw (v1) -- (v6);
\draw (v2) -- (v5);
\draw (v2) -- (v6);
\draw (v4) -- (v6);
\end{tikzpicture}
&
\begin{tikzpicture}[scale=.5]
\tikzstyle{every node}=[minimum width=0pt, inner sep=1pt, circle]
\draw (18:1) node (v4) [draw,fill=gray] {};
\draw (90:1) node (v6) [draw,fill=gray] {};
\draw (162:1) node (v2) [draw,fill=white] {};
\draw (234:1) node (v5) [draw,fill=gray] {};
\draw (306:1) node (v1) [draw,fill=gray] {};
\draw (v1) -- (v4);
\draw (v1) -- (v5);
\draw (v1) -- (v6);
\draw (v2) -- (v5);
\draw (v2) -- (v6);
\draw (v4) -- (v6);
\draw (v5) -- (v6);
\end{tikzpicture}
&
\begin{tikzpicture}[scale=.5]
\tikzstyle{every node}=[minimum width=0pt, inner sep=1pt, circle]
\draw (234:1) node (v3) [draw,fill=gray] {};
\draw (162:1) node (v6) [draw,fill=gray] {};
\draw (90:1) node (v2) [draw,fill=gray] {};
\draw (18:1) node (v4) [draw,fill=gray] {};
\draw (306:1) node (v5) [draw,fill=gray] {};
\draw (-90:.2) node (v1) [draw,fill=gray] {};
\draw (v1) -- (v4);
\draw (v1) -- (v5);
\draw (v1) -- (v6);
\draw (v2) -- (v4);
\draw (v2) -- (v6);
\draw (v3) -- (v5);
\draw (v3) -- (v6);
\draw (v4) -- (v5);
\draw (v4) -- (v6);
\end{tikzpicture}
&
\begin{tikzpicture}[scale=.5]
\tikzstyle{every node}=[minimum width=0pt, inner sep=1pt, circle]
\draw (18:1) node (v2) [draw,fill=gray] {};
\draw (90:1) node (v6) [draw,fill=gray] {};
\draw (162:1) node (v3) [draw,fill=gray] {};
\draw (234:1) node (v5) [draw,fill=gray] {};
\draw (0,-.2) node (v1) [draw,fill=gray] {};
\draw (306:1) node (v4) [draw,fill=gray] {};
\draw (v1) -- (v4);
\draw (v1) -- (v5);
\draw (v1) -- (v6);
\draw (v2) -- (v4);
\draw (v2) -- (v6);
\draw (v3) -- (v5);
\draw (v3) -- (v6);
\draw (v4) -- (v5);
\draw (v4) -- (v6);
\draw (v5) -- (v6);
\end{tikzpicture}
&
\begin{tikzpicture}[scale=.5]
\tikzstyle{every node}=[minimum width=0pt, inner sep=1pt, circle]
\draw (90:1) node (v1) [draw,fill=gray] {};
\draw (162:1) node (v5) [draw,fill] {};
\draw (234:1) node (v2) [draw,fill=gray] {};
\draw (306:1) node (v3) [draw,fill=gray] {};
\draw (18:1) node (v6) [draw,fill=gray] {};
\draw (v1) -- (v5);
\draw (v1) -- (v6);
\draw (v2) -- (v5);
\draw (v3) -- (v6);
\draw (v5) -- (v6);
\end{tikzpicture}
\\
$\Gak$
&
$\Gal$
&
$\Gam$
&
$\Gan$
&
$\Gao$
&
$\Gap$
&
$\Gaq$
&
$\Gar$
&
$\Gas$
&
$\Gat$
\end{tabular}
\end{center}

\begin{center}
\begin{tabular}{cccccccccc}
\begin{tikzpicture}[scale=.5]
\tikzstyle{every node}=[minimum width=0pt, inner sep=1pt, circle]
\draw (90:1) node (v1) [draw,fill] {};
\draw (162:1) node (v6) [draw,fill=gray] {};
\draw (234:1) node (v4) [draw,fill=gray] {};
\draw (306:1) node (v2) [draw,fill=gray] {};
\draw (18:1) node (v5) [draw,fill=gray] {};
\draw (v1) -- (v5);
\draw (v1) -- (v6);
\draw (v2) -- (v4);
\draw (v2) -- (v5);
\draw (v4) -- (v6);
\draw (v5) -- (v6);
\end{tikzpicture}
&
\begin{tikzpicture}[scale=.5]
\tikzstyle{every node}=[minimum width=0pt, inner sep=1pt, circle]
\draw (180:1) node (v1) [draw,fill=gray] {};
\draw (240:1) node (v3) [draw,fill=gray] {};
\draw (300:1) node (v5) [draw,fill=gray] {};
\draw (360:1) node (v2) [draw,fill=gray] {};
\draw (420:1) node (v4) [draw,fill=gray] {};
\draw (480:1) node (v6) [draw,fill=gray] {};
\draw (v1) -- (v3);
\draw (v1) -- (v5);
\draw (v1) -- (v6);
\draw (v2) -- (v4);
\draw (v2) -- (v5);
\draw (v2) -- (v6);
\draw (v3) -- (v5);
\draw (v4) -- (v6);
\end{tikzpicture}
&
\begin{tikzpicture}[scale=.5]
\tikzstyle{every node}=[minimum width=0pt, inner sep=1pt, circle]
\draw (180:1) node (v4) [draw,fill=gray] {};
\draw (240:1) node (v2) [draw,fill=gray] {};
\draw (300:1) node (v5) [draw,fill=gray] {};
\draw (360:1) node (v3) [draw,fill=gray] {};
\draw (420:1) node (v1) [draw,fill=gray] {};
\draw (480:1) node (v6) [draw,fill=gray] {};
\draw (v1) -- (v3);
\draw (v1) -- (v5);
\draw (v1) -- (v6);
\draw (v2) -- (v4);
\draw (v2) -- (v5);
\draw (v2) -- (v6);
\draw (v3) -- (v5);
\draw (v4) -- (v6);
\draw (v5) -- (v6);
\end{tikzpicture}
&
\begin{tikzpicture}[scale=.5]
\tikzstyle{every node}=[minimum width=0pt, inner sep=1pt, circle]
\draw (90:1) node (v6) [draw,fill=gray] {};
\draw (162:1) node (v2) [draw,fill=gray] {};
\draw (234:1) node (v5) [draw,fill=gray] {};
\draw (306:1) node (v1) [draw,fill] {};
\draw (18:1) node (v4) [draw,fill=gray] {};
\draw (v1) -- (v4);
\draw (v1) -- (v5);
\draw (v1) -- (v6);
\draw (v2) -- (v5);
\draw (v2) -- (v6);
\draw (v4) -- (v6);
\draw (v5) -- (v6);
\end{tikzpicture}
&
\begin{tikzpicture}[scale=.5]
\tikzstyle{every node}=[minimum width=0pt, inner sep=1pt, circle]
\draw (-90:.2) node (v6) [draw,fill=gray] {};
\draw (306:1) node (v2) [draw,fill=gray] {};
\draw (18:1) node (v5) [draw,fill=gray] {};
\draw (90:1) node (v3) [draw,fill=gray] {};
\draw (162:1) node (v1) [draw,fill=gray] {};
\draw (234:1) node (v4) [draw,fill=gray] {};
\draw (v1) -- (v3);
\draw (v1) -- (v4);
\draw (v1) -- (v5);
\draw (v1) -- (v6);
\draw (v2) -- (v4);
\draw (v2) -- (v5);
\draw (v2) -- (v6);
\draw (v3) -- (v5);
\draw (v4) -- (v6);
\draw (v5) -- (v6);	
\end{tikzpicture}
&
\begin{tikzpicture}[scale=.5]
\tikzstyle{every node}=[minimum width=0pt, inner sep=1pt, circle]
\draw (180:1) node (v1) [draw,fill=gray] {};
\draw (240:1) node (v5) [draw,fill=gray] {};
\draw (300:1) node (v3) [draw,fill=gray] {};
\draw (360:1) node (v6) [draw,fill=gray] {};
\draw (420:1) node (v2) [draw,fill=gray] {};
\draw (480:1) node (v4) [draw,fill=gray] {};
\draw (v1) -- (v3);
\draw (v1) -- (v4);
\draw (v1) -- (v5);
\draw (v1) -- (v6);
\draw (v2) -- (v4);
\draw (v2) -- (v5);
\draw (v2) -- (v6);
\draw (v3) -- (v5);
\draw (v3) -- (v6);
\draw (v4) -- (v6);
\end{tikzpicture}
&
\begin{tikzpicture}[scale=.5]
\tikzstyle{every node}=[minimum width=0pt, inner sep=1pt, circle]
\draw (180:1) node (v4) [draw,fill=gray] {};
\draw (240:1) node (v2) [draw,fill=gray] {};
\draw (300:1) node (v5) [draw,fill=gray] {};
\draw (360:1) node (v1) [draw,fill=gray] {};
\draw (420:1) node (v3) [draw,fill=gray] {};
\draw (480:1) node (v6) [draw,fill=gray] {};
\draw (v1) -- (v3);
\draw (v1) -- (v4);
\draw (v1) -- (v5);
\draw (v1) -- (v6);
\draw (v2) -- (v4);
\draw (v2) -- (v5);
\draw (v2) -- (v6);
\draw (v3) -- (v5);
\draw (v3) -- (v6);
\draw (v4) -- (v6);
\draw (v5) -- (v6);
\end{tikzpicture}
&
\begin{tikzpicture}[scale=.5]
\tikzstyle{every node}=[minimum width=0pt, inner sep=1pt, circle]
\draw (162:1) node (v2) [draw,fill=gray] {};
\draw (234:1) node (v4) [draw,fill=gray] {};
\draw (306:1) node (v1) [draw,fill=gray] {};
\draw (18:1) node (v3) [draw,fill=gray] {};
\draw (90:1) node (v5) [draw,fill] {};
\draw (v1) -- (v3);
\draw (v1) -- (v4);
\draw (v1) -- (v5);
\draw (v2) -- (v4);
\draw (v2) -- (v5);
\draw (v3) -- (v5);
\draw (v4) -- (v5);
\end{tikzpicture}
&
\begin{tikzpicture}[scale=.5]
\tikzstyle{every node}=[minimum width=0pt, inner sep=1pt, circle]
\draw (240:1) node (v1) [draw,fill] {};
\draw (0:1) node (v3) [draw,fill=white] {};
\draw (120:1) node (v2) [draw,fill] {};
\draw (0,0) node (v7) [draw,fill=gray] {};
\draw (v1) -- (v7);
\draw (v2) -- (v7);
\draw (v3) -- (v7);
\end{tikzpicture}
&
\begin{tikzpicture}[scale=.5]
\tikzstyle{every node}=[minimum width=0pt, inner sep=1pt, circle]
\draw (180:1) node (v1) [draw,fill] {};
\draw (270:1) node (v6) [draw,fill=gray] {};
\draw (0:1) node (v2) [draw,fill=white] {};
\draw (437:0) node (v4) [draw,fill=gray] {};
\draw (90:1) node (v7) [draw,fill=gray] {};
\draw (v1) -- (v6);
\draw (v1) -- (v7);
\draw (v2) -- (v6);
\draw (v2) -- (v7);
\draw (v4) -- (v7);
\end{tikzpicture}
\\
$\Gau$
&
$\Gav$
&
$\Gaw$
&
$\Gax$
&
$\Gay$
&
$\Gaz$
&
$\Gba$
&
$\Gbb$
&
$\Gbc$
&
$\Gbd$
\end{tabular}
\end{center}

\begin{center}
\begin{tabular}{cccccccccc}
\begin{tikzpicture}[scale=.5]
\tikzstyle{every node}=[minimum width=0pt, inner sep=1pt, circle]
\draw (230:1) node (v7) [draw,fill=gray] {};
\draw (282:1) node (v3) [draw,fill=gray] {};
\draw (334:0) node (v6) [draw,fill=gray] {};
\draw (0:1) node (v2) [draw,fill=white] {};
\draw (110:1) node (v4) [draw,fill=white] {};
\draw (v2) -- (v4);
\draw (v2) -- (v6);
\draw (v2) -- (v7);
\draw (v3) -- (v7);
\draw (v4) -- (v6);
\draw (v4) -- (v7);
\draw (v6) -- (v7);
\end{tikzpicture}
&
\begin{tikzpicture}[scale=.5]
\tikzstyle{every node}=[minimum width=0pt, inner sep=1pt, circle]
\draw (162:1) node (v5) [draw,fill] {};
\draw (234:1) node (v7) [draw,fill=gray] {};
\draw (306:1) node (v3) [draw,fill=gray] {};
\draw (18:1) node (v1) [draw,fill=white] {};
\draw (90:1) node (v6) [draw,fill=gray] {};
\draw (v1) -- (v5);
\draw (v1) -- (v6);
\draw (v1) -- (v7);
\draw (v3) -- (v7);
\draw (v5) -- (v6);
\draw (v5) -- (v7);
\end{tikzpicture}
&
\begin{tikzpicture}[scale=.5]
\tikzstyle{every node}=[minimum width=0pt, inner sep=1pt, circle]
\draw (130:1) node (v7) [draw,fill=gray] {};
\draw (230:1) node (v3) [draw,fill=gray] {};
\draw (310:1) node (v6) [draw,fill=gray] {};
\draw (50:1) node (v4) [draw,fill=gray] {};
\draw (90:.3) node (v5) [draw,fill=gray] {};
\draw (270:.3) node (v2) [draw,fill=white] {};
\draw (v2) -- (v4);
\draw (v2) -- (v5);
\draw (v2) -- (v6);
\draw (v2) -- (v7);
\draw (v3) -- (v6);
\draw (v3) -- (v7);
\draw (v4) -- (v5);
\draw (v4) -- (v6);
\draw (v4) -- (v7);
\draw (v5) -- (v7);
\end{tikzpicture}
&
\begin{tikzpicture}[scale=.5]
\tikzstyle{every node}=[minimum width=0pt, inner sep=1pt, circle]
\draw (315:1) node (v3) [draw,fill=gray] {};
\draw (45:1) node (v6) [draw,fill] {};
\draw (135:1) node (v1) [draw,fill=white] {};
\draw (225:1) node (v4) [draw,fill] {};
\draw (v1) -- (v4);
\draw (v1) -- (v6);
\draw (v3) -- (v6);
\draw (v4) -- (v6);
\end{tikzpicture}
&
\begin{tikzpicture}[scale=.5]
\tikzstyle{every node}=[minimum width=0pt, inner sep=1pt, circle]
\draw (135:1) node (v5) [draw,fill=gray] {};
\draw (0,0) node (v3) [draw,fill=gray] {};
\draw (225:1) node (v2) [draw,fill=gray] {};
\draw (315:1) node (v4) [draw,fill] {};
\draw (45:1) node (v1) [draw,fill] {};
\draw (v1) -- (v4);
\draw (v1) -- (v5);
\draw (v2) -- (v4);
\draw (v2) -- (v5);
\draw (v3) -- (v5);
\end{tikzpicture}
&
\begin{tikzpicture}[scale=.5]
\tikzstyle{every node}=[minimum width=0pt, inner sep=1pt, circle]
\draw (162:1) node (v5) [draw,fill=gray] {};
\draw (234:1) node (v3) [draw,fill=gray] {};
\draw (306:1) node (v2) [draw,fill=white] {};
\draw (18:1) node (v4) [draw,fill] {};
\draw (90:1) node (v7) [draw,fill=gray] {};
\draw (v2) -- (v4);
\draw (v2) -- (v5);
\draw (v2) -- (v7);
\draw (v3) -- (v5);
\draw (v4) -- (v7);
\draw (v5) -- (v7);
\end{tikzpicture}
&
\begin{tikzpicture}[scale=.5]
\tikzstyle{every node}=[minimum width=0pt, inner sep=1pt, circle]
\draw (225:1) node (v6) [draw,fill=gray] {};
\draw (315:1) node (v3) [draw,fill=gray] {};
\draw (45:1) node (v5) [draw,fill] {};
\draw (135:1) node (v2) [draw,fill=white] {};
\draw (0,0) node (v4) [draw,fill=gray] {};
\draw (v2) -- (v4);
\draw (v2) -- (v5);
\draw (v2) -- (v6);
\draw (v3) -- (v5);
\draw (v3) -- (v6);
\end{tikzpicture}
&
\begin{tikzpicture}[scale=.5]
\tikzstyle{every node}=[minimum width=0pt, inner sep=1pt, circle]
\draw (130:1) node (v1) [draw,fill=white] {};
\draw (230:1) node (v5) [draw,fill=white] {};
\draw (50:1) node (v7) [draw,fill=gray] {};
\draw (310:1) node (v3) [draw,fill=gray] {};
\draw (180:1) node (v4) [draw,fill=gray] {};
\draw (v1) -- (v4);
\draw (v1) -- (v5);
\draw (v1) -- (v7);
\draw (v3) -- (v5);
\draw (v3) -- (v7);
\draw (v5) -- (v7);
\end{tikzpicture}
&
\begin{tikzpicture}[scale=.5]
\tikzstyle{every node}=[minimum width=0pt, inner sep=1pt, circle]
\draw (135:1) node (v3) [draw,fill=gray] {};
\draw (225:1) node (v6) [draw,fill=gray] {};
\draw (0,0) node (v1) [draw,fill=white] {};
\draw (315:1) node (v4) [draw,fill=gray] {};
\draw (45:1) node (v7) [draw,fill] {};
\draw (v1) -- (v4);
\draw (v1) -- (v6);
\draw (v1) -- (v7);
\draw (v3) -- (v6);
\draw (v3) -- (v7);
\draw (v4) -- (v6);
\draw (v4) -- (v7);	
\end{tikzpicture}
&
\begin{tikzpicture}[scale=.5]
\tikzstyle{every node}=[minimum width=0pt, inner sep=1pt, circle]
\draw (180:1) node (v3) [draw,fill=gray] {};
\draw (231:1) node (v5) [draw,fill=white] {};
\draw (0:1) node (v4) [draw,fill=gray] {};
\draw (385:0) node (v2) [draw,fill=white] {};
\draw (488:1) node (v7) [draw,fill=gray] {};
\draw (v2) -- (v4);
\draw (v2) -- (v5);
\draw (v2) -- (v7);
\draw (v3) -- (v5);
\draw (v3) -- (v7);
\draw (v4) -- (v5);
\draw (v4) -- (v7);
\draw (v5) -- (v7);
\end{tikzpicture}
\\
$\Gbe$
&
$\Gbf$
&
$\Gbg$
&
$\Gbh$
&
$\Gbi$
&
$\Gbj$
&
$\Gbk$
&
$\Gbl$
&
$\Gbm$
&
$\Gbn$
\end{tabular}
\end{center}

\begin{center}
\begin{tabular}{cccccccccc}
\begin{tikzpicture}[scale=.5]
\tikzstyle{every node}=[minimum width=0pt, inner sep=1pt, circle]
\draw (162:1) node (v1) [draw,fill=white] {};
\draw (234:1) node (v6) [draw,fill=gray] {};
\draw (306:1) node (v3) [draw,fill=gray] {};
\draw (90:1) node (v7) [draw,fill=gray] {};
\draw (18:1) node (v5) [draw,fill=white] {};
\draw (v1) -- (v5);
\draw (v1) -- (v6);
\draw (v1) -- (v7);
\draw (v3) -- (v5);
\draw (v3) -- (v6);
\draw (v5) -- (v7);
\end{tikzpicture}
&
\begin{tikzpicture}[scale=.5]
\tikzstyle{every node}=[minimum width=0pt, inner sep=1pt, circle]
\draw (135:1) node (v1) [draw,fill=gray] {};
\draw (225:1) node (v5) [draw,fill] {};
\draw (315:1) node (v2) [draw,fill] {};
\draw (45:1) node (v7) [draw,fill] {};
\draw (v1) -- (v5);
\draw (v1) -- (v7);
\draw (v2) -- (v7);
\end{tikzpicture}
&
\begin{tikzpicture}[scale=.5]
\tikzstyle{every node}=[minimum width=0pt, inner sep=1pt, circle]
\draw (90:1) node (v4) [draw,fill=gray] {};
\draw (162:1) node (v1) [draw,fill] {};
\draw (234:1) node (v5) [draw,fill=gray] {};
\draw (306:1) node (v2) [draw,fill=gray] {};
\draw (18:1) node (v7) [draw,fill] {};
\draw (v1) -- (v4);
\draw (v1) -- (v5);
\draw (v1) -- (v7);
\draw (v2) -- (v5);
\draw (v2) -- (v7);
\draw (v4) -- (v7);
\end{tikzpicture}
&
\begin{tikzpicture}[scale=.5]
\tikzstyle{every node}=[minimum width=0pt, inner sep=1pt, circle]
\draw (135:1) node (v1) [draw,fill] {};
\draw (225:1) node (v3) [draw,fill] {};
\draw (315:1) node (v2) [draw,fill=gray] {};
\draw (45:1) node (v4) [draw,fill=gray] {};
\draw (0,0) node (v7) [draw,fill=gray] {};
\draw (v1) -- (v3);
\draw (v1) -- (v4);
\draw (v1) -- (v7);
\draw (v2) -- (v3);
\draw (v2) -- (v4);
\draw (v2) -- (v7);
\draw (v3) -- (v7);
\draw (v4) -- (v7);
\end{tikzpicture}
&
\begin{tikzpicture}[scale=.5]
\tikzstyle{every node}=[minimum width=0pt, inner sep=1pt, circle]
\draw (135:1) node (v1) [draw,fill=gray] {};
\draw (225:1) node (v4) [draw,fill=gray] {};
\draw (315:1) node (v2) [draw,fill=gray] {};
\draw (45:1) node (v3) [draw,fill] {};
\draw (0,0) node (v6) [draw,fill] {};
\draw (v1) -- (v3);
\draw (v1) -- (v4);
\draw (v1) -- (v6);
\draw (v2) -- (v3);
\draw (v2) -- (v4);
\draw (v2) -- (v6);
\draw (v3) -- (v6);
\draw (v4) -- (v6);
\end{tikzpicture}
&
\begin{tikzpicture}[scale=.5]
\tikzstyle{every node}=[minimum width=0pt, inner sep=1pt, circle]
\draw (0,0) node (v3) [draw,fill=gray] {};
\draw (180:1) node (v7) [draw,fill=gray] {};
\draw (-90:1) node (v4) [draw,fill] {};
\draw (0:1) node (v6) [draw,fill] {};
\draw (90:1) node (v5) [draw,fill] {};
\draw (v3) -- (v6);
\draw (v3) -- (v7);
\draw (v4) -- (v6);
\draw (v4) -- (v7);
\draw (v5) -- (v6);
\draw (v5) -- (v7);
\end{tikzpicture}
&
\begin{tikzpicture}[scale=.5]
\tikzstyle{every node}=[minimum width=0pt, inner sep=1pt, circle]
\draw (180:1) node (v1) [draw,fill] {};
\draw (-90:1) node (v6) [draw,fill=white] {};
\draw (0:1) node (v5) [draw,fill] {};
\draw (0:.5) node (v3) [draw,fill=gray] {};
\draw (90:1) node (v8) [draw,fill=gray] {};
\draw (v1) -- (v6);
\draw (v1) -- (v8);
\draw (v3) -- (v6);
\draw (v3) -- (v8);
\draw (v5) -- (v6);
\draw (v5) -- (v8);
\draw (v6) -- (v8);
\end{tikzpicture}
&
\begin{tikzpicture}[scale=.5]
\tikzstyle{every node}=[minimum width=0pt, inner sep=1pt, circle]
\draw (135:1) node (v7) [draw,fill=white] {};
\draw (225:1) node (v3) [draw,fill=white] {};
\draw (315:1) node (v5) [draw,fill] {};
\draw (45:1) node (v4) [draw,fill] {};
\draw (v3) -- (v5);
\draw (v3) -- (v7);
\draw (v4) -- (v5);
\draw (v4) -- (v7);
\end{tikzpicture}
&
\begin{tikzpicture}[scale=.5]
\tikzstyle{every node}=[minimum width=0pt, inner sep=1pt, circle]
\draw (90:1) node (v1) [draw,fill=white] {};
\draw (210:1) node (v3) [draw,fill=white] {};
\draw (330:1) node (v5) [draw,fill=white] {};
\draw (0,0) node (v7) [draw,fill] {};
\draw (v1) -- (v3);
\draw (v1) -- (v5);
\draw (v1) -- (v7);
\draw (v3) -- (v5);
\draw (v3) -- (v7);
\draw (v5) -- (v7);
\end{tikzpicture}
\\
$\Gbo$
&
$\Gbp$
&
$\Gbq$
&
$\Gbr$
&
$\Gbs$
&
$\Gbt$
&
$\Gbu$
&
$\Gbv$
&
$\Gbw$
\end{tabular}
\end{center}
\caption{The family of graphs ${\mathcal F}_3$.
A black vertex represents a clique of cardinality $2$, a white vertex represents a stable set of cardinality $2$ and a gray vertex represent a single vertex.}
\label{figure:FamilyGraphF3}
\end{figure}

\begin{Proposition}\label{lem::G3W2}
Each graph in $\mathcal F_3$ belongs to ${\bf Forb}(\Gamma_{\leq 3})$.
\end{Proposition}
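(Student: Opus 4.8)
The plan is to verify, for each graph $H$ in $\mathcal F_3$, the two conditions defining ${\bf Forb}(\Gamma_{\leq 3})$: that $H$ is forbidden, i.e.\ $\gamma(H)\geq 4$, and that every one-vertex deletion satisfies $\gamma(H-v)\leq 3$ (which in particular makes $H$ $\gamma$-critical, since then $\gamma(H-v)<\gamma(H)$). The reduction I would use throughout is that, since any $(j{+}1)\times(j{+}1)$ minor of $L(G,X_G)$ expands as a combination of $j\times j$ minors, one has $I_{j+1}(G,X_G)\subseteq I_j(G,X_G)$; hence the trivial critical ideals form an initial segment and
\[
\gamma(G)=\max\{\, j \, : \, I_j(G,X_G)=\langle 1\rangle\,\}.
\]
Consequently $\gamma(H)\geq 4$ is equivalent to $I_4(H,X_H)$ being trivial, while $\gamma(H-v)\leq 3$ is equivalent to $I_4(H-v,X_{H-v})$ being non-trivial. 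So the whole statement reduces to deciding the triviality of $I_4$ for $H$ and for each deletion $H-v$.

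To decide these, I would read each $H$ off Figure \ref{figure:FamilyGraphF3} as a graph $G^{\bf d}$, where $G$ is the underlying graph obtained by contracting every blob to a single vertex, $V'\subseteq V(G)$ is the set of black and white vertices, and ${\bf d}_v=-2$ (black, a clique) or ${\bf d}_v=2$ (white, a stable set); the gray vertices are the single vertices lying in $V(G)\setminus V'$, whose diagonal variables are left untouched. For the first condition I apply Theorem \ref{Theo:trivialcriticalideal}: $I_4(H,X_H)$ is trivial if and only if the evaluation of $I_4(G,X_G)$ at $X_G=\phi({\bf d})$ (that is, setting $x_v=-1$ for black $v$, $x_v=0$ for white $v$, and keeping the gray variables free) is trivial. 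Since the underlying graphs are small, $I_4(G,X_G)$ can be computed once per underlying graph and then evaluated; for the all-gray members, such as $\Gaa=P_5$, we have $G=H$ and $V'=\emptyset$, so one simply checks $I_4(H,X_H)=\langle 1\rangle$ directly.

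The $\gamma$-criticality is the heart of the argument and is handled orbit by orbit under $\mathrm{Aut}(H)$. If $v$ lies in a size-$2$ blob, then $H-v=G^{{\bf d}'}$ with that blob shrunk to a single vertex; crucially its coordinate leaves $V'$, so under Theorem \ref{Theo:trivialcriticalideal} the corresponding variable is no longer substituted but becomes free. Thus $I_4(H-v,X_{H-v})$ is trivial iff the evaluation of $I_4(G,X_G)$ with \emph{one fewer} substitution is trivial, and the task is to confirm that releasing this variable breaks triviality; this is precisely why enlarging a single vertex to a $2$-blob is what pushes $\gamma$ from $3$ to $4$. If instead $v$ is a gray single vertex, then $H-v=(G-u)^{{\bf d}''}$ has a strictly smaller underlying graph, and one checks that the evaluation of $I_4(G-u,X_{G-u})$ is non-trivial.

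The main obstacle is twofold. First, this is a finite but substantial case analysis: there are many graphs in $\mathcal F_3$, each with several deletion orbits, so the work must be organized by reusing the single computation of $I_4(G,X_G)$ per underlying graph and by exploiting automorphisms so that only one representative per orbit is tested. Second, when a gray-vertex deletion leaves an underlying graph $G-u$ on fewer than $4$ vertices, the hypothesis $j\leq|V(G-u)|$ of Theorem \ref{Theo:trivialcriticalideal} fails and the reduction is unavailable; for those few cases I would instead compute $I_4(H-v,X_{H-v})$ directly on the still-small graph $H-v$ (at most $7$ vertices) and verify it is non-trivial. With these two points addressed, collecting the evaluations for every $H$ and every deletion orbit yields $\gamma(H)\geq 4$ and $\gamma(H-v)\leq 3$ for all $v$, which is exactly the claim.
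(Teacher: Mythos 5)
Your proposal is correct, and in substance it is the same argument as the paper's: the paper's entire proof is the observation that, by a computer algebra computation, every $H\in\mathcal F_3$ has algebraic co-rank exactly $4$ and is $\gamma$-critical, which by the paper's characterization of ${\bf Forb}(\Gamma_{\leq k})$ is precisely your pair of conditions $\gamma(H)\geq 4$ and $\gamma(H-v)\leq 3$ for all $v$. Where you differ is in the mechanics of the verification. The paper computes the critical ideals directly on each graph of $\mathcal F_3$ (all have at most $8$ vertices) and on each vertex-deleted subgraph, making no use of Theorem \ref{Theo:trivialcriticalideal} for this finite check; you instead route every triviality question through Theorem \ref{Theo:trivialcriticalideal}, computing $I_4$ once per underlying (blob-contracted) graph and then deciding triviality of $H$ and of each deletion $H-v$ by evaluations, with deletions organized by $\mathrm{Aut}(H)$-orbits. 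Your reduction rests on the nesting $I_{j+1}(G,X_G)\subseteq I_j(G,X_G)$ (cofactor expansion), which is indeed what makes triviality an initial-segment property and $\gamma(H)\geq 4$ equivalent to triviality of $I_4(H,X_H)$ alone; this fact is standard and your use of it is sound, as is your handling of the two delicate points (a deleted blob vertex frees its variable rather than removing it, and the hypothesis $j\leq|V(G-u)|$ of Theorem \ref{Theo:trivialcriticalideal} can fail after a gray-vertex deletion, forcing a direct computation on $H-v$). What your version buys is economy and structure --- one Gr\"obner basis per underlying graph instead of one per graph and per deletion, plus a conceptual reading of why doubling a vertex is exactly what pushes $\gamma$ from $3$ to $4$; what the paper's version buys is brevity, since both proofs are ultimately finite machine verifications of the same two conditions.
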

\begin{proof}
It can easily checked, using a computer algebra system, that each graph in $\mathcal F_3$ is $\gamma$-critical and has algebraic co-rank equal to 4.
\end{proof}

One of the main results of this article is the following:

\begin{Theorem}\label{theorem:Gamma3belongstoF1}
If a graph $G\in \Gamma_{\leq3}$ has clique number at most 3, then $G$ is an induced subgraph of a graph in $\mathcal F_1$.
\end{Theorem}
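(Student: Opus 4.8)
The plan is to convert the hypothesis $G\in\Gamma_{\leq 3}$ into a purely combinatorial forbidden-subgraph condition and then run a structural analysis organized by the clique number. Since $\gamma$ is monotone under the induced-subgraph order and, by Proposition \ref{lem::G3W2}, every member of $\mathcal F_3$ has algebraic co-rank $4$, any graph containing an induced copy of a member of $\mathcal F_3$ would satisfy $\gamma\geq 4$. Hence $G\in\Gamma_{\leq 3}$ forces $G$ to be $\mathcal F_3$-free. Note that this step uses only Proposition \ref{lem::G3W2}, i.e.\ $\mathcal F_3\subseteq\mathbf{Forb}(\Gamma_{\leq 3})$, and \emph{not} the conjectural completeness of $\mathcal F_3$. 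It therefore suffices to prove the stronger, self-contained statement that every $\mathcal F_3$-free graph $G$ with $\omega(G)\leq 3$ is an induced subgraph of a graph in $\mathcal F_1$, which I would split according to $\omega(G)\in\{1,2,3\}$.

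The cases $\omega(G)\leq 2$ are the easy ones. If $\omega(G)=1$ then $G=K_1$, which is an induced subgraph of every member of $\mathcal F_1$. If $\omega(G)=2$ then $G$ is triangle-free, and this is exactly the regime treated by the clique-$2$ analysis: one shows $G$ is an induced subgraph of a graph in the family $\mathcal F_2$, and then checks by inspection that each graph of $\mathcal F_2$ is itself an induced subgraph of a graph in $\mathcal F_1$. Thus the entire $\omega\leq 2$ range reduces to a finite embedding verification.

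The substance of the theorem is the case $\omega(G)=3$. Here I would anchor the analysis on a fixed triangle $T=\{a,b,c\}$, which exists by hypothesis. Because $\omega(G)=3$ forbids $K_4$, no vertex outside $T$ is adjacent to all of $a,b,c$, so every other vertex has one of the seven adjacency types to $T$ (empty, a single vertex, or one of the three pairs). The $\mathcal F_3$-free hypothesis then severely restricts how vertices of different types may interact; in particular it forces the extra vertices to organize into true-twin cliques and false-twin stable sets, so that $G$ can be written as $H^{\mathbf d}$ for a small underlying graph $H$, with $\mathbf d$ recording, with sign, whether each class is a clique or a stable set. At this point Theorem \ref{Theo:trivialcriticalideal} applies: the condition $\gamma(H^{\mathbf d})\leq 3$ becomes a condition on the evaluation of the critical ideals of the small graph $H$ at $\phi(\mathbf d)$. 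A bounded-size argument (too many inequivalent twin-classes, or the wrong incidence pattern among the two-neighbor vertices, always produces a member of $\mathcal F_3$) then forces $H$ to be small, after which one enumerates the admissible $H$ and identifies each as an induced subgraph of the prism $G_1$, of $G_2$, or of the wheel $G_3$ (Figure \ref{figure:omega3}).

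The main obstacle is precisely this last, clique-$3$ step: the casework showing that an $\mathcal F_3$-free graph built around a triangle must collapse, under twin-reduction, to an induced subgraph of $G_1$, $G_2$, or $G_3$. The difficulty is bookkeeping — tracking the seven adjacency types to $T$, the adjacencies among the remaining vertices, and repeatedly invoking specific members of $\mathcal F_3$ to exclude each stray configuration — all while maintaining the clique-number bound $\omega\leq 3$ throughout the blow-up. As a consistency check, Proposition \ref{theorem:F1belongstoGamma3} guarantees that every graph so obtained indeed lies in $\Gamma_{\leq 3}$, so no admissible case is spuriously excluded.
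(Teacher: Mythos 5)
Your skeleton matches the paper's proof exactly: Proposition \ref{lem::G3W2} plus monotonicity of $\gamma$ under induced subgraphs gives $\mathcal F_3$-freeness, the problem splits by clique number, the $\omega=2$ case goes through $\mathcal F_2$, and the $\omega=3$ case is an analysis anchored at a triangle with the seven admissible adjacency types. The problem is that what you have written is a roadmap, not a proof: the entire content of the theorem is the claim that an $\mathcal F_3$-free graph with $\omega(G)=3$ ``organizes into true-twin cliques and false-twin stable sets'' forming a small blow-up that embeds in $\mathcal F_1$, and you assert this instead of proving it. In the paper that assertion is Theorem \ref{pro::G3W3}, whose proof occupies all of Section \ref{proofoftheorem:pro::G3W3}: one must first show (Claim \ref{Claim:VxKmn2Kk1k2Tn}) that each $G[V_x]$ is $K_{m,n}$, $2K_2$, $K_2+K_1$ or $T_n$, then control every edge set $E(V_x,V_{y,z})$, $E(V_{x,y},V_{y,z})$, $E(V_\emptyset,\cdot)$ through dozens of forbidden-subgraph claims, and only then read off the embedding case by case. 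The same applies, to a lesser degree, to your $\omega=2$ case, which in the paper is Theorem \ref{pro::G3W2} with its own substantial proof; it is not a ``finite embedding verification'' --- only the final check that each graph of $\mathcal F_2$ sits inside $\mathcal F_1^1$ is finite.

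Second, the role you assign to Theorem \ref{Theo:trivialcriticalideal} in the forward direction is misplaced and cannot close that gap. That theorem tests whether a blow-up $H^{\mathbf d}$ has trivial $j$-th critical ideal by evaluating $I_j(H,X_H)$ at $\phi(\mathbf d)$, and the paper uses it only for the converse (Proposition \ref{theorem:F1belongstoGamma3}: every graph in $\mathcal F_1$, hence every induced subgraph of one, lies in $\Gamma_{\leq 3}$). In your direction it becomes applicable only after you already know $G$ is (an induced subgraph of) a blow-up of a small underlying graph --- which is precisely the unproven casework --- and once that structure is established you no longer need it, since exhibiting the embedding into $\mathcal F_1$ is a purely combinatorial identification. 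Evaluating critical ideals of candidate graphs $H$ neither produces the twin structure nor bounds the number of classes; only the forbidden-subgraph arguments do that. So the strategy is the right one, but its mathematical core has been outsourced to ``bookkeeping'' that is, in fact, the theorem.
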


We divide the proof in two characterizations: the graphs in $\Gamma_{\leq3}$ with clique number equal to $2$ and $3$.
The converse of Theorem \ref{theorem:Gamma3belongstoF1} is stronger, by Proposition \ref{theorem:F1belongstoGamma3} 
we have that each induced subgraph of a graph in $\mathcal F_1$ belongs to $\Gamma_{\leq3}$. 
However, it is not difficult to recognize the graphs in $\mathcal F_1^1$ with clique number greater or equal than 4.

\begin{Theorem}\label{pro::G3W2}
Let $G$ be a simple connected graph with $\omega(G)=2$.
Then, $G$ is $\mathcal F_3$-free if and only if $G$ is isomorphic to an induced subgraph of a graph in $\mathcal F_2$ (see Figure \ref{figure:omega2}).
\begin{figure}[h!]
\begin{tabular}{c@{\extracolsep{2cm}}c@{\extracolsep{2cm}}c@{\extracolsep{2cm}}c}
\begin{tikzpicture}[scale=1]
 	\tikzstyle{every node}=[minimum width=0pt, inner sep=1pt, circle]
 	\draw (-.5,0) node[draw] (1) {\tiny ${n_1}$};
 	\draw (.5,0) node[draw] (2) {\tiny ${n_2}$};
 	\draw (1) -- (2);
 \end{tikzpicture}
 &
\begin{tikzpicture}[scale=1]
 	\tikzstyle{every node}=[minimum width=0pt, inner sep=1pt, circle]
 	\draw (-.5,0) node[draw] (1) {\tiny ${n_1}$};
 	\draw (.5,0) node[draw] (2) {\tiny ${n_2}$};
 	\draw (.5,1) node[draw, fill=gray] (3) {};
 	\draw (-.5,1) node[draw, fill=gray] (4) {};
 	\draw (1) -- (2) -- (3) -- (4);
 \end{tikzpicture}
 &
\begin{tikzpicture}[scale=1]
 	\tikzstyle{every node}=[minimum width=0pt, inner sep=1pt, circle]
 	\draw (-.5,0) node[draw] (1) {\tiny ${n_1}$};
 	\draw (.5,0) node[draw] (2) {\tiny ${n_2}$};
 	\draw (.5,1) node[draw, fill=gray] (3) {};
 	\draw (-.5,1) node[draw, fill=gray] (4) {};
 	\draw (4) -- (1) -- (2) -- (3);
 \end{tikzpicture}
 &
\begin{tikzpicture}[scale=1]
 	\tikzstyle{every node}=[minimum width=0pt, inner sep=1pt, circle]
 	\draw ({90+72*2}:.7) node[draw] (1) {\tiny ${n_1}$};
 	\draw ({90+72*3}:.7) node[draw] (2) {\tiny ${n_2}$};
 	\draw ({90+72*4}:.7) node[draw, fill=gray] (3) {};
 	\draw (90:.7) node[draw, fill=gray] (4) {};
 	\draw ({90+72}:.7) node[draw, fill=gray] (5) {};
 	\draw (1) -- (2) -- (3) -- (4) -- (5) -- (1);
 \end{tikzpicture}
 \\
 (i) $K_{n_1,n_2}$ & (ii) $\mathcal F_2^1$ & (iii) $\mathcal F_2^2$ & (iv) $\mathcal F_2^3$ 
\end{tabular}
\caption{ The family of graphs $\mathcal F_2$.
A white vertex represents a stable set of cardinality $n_v$ and a gray vertex represents a single vertex.}
\label{figure:omega2}
\end{figure}
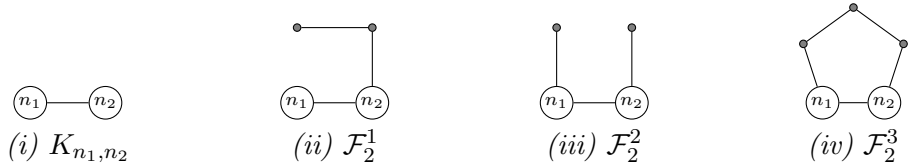
\end{Theorem}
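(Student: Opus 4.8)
The plan is to establish both implications, relegating the sufficiency (``if'') direction to the computational machinery of Section 2 and concentrating on the necessity (``only if'') direction, which carries the structural content.

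For the ``if'' direction I would first observe that, since $\mathcal F_3\subseteq{\bf Forb}(\Gamma_{\leq 3})$ by Proposition \ref{lem::G3W2} and $\Gamma_{\leq 3}$ is closed under induced subgraphs, every graph in $\Gamma_{\leq 3}$ is $\mathcal F_3$-free and so is each of its induced subgraphs. Hence it suffices to show that each complete bipartite graph and each member of the families $\mathcal F_2^1$, $\mathcal F_2^2$ and $\mathcal F_2^3$ belongs to $\Gamma_{\leq 3}$. I would do this exactly as in Proposition \ref{theorem:F1belongstoGamma3}: the underlying graphs of these families are $K_2$, $P_4$ and $C_5$, so I compute $I_4$ of each underlying graph with a computer algebra system, evaluate it at $\phi({\bf d})$, and invoke Theorem \ref{Theo:trivialcriticalideal} to conclude that the fourth critical ideal never becomes trivial; thus $\gamma\leq 3$ throughout $\mathcal F_2$.

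For the ``only if'' direction, let $G$ be connected, triangle-free and $\mathcal F_3$-free. Because every induced subgraph of $G$ is triangle-free, no member of $\mathcal F_3$ containing a triangle can be induced in $G$, so $G$ need only avoid the triangle-free members of $\mathcal F_3$. These are exactly the graphs of $\mathcal F_3$ drawn with no black vertex (a black vertex is a $K_2$ and forms a triangle with any neighbor, and the members are connected) and whose underlying graph is triangle-free; among them are $P_5=\Gaa$, $\Gab$ and $\Gad$. The key consequence is that $G$ is $P_5$-free, so every induced path has at most four vertices and $G$ has diameter at most $3$. I would then split on bipartiteness. If $G$ is bipartite of diameter at most $2$, then any cross pair of vertices is at odd distance, hence at distance $1$, so $G$ is complete bipartite, giving family (i). If $G$ is bipartite of diameter $3$, a shortest path gives an induced $P_4$; I would fix a maximal complete-bipartite core and analyse how the remaining vertices attach to it, using the six- and seven-vertex triangle-free obstructions to forbid branchings and second extensions, concluding that $G$ embeds into $\mathcal F_2^1$ or $\mathcal F_2^2$. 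If $G$ is not bipartite then, being triangle-free, its odd girth is at least $5$, and since a shortest odd cycle of length at least $7$ would contain an induced $P_5$, the odd girth is exactly $5$ and $G$ has an induced $C_5$. Triangle-freeness forces the neighbourhood on this $C_5$ of any outside vertex to be independent (a single vertex or a distance-two pair), and the obstructions then restrict all extra vertices to twins of two fixed adjacent cycle vertices, so that $G$ is an induced subgraph of the blow-up $\mathcal F_2^3$.

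The main obstacle is the bipartite diameter-$3$ case together with the non-bipartite attachment analysis: in both I must verify that, once the core ($K_{n_1,n_2}$ or $C_5$) is fixed, every additional vertex attaches only in a way already realised inside $\mathcal F_2$. This is precisely where the numerous triangle-free members of $\mathcal F_3$ are used, each excluding one forbidden local pattern (a long pendant path, a double branching, or independent extensions at two distinct places). I expect the difficulty to lie in the case bookkeeping rather than in any single idea; the clean observation that a connected bipartite graph of diameter at most $2$ is complete bipartite keeps the low-diameter analysis short and isolates the work in the diameter-$3$ and the $C_5$ cases.
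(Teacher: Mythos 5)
Your strategy for the hard direction is genuinely different from the paper's: the paper never mentions diameter, bipartiteness or odd girth, but instead fixes an edge $ab$, splits $N_G(a)\setminus\{b\}$ and $N_G(b)\setminus\{a\}$ according to whether they send edges to each other, and then classifies the attachments of the remaining vertices claim by claim. Your global reductions are all correct: for $\omega(G)=2$ only the triangle-free members of $\mathcal F_3$ matter, and these are exactly the four graphs $\Gaa$, $\Gab$, $\Gad$ and $\Gaj$ (all on five or six vertices, so your phrase ``six- and seven-vertex obstructions'' is off); $P_5$-freeness gives diameter at most $3$; a connected bipartite graph of diameter at most $2$ is complete bipartite; and a non-bipartite triangle-free $P_5$-free graph contains an induced $C_5$. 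This skeleton is arguably cleaner than the paper's local edge-based analysis and correctly isolates where the work must happen.

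However, the proposal has genuine gaps. First, the two attachment analyses --- bipartite of diameter $3$, and the $C_5$ case --- are exactly where the paper's entire proof lives (a long sequence of claims, each disposed of by exhibiting one of $\Gaa$, $\Gab$, $\Gad$, $\Gaj$), and you assert their conclusions rather than prove them. In the bipartite case one must actually establish statements such as: once a third BFS level exists, every second-level vertex is complete to the first level, and the first or the third level must be a singleton; in the $C_5$ case one must show that no vertex lies at distance $2$ from the cycle, that a vertex cannot attach at a single cycle vertex, that false twins of non-adjacent cycle vertices cannot coexist (this is where $\Gab$ enters), and that twins of adjacent cycle vertices must see each other. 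All of these are true and provable from the four obstructions, but they constitute the theorem, not bookkeeping, and the proposal gives no argument for any of them. Second, your sufficiency direction fails as stated for the family $K_{n_1,n_2}$: its underlying graph is $K_2$, and Theorem \ref{Theo:trivialcriticalideal} applies only for $j\leq |V(G)|$ where $G$ is the underlying graph, so there is no ``$I_4$ of $K_2$'' to compute. One must instead realize the graphs of $\mathcal F_2$ as induced subgraphs of blow-ups of a larger underlying graph, which is what the paper does by embedding every member of $\mathcal F_2$ into $\mathcal F_1^1$ and invoking Propositions \ref{theorem:F1belongstoGamma3} and \ref{lem::G3W2}.
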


Since each graph in $\mathcal F_2$ is isomorphic to an induced subgraph of a graph in $\mathcal F_1^1$, 
then Proposition \ref{theorem:F1belongstoGamma3} implies that each graph in $\mathcal F_2$ belongs to $\Gamma_{\leq 3}$.
Note that the graphs in $\Gamma_{\leq 1}$ and $\Gamma_{\leq 2}$ are induced subgraph of a graph in $\mathcal F_1^1$ (see Figure \ref{figure:omega3}.ii).

\begin{Theorem}\label{pro::G3W3}
Let $G$ be a simple connected graph with $\omega(G)=3$.
Then, $G$ is $\mathcal F_3$-free if and only if $G$ is isomorphic to an induced subgraph of a graph in $\mathcal F_1$ with clique number 3.
\end{Theorem}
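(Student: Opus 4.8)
The plan is to prove the two implications separately, with the forward (structural) direction carrying essentially all the weight. For the easy direction, suppose $G$ with $\omega(G)=3$ is isomorphic to an induced subgraph of some $H\in\mathcal F_1$ with $\omega(H)=3$. By Proposition~\ref{theorem:F1belongstoGamma3} we have $H\in\Gamma_{\leq3}$, and since $\Gamma_{\leq3}$ is closed under induced subgraphs this gives $\gamma(G)\leq3$. If $G$ contained an induced copy of some $F\in\mathcal F_3$, then monotonicity of $\gamma$ under induced subgraphs together with Proposition~\ref{lem::G3W2} (which yields $\gamma(F)=4$) would force $\gamma(G)\geq4$, a contradiction; hence $G$ is $\mathcal F_3$-free.

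For the forward direction, assume $G$ is connected, $\mathcal F_3$-free, and $\omega(G)=3$. First I would record two consequences of $\mathcal F_3$-freeness. Since the five-vertex path $\Gaa=P_5$ lies in $\mathcal F_3$, the graph $G$ is $P_5$-free. Next, for every vertex $v$ the induced neighborhood $G[N(v)]$ is $\mathcal F_3$-free and satisfies $\omega(G[N(v)])\leq\omega(G)-1=2$ (a clique in $N(v)$ together with $v$ is a clique of $G$); hence every connected induced subgraph of a neighborhood falls under the $\omega\leq2$ classification of Theorem~\ref{pro::G3W2} (with the trivial $\omega\leq1$ case), so each component of $G[N(v)]$ is an induced subgraph of a graph in $\mathcal F_2$. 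Inspecting Figure~\ref{figure:omega2}, the only member of $\mathcal F_2$ containing an induced $C_5$ is the blow-up $\mathcal F_2^3$; this is the invariant that drives the case split.

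The core is a case analysis organized by the richest neighborhood in $G$. Suppose first that some vertex $v$ has an induced $C_5$ inside $G[N(v)]$; then the component of $G[N(v)]$ meeting that $C_5$ is, by the previous paragraph, a blow-up of $C_5$ of type $\mathcal F_2^3$. I would show that $v$ is a hub: its neighborhood reduces to a single induced $C_5$ (the rim), every vertex $w\notin N[v]$ is a false twin of $v$ (that is, $w$ is non-adjacent to $v$ with $N(w)=N(v)$, forced to be joined to the whole rim and to nothing else by $P_5$-freeness and the forbidden graphs of Figure~\ref{figure:FamilyGraphF3}), and the hubs form a stable set, since an edge between two hubs together with a rim edge would produce a $K_4$ contradicting $\omega(G)=3$; hence $G$ embeds as an induced subgraph of the wheel blow-up $\mathcal F_1^2$. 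If no neighborhood of $G$ contains an induced $C_5$, then fixing a triangle $\{a,b,c\}$ of $G$ and classifying each remaining vertex by its trace on $\{a,b,c\}$, the forbidden subgraphs together with $P_5$-freeness leave only finitely many admissible attachment types; assembling these shows that $G$ is either the prism $G_1$ or an induced subgraph of the blow-up of $G_2$ (Figure~\ref{figure:omega3}.ii). Theorem~\ref{Theo:trivialcriticalideal} underlies this last step: it justifies describing $\mathcal F_1^1$ and $\mathcal F_1^2$ by their fixed underlying graphs $G_2$ and the wheel, and, through Proposition~\ref{theorem:F1belongstoGamma3}, guarantees the resulting blow-ups stay in $\Gamma_{\leq3}$ for all class sizes, so I may argue about these unbounded families via sign vectors $\phi(\mathbf d)$ rather than vertex by vertex.

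The main obstacle is precisely this gluing step. Each of the roughly three dozen graphs in $\mathcal F_3$, together with $P_5$-freeness, imposes only a \emph{local} restriction, and the difficulty is to show that the only connected graphs with $\omega=3$ simultaneously compatible with all of them are the induced subgraphs of $G_1$, $\mathcal F_1^1$ and $\mathcal F_1^2$ — in particular, ruling out ``hybrid'' configurations that look locally admissible around individual triangles or neighborhoods yet violate some forbidden subgraph globally. Once the structural form is pinned down, the easy direction together with Proposition~\ref{theorem:F1belongstoGamma3} confirms that the graph produced does lie in $\mathcal F_1$ with clique number $3$, completing the characterization and, combined with Theorem~\ref{pro::G3W2}, the $\omega\leq3$ case of Theorem~\ref{theorem:Gamma3belongstoF1}.
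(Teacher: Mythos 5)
Your converse direction is fine and agrees with the paper's (Proposition \ref{theorem:F1belongstoGamma3} plus monotonicity of $\gamma$ under induced subgraphs), but the forward direction — which carries the entire content of the theorem — is a plan rather than a proof, and the gap is exactly where you place it yourself: ``The main obstacle is precisely this gluing step.'' That step is not an obstacle to be flagged; it \emph{is} the theorem. In your Case 1 every structural assertion is introduced by ``I would show that\ldots'' with no forbidden-subgraph certificates supplied, and in your Case 2 the conclusion ``assembling these shows that $G$ is either the prism $G_1$ or an induced subgraph of the blow-up of $G_2$'' is a restatement of what must be proved, with no argument for why the locally admissible attachment types cannot combine into anything else. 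Compare this with what the paper actually does in Section \ref{proofoftheorem:pro::G3W3}: it fixes a triangle $W=\{a,b,c\}$, classifies every vertex by its trace $V_X$, $X\subseteq W$, and proves on the order of forty claims, spread over eight subsections, determining the possible graphs $G[V_x]$, every edge set between the parts $V_x$, $V_{x,y}$, and finally $V_\emptyset$, exhibiting an explicit member of $\mathcal F_3$ for every configuration that has to be excluded. Nothing equivalent to that verification appears in your proposal.

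To be fair, your organizing dichotomy is genuinely different from the paper's and looks viable: since $G[N(v)]$ is $\mathcal F_3$-free with clique number at most $2$, Theorem \ref{pro::G3W2} applies to its components, and only $\mathcal F_2^3$ can host an induced $C_5$, which neatly isolates the wheel family $\mathcal F_1^2$. But each of your claims still needs its own certificate from $\mathcal F_3$, and $P_5$-freeness alone rarely suffices. Two concrete examples of what is missing: to rule out the $C_5$-component of $G[N(v)]$ being a nontrivial blow-up, you must find a forbidden graph inside $K_1\vee(C_5$ with a false twin of a rim vertex$)$ — one can check that $\Gaq$ occurs there, but this is not visible from $P_5$-freeness; and a vertex outside $N[v]$ adjacent to exactly two \emph{non-adjacent} rim vertices creates no induced $P_5$ at all, yet must be excluded (it yields $\Gar$ once the hub is added). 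The proof of the theorem consists precisely of locating such certificates for every bad configuration, in both of your cases; until that is done, the statement is not established.
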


Section \ref{proofoftheorem:pro::G3W3} is devoted to the proof of the Theorem \ref{pro::G3W3}.
Now we give the proof of Theorem \ref{pro::G3W2}.

\begin{proof}[Proof of Theorem \ref{pro::G3W2}]
Since each graph in $\mathcal F_2$ belongs to $\Gamma_{\leq 3}$, then each graph in $\mathcal F_2$ is $\mathcal F_3$-free.
Thus, we get one implication.

Suppose $G$ is $\mathcal F_3$-free.
Let $a,b\in V(G)$ such that $ab\in E(G)$.
Since $\omega(G)=2$, then there is no vertex adjacent with $a$ and $b$ at the same time.
For a vertex $v\in V(G)$, the neighbor set $N_G(v)$ of $v$ in $G$ is the set of all vertices adjacent with $v$.
Let ${\mathcal A}=N_G(a)-b$ and ${\mathcal B}=N_G(b)-a$.
Clearly, each of the sets $\mathcal A$ and $\mathcal B$ induces a trivial graph.
Let us define
\[
A=\{ u\in {\mathcal A} : \exists v \in {\mathcal B} \text{ such that } uv\in E\}, 
\hspace{.5cm}
A'=\{ u\in {\mathcal A} : \nexists v \in {\mathcal B} \text{ such that } uv\in E\}
\]
\[
B=\{ u\in {\mathcal B} : \exists v \in {\mathcal A} \text{ such that } uv\in E\}, 
\text{ and }
B'=\{ u\in {\mathcal B} : \nexists v \in {\mathcal A} \text{ such that } uv\in E\}.
\]
Thus we have two possible cases: when $A$ and $B$ are not empty and when $A$ and $B$ are empty.
\paragraph{\bf First we consider when $A$ and $B$ are not empty.}
In this case we have the following statements: 

\begin{Claim}
One of  the sets $A'$ or $B'$ is empty, and the other has cardinality at most one.
\end{Claim}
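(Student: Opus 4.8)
The plan is to prove both halves of the Claim by contradiction: in each ``bad'' configuration I would exhibit an induced subgraph isomorphic to a member of $\mathcal F_3$, contradicting that $G$ is $\mathcal F_3$-free. First I would record the structural facts used throughout. Since $\omega(G)=2$, the graph is triangle-free, so no vertex of $\mathcal A=N_G(a)-b$ is adjacent to $b$ and no vertex of $\mathcal B=N_G(b)-a$ is adjacent to $a$ (such a vertex together with $a,b$ would form a triangle), and each of $\mathcal A,\mathcal B$ is stable. Because we are in the case $A,B\neq\emptyset$, I would fix $u\in A$ and $v\in B$ with $uv\in E$; such a $v$ exists by the definition of $A$ and necessarily lies in $B$. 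Then $\{a,u,v,b\}$ induces a $4$-cycle $a-u-v-b-a$: the edges $au,uv,vb,ba$ are present while $av,ub$ are non-edges by the triangle-free remark. This $C_4$ is the common core of both arguments.

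For the first assertion (at most one of $A',B'$ is nonempty), I would suppose for contradiction that both are nonempty and choose $w\in A'$ and $w'\in B'$. The claim is that the six vertices $a,b,u,v,w,w'$ induce the graph $\Gaj$. Indeed $w$ is adjacent to $a$ only: it lies in $\mathcal A$ so $w\not\sim b,v$ and $w\not\sim u$ (stable set), and it lies in $A'$ so $w$ is adjacent to nothing in $\mathcal B$, in particular $w\not\sim v,w'$; symmetrically $w'$ is adjacent to $b$ only. Attaching these two pendants to the core $C_4$ at the \emph{adjacent} corners $a$ and $b$ produces exactly $\Gaj$, a contradiction. Hence at least one of $A',B'$ is empty, and by the symmetry exchanging $a$ with $b$ (which interchanges $A'\leftrightarrow B'$) I may assume $B'=\emptyset$.

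For the second assertion I must show $|A'|\le 1$. Supposing $|A'|\ge 2$, I would pick distinct $w_1,w_2\in A'$. The same bookkeeping shows each $w_i$ is adjacent to $a$ and to nothing else among $\{b,u,v,w_1,w_2\}$; in particular $w_1\not\sim w_2$ since both lie in the stable set $\mathcal A$. Thus $\{a,b,u,v,w_1,w_2\}$ induces a $C_4$ with two pendants attached to the \emph{single} vertex $a$, which is exactly $\Gad$, again contradicting $\mathcal F_3$-freeness. Therefore $|A'|\le 1$, which together with the first assertion proves the Claim.

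The routine-but-delicate part is the final identification in each case: checking every potential edge among the six chosen vertices against the definitions of $A,A',B,B'$ and the triangle-free hypothesis, so that the induced subgraph is \emph{exactly} the claimed plain six-vertex graph, and then matching it to the blown-up description of the target forbidden graph (reading its white/gray labelling correctly). The main obstacle is conceptual only: locating the right members of $\mathcal F_3$ to aim at. Once $\Gaj$ is recognized as ``a $C_4$ with a pendant on each of two adjacent corners'' and $\Gad$ as ``a $C_4$ with two pendants on one corner'', both verifications are immediate.
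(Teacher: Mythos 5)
Your proof is correct and follows essentially the same route as the paper: both parts are proved by exhibiting the same forbidden induced subgraphs, $\Gaj$ on $\{a,b,u,v,s,t\}$ when $A'$ and $B'$ are both nonempty, and $\Gad$ on $\{a,b,u,v,w_1,w_2\}$ when $|A'|\geq 2$. In fact you are slightly more careful than the paper, which leaves implicit the choice of $u\in A$ and $v\in B$ with $uv\in E(G)$ that both identifications require.
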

\begin{proof}
Suppose $A'$ and $B'$ are not empty.
Let $u\in A$, $v \in B$, $s\in A'$ and $t \in B'$.
Then the vertex set $\{a,b,u, v, s,t\}$ induces a graph isomorphic to $\Gaj$, which is impossible.
Now suppose $A'$ has cardinality more than 1.
Take $w_1, w_2\in A'$. 
Then $\{x, y, u, v, w_1, w_2\}$ induces a graph isomorphic to $\Gad$; a contradiction.
Thus $A'$ has cardinality at most one.
\end{proof}

\begin{Claim}
The edge set $E(A, B)$ induces either a complete bipartite graph or a complete bipartite graph minus an edge.
\end{Claim}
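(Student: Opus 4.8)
The plan is to reformulate the claim and then argue by contradiction. Since $\omega(G)=2$, both $A\subseteq N_G(a)$ and $B\subseteq N_G(b)$ induce stable sets, and $A\cap B=\emptyset$; moreover every vertex of $A$ has a neighbour in $B$ and every vertex of $B$ has a neighbour in $A$ (if $u\in A$ and $uv\in E$ with $v\in\mathcal B$, then $v$ has the neighbour $u\in\mathcal A$, so $v\in B$). Hence the edge-induced subgraph $G[E(A,B)]$ is bipartite with parts exactly $A$ and $B$ and minimum degree at least $1$, so the statement is equivalent to: \emph{there is at most one non-edge between $A$ and $B$}, i.e. at most one pair $(u,v)\in A\times B$ with $uv\notin E$. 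First I would assume, for contradiction, that there are two distinct non-edges, and split into two cases according to whether some vertex is incident to both.

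In the first case some vertex has two non-neighbours on the opposite side; by the $A$--$B$ symmetry (interchanging the roles of $(a,A)$ and $(b,B)$) I may assume $u\in A$ is non-adjacent to two distinct vertices $v_1,v_2\in B$. Since $u\in A$ it has a neighbour $w\in B$, and $w\neq v_1,v_2$. Checking adjacencies, the set $\{a,b,u,w,v_1,v_2\}$ induces the $4$-cycle $a\,u\,w\,b$ together with the two pendant vertices $v_1,v_2$ at $b$; I would verify that this is isomorphic to $\Gad$ (whose white vertex realises the stable pair $\{v_1,v_2\}$), contradicting that $G$ is $\mathcal F_3$-free.

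In the remaining case no vertex has two non-neighbours across, so any two non-edges $u_1v_1$ and $u_2v_2$ are vertex-disjoint; since each vertex of $A$ then has at most one non-neighbour in $B$ (and symmetrically), the two cross pairs must be edges, $u_1v_2,u_2v_1\in E$. Then I would check that $\{u_1,v_2,b,v_1,u_2\}$ induces the path with edges $u_1v_2$, $v_2b$, $bv_1$, $v_1u_2$: the six remaining pairs are non-edges because $u_1,u_2\in A$ and $v_1,v_2\in B$ are stable, $u_1$ and $u_2$ are non-adjacent to $b$, and $u_1v_1,u_2v_2$ are the chosen non-edges. This induced $P_5$ is isomorphic to $\Gaa$, again a contradiction. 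Since both cases are impossible, at most one non-edge occurs and $G[E(A,B)]$ is complete bipartite or complete bipartite minus an edge.

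The adjacency bookkeeping is routine; the step requiring care is exhaustiveness of the case split. I must confirm that the ``no shared endpoint'' hypothesis genuinely forces both cross pairs $u_1v_2,u_2v_1$ to be edges — otherwise a vertex would have two non-neighbours across and the configuration would collapse into the first case — and that all chosen vertices are distinct, which follows from $A\cap B=\emptyset$, $a,b\notin A\cup B$, and $w\notin\{v_1,v_2\}$. This mirrors the witness-and-match technique already used for the previous claim with $\Gaj$ and $\Gad$.
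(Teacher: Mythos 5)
Your proof is correct and follows essentially the same route as the paper: you show a vertex with two non-neighbours across yields the forbidden graph $\Gad$ (so the non-edges form a matching), and two disjoint non-edges with cross edges yield an induced $P_5\simeq\Gaa$, exactly the two configurations the paper exhibits, up to relabeling and the $a$--$b$ symmetry. The only differences are cosmetic: you organize the argument as a case split on whether two non-edges share an endpoint, and you route the $P_5$ through $b$ rather than $a$.
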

\begin{proof}
First note that each vertex in $A$ is incident with each vertex in $B$, except for at most one vertex.
It is because if $u\in A$ and $v_1, v_2, v_3 \in B$ such that $uv_1\in E(G)$ and $uv_2, uv_3 \notin E(G)$,
then the induced subgraph $G[\{x, y, u, v_1, v_2, v_3\}]$ is isomorphic to $\Gad$; which is impossible.
In a similar way, each vertex in $B$ is incident with each vertex in $A$, except for at most one vertex.
Thus the edge set $E(A, B)$ must be equal to the edge set $\{uv : u\in A, v\in B\}$ minus a matching.
In fact, the cardinality of this matching must be at most one.
Otherwise, if $u, v\in A$ and $s, t \in B$ such that $us, vt \notin E(G)$ and $ut, vs \in E(G)$, then the induced subgraph $G[\{t,u, a, v,s\}]$ is isomorphic to $\Gaa$; which is a contradiction. 
\end{proof}	

\begin{Claim}
It is not possible that, at the same time, the edge set $E(A,B)$ induces a complete bipartite graph minus an edge and  $A'\cup B'\neq \emptyset$.
\end{Claim}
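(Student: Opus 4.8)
The plan is to argue by contradiction and to exhibit an induced copy of the forbidden graph $\Gaj$. So suppose that, at the same time, $E(A,B)$ induces a complete bipartite graph minus an edge and $A'\cup B'\neq\emptyset$. Write $u_0v_0$ for the unique missing edge, with $u_0\in A$ and $v_0\in B$, so that every pair in $A\times B$ other than $(u_0,v_0)$ is joined. By the first Claim above, one of $A'$, $B'$ is empty and the other is nonempty; since interchanging the roles of $a$ and $b$ (hence of $A,A'$ with $B,B'$) is a symmetry of the whole configuration, I may assume $A'=\emptyset$ and that there is a vertex $t\in B'$.

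The key step is to locate, besides $u_0$, a second vertex of $A$ adjacent to $v_0$. Since $v_0\in B$, by definition it has a neighbor in $\mathcal A$; as the vertices of $A'$ have no neighbor in $\mathcal B$, this neighbor lies in $A$, and because $u_0v_0\notin E$ it is some $u_1\neq u_0$ with $u_1v_0\in E$. I would then examine the six vertices $a,b,u_0,u_1,v_0,t$, which are pairwise distinct because $\mathcal A\cap\mathcal B=\emptyset$ (as $\omega(G)=2$) and $B\cap B'=\emptyset$. Their induced edges are forced to be exactly $ab$, together with $au_0,au_1$ (since $u_0,u_1\in\mathcal A=N_G(a)-b$), $bv_0,bt$ (since $v_0,t\in\mathcal B=N_G(b)-a$), and $u_1v_0$ by the choice of $u_1$. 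Every remaining pair is a non-edge: $u_0u_1$ and $v_0t$ lie inside the stable sets $\mathcal A$ and $\mathcal B$; $u_0v_0$ is the deleted edge; $u_0t,u_1t\notin E$ because $t\in B'$ has no neighbor in $A$; and $u_0b,u_1b,v_0a,ta\notin E$ since any such edge, together with the edges to $a$ or $b$ already present, would create a triangle, contradicting $\omega(G)=2$. Thus $a,b$ are adjacent vertices of degree $3$, each carrying one pendant ($u_0$, resp.\ $t$) and one degree-$2$ neighbor ($u_1$, resp.\ $v_0$), and the two degree-$2$ vertices $u_1,v_0$ are themselves adjacent. This is precisely the structure of $\Gaj$, so $\{a,b,u_0,u_1,v_0,t\}$ induces $\Gaj$, contradicting that $G$ is $\mathcal F_3$-free.

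The symmetric possibility $B'=\emptyset$, $A'\neq\emptyset$ is settled verbatim after the $a\leftrightarrow b$ swap: taking $s\in A'$ and a vertex $v_1\in B$ with $u_0v_1\in E$ (which exists because $u_0\in A$ has a neighbor in $B$, necessarily distinct from $v_0$), one checks that $\{a,b,u_0,v_0,v_1,s\}$ again induces $\Gaj$. I do not expect a serious obstacle here; the only genuine care needed is the bookkeeping that each listed non-adjacency is forced by the definitions of $A',B'$ and by triangle-freeness, and that one confirms the induced graph is actually $\Gaj$ rather than merely some graph of degree sequence $(3,3,2,2,1,1)$. It is exactly at this point that the hypothesis of a \emph{single} deleted edge is used: it guarantees the adjacency $u_1v_0$ (resp.\ $u_0v_1$) that closes the four-cycle $a\,u_1\,v_0\,b$ (resp.\ $a\,u_0\,v_1\,b$) and pins the configuration down to $\Gaj$.
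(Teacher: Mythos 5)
Your proof is correct and takes essentially the same approach as the paper's: both argue by contradiction by exhibiting an induced copy of the forbidden graph $\Gaj$. The only difference is in the choice of the six vertices — the paper realizes $\Gaj$ on $\{a,u_1,u_2,v_1,v_2,w\}$ with $u_1,u_2\in A$, $v_1,v_2\in B$ and $w\in A'$ (never using $b$), while you realize it on $\{a,b,u_0,u_1,v_0,t\}$ using both endpoints of the missing edge, one extra neighbor $u_1\in A$ of $v_0$, and $t\in B'$.
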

\begin{proof}
Suppose both situations occur at the same time.
Let $u_1, u_2\in A$ and $v_1, v_2 \in B$ such that $u_1v_1, u_1v_2, u_2v_1\in E(G)$ and $u_2v_2 \notin E(G)$.
And let $w\in A'$.
Then the vertex set $\{u_1, u_2 ,v_1, v_2, y, w\}$ induces a graph isomorphic to $\Gaj$; which is a contradiction. 
\end{proof}

Thus there are three possible cases:
\begin{enumerate}[\it (a)]
\item $E(A, B)=\{uv: u\in A, v\in B\}$ and $A'\cup B'=\emptyset$,
\item $E(A, B)=\{uv: u\in A, v\in B\}$ and $A'=T_1$, $B'=\emptyset$ or
\item $E(A, B)$ induces a bipartite complete graph minus an edge and $A'\cup B'=\emptyset$.
\end{enumerate}

Let $V_\emptyset$ denote the set of vertices that are not adjacent with $a$ nor $b$.
In what follows we will describe the vertex set $V_\emptyset$.

\noindent {\it Case (a).} Let $w_1, w_2\in V_\emptyset$, $u_1, u_2 \in A$, and $v_1\in B$.
Note that it is not possible that a vertex in $V_\emptyset$ is adjacent with a vertex in $A$ and a vertex in $B$ at the same time, because then we get $\omega(G)\geq3$.
Moreover
 
\begin{Claim}\label{claim:omega2V0:1}
There exist no two vertices in $V_\emptyset$ such that one is adjacent with a vertex in $A$ and the other one is adjacent with a vertex in $B$.
\end{Claim}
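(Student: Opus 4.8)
The plan is to argue by contradiction. Suppose there are $w_1,w_2\in V_\emptyset$ with $w_1$ adjacent to some $u\in A$ and $w_2$ adjacent to some $v\in B$. The first thing I would do is record all the forced incidences among the six vertices $a,b,u,v,w_1,w_2$. Since we are in case (a), the edge $uv$ is present because $E(A,B)$ is complete bipartite. As $\omega(G)=2$ the graph is triangle-free, so from the edges $ab,au,bv,uv$ one gets at once $av,bu\notin E(G)$, and also $w_1v\notin E(G)$ and $w_2u\notin E(G)$ (otherwise $\{u,v,w_1\}$ or $\{u,v,w_2\}$ would be a triangle). By definition of $V_\emptyset$, neither $w_1$ nor $w_2$ is adjacent to $a$ or $b$, and $w_1\neq w_2$ since a common vertex would be adjacent to both $u$ and $v$, yielding a triangle. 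Hence the only undecided adjacency among these six vertices is the pair $w_1w_2$, and I would split the argument on it.

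If $w_1w_2\notin E(G)$, then the induced subgraph on $\{a,b,u,v,w_1,w_2\}$ is the $4$-cycle $a-u-v-b-a$ with pendant vertices $w_1$ at $u$ and $w_2$ at $v$, which is precisely $\Gaj$; this already contradicts $\mathcal F_3$-freeness. If instead $w_1w_2\in E(G)$, the same six vertices induce two $4$-cycles glued along $uv$, a graph that unfortunately lies in $\Gamma_{\leq 3}$ and is therefore of no use. The decisive move here is to discard one of the two degree-three vertices in order to destroy the chord $uv$: deleting $v$ leaves $\{b,a,u,w_1,w_2\}$, which induces the path $b-a-u-w_1-w_2$, i.e. a copy of $\Gaa$, because the remaining pairs $bu,bw_1,bw_2,aw_1,aw_2,uw_2$ are all non-edges by the facts collected above (the last of these being exactly the triangle-freeness statement $uw_2\notin E(G)$). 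Deleting $u$ instead gives the symmetric $P_5$ on $\{a,b,v,w_2,w_1\}$, so the two roles of $A$ and $B$ are interchangeable. In either subcase $G$ contains an induced member of $\mathcal F_3$, the desired contradiction.

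I expect the second subcase to be the only real obstacle. The naive candidate — the full six-vertex set — fails there, since the glued $4$-cycles are not forbidden, and one has to notice that the forbidden configuration reappears only after dropping $u$ (or $v$), trading the six-vertex obstruction $\Gaj$ of the first subcase for the five-vertex path $\Gaa$. The rest is the triangle-free bookkeeping above, which I would state once at the start and reuse verbatim in both subcases.
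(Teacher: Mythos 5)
Correct, and essentially the paper's own argument: both proofs split on whether $w_1w_2\in E(G)$, obtaining the induced $\Gaj$ on all six vertices when $w_1w_2\notin E(G)$ and an induced $P_5$ (the graph $\Gaa$) on five of them when $w_1w_2\in E(G)$; the paper's $P_5$ is $G[\{w_1,w_2,v_1,b,a\}]$, the mirror image of your $G[\{b,a,u,w_1,w_2\}]$. One side remark of yours is false, though nothing rests on it: the two $4$-cycles glued along $uv$ do \emph{not} lie in $\Gamma_{\leq 3}$ --- as you yourself then show, that six-vertex graph contains an induced $P_5$, so it is forbidden for $\Gamma_{\leq 3}$; it is merely not \emph{minimal} forbidden, i.e.\ not a member of the list $\mathcal F_3$, which is why you must pass to the five-vertex subset.
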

\begin{proof}
Suppose $w_1u_1, w_2v_1 \in E(G)$.
There are two cases: either $w_1w_2 \in E(G)$ or $w_1w_2 \notin E(G)$.
If $w_1w_2 \in E(G)$, then $G[\{ w_1, w_2, v_1, b, a\}]\simeq \Gaa$; which is impossible.
And if $w_1w_2 \notin E(G)$, then $G[\{ w_1, w_2, v_1, u_1, b, a\}]\simeq \Gaj$; which is a contradiction.
\end{proof}

Without loss of generality, suppose $w_1\in V_\emptyset$ is adjacent with $u_1\in A$.
Thus
\begin{Claim}\label{claim:omega2V0:2}
The vertex set $V_\emptyset$ has cardinality at most 1.
\end{Claim}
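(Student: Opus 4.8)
The plan is to argue by contradiction. We are still inside Case (a), so $A$ and $B$ are nonempty, the bipartite graph between $A$ and $B$ is complete, and $A'=B'=\emptyset$; fix $v_1\in B$, so that $u_1v_1\in E(G)$ and $u_1,a,b,v_1$ induce a chordless $4$-cycle (the would-be chords $u_1b$ and $av_1$ are absent because $\omega(G)=2$). Assuming $|V_\emptyset|\ge 2$, I would take $w_2\in V_\emptyset$ distinct from $w_1$ and, in each configuration, exhibit an induced subgraph isomorphic to a member of $\mathcal F_3$.

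First I would record one observation that does most of the work: \emph{if $w\in V_\emptyset$ is adjacent to some vertex of $A$, then $w$ has no neighbor in $V_\emptyset$}. Indeed, if $w\sim u\in A$ and $z\in V_\emptyset$ with $z\sim w$, then $z\not\sim u$ (else $\{z,w,u\}$ is a triangle); since $z,w\notin N_G(a)\cup N_G(b)$ and $u\not\sim b$, we get $G[\{z,w,u,a,b\}]\simeq\Gaa$, a contradiction. Applying this to $w=w_1$ yields $w_1\not\sim w_2$.

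Next I would split on how $w_2$ attaches to $A$. If $w_2\sim u_1$, then $w_1$ and $w_2$ are two non-adjacent pendants of $u_1$ on the cycle $u_1,a,b,v_1$, and neither is adjacent to $a$, $b$, or $v_1$ (adjacency to $v_1$ is excluded since $w_i\sim u_1\sim v_1$ would form a triangle); hence $G[\{u_1,a,b,v_1,w_1,w_2\}]\simeq\Gad$. Otherwise $w_2\not\sim u_1$, and if $w_2\sim u_2$ for some $u_2\in A\setminus\{u_1\}$ then $u_2\sim a$ while $u_1\not\sim u_2$ (both lie in $N_G(a)$): when $w_1\sim u_2$ the vertices $w_1,u_1,a,u_2$ induce a $4$-cycle with $b$ and $w_2$ as pendants on the adjacent cycle-vertices $a$ and $u_2$, so $G[\{w_1,u_1,a,u_2,b,w_2\}]\simeq\Gaj$; when $w_1\not\sim u_2$ we instead get $G[\{w_1,u_1,a,u_2,w_2\}]\simeq\Gaa$. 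Each possibility contradicts $\mathcal F_3$-freeness.

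The remaining, and main, difficulty is the case in which $w_2$ has \emph{no} neighbor in $A$. Since no vertex of $V_\emptyset$ is adjacent to $a$, $b$, or (by Claim \ref{claim:omega2V0:1}, using that $w_1$ is adjacent to $A$) to $B$, the neighbours of $w_2$ all lie in $V_\emptyset$, and by connectivity of $G$ a shortest path $w_2=z_0,z_1,\dots,z_k$ from $w_2$ to the core has $z_0,\dots,z_{k-1}\in V_\emptyset$, $z_k\in A$, and $k\ge 2$. Then $z_{k-1}\in V_\emptyset$ is adjacent to $z_k\in A$, so by the observation above $z_{k-1}$ has no neighbour in $V_\emptyset$; but $z_{k-2}\in V_\emptyset$ is such a neighbour, a contradiction. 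This closes the last case and gives $|V_\emptyset|\le 1$. I expect this connectivity step, together with the bookkeeping of the non-edges forced by $\omega(G)=2$ and by membership in $V_\emptyset$, to be the part that needs the most care.
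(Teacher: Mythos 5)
Your proof is correct, and at bottom it follows the same strategy as the paper: classify the possible attachments of a second vertex $w_2\in V_\emptyset$ and exhibit, in each configuration, an induced $\Gaa$, $\Gad$ or $\Gaj$. Two differences are worth recording. First, you isolate as a standalone observation that a vertex of $V_\emptyset$ with a neighbour in $A$ can have no neighbour in $V_\emptyset$ (via $\Gaa$); this collapses at once the paper's cases in which $w_1w_2\in E(G)$, and your sub-case $w_1\sim u_2$ could likewise be folded into the common-neighbour case (there $u_2$ is a common neighbour of $w_1$ and $w_2$, giving $\Gad$ on $\{w_1,w_2,u_2,v_1,a,b\}$), exactly as the paper does; your identification of $\Gaj$ there is equally valid. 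Second, and more substantially, your connectivity argument covers the configuration in which $w_2$ has no neighbour in $A$ and is also not adjacent to $w_1$: the paper's enumeration treats $w_2$ adjacent to some vertex of $A$, or adjacent to $w_1$, but never the fully detached $w_2$, whose neighbours must then all lie in $V_\emptyset$. Your shortest-path argument (the last $V_\emptyset$-vertex on a shortest path from $w_2$ to $A$ would have both a neighbour in $A$ and a neighbour in $V_\emptyset$, violating your observation) disposes of this case cleanly, so your write-up is, if anything, more complete than the paper's own proof of this claim.
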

\begin{proof}
There are two possible cases: either each vertex in $V_\emptyset$ is adjacent with a common vertex in $A$ or not.
Suppose there exists $w_2\in V_\emptyset$ such that $u_1$ is adjacent with both $w_1$ and $w_2$.
Then $w_1w_2 \notin E(G)$, because otherwise $w_1, w_2$ and $u_1$ induce a $K_3$.
Thus the vertex set $\{ w_1, w_2, u_1, v_1, a, b\}$ induces a graph isomorphic to $\Gad$; which is forbidden.
Then this case is not possible.
Now suppose $w_1$ and $w_2$ are not adjacent with a common vertex in $A$.
We have the following possible cases:
\begin{enumerate}
\item $w_1u_1, w_2u_2 \in E(G)$, 
\item $w_1u_1, w_2u_2, w_1w_2 \in E(G)$, or
\item $w_1 w_2 \in E(G)$.
\end{enumerate}
This yields a contradiction since in case $(1)$ the vertex set $\{ w_1, u_1, v_1, u_2, w_2\}$ induces a graph isomorphic to $\Gaa$, in case $(2)$ the vertex set $\{ w_2, w_1, u_1, v_1, b \}$ induces a graph isomorphic to $\Gaa$, and in case $(3)$ the vertex set $\{ w_2, w_1, u_1, v_1, b \}$ induces a graph isomorphic to $\Gaa$.
\end{proof}
 
If $|A|=2$, there are two possibilities: either $w_1$ is adjacent with each vertex in $A$ or $w_1$ is adjacent with only one vertex of $A$.
If $|A|\geq3$, then $w_1$ is adjacent with either all vertex in $A$ or only one vertex in $A$.
It is because if $w_1$ is adjacent with both $u_1, u_2\in A$ and $w$ is not adjacent with $u_3\in A$, then the vertex set $\{ w_1, u_1, u_2, u_3, a,b\}$ induces a graph isomorphic to $\Gad$.
Note that when $w_1$ is adjacent with each vertex in $A$, then the graph is isomorphic to an induced subgraph of a graph in $\mathcal F_2^2$.
Meanwhile, when $w_1$ is adjacent only with one vertex of $A$, then the graph is isomorphic to an induced subgraph of a graph in $\mathcal F_2^1$.
Finally, when $V_\emptyset=\emptyset$, the graph is a complete bipartite graph.
 
\noindent {\it Case (b).} 
Suppose without loss of generality that $A'\neq\emptyset$.
By Claims \ref{claim:omega2V0:1} and \ref{claim:omega2V0:2}, the vertex set $V_\emptyset$ has cardinality at most one.
Let $w\in V_\emptyset$, $u_1 \in A$, $u_2 \in A'$ and $v_1\in B$.
We have two cases: $w$ is adjacent with either a vertex in $\mathcal A$ or a vertex in $\mathcal B$.
Let us consider when $w$ is adjacent with a vertex in $\mathcal A$.
Here we have two possibilities: either $wu_2\in E(G)$ or $wu_2\notin E(G)$.
However, none of the two cases is allowed, since in the first case we get that the vertex set $\{w, u_2, a, b, v_1\}$ induces a graph isomorphic to $\Gaa$, and in the second case the vertex set $\{ w, u_2, a, b, u_1, v_1\}$ induces a graph isomorphic to $\Gaj$.
Thus the remaining case is that $w$ is adjacent with a vertex in $\mathcal B$.
In this case $w$ must be adjacent with $u_2$ and each vertex in $\mathcal B$, because otherwise the graph $\Gaa$ appears as induced subgraph.
Note that this graph is isomorphic to an induced subgraph of a graph in $\mathcal F_2^3$.
 
\noindent {\it Case(c).} 
Let $w\in V_\emptyset$, $u_1, u_2 \in A$, and $v_1, v_2\in B$ such that $u_1v_2\notin E(G)$ and $u_1v_1, u_2v_1, u_2v_2\in E(G)$.
Note that if $w$ is adjacent with $u_1$ or $v_2$, then $w$ is adjacent with both $u_1$ and $v_2$, because since $u_1v_2\notin E(G)$, we would obtain $P_5$ as induced subgraph; which is not possible.
In this case, when $w$ is adjacent with $u_1$ and $v_2$, the graph is isomorphic to an induced subgraph of $\mathcal F_2^3$.
Now we consider when $w$ is adjacent with a vertex in $(A-u_1)\cup (B-v_2)$.
Without loss of generality, we can suppose $w$ is adjacent with $u_2$.
The vertex $w$ is not adjacent with $v_1$ or $v_2$, because otherwise a clique of cardinality 3 is obtained.
On the other hand, $w$ is not adjacent with $u_1$, because otherwise $w$ would be adjacent with both vertices $u_1$ and $v_1$.
Thus $w$ is adjacent only with $u_2$, but the vertex set $\{w,u_1,u_2,v_2,a,b,\}$ induces a graph isomorphic to $\Gaj$; a contradiction.
Thus $V_\emptyset$ is empty, and the graph is isomorphic to an induced subgraph of $\mathcal F_2^2$.

\paragraph{\bf Now we consider the case when $A$ and $B$ are empty.}

One of the vertex sets $A'$ or $B'$ has cardinality at most one, because otherwise $A'\cup B'\cup\{a,b\}$ would contain $\Gab$ as induced subgraph.
Thus, let us assume that $A'=\{u\}$ and $|B'|>1$.
Let $V_\emptyset$ denote the vertex set whose vertices are not adjacent with both $a$ and $b$.

\begin{Claim}
If $A'$ and $B'$ are not empty, and $w\in V_\emptyset$ is adjacent with a vertex in $A'\cup B'$, then $w$ is adjacent with each vertex in $A'\cup B'$.
\end{Claim}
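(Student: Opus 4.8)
The plan is to show that, once $w$ is adjacent to a single vertex of $A'\cup B'$, any non-adjacency between $w$ and another vertex of $A'\cup B'$ forces an induced $\Gaa$ (that is, a $P_5$) whose middle edge is $ab$. So the whole argument reduces to exhibiting two five-vertex paths and invoking that $\Gaa\in\mathcal F_3$ is forbidden. First I would record the non-adjacencies available in this subcase: since $A=B=\emptyset$ we have $\mathcal A=A'$ and $\mathcal B=B'$, each of which is independent; no vertex of $A'$ is joined to any vertex of $B'$; no vertex of $A'$ is adjacent to $b$ and no vertex of $B'$ is adjacent to $a$ (otherwise $\{a,b,\cdot\}$ would be a triangle, contradicting $\omega(G)=2$); and $w\in V_\emptyset$ is adjacent to neither $a$ nor $b$. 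These are precisely the ``chord'' non-edges needed to make the paths below induced.

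By symmetry, interchanging $a\leftrightarrow b$ and $A'\leftrightarrow B'$, I may assume the vertex of $A'\cup B'$ adjacent to $w$ lies in $A'$; call it $z$. The first step is to prove that $w$ is adjacent to every vertex of $B'$. For an arbitrary $t\in B'$ I would look at the set $\{w,z,a,b,t\}$: the pairs $wz,\,za,\,ab,\,bt$ are edges forming a path, while the pairs $wa,\,wb,\,zb,\,zt,\,at$ are all non-edges by the facts above; the only pair whose status is unknown is $wt$. Hence if $wt\notin E$, then $\{w,z,a,b,t\}$ induces $\Gaa$, which is impossible, so $wt\in E$. As $t$ was arbitrary, $w$ is adjacent to all of $B'$.

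The second step is to prove that $w$ is adjacent to every vertex of $A'$. I would fix some $t\in B'$ (which is nonempty by hypothesis); by the previous step $wt\in E$. Given an arbitrary $s\in A'$, consider $\{s,a,b,t,w\}$: the pairs $sa,\,ab,\,bt,\,tw$ are edges forming a path, while $sb,\,st,\,at,\,aw,\,bw$ are non-edges, leaving only $sw$ undetermined. Thus if $sw\notin E$ the set induces $\Gaa$, a contradiction, so $sw\in E$. Combining the two steps gives that $w$ is adjacent to every vertex of $A'\cup B'$, as required.

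I do not expect a genuine obstacle here: the content is entirely in the bookkeeping of the two configurations, namely verifying that the indicated chord pairs are truly non-edges so that the five chosen vertices span a path rather than a proper supergraph of $P_5$. Each such verification follows immediately from triangle-freeness together with $A=B=\emptyset$, and the symmetry remark disposes of the case $z\in B'$ without any new argument. The only slightly delicate point is that the second step uses a vertex $t\in B'$ already shown (in the first step) to be adjacent to $w$, so the order of the two steps matters and both hypotheses $A'\neq\emptyset$, $B'\neq\emptyset$ are genuinely used.
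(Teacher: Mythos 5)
Your proof is correct and follows essentially the same route as the paper: both detect a forbidden induced $P_5$ (the graph $\Gaa$) on a five-vertex set of the form $\{w,\cdot,a,b,\cdot\}$ with one vertex taken from $A'$ and one from $B'$, all the needed non-adjacencies coming from triangle-freeness, the definition of $A',B'$, and $w\in V_\emptyset$. The paper compresses this into a single configuration (using its standing normalization $A'=\{u\}$), whereas your two ordered steps are a slightly more explicit rendering of the same argument.
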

\begin{proof}
Let $v\in B'$.
Suppose one of the edges $uw$ or $vw$ does not exist.
Then $\{w, u, a, b, v\}$ induces a graph isomorphic to $\Gaa$; which is a contradiction.
\end{proof}

Note that the vertex set $V_\emptyset$ induces a stable set, because otherwise $\omega(G) > 2$.
Thus when $A'$ and $B'$ are not empty, the graph $G$ is isomorphic to an induced subgraph of a graph in $\mathcal F_2^3$.

Now consider the case when $B=\emptyset$ and $|A|>1$.
\begin{Claim}
Each vertex $w\in V_\emptyset$ is adjacent with either a unique vertex in $A$ or each vertex in $A$.
\end{Claim}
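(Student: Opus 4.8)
The plan is to obtain the dichotomy by the same forbidden-subgraph argument that produced $\Gad$ in the earlier cases: an \emph{intermediate} adjacency pattern of $w$ to $A$ is exactly what creates a copy of $\Gad$. First I would record the structure available here. Write $A$ for the set of neighbours of $a$ other than $b$. In this case $b$ is adjacent only to $a$, the set $A$ is independent (its vertices share the common neighbour $a$ and $\omega(G)=2$), and every $w\in V_\emptyset$ is adjacent to neither $a$ nor $b$.

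For the contradiction I would suppose that some $w\in V_\emptyset$ is adjacent to two distinct vertices $u_1,u_2\in A$ but misses a third vertex $u_3\in A$; this presupposes $|A|\ge 3$, while for $|A|=2$ there is no intermediate pattern and the statement holds trivially. I would then list the edges of the subgraph induced by $\{a,b,w,u_1,u_2,u_3\}$: the vertex $a$ is joined to each of $b,u_1,u_2,u_3$, the vertex $w$ is joined only to $u_1,u_2$, and there is no further edge, since the $u_i$ are pairwise non-adjacent, $b$ is pendant on $a$, and $w$ misses $a$, $b$ and $u_3$. Reading $\{u_1,u_2\}$ and $\{u_3,b\}$ as the two stable pairs and $a,w$ as the two single vertices, this subgraph is precisely $\Gad$, contradicting that $G$ is $\mathcal F_3$-free. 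Hence $w$ is adjacent to at most one vertex of $A$ or to all of $A$.

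The only step I expect to require care is the identification with $\Gad$: one must verify that $b$ supplies the second endpoint of the stable pair $\{u_3,b\}$ without contributing any spurious edge, and this is exactly where the hypothesis that $b$ is adjacent only to $a$ is indispensable, since it keeps $b$ pendant inside the chosen six vertices. The remaining bookkeeping is light: the degenerate case $|A|=2$ noted above, and the remark that a vertex $w\in V_\emptyset$ relevant to the reconstruction of $G$ is adjacent to at least one vertex of $A$, so that ``at most one'' may be read as ``a unique vertex'' as in the statement.
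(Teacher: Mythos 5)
Your proof is correct and takes essentially the same approach as the paper: both dispose of $|A|\le 2$ trivially and, for the intermediate pattern $wu_1,wu_2\in E(G)$ and $wu_3\notin E(G)$, exhibit the forbidden graph $\Gad$ induced on the same six vertices $\{w,u_1,u_2,u_3,a,b\}$. Your explicit identification of the stable pairs $\{u_1,u_2\}$ and $\{u_3,b\}$ (using that $b$ is pendant on $a$ in this case) merely spells out the isomorphism that the paper leaves implicit.
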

\begin{proof}
The result is easy to check when $|A|\leq2$.
So suppose $A$ has cardinality greater or equal than 3.
Let $u_1,u_2,u_3\in A$ such that $w$ is adjacent with both $u_1, u_2$, and $w$ is not adjacent with $u_3$.
Since the vertex set $\{ w, u_1, u_2, u_3, a,b\}$ induces a graph isomorphic to $\Gad$, then we get a contradiction and the result follows.
\end{proof}

\begin{Claim}
Let $w\in V_\emptyset$ such that it is adjacent with $u\in A$.
If $V_\emptyset$ has cardinality greater than 1, then each vertex in $V_\emptyset$ is adjacent either with $u$ or with each vertex in $A$.
\end{Claim}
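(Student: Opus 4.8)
The plan is to pick an arbitrary second vertex $w'\in V_\emptyset$ (distinct from $w$) and show it attaches to $A$ in one of the two allowed ways. By the preceding claim, $w'$ is adjacent either to a single vertex $u'\in A$ or to every vertex of $A$. In the latter case the conclusion ``adjacent with each vertex in $A$'' already holds, so the whole claim reduces to the case that $w'$ has exactly one neighbour $u'\in A$, where I must prove $u'=u$.

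To do this I would argue by contradiction, assuming $u'\neq u$, and exhibit a forbidden induced subgraph on five vertices. I first record the structure of the present case: $a$ is adjacent to $b$ and to every vertex of $A=\mathcal A$; the set $A$ is stable (two adjacent vertices of $A$ would form a triangle with $a$, violating $\omega(G)=2$); $b$ has no neighbour other than $a$; and $V_\emptyset$ is stable. Consequently, when $w$ is adjacent to $u$ and to no other vertex of $A$, the five distinct vertices $w,u,a,u',w'$ realise exactly the adjacencies $wu,\ ua,\ au',\ u'w'\in E(G)$ and no others: $wa,aw'\notin E(G)$ because $w,w'\in V_\emptyset$; $ww'\notin E(G)$ and $uu'\notin E(G)$ because $V_\emptyset$ and $A$ are stable; and $wu',uw'\notin E(G)$ because $w$ and $w'$ each have a single neighbour in $A$. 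Hence $\{w,u,a,u',w'\}$ induces the path $w-u-a-u'-w'$, which is isomorphic to $\Gaa=P_5$, contradicting that $G$ is $\mathcal F_3$-free and forcing $u'=u$.

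The step I expect to need the most care is the case in which $w$ is adjacent to all of $A$ rather than to $u$ alone: then $wu'\in E(G)$ and the five vertices above no longer induce $P_5$. Here I would run the same $P_5$ obstruction on \emph{two} single-neighbour vertices of $V_\emptyset$ directly, concluding that any two such vertices must share their unique neighbour in $A$ (otherwise the analogous five vertices induce $\Gaa$); thus all single-neighbour vertices of $V_\emptyset$ attach to one common vertex, while full-neighbour vertices satisfy the second alternative automatically. Since our fixed $w$ is adjacent to $u$, that common attachment vertex is $u$, and every vertex of $V_\emptyset$ is adjacent to $u$ or to all of $A$, as claimed. The only genuine subtlety is the bookkeeping of the single forbidden subgraph $\Gaa$ against the two attachment patterns delivered by the previous claim; no new forbidden graph beyond $\Gaa$ should be required.
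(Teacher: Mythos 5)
Your reduction via the preceding claim, and the $P_5$ argument in the case where both $w$ and $w'$ have a unique neighbour in $A$, matches the paper's case $wu',ww'\notin E(G)$ and is sound. However, two of your supporting steps do not hold up. First, you record ``$V_\emptyset$ is stable'' as part of the setup and then use it to justify $ww'\notin E(G)$. At this point stability of $V_\emptyset$ is not known --- two adjacent vertices of $V_\emptyset$ need not lie in any triangle --- and ruling out such edges is exactly part of what this claim's proof must do. It can be patched, but only by routing a path through $b$: if $w,w'$ are adjacent with distinct unique neighbours $u\neq u'$, then $\{b,a,u',w',w\}$ induces $\Gaa$ (note that the set $\{w,u,a,u',w'\}$ induces $C_5$ in this situation, and $C_5$ is not forbidden, so the vertex $b$ is genuinely needed), while if they share a neighbour, or one of them is adjacent to all of $A$, a triangle appears, contradicting $\omega(G)=2$.

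Second, and fatally, your treatment of the case in which $w$ is adjacent to every vertex of $A$ is a non sequitur and cannot be repaired using $\Gaa$ alone. Suppose $w$ is adjacent to all of $A$ and $w'\in V_\emptyset$ is adjacent to a single vertex $u'\neq u$, with $ww'\notin E(G)$ (forced, since otherwise $w,w',u'$ form a triangle). Your argument only shows that all single-neighbour vertices of $V_\emptyset$ share one attachment vertex; since $w$ is not a single-neighbour vertex, nothing forces that common vertex to be $u$, so the sentence ``that common attachment vertex is $u$'' does not follow, and the claim would fail for this configuration. Moreover, no argument using only $\Gaa$ can exclude it: the induced subgraph on $\{w,w',u,u',a,b\}$, with edges $wu,\,wu',\,w'u',\,ua,\,u'a,\,ab$, is the $4$-cycle $w,u,a,u'$ with pendant vertices $b$ and $w'$ attached to the adjacent cycle vertices $a$ and $u'$; this is exactly $\Gaj$, which is bipartite and contains no induced $P_5$ (being a minimal forbidden graph, all its proper induced subgraphs lie in $\Gamma_{\leq 3}$). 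This is precisely the paper's third case, which invokes $\Gaj$, so your closing assertion that ``no new forbidden graph beyond $\Gaa$ should be required'' is false, and this case is a genuine gap in your proof.
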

\begin{proof}
Suppose there exists $w'\in V_\emptyset$ such that $w'$ is not adjacent with $u$.
Let $u'\in A$ such that $w'$ is adjacent with $w'$.
There are four possible cases:
\begin{itemize}
\item $wu',ww'\in E(G)$,
\item $ww'\in E(G)$ and $wu'\notin E(G)$,
\item $wu'\in E(G)$ and $ww'\notin E(G)$,
\item $wu',ww'\notin E(G)$.
\end{itemize}
In the cases when $ww'\in E(G)$, the vertex set $\{w,u,a,u',w'\}$ induces a graph isomorphic to $\Gaa$; those cases do not occur.
On the other hand if $wu'\in E(G)$ and $ww'\notin E(G)$, then $\{w,w',u,u',a,b\}$ induces a graph isomorphic to $\Gaj$, and this case can not occur.
Finally, if $wu',ww'\notin E(G)$, then the vertex set $\{w,u,a,u',w'\}$ induces a graph isomorphic to $\Gaa$.
Which is a contradiction.
Thus each pair of vertices in $V_\emptyset$ are adjacent with the same vertices in $A$.
And the result follows.
\end{proof}
Thus there are two cases: when each vertex in $V_\emptyset$ is adjacent with a unique vertex $u$ in $A$, and when each vertex in $V_\emptyset$ is adjacent with each vertex in $A$.
Note that $V_\emptyset$ induces a stable set, because otherwise $\omega(G)>2$.
In the first case we have that either $|V_\emptyset|\geq2$ or the vertex set $A-u$ is empty.
It is because otherwise $G$ would have $\Gab$ as induced subgraph.
Therefore, in each case $G$ is isomorphic to a graph in $\mathcal F_1^1$.
\end{proof}

\section{Proof of Theorem \ref{pro::G3W3}}\label{proofoftheorem:pro::G3W3}
One implication is easy because Proposition \ref{theorem:F1belongstoGamma3} implies that each graph in $\mathcal F_1$ is $\mathcal F_3$-free.
The other implication is much more complex.

Suppose $G$ is $\mathcal F_3$-free.
Let $W=\{a,b,c\}$ be a clique of cardinality 3.
For each $X\subseteq \{a,b,c\}$, let $V_X=\{u\in V(G) : N_G(u)\cap \{a, b, c\} = X\}$.
Note that $V_{\emptyset}$ denote the vertex set whose vertices are not adjacent with a vertex in $\{a,b,c\}$.

Since $\omega(G)=3$, then the vertex set $V_{a,b,c}$ is empty, and for each pair $\{x,y\}\subset\{a,b,c\}$, the vertex set $V_{x,y}$ induces a stable set.
Furthermore

\begin{Claim}\label{Claim:VxKmn2Kk1k2Tn}
For $x\in\{a,b,c\}$, the induced subgraph $G[V_x]$ is isomorphic to either $K_{m,n}$, $2K_2$, $K_2+K_1$, or $T_n$.
\end{Claim}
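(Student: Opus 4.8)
The plan is to prove the statement for $x=a$; the cases $x=b$ and $x=c$ follow by the symmetry of the clique $W=\{a,b,c\}$. Write $H=G[V_a]$. The first observation is that $H$ is triangle-free: every vertex of $V_a$ is adjacent to $a$, so a triangle in $H$ together with $a$ would be a $K_4$, contradicting $\omega(G)=3$. The other two clique vertices play a distinguished auxiliary role: by the definition of $V_a$, neither $b$ nor $c$ is adjacent to any vertex of $V_a$, while both are adjacent to $a$ and to each other. Hence, relative to any configuration extracted from $V_a\cup\{a\}$, the pair $\{b,c\}$ behaves as a clique of size two attached only to $a$; in the $G^{\bf d}$ drawing conventions of Figure \ref{figure:FamilyGraphF3} this is exactly a black vertex (a clique of cardinality $2$) pendant at the gray vertex $a$. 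This is what lets small obstructions inside $V_a$ grow into genuine members of $\mathcal F_3$.

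Next I reduce the problem to forbidding three small triangle-free graphs inside $H$. I claim $H$ contains no induced $P_4$, no induced $P_3+K_1$, and no induced $K_2+2K_1$. In each case the argument is of the type used throughout the proof of Theorem \ref{pro::G3W2}: starting from the offending induced subgraph $S\subseteq V_a$, adjoin $a$ (which is adjacent to all of $S$) together with whichever of $b,c$ are needed, and check that the resulting induced subgraph of $G$ is isomorphic to one of the graphs in Figure \ref{figure:FamilyGraphF3}; since $G$ is $\mathcal F_3$-free, this is a contradiction. Adjoining $a$ to $S$ creates a universal vertex, and adjoining $\{b,c\}$ supplies the black vertex described above, so the relevant members of $\mathcal F_3$ are precisely those of the shape ``a gray vertex universal over a triangle-free part, carrying a pendant clique of size two''.

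With these three obstructions excluded, the conclusion follows from an elementary classification of triangle-free graphs. Being triangle-free and $P_4$-free, $H$ is a triangle-free cograph, hence a disjoint union of complete bipartite graphs: a connected cograph on at least two vertices is a join $H_1\vee H_2$ of nonempty graphs, and triangle-freeness forces both $H_1$ and $H_2$ to be edgeless, so each connected component of $H$ is some $K_{m,n}$ (allowing the degenerate cases $K_1$ and $K_2$). If $H$ has no edge it is $T_n$. If $H$ is connected and has an edge it is $K_{m,n}$ with $m,n\ge 1$. If $H$ is disconnected and has an edge, then absence of an induced $P_3+K_1$ forces every edge-bearing component to be a single $K_2$ (a component $K_{m,n}$ with $m+n\ge 3$ contains an induced $P_3$, which together with any vertex of another component gives $P_3+K_1$), and absence of an induced $K_2+2K_1$ limits the remaining vertices, so that the only possibilities are $K_2+K_1$ and $2K_2$. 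This exhausts the four listed types.

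The main obstacle is the middle step. The delicate part is not the final classification, which is routine once the three obstructions are in hand, but the verification that each of $P_4$, $P_3+K_1$ and $K_2+2K_1$ inside $V_a$ --- after adjoining $a$ and the appropriate subset of $\{b,c\}$ --- really reproduces a member of $\mathcal F_3$. This requires expanding the clique and stable-set drawings of Figure \ref{figure:FamilyGraphF3} and matching induced subgraphs carefully, and one must ensure that the chosen subset of $\{b,c\}$ is exactly what is needed to reach the right graph on five to seven vertices. A secondary point of care is to keep the disconnected case analysis exhaustive, so that no triangle-free disjoint union of complete bipartite graphs other than $T_n$, $K_{m,n}$, $K_2+K_1$ and $2K_2$ slips through.
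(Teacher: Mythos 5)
Your proposal is correct and follows essentially the same route as the paper: the paper likewise forbids $K_3$, $P_4$, $P_3+T_1$ and $K_2+T_2$ inside $G[V_x]$ by adjoining $x$ together with one or both of the remaining clique vertices and invoking $\mathcal F_3$-freeness, then classifies the resulting triangle-free, $P_4$-free graph component by component. The verification you defer as the ``delicate part'' is exactly what the paper supplies, namely $G[P_4\cup\{x,y\}]\simeq\Gam$, $G[P_3+T_1\cup\{x,y\}]\simeq\Gae$ and $G[K_2+T_2\cup\{a,b,c\}]\simeq\Gbc$, so your middle step does go through --- with the small correction that for $P_4$ and $P_3+K_1$ only a pendant single vertex (one of $b,c$), not a pendant clique of size two, is what produces the forbidden graph.
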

\begin{proof} 
First consider $K_3$, $P_4$, $K_2+T_2$ and $P_3+T_1$ as induced subgraphs of  $G[V_x]$.
Since $G[K_3\cup x]\simeq K_4$, $G[P_4 \cup \{x,y\}]\simeq \Gam$, $G[K_2 + T_2\cup \{a,b,c\}]\simeq \Gbc$, $G[P_3 + T_1\cup \{x,y\}]\simeq \Gae$ and all of them are forbidden for $G$, then the graphs $K_3$, $P_4$, $K_2+T_2$ and $P_3+T_1$ are forbidden in $G[V_x]$.
Thus $\omega(G[V_x])\leq2$.

If $\omega(G[V_x])=2$, then there exist $u,v\in V_x$ such that $uv\in E(G[V_x])$.
Clearly, $N_{G[V_x]}(u)\cap N_{G[V_x]}(v)=\emptyset$, and each vertex set $N_{G[V_x]}(u)$ and $N_{G[V_x]}(u)$ induces a trivial subgraph.
Since $G[V_x]$ is $P_4$-free, then $st\in E(V_x)$ for all $s\in N_{V_x}(u)\setminus\{v\}$ and $t\in N_{V_x}(v)\setminus\{u\}$.
Therefore, each component in $G[V_x]$ is a complete bipartite subgraph.
 		
If a component in $G[V_x]$ has cardinality at least three, then $G[V_x]$ does not have another component, 
because the existence of another component makes that $P_3+T_1$ appears as an induced subgraph in $G[V_x]$; which is impossible.
If there is a component in $G[V_x]$ of cardinality at least two, then there is at most another component in $G[V_x]$ since $K_2+T_2$ is forbidden in $G[V_x]$.
And thus the result turns out.
\end{proof}
\begin{Claim}\label{Claim:omega3:Vx2Cyvacio}
If there is $x\in \{a,b,c\}$ such that the induced subgraph $G[V_x]$ has at least 2 components or is isomorphic to a complete bipartite with at least three vertices, 
then the vertex set $V_y$ is empty for each $y\in \{a,b,c\}- x$.
\end{Claim}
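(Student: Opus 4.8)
The plan is to argue by contradiction, using the classification of $G[V_x]$ from Claim \ref{Claim:VxKmn2Kk1k2Tn} to force a forbidden induced subgraph as soon as a second part is nonempty. After relabelling we may assume $x=a$; since the hypothesis constrains only $G[V_a]$, the indices $b$ and $c$ play symmetric roles, so it suffices to derive a contradiction from the existence of a vertex $w$ in $V_b$, after which $V_b=V_c=\emptyset$ follows. Thus fix $w\in V_b$, so that $w$ is adjacent to $b$ but to neither $a$ nor $c$, while $a,b,c$ are pairwise adjacent. The whole argument then lives inside $\{a,b,c,w\}$ together with two or three carefully chosen vertices of $V_a$, so only a bounded neighbourhood is examined and each case reduces to recognizing a member of $\mathcal F_3$.

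First I would treat the case where $G[V_a]$ has at least two components. By Claim \ref{Claim:VxKmn2Kk1k2Tn} we may pick $u_1,u_2\in V_a$ in distinct components, so $u_1u_2\notin E(G)$ and each $u_i$ is adjacent to $a$ only among $\{a,b,c\}$. The sole remaining freedom is the adjacency of $w$ to $\{u_1,u_2\}$, giving three subcases. If $w$ is adjacent to neither, then $G[\{a,b,c,u_1,u_2,w\}]\cong \Gac$; if $w$ is adjacent to exactly one of them, this induced subgraph is isomorphic to $\Gal$; and if $w$ is adjacent to both, it is isomorphic to $\Gap$. In each subcase no $K_4$ is created (as $w\not\sim a$ and $u_1\not\sim u_2$, so $\omega(G)$ stays $3$), and all three target graphs lie in $\mathcal F_3$, contradicting that $G$ is $\mathcal F_3$-free.

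Next I would treat the case $G[V_a]\cong K_{m,n}$ with $m+n\ge 3$. Then some side of the bipartition has at least two vertices, so I can choose $u_1,u_2$ on that side and a common neighbour $v_1$ on the other; here $u_1u_2\notin E(G)$ while $u_1v_1,u_2v_1\in E(G)$ and all three are adjacent to $a$ only, so $\{a,u_1,u_2,v_1\}$ induces a diamond. Examining $G[\{a,b,w,u_1,u_2,v_1\}]$ and again splitting on the adjacencies of $w$ to $u_1,u_2,v_1$, each pattern yields a member of $\mathcal F_3$; for instance, when $w$ is adjacent to none of them the subgraph is isomorphic to $\Gah$, and the remaining patterns are eliminated by the same bounded six-vertex analysis. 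This once more contradicts $\mathcal F_3$-freeness, whence $V_b\cup V_c=\emptyset$, as required.

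The conceptual content is light; the delicate part is the bookkeeping of these isomorphism checks. The main obstacle I anticipate is matching each configuration to the correct forbidden graph, because several members of $\mathcal F_3$ are blow-ups in which a black (resp.\ white) vertex denotes a clique (resp.\ stable set) of size two: the isomorphism must send the non-adjacent pair $\{u_1,u_2\}$ to a white vertex (and an adjacent pair, should one be used, to a black one), and one must check that every adjacency pattern of $w$ has been enumerated so that no configuration escapes the list. Since only six vertices are ever involved, each subcase remains a finite, mechanical verification.
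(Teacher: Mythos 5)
Your first case (at least two components) is correct and is in fact the whole of the paper's proof: the paper simply observes that under \emph{either} hypothesis $G[V_x]$ contains two non-adjacent vertices $u,v$ (in the complete bipartite case, two vertices on the same side of the bipartition, which exist since $|V_x|\geq 3$), and then runs your three-way analysis on $G[\{a,b,c,u,v,w\}]$, obtaining $\Gap$, $\Gal$, $\Gac$. Your second case, however, has a genuine gap. By discarding $c$ and working inside $\{a,b,w,u_1,u_2,v_1\}$, you lose exactly the information needed: the claim that ``each pattern yields a member of $\mathcal F_3$'' is false. Concretely, take the pattern in which $w$ is adjacent to all of $u_1,u_2,v_1$. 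The induced graph on $\{a,b,w,u_1,u_2,v_1\}$ has the ten edges $ab$, $bw$, $au_1$, $au_2$, $av_1$, $wu_1$, $wu_2$, $wv_1$, $u_1v_1$, $u_2v_1$ and degree sequence $(4,4,4,3,3,2)$. Every $5$-vertex induced subgraph of it has at least $10-4=6$ edges, so it contains no induced $\Gaa$ (which has $4$ edges); and the only six-vertex member of $\mathcal F_3$ with this degree sequence is $\Gay$, which is not isomorphic to it, since in $\Gay$ the two vertices of degree $3$ are adjacent while $u_1,u_2$ are not. So this configuration contains no forbidden graph at all on your chosen vertex set. The same failure occurs for the patterns ``$w$ adjacent to $v_1$ only'' (degree sequence $(4,4,2,2,2,2)$, no match and no induced $\Gaa$) and ``$w$ adjacent to $u_1,u_2$ only'' (degree sequence $(4,3,3,3,3,2)$, likewise no match).

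These unkilled patterns are precisely the ones that must be excluded, and they can only be excluded by reinstating $c$: with the paper's set $\{a,b,c,u_1,u_2,w\}$, the pattern ``$w$ adjacent to both $u_1,u_2$'' induces $\Gap$ and ``$w$ adjacent to neither'' induces $\Gac$, regardless of how $w$ attaches to $v_1$. The repair is therefore not more bookkeeping but less: delete your second case entirely, note that a complete bipartite graph on at least three vertices also supplies a non-adjacent pair $u_1,u_2\in V_x$, and let your first-case argument (with $c$ kept in the six-vertex set) cover both hypotheses uniformly --- which is exactly what the paper does.
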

\begin{proof}
Let $G[V_x]$ be as above.
Then there are two vertices $u,v\in V_x$ that are not adjacent.
Suppose $V_y$ is not empty, that is, there is $w\in V_y$.
There are three possibilities:
\begin{itemize}
\item $uw, vw \in E(G)$,
\item $uw \in E(G)$ and $vw \notin E(G)$, and
\item $uw, vw \notin E(G)$.
\end{itemize}
But in each case the vertex set $\{ a,b,c,u,v,w\}$ induces a graph isomorphic to $\Gap$, $\Gal$, and $\Gac$, respectively.
Since these graphs are forbidden, then we obtain a contradiction and then $V_y$ is empty.
\end{proof}
\begin{Claim}\label{Claim:omega3:Vx2CVxyvacio}
If there is $x\in \{a,b,c\}$ such that $G[V_x]$ has at least 2 components or is isomorphic to a complete bipartite with at least three vertices, then the vertex set $V_{x,y}$ is empty for each $y\in \{a,b,c\}- x$.
\end{Claim}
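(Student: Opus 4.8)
The plan is to mirror the proof of the preceding Claim \ref{Claim:omega3:Vx2Cyvacio} almost verbatim, the only change being that the ``outside'' vertex now sees two of the three clique vertices instead of one. Fix $x\in\{a,b,c\}$ satisfying the hypothesis and write $\{a,b,c\}=\{x,y,z\}$. As in Claim \ref{Claim:omega3:Vx2Cyvacio}, and using the classification of $G[V_x]$ from Claim \ref{Claim:VxKmn2Kk1k2Tn}, whether $G[V_x]$ has at least two components or is a complete bipartite graph on at least three vertices there exist two non-adjacent vertices $u,v\in V_x$ (take them in distinct components, or on the larger side of the bipartition). Arguing by contradiction, I would suppose $V_{x,y}\neq\emptyset$ and pick $w\in V_{x,y}$.

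First I would record that the induced subgraph on $S=\{x,y,z,u,v,w\}$ is completely forced except for the two pairs $uw$ and $vw$. Indeed $\{x,y,z\}$ is a triangle; since $u,v\in V_x$ we have $xu,xv\in E(G)$ and $yu,yv,zu,zv\notin E(G)$; by the choice of $u,v$ we have $uv\notin E(G)$; and since $w\in V_{x,y}$ we have $xw,yw\in E(G)$ and $zw\notin E(G)$. This settles all $\binom{6}{2}$ pairs except $uw$ and $vw$, so only these two adjacencies remain free, and $u,v$ play symmetric roles with respect to them.

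Then I would split into the three cases according to how $w$ attaches to the pair $\{u,v\}$. The key step is to verify that $G[S]$ is in each case isomorphic to a member of $\mathcal F_3$: when $uw,vw\in E(G)$, the pair $\{u,v\}$ is a non-adjacent twin pair with common neighbourhood $\{x,w\}$ and $G[S]\simeq\Gaq$; when exactly one of $uw,vw$ lies in $E(G)$ (say $uw\in E(G)$, $vw\notin E(G)$, by the symmetry of $u,v$), the subgraph has no twin pair and $G[S]\simeq\Gam$; and when $uw,vw\notin E(G)$, both $\{u,v\}$ and $\{z,w\}$ become non-adjacent twin pairs and $G[S]\simeq\Gae$. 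Since each of $\Gaq,\Gam,\Gae$ lies in $\mathcal F_3$ and $G$ is $\mathcal F_3$-free, every case is impossible; hence $V_{x,y}=\emptyset$, and the identical argument with the roles of $y$ and $z$ interchanged gives $V_{x,z}=\emptyset$.

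I expect the only delicate point to be this identification step: matching each concrete six-vertex graph $G[S]$ with the correct member of $\mathcal F_3$, keeping in mind that in the drawings of $\mathcal F_3$ a white vertex encodes a stable pair, so that the twin pairs appearing in $G[S]$ correspond exactly to white vertices in the figure. I would confirm the three isomorphisms by first comparing degree sequences, namely $(5,4,3,2,2,2)$ for $\Gaq$, $(5,3,3,2,2,1)$ for $\Gam$, and $(5,3,2,2,1,1)$ for $\Gae$, and then checking the neighbourhoods vertex by vertex. This is routine but error-prone bookkeeping of precisely the kind already carried out in Claim \ref{Claim:omega3:Vx2Cyvacio}, so no new idea beyond careful case analysis should be required.
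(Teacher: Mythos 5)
Your proof is correct and is essentially the paper's own argument: the paper likewise picks two non-adjacent vertices $u_1,u_2\in V_x$, takes $v\in V_{x,y}$, splits into the same three adjacency cases, and identifies $G[\{a,b,c,u_1,u_2,v\}]$ with $\Gaq$, $\Gam$, and $\Gae$ in exactly the same correspondence. Your added bookkeeping (degree sequences and twin-pair identification) checks out, so nothing further is needed.
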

\begin{proof}
Let $G[V_x]$ be as above.
Then there are two vertices $u_1,u_2\in V_x$ that are not adjacent.
Suppose $V_{x,y}\neq \emptyset$.
Let $v\in V_{x,y}$.
There are three possibilities:
\begin{itemize}
\item $u_1 v\in E(G)$ and $u_2 v \in E(G)$,
\item $u_1 v\in E(G)$ and $u_2 v \notin E(G)$, and
\item $u_1 v\notin E(G)$ and $u_2 v \notin E(G)$.
\end{itemize}
Then in each case, $G[\{a,b,c, u_1, u_2, v \}]$ is isomorphic to $\Gaq$, $\Gam$ and $\Gae$, respectively.
Since these graphs are forbidden, we have that $V_{x,y}$ is empty.
\end{proof}

\begin{Remark}\label{remark:bigcases}
Claims \ref{Claim:omega3:Vx2Cyvacio} and \ref{Claim:omega3:Vx2CVxyvacio} imply that when  $V_x$ has connected 2 components or is a complete bipartite graph of at least 3 vertices, then the only non-empty vertex sets are $V_\emptyset$, $V_{x}$ and $V_{y,z}$, where $x,y$ and $z$ are different elements of $\{a,b,c\}$ 
Moreover, by Claim \ref{Claim:VxKmn2Kk1k2Tn}, the vertex set $V_x$ is one of the following vertex sets:
\begin{itemize}
\item $T_n$, where $n\geq 2$,
\item complete bipartite graph with cardinality at least 3,
\item $K_1+K_2$ or $2K_2$.
\end{itemize}
\end{Remark}

Next result describes the induced subgraph $G[V_a \cup V_b \cup V_c]$ when each set $V_x$ is connected of cardinality at most 2.

\begin{Claim}\label{cl:unstablecase}
Suppose for each $x\in \{a,b,c\}$ the vertex set $V_x$ is connected of cardinality at most 2.
If for all $x\in \{a,b,c\}$ the set $V_x$ is not empty, then $G[V_a \cup V_b \cup V_c]$ is isomorphic (where $x,y$ and $z$ are different elements of $\{a,b,c\}$) to one of the following sets:
\begin{itemize}
\item $V_x\vee(V_y+V_z)$ where $V_x=K_1$, $V_y=K_m$, $V_z=K_n$ and $m,n\in\{1,2\}$,
\item $V_x\vee(V_y+V_z)$ where $V_x=K_2$, $V_y=K_1$, $V_z=K_1$, or
\item $V_x\vee(V_y\vee V_z)$ where $V_x=K_1$, $V_y=K_1$, $V_z=K_1$.
\end{itemize}
If $V_z=\emptyset$, then $G[V_x \cup V_y]$ is isomorphic to one of the following sets:
\begin{itemize}
\item $V_x+V_y$, where $V_x=K_m$, $V_y=K_n$ and $m,n\in\{1,2\}$, or
\item $V_x\vee V_y$, where $V_x=K_1$, $V_y=K_m$ and $m\in\{1,2\}$.
\end{itemize}
If $V_y=V_z=\emptyset$, then $V_x$ is isomorphic to $K_1$ or $K_2$.
\end{Claim}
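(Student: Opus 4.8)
The plan is to determine completely the adjacencies among the clusters $V_a, V_b, V_c$ (the adjacencies to the triangle $\{a,b,c\}$ being already fixed by the definition of the $V_x$) and then read off the isomorphism type. By the hypothesis and Claim \ref{Claim:VxKmn2Kk1k2Tn}, each nonempty $V_x$ is connected on at most two vertices, hence $V_x\cong K_1$ or $V_x\cong K_2$; this is the only place the connectivity/size hypothesis is used, and it already settles the last assertion, when $V_y=V_z=\emptyset$.

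First I would prove an \emph{all-or-nothing} lemma: for distinct $x,y$ with $V_x,V_y$ nonempty, either every vertex of $V_x$ is adjacent to every vertex of $V_y$, or no vertex of $V_x$ is adjacent to any vertex of $V_y$. This has content only when some cluster is a $K_2$, say $V_x=\{u_1,u_2\}$ with $u_1u_2\in E(G)$. Assuming a partial adjacency (for instance $u_1\sim v$ and $u_2\not\sim v$ for some $v\in V_y$), I would exhibit a member of $\mathcal F_3$ as an induced subgraph of $G[\{a,b,c\}\cup\{u_1,u_2,v,\dots\}]$, the relevant small forbidden graphs being $\Gaa=P_5$, $\Gad$, and $\Gaj$, contradicting $\mathcal F_3$-freeness. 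Thus the bipartite graph between any two distinct clusters is complete or empty, and I may encode the situation by a \emph{join pattern}: the graph $H$ on node set $\{a,b,c\}$ having an edge $xy$ exactly when $V_x$ and $V_y$ are completely joined.

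Next, assuming all three clusters are nonempty, I would rule out join patterns with at most one edge. Choosing representatives $u\in V_a$, $v\in V_b$, $w\in V_c$, the subgraph $G[\{a,b,c,u,v,w\}]$ is the triangle $abc$ with one pendant vertex on each of $a,b,c$, together with the edges among $u,v,w$ prescribed by $H$. If $H$ has no edge this is the \emph{net}, which is a member of $\mathcal F_3$; if $H$ has exactly one edge, say $uv\in E(G)$, then $\{w,c,a,u,v\}$ induces $P_5=\Gaa$. Either way we contradict $\mathcal F_3$-freeness, so $H$ has at least two edges, i.e.\ $H$ is a path or a triangle on $\{a,b,c\}$.

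Finally I would impose $\omega(G)=3$ to fix the cluster sizes and assemble the answer. If $H$ is a triangle, then for representatives $u,v,w$ the set $\{u,v,w\}$ is itself a triangle, and were any cluster a $K_2$, its two vertices together with one representative from each of the other two clusters would induce $K_4$; hence all three clusters are $K_1$, giving $V_x\vee(V_y\vee V_z)$ with all factors $K_1$. If $H$ is a path with center $x$, then $V_x$ is completely joined to both $V_y$ and $V_z$ while $V_y,V_z$ are independent, giving $V_x\vee(V_y+V_z)$, and $K_4$-avoidance forbids two joined $K_2$'s, so either $V_x=K_1$ (with $V_y,V_z\in\{K_1,K_2\}$ free) or $V_x=K_2$ (forcing $V_y=V_z=K_1$), which are exactly the first two listed types. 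The degenerate case $V_z=\emptyset$ is handled the same way but without the forcing of the previous paragraph: the two nonempty clusters are joined or not, yielding $V_x\vee V_y$ (with not both $K_2$, hence one a $K_1$) or $V_x+V_y$ respectively. I expect the \textbf{main obstacle} to be the all-or-nothing lemma and the one-join exclusion: each requires selecting the correct handful of cluster vertices and recognizing the resulting induced subgraph as a specific member of $\mathcal F_3$, and the presence of $K_2$ clusters multiplies the sub-cases that must be checked against the list in Figure \ref{figure:FamilyGraphF3}.
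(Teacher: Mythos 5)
Your proposal is correct, and its first step coincides exactly with the paper's: the paper's entire written proof consists of the assertion that each $E(V_x,V_y)$ is either empty or a complete bipartite graph (dismissed as "not difficult to prove"), followed by "checking the possibilities with a computer algebra system." Where you genuinely diverge is in that second step: you replace the computer check of the finitely many configurations by hand arguments, and the exclusions you propose do work. A join pattern with no edges yields the net, which is indeed a member of $\mathcal F_3$ (it is $\Gai$: the triangle $v_1v_5v_6$ with pendant vertices $v_2,v_3,v_4$); a pattern with exactly one join yields the induced path $w$--$c$--$a$--$u$--$v$, i.e. $\Gaa$; and since the claim sits inside the proof of Theorem \ref{pro::G3W3}, where $\omega(G)=3$ is assumed, $K_4$-avoidance legitimately pins down the cluster sizes to exactly the types listed. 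Two small remarks. First, your all-or-nothing lemma needs only $\Gaa$: if $V_x=\{u_1,u_2\}$ is a $K_2$ cluster with $u_1\sim v$, $u_2\not\sim v$ for $v\in V_y$, then $u_2$--$u_1$--$v$--$y$--$z$ is an induced $P_5$, and the only other mixed pattern (one vertex of a $K_2$ cluster joined to both vertices of another $K_2$ cluster, the other to neither) gives $v_2$--$v_1$--$u_1$--$x$--$z$, again $P_5$; the graphs $\Gad$ and $\Gaj$ you list are not actually needed. Second, your argument establishes only necessity (the graph must be one of the listed types), whereas the paper's computer enumeration implicitly also verifies which configurations survive; but the claim as stated asserts only necessity, so your route proves it in full, with the added benefit of making explicit which forbidden subgraphs and which use of $\omega(G)=3$ carry the argument.
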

\begin{proof}
It is not difficult to prove that either $E(V_x,V_y)$ is empty or $E(V_x,V_y)$ induces a complete bipartite graph.
The result follows by checking the possibilities with a computer algebra system.
\end{proof}

In the rest of the proof, for each case obtained in Remark \ref{remark:bigcases} and Claim \ref{cl:unstablecase}, we will analyze the remaining edges sets and the vertex set $V_\emptyset$.
As before, we may refer to $x,y$ or $z$ as different elements of $\{a,b,c\}$.
Each case will be consider in a subsection.

\subsection{When $V_a\cup V_b\cup V_c=\emptyset$}
Now we describe the induced subgraph $G[V_{a,b}\cup V_{a,c} \cup V_{b,c}]$.

 \begin{Claim}\label{claim:EUVempty}
If the vertex sets $V_{x,y}$ and $V_{y,z}$ are not empty, and the edge set $E(V_{x,y},V_{y,z})$ is empty, then $|V_{x,y}|=|V_{y,z}|=1$.
\end{Claim}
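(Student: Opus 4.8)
The plan is to argue by contradiction: assume one of the two sets, say $V_{x,y}$, has at least two elements, exhibit a forbidden induced subgraph, and then recover the other equality by symmetry. First I would fix two distinct vertices $u_1,u_2\in V_{x,y}$ together with a vertex $w\in V_{y,z}$ (available because $V_{y,z}\neq\emptyset$). These six vertices $x,y,z,u_1,u_2,w$ are pairwise distinct, since $u_1,u_2,w$ lie outside the clique $\{a,b,c\}$ and $V_{x,y}\cap V_{y,z}=\emptyset$.

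Next I would record all adjacencies inside $S=\{x,y,z,u_1,u_2,w\}$. The clique $W$ gives $xy,yz,xz\in E(G)$; by definition of $V_{x,y}$ each $u_i$ is adjacent to $x,y$ and not to $z$; by definition of $V_{y,z}$ the vertex $w$ is adjacent to $y,z$ and not to $x$. Since $\omega(G)=3$ forces $V_{x,y}$ to induce a stable set, $u_1u_2\notin E(G)$; and the hypothesis $E(V_{x,y},V_{y,z})=\emptyset$ gives $u_1w,u_2w\notin E(G)$. These facts determine $G[S]$ completely.

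The key step is to recognize $G[S]$ as a member of $\mathcal F_3$. Here $\{u_1,u_2\}$ is a non-adjacent pair of twins with common neighborhood $\{x,y\}$, so it plays the role of the white (stable, cardinality-$2$) vertex, while $x,y,z,w$ are single (gray) vertices; matching the seven resulting adjacencies against the diagram shows $G[S]\simeq\Gaq$. Since $\Gaq\in\mathcal F_3$ and $G$ is $\mathcal F_3$-free, this is a contradiction, so $|V_{x,y}|=1$. Interchanging the roles of $x$ and $z$ swaps $V_{x,y}$ with $V_{y,z}$ and leaves every hypothesis invariant, so the identical argument yields $|V_{y,z}|=1$.

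I expect the only delicate point to be the isomorphism identification: one must verify that the stable pair $\{u_1,u_2\}$ corresponds to the white vertex of $\Gaq$ and that no extra edges are present (in particular that $w$ is non-adjacent to $x$ and to $u_1,u_2$, and that $u_1,u_2$ are themselves non-adjacent), so that $G[S]$ is \emph{exactly} $\Gaq$ and not merely a graph containing it. This last verification, like the other membership checks in the paper, can be confirmed directly or with a computer algebra system.
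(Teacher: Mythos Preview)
Your proof is correct and follows essentially the same approach as the paper: both take two vertices from $V_{x,y}$, one from $V_{y,z}$, adjoin the triangle $\{a,b,c\}$, and identify the resulting induced subgraph as the forbidden graph $\Gaq$. Your write-up is more detailed in justifying distinctness, stability of $V_{x,y}$, and the symmetry step, but the argument is the same.
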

\begin{proof}
Suppose $|V_{x,y}|\geq 2$ and $V_{y,z}\neq \emptyset$.
Take $u,u'\in V_{x,y}$ and $v\in V_{y,z}$.
The result follows since the vertex set $\{a,b,c,u,u',v\}$ induces a graph isomorphic to the forbidden graph $\Gaq$.
 \end{proof} 

\begin{Claim}\label{claim:EUVnotemptyEUWemptyEVWempty}
If $E(V_{x,y},V_{y,z})\neq\emptyset$, $E(V_{x,y},V_{x,z})=\emptyset$ and $E(V_{y,z},V_{x,z})=\emptyset$, then $V_{x,z}$ is empty.
\end{Claim}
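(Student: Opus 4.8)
The statement to prove is Claim \ref{claim:EUVnotemptyEUWemptyEVWempty}: assuming $E(V_{x,y},V_{y,z})\neq\emptyset$, $E(V_{x,y},V_{x,z})=\emptyset$ and $E(V_{y,z},V_{x,z})=\emptyset$, we must conclude $V_{x,z}$ is empty. The plan is to argue by contradiction: suppose there is a vertex $p\in V_{x,z}$, and produce a forbidden induced subgraph from $\mathcal F_3$ on six vertices drawn from $W=\{a,b,c\}$ together with well-chosen vertices of $V_{x,y}$, $V_{y,z}$ and $V_{x,z}$. Since $E(V_{x,y},V_{y,z})\neq\emptyset$, I would first fix an adjacent pair $u\in V_{x,y}$, $v\in V_{y,z}$ witnessing this nonempty edge set.

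\textbf{The core argument.} The six candidate vertices are $a,b,c,u,v,p$. Their adjacencies to $W$ are completely determined by the definition of the sets $V_X$: the vertex $u\in V_{x,y}$ is adjacent to exactly $x$ and $y$, the vertex $v\in V_{y,z}$ is adjacent to exactly $y$ and $z$, and $p\in V_{x,z}$ is adjacent to exactly $x$ and $z$; moreover $\{a,b,c\}$ induces $K_3$. The adjacencies among $u,v,p$ are forced by the hypotheses: $uv\in E(G)$ by choice, while $up\notin E(G)$ (since $E(V_{x,y},V_{x,z})=\emptyset$) and $vp\notin E(G)$ (since $E(V_{y,z},V_{x,z})=\emptyset$). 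Thus the isomorphism type of $G[\{a,b,c,u,v,p\}]$ is now \emph{entirely} pinned down, with no remaining freedom. I would then identify this graph against the list in Figure \ref{figure:FamilyGraphF3} and exhibit it as one of the forbidden graphs (it should match one of the six-vertex members of $\mathcal F_3$, most plausibly $\Gao$ or a graph of that shape, given the $K_3$ on $\{a,b,c\}$ with the three pendant-type attachments $u,v,p$ each hitting two clique vertices and $u,v$ adjacent). Since $G$ is assumed $\mathcal F_3$-free, this is the desired contradiction, forcing $V_{x,z}=\emptyset$.

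\textbf{Where the difficulty lies.} The argument is structurally routine — it is a single forbidden-subgraph extraction, exactly parallel to Claims \ref{Claim:omega3:Vx2Cyvacio}, \ref{Claim:omega3:Vx2CVxyvacio} and \ref{claim:EUVempty} already established. The only genuine care required is the bookkeeping of which vertex of $W$ plays which role: $u$ and $p$ share the neighbor $x$, $u$ and $v$ share $y$, and $v$ and $p$ share $z$, so each of $x,y,z$ is the common $W$-neighbor of exactly one of the three pairs. This symmetric incidence pattern, combined with the single edge $uv$ and the two non-edges $up,vp$, is what determines the forbidden type, so I would verify the identification carefully (a quick check with a computer algebra system, as the authors do repeatedly, settles the exact label). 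The main obstacle is therefore not logical but purely one of correctly matching the constructed graph to the right element of $\mathcal F_3$; once that label is fixed, the contradiction is immediate and the claim follows.
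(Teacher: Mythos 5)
Your proposal follows exactly the paper's proof: fix an adjacent pair $u\in V_{x,y}$, $v\in V_{y,z}$, suppose some $w\in V_{x,z}$ exists, observe that the hypotheses force all adjacencies among $\{a,b,c,u,v,w\}$ (the one edge $uv$ and the non-edges $uw$, $vw$), and derive a contradiction because this induced subgraph is forbidden. The only step you left open is the identification, and your tentative label is wrong: the constructed graph has $10$ edges and degree sequence $(4,4,4,3,3,2)$, whereas $\Gao$ has only $8$ edges and degree sequence $(4,4,3,2,2,1)$; the correct forbidden graph is $\Gay$, which is what the paper cites (an isomorphism sends the triangle $\{a,b,c\}$ to the three degree-$4$ vertices of $\Gay$, the adjacent pair $u,v$ to its two adjacent degree-$3$ vertices, and $w$ to its degree-$2$ vertex). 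Since you explicitly deferred this check and it does succeed, your argument is sound once the label is corrected.
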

\begin{proof}
Let $u\in V_{x,y}$ and $v\in V_{y,z}$ such that $uv\in E(G)$.
Suppose there exists $w\in V_{x,z}$.
Then $uw,vw\notin E(G)$.
Since the induced subgraph $G[\{a,b,c,u,w,v\}]$ is isomorphic to $\Gay$, then we get a contradiction.
Then $V_{x,z}$ is empty.
\end{proof}
   
\begin{Claim}\label{claim:EUVnotempty}
If $E(V_{x,y},V_{y,z})\neq\emptyset$, then $E(V_{x,y},V_{y,z})$ induces either a complete bipartite graph or a complete bipartite graph minus an edge.
\end{Claim}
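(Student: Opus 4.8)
The plan is to recast the conclusion in purely combinatorial terms: the bipartite graph carried by $E(V_{x,y},V_{y,z})$ is complete bipartite, or complete bipartite minus an edge, exactly when there is \emph{at most one} non-adjacent pair $(u,v)$ with $u\in V_{x,y}$ and $v\in V_{y,z}$. Since $\omega(G)=3$ forces each of $V_{x,y}$ and $V_{y,z}$ to induce a stable set, it suffices to bound the ``complement'' bipartite graph, whose edges are precisely these non-adjacent cross pairs, and show it has at most one edge. I would do this in two steps: first that the complement is a matching (no vertex meets two non-edges), and then that this matching has size at most one.

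For the first step, suppose some $v\in V_{y,z}$ had two non-neighbours $u,u'\in V_{x,y}$. Then $u,u'$ are non-adjacent, and every adjacency among $\{a,b,c,u,u',v\}$ is determined by the memberships, so this set induces exactly the configuration used in the proof of Claim~\ref{claim:EUVempty}, namely the forbidden graph $\Gaq$; impossible. Interchanging the roles of $x$ and $z$ is an automorphism of the clique $\{a,b,c\}$ that swaps $V_{x,y}$ and $V_{y,z}$, so the same argument rules out a vertex $u\in V_{x,y}$ with two non-neighbours in $V_{y,z}$. Hence every vertex on either side misses at most one vertex of the other side; the complement bipartite graph has maximum degree at most one and is therefore a matching.

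For the second step I would argue by contradiction. If the matching had two edges, they would be disjoint, say $u_1v_1$ and $u_2v_2$ with $u_1,u_2\in V_{x,y}$ and $v_1,v_2\in V_{y,z}$, and by the first step the crossing pairs $u_1v_2$ and $u_2v_1$ must be genuine edges of $G$. Writing $z$ for the element of $\{a,b,c\}$ adjacent to $V_{y,z}$ but not to $V_{x,y}$, one has $z\sim v_1,v_2$ and $z\not\sim u_1,u_2$, while all remaining pairs among $u_1,v_2,z,v_1,u_2$ are non-edges (using the two stable sets and the two chosen non-edges); thus $G[\{u_1,v_2,z,v_1,u_2\}]$ is the path $u_1-v_2-z-v_1-u_2$, an induced $\Gaa=P_5$, which is forbidden. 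This contradiction leaves at most one non-edge and yields the claim. The step I expect to require the most care is this last one: one must apply the matching property first so that the two non-edges are disjoint and the crossing pairs are truly edges, and one must include the \emph{outer} apex $z$ rather than the common apex $y$ (which would be universal on these five vertices and collapse the path), so that an \emph{induced} $P_5$ appears instead of a denser graph.
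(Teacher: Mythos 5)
Your proposal is correct and follows essentially the same two-step route as the paper: first rule out a vertex incident to two cross non-edges via the forbidden graph $\Gaq$ on $\{a,b,c\}$ together with the three vertices involved (so the non-edges form a matching), then kill a matching of size two by exhibiting an induced $\Gaa=P_5$ through an apex of the triangle. The only cosmetic differences are symmetric relabelings: the paper threads its $P_5$ through $x$ (the path $v\,\text{--}\,u\,\text{--}\,x\,\text{--}\,u'\,\text{--}\,v'$) where you use $z$, and your observation that the common apex $y$ would not work is a correct point the paper leaves implicit.
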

\begin{proof}
Let $u\in V_{x,y}$.
Suppose there exist $v,v'\in V_{y,z}$ such that $u v, u v' \notin E(G)$.
Since the induced subgraph $G[\{a,b,c,u,v,v'\}]$ is isomorphic to $\Gaq$, which is forbidden, then the vertex $u$ is adjacent with at least all but one vertices in $V_{y,z}$.
In a similar way, we have that each vertex in $V_{y,z}$ is adjacent with at at least all but one vertices in $V_{x,y}$.
Therefore, the edge set $E(V_{x,y},V_{y,z})$ induces a complete bipartite graph minus a matching.
Now suppose this matching has cardinality greater or equal to 2.
Then there exist $u,u'\in V_{x,y}$ and $v,v'\in V_{y,z}$ such that $uv',u'v\notin E(G)$ and $uv,u'v'\in E(G)$.
Since $G[\{ v,u,x, u',v'\}]$ is isomorphic to $\Gaa$, then  we get a contradiction and the matching has cardinality at most 1.
Therefore, $E(V_{xy},V_{y,z})$ induces a complete bipartite graph or a complete bipartite graph minus an edge.
\end{proof}
 
\begin{Claim}\label{claim:EUVnotemptyEUWnotemptyEVWempty}
If $E(V_{x,y},V_{y,z})\neq\emptyset$, $E(V_{y,z},V_{x,z})\neq\emptyset$ and $E(V_{x,y},V_{x,z})=\emptyset$, then $V_{x,y}=\{v_1\}$, $V_{x,z}=\{v_2\}$ and one of the following two statements holds:
\begin{itemize}
\item $E(v_1,V_{y,z})=\{v_1v : v\in V_{y,z}\}$ and $E(v_2,V_{y,z})=\{v_2v : v\in V_{y,z}\}$, or
\item there exists $v_3\in V_{y,z}$ such that $E(v_1,V_{y,z})=\{v_1v : v\in V_{y,z}-v_3\}$ and $E(v_2,V_{y,z})=\{v_2v : v\in V_{y,z}-v_3\}$.
\end{itemize}
\end{Claim}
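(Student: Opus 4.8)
The plan is to reduce the entire statement to two explicit forbidden configurations, namely $\Gar$ and $\Gay$. For readability I fix $x=a$, $y=b$, $z=c$ (the general case is identical after relabelling), and I recall the structure already available at this point: $\{a,b,c\}$ is a triangle, each of $V_{a,b},V_{a,c},V_{b,c}$ is a stable set, and the hypothesis $E(V_{x,y},V_{x,z})=\emptyset$ says that $V_{a,b}$ and $V_{a,c}$ are completely non-adjacent. Since $E(V_{x,y},V_{y,z})\neq\emptyset$ and $E(V_{y,z},V_{x,z})\neq\emptyset$, both $V_{x,y}$ and $V_{x,z}$ are non-empty, so I can fix some $u\in V_{x,y}$ and some $w\in V_{x,z}$.

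First I would show $|V_{x,y}|=|V_{x,z}|=1$. Suppose $|V_{x,y}|\geq 2$ and take distinct $u_1,u_2\in V_{x,y}$. Then $u_1u_2\notin E$ because $V_{x,y}$ is stable, and $u_1w,u_2w\notin E$ because $E(V_{x,y},V_{x,z})=\emptyset$. Recording all adjacencies, $G[\{a,b,c,u_1,u_2,w\}]$ is a triangle on $\{a,b,c\}$ carrying a doubled (stable) ear $\{u_1,u_2\}$ attached to $\{a,b\}$ and a single ear $w$ attached to $\{a,c\}$; this is precisely $\Gar$, which is forbidden. Hence $|V_{x,y}|=1$, and the same argument with $b$ and $c$ (equivalently $V_{x,y}$ and $V_{x,z}$) interchanged gives $|V_{x,z}|=1$. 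Write $V_{x,y}=\{v_1\}$ and $V_{x,z}=\{v_2\}$.

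Next I would determine how $v_1$ and $v_2$ meet $V_{y,z}$. Applying Claim \ref{claim:EUVnotempty} to the non-empty set $E(V_{x,y},V_{y,z})$, and (after the relabelling $y\leftrightarrow z$) to $E(V_{x,z},V_{y,z})$, each of these induces a complete bipartite graph or a complete bipartite graph minus an edge; since $V_{x,y}$ and $V_{x,z}$ are single vertices, this means exactly that $v_1$, resp. $v_2$, is adjacent either to all of $V_{y,z}$ or to all but one vertex of $V_{y,z}$. The crux is then to prove $N_G(v_1)\cap V_{y,z}=N_G(v_2)\cap V_{y,z}$. If not, choose $v_3\in V_{y,z}$ in the symmetric difference; by the symmetry $b\leftrightarrow c$ I may assume $v_1v_3\in E$ and $v_2v_3\notin E$. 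Using $v_1\sim a,b,v_3$, $v_2\sim a,c$, $v_3\sim b,c,v_1$, and $v_1v_2\notin E$ (from $E(V_{x,y},V_{x,z})=\emptyset$), one checks that $G[\{a,b,c,v_1,v_2,v_3\}]\cong\Gay$, which is forbidden. So the two neighborhoods coincide, and together with the previous sentence this yields exactly the two alternatives of the Claim: either both vertices are adjacent to all of $V_{y,z}$, or they both miss one and the same vertex $v_3$.

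The only genuine obstacle is bookkeeping: verifying the two isomorphisms $G[\{a,b,c,u_1,u_2,w\}]\cong\Gar$ and $G[\{a,b,c,v_1,v_2,v_3\}]\cong\Gay$ (both routine, and confirmable with a computer algebra system as done elsewhere in the paper), and noticing that a \emph{single} discrepancy vertex already forces $\Gay$. This last point is what makes the argument efficient: it simultaneously rules out every asymmetric situation — one of $v_1,v_2$ complete while the other misses a vertex, or the two missing \emph{different} vertices — so no separate seven-vertex configuration is needed.
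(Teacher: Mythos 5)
Your strategy is sound and, in substance, follows the paper's proof, but there is one concrete error in your first step. The six-vertex configuration you build there---the triangle $\{a,b,c\}$, the stable pair $u_1,u_2$ attached to $\{a,b\}$, the single vertex $w$ attached to $\{a,c\}$, and no other edges---is \emph{not} isomorphic to $\Gar$: that graph has degree sequence $(5,4,3,3,2,2)$, while your configuration has degree sequence $(5,4,3,2,2,2)$. The configuration is in fact isomorphic to $\Gaq$ (expand the white vertex of $\Gaq$ into the stable pair $\{u_1,u_2\}$), so it is still forbidden and your argument survives after renaming; but the isomorphism you propose to ``verify'' is false as stated, and anyone checking it would get stuck. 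Note also that this first step is exactly Claim~\ref{claim:EUVempty} applied to the pair $V_{x,y},V_{x,z}$, which share the index $x$ and have empty edge set between them; that is precisely how the paper handles it, so you could simply cite that claim instead of re-deriving its proof.

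Your second step is correct and is a genuine, if small, streamlining of the paper's argument. After invoking Claim~\ref{claim:EUVnotempty}, the paper splits into two residual cases: one edge set complete bipartite and the other missing an edge, ruled out by $\Gay$ on $\{a,b,c,v_1,v_2,v_3\}$; and both edge sets missing an edge at \emph{distinct} vertices of $V_{y,z}$, ruled out by $P_5=\Gaa$ on $\{v_4,v_1,z,v_2,v_3\}$. Your symmetric-difference argument---any $v_3\in V_{y,z}$ adjacent to exactly one of $v_1,v_2$ already yields $\Gay$ on $\{a,b,c,v_1,v_2,v_3\}$, an identification which is correct---subsumes both cases at once, since in the paper's second case the vertex missed by only one of $v_1,v_2$ serves as such a $v_3$. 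So the $P_5$ case distinction becomes unnecessary: your version is slightly shorter and uses one forbidden graph where the paper uses two.
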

\begin{proof}
Since there is no edge joining a vertex in $V_{x,y}$ and a vertex in $V_{x,z}$, then Claim \ref{claim:EUVempty} implies that $|V_{x,y}|=|V_{x,z}|=1$.
Let $V_{x,y}=\{v_1\}$ and $V_{x,z}=\{v_2\}$.
By Claim \ref{claim:EUVnotempty}, each edge set $E(V_{x,y},V_{y,z})$ and $E(V_{y,z},V_{x,z})$  induces either a complete bipartite graph or a complete bipartite graph minus an edge.
If both sets $E(V_{x,y},V_{y,z})$ and $E(V_{y,z},V_{x,z})$ induce either a complete bipartite graph, then we are done.
So it remains to check two cases: 
\begin{itemize}
\item when one edge set induces a complete bipartite graph and the other one induces a complete bipartite graph minus an edge, and 
\item when both edge sets induce a complete bipartite graph minus an edge.
\end{itemize}
First consider the former case.
Suppose there exists $v_3\in V_{y,z}$ such that $E(v_1,V_{y,z})=\{v_1v : v\in V_{y,z}-v_3\}$ and $E(v_2,V_{y,z})=\{v_2v : v\in V_{y,z}\}$.
Since $G[\{ a,b,c,v_1,v_2,v_3\}]$ is isomorphic to $\Gay$, then this case is not possible.
Now consider second case.
Let $v_3, v_4\in V_{y,z}$.
There are two possible cases: either $v_1v_3,v_2v_3\notin E(G)$, or $v_1v_3,v_2v_4\notin E(G)$.
Suppose $v_1v_3,v_2v_4\notin E(G)$.
Since $G[\{ v_4,v_1,z,v_2,v_3\}]$ is isomorphic to $\Gaa$, then this case is impossible.
And therefore $v_1v_3,v_2v_3\notin E(G)$.
\end{proof}
 
\begin{Claim}
If each edge set $E(V_{x,y},V_{y,z})$ is not empty, then the induced subgraph $G[V_{a,b}\cup V_{a,c} \cup V_{b,c}]$ is isomorphic to one of the following graphs:
\begin{itemize}
\item a complete tripartite graph,
\item a complete tripartite graph minus an edge, or
\item a complete tripartite graph minus the edges $v_1 v_2, v_2 v_3, v_3 v_1$, where $v_1\in V_{x,y}$, $v_2\in V_{y,z}$, $v_3\in V_{x,z}$.
\end{itemize}
\end{Claim}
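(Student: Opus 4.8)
The plan is to reduce the statement to a ``complete tripartite graph with a few missing edges'' picture and then to pin down those missing edges by locating forbidden induced subgraphs in every configuration other than the three listed.

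First I would apply Claim~\ref{claim:EUVnotempty} to each of the three nonempty edge sets $E(V_{a,b},V_{b,c})$, $E(V_{b,c},V_{a,c})$ and $E(V_{a,b},V_{a,c})$. Each of them induces a complete bipartite graph or a complete bipartite graph minus one edge, and since every $V_{x,y}$ is a stable set, $G[V_{a,b}\cup V_{a,c}\cup V_{b,c}]$ is a complete tripartite graph from which a set $M$ of edges has been deleted, with at most one deleted edge between each pair of parts; in particular $|M|\le 3$. If $|M|=0$ we obtain a complete tripartite graph and if $|M|=1$ a complete tripartite graph minus an edge, which are the first two cases. It remains to exclude $|M|=2$ and to force a triangle when $|M|=3$.

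The heart of the argument is the following observation: if two deleted edges lie in two different sides, then they share their endpoint in the common part. Two distinct sides meet in exactly one part, say $V_{y,z}$, so write the two deleted edges as $pq$ with $p\in V_{x,y}$, $q\in V_{y,z}$ and $q'r$ with $q'\in V_{y,z}$, $r\in V_{x,z}$, and suppose $q\neq q'$. Because each of the two sides misses only one edge, the edges $pq'$ and $qr$ are present, and together with the triangle $\{a,b,c\}$ the set $\{a,b,c,p,q,q',r\}$ induces a forbidden member of $\mathcal F_3$; there are only two subcases to inspect, according to whether the edge $pr$ of the third side is present or absent, and each is matched against the list exactly as in Claim~\ref{cl:unstablecase}. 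Hence $q=q'$.

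With this observation the assembly is immediate. If $|M|=2$, the two deleted edges share their common-part vertex $q$ while the third side is complete, so $pr$ is present; then $\{a,b,c,p,q,r\}$ induces $\Gay$, a contradiction, and therefore $|M|\neq 2$. If $|M|=3$, applying the observation to each of the three pairs of sides forces the three common-part endpoints to coincide, so the deleted edges are $pq$, $qr$ and $pr$ for a single triple $p\in V_{x,y}$, $q\in V_{y,z}$, $r\in V_{x,z}$; that is, they form the triangle of the third case. The main obstacle is the bookkeeping hidden in the observation: one must enumerate the sub-configurations of the two deleted edges (whether their common-part vertices coincide and whether the third-side edge survives) and correctly identify the forbidden subgraph in each bad configuration, which is a finite but delicate check best confirmed with a computer algebra system, as done elsewhere in this section.
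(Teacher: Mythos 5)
Your proof is correct, and it starts from the same reduction as the paper's: Claim~\ref{claim:EUVnotempty} together with the stability of the parts turns $G[V_{a,b}\cup V_{a,c}\cup V_{b,c}]$ into a complete tripartite graph minus at most one edge per side, so $|M|\le 3$. The difference is in how the bad configurations are eliminated. The paper enumerates by brute force: two subcases when $|M|=2$ and four subcases (a)--(d) when $|M|=3$, each dispatched by naming a forbidden induced subgraph. You instead extract a reusable sharing lemma (two deleted edges from different sides must meet in the common part), after which $|M|=2$ needs only one further check and $|M|=3$ becomes pure bookkeeping; this subsumes the paper's cases (a), (b) and (d) in a single statement and makes it transparent why exactly the triangle of deleted edges survives. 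The underlying forbidden subgraphs are the same two in both proofs: the non-sharing configuration with the outer edge present contains $\Gar$ (induced on a six-vertex subset obtained by dropping one vertex of $\{a,b,c\}$, e.g.\ $\{x,z,p,q,q',r\}$), and two deleted edges sharing a vertex, with the edge joining their outer endpoints present, give $\Gay$ on $\{a,b,c\}$ plus the three vertices involved. Incidentally, your label $\Gay$ for the $|M|=2$ configuration is the right one: the paper calls that graph $\Gan$, which cannot be correct ($\Gan$ has a vertex of degree one while the configuration has minimum degree two), and the paper itself identifies the isomorphic configurations arising in its cases (b) and (d) as $\Gay$.

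Three small repairs to your write-up. First, the seven-vertex set $\{a,b,c,p,q,q',r\}$ does not \emph{induce} a member of ${\mathcal F}_3$: members of ${\bf Forb}(\Gamma_{\leq 3})$ are minimal under induced subgraphs, so a seven-vertex graph properly containing a six-vertex member cannot itself be one; say instead that it \emph{contains} a forbidden induced subgraph, which is all the contradiction requires. Second, in the subcase where $pr$ is absent, the contradiction does not come from the pair $(pq,q'r)$ you fixed but from the pair $(pq,pr)$ sharing $p$ together with the present edge $qr$: the forbidden $\Gay$ sits on $\{a,b,c,p,q,r\}$ and never uses $q'$. Your finite check will find it, but the text should say so rather than leave the reader to rediscover it. Third, the pointer to Claim~\ref{cl:unstablecase} is misplaced: that claim and its computer verification concern $G[V_a\cup V_b\cup V_c]$, a different configuration entirely; cite the list ${\mathcal F}_3$ (or the two specific graphs $\Gar$ and $\Gay$) directly.
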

\begin{proof}
By Claim \ref{claim:EUVnotempty}, the induced subgraph $G[V_{a,b}\cup V_{a,c} \cup V_{b,c}]$ is a complete tripartite graph minus at most three edges.
We will analyze the cases where 2 or 3 edges have been removed.

Suppose $G[V_{a,b}\cup V_{a,c} \cup V_{b,c}]$ induces a complete tripartite graph minus 2 edges.
Since $E(V_{x,y},V_{y,z})$ induces complete bipartite graph or complete bipartite graph minus an edge, then the 2 edges cannot be removed from a unique edge set $E(V_{x,y},V_{y,z})$.
Then there are two possibilities:
\begin{itemize}
\item there exist $u, u' \in V_{x,y}$, $v\in V_{y,z}$ and $w\in V_{x,z}$ such that $uv,u'w \notin E(G)$, or
\item there exist $u \in V_{xy}$, $v\in V_{y,z}$ and $w\in V_{x,z}$ such that $uv,uw \notin E(G)$.
\end{itemize}
In the first case the graph induced by the set $\{x,v,u,u',w,z\}$ is isomorphic to the forbidden graph $\Gar$; which is impossible.
And in the second case, the the induced subgraph $G[\{a,b,c,u,v,w\}]$ is isomorphic to $\Gan$, which is impossible.
Thus the case where 2 edges are removed from $G[V_{a,b}\cup V_{a,c} \cup V_{b,c}]$ is not possible.

Suppose $G[V_{a,b}\cup V_{a,c} \cup V_{b,c}]$ is a complete tripartite graph minus 3 edges.
Since $E(V_{x,y},V_{y,z})$ induces a complete bipartite graph or a complete bipartite graph minus an edge, then the 3 edges cannot be removed from a unique edge set $E(V_{x,y},V_{y,z})$.
Thus there are four possible cases:
\begin{enumerate}[(a)]
\item $v_1 v_4, v_3 v_6, v_5 v_2 \notin E(G)$, where $v_1,v_2\in V_{x,y}$, $v_3, v_4\in V_{y,z}$ and $v_5,v_6\in V_{x,z}$
\item $v_1 v_3, v_2 v_5, v_5 v_4 \notin E(G)$, where $v_1,v_2\in V_{x,y}$, $v_3, v_4\in V_{y,z}$ and $v_5\in V_{x,z}$
\item $v_2 v_4, v_4 v_5, v_5 v_2 \notin E(G)$, where $v_1,v_2\in V_{x,y}$, $v_3, v_4\in V_{y,z}$ and $v_5\in V_{x,z}$ and
\item $v_1 v_3, v_3 v_5, v_5 v_2 \notin E(G)$, where $v_1,v_2\in V_{x,y}$, $v_3, v_4\in V_{y,z}$ and $v_5\in V_{x,z}$.
\end{enumerate}
Cases $(a)$, $(b)$ and $(d)$ are impossible, the argument is the following.
Case $(a)$ is impossible because the induced subgraph $G[\{v_2,v_3,v_5,v_6,x,y\}]$ is isomorphic to the forbidden graph $\Gar$.
In case $(b)$, the induced subgraph $G[\{v_2, v_5, v_4, a, b, c\}]$ is isomorphic to the forbidden graph $\Gay$.
And in case $(d)$, the induced subgraph $G[\{v_2, v_3, v_5, a,b,c\}]$ is isomorphic to the forbidden graph $\Gay$.
Thus when 3 edges are removed the only possible case is $(c)$.
\end{proof}
 
By previous Claims we have the following cases:
\begin{enumerate}
\item when the set $V_{x,y}$ is the only not empty set,
\item when $V_{x,z}=\emptyset$, $V_{x,y}=\{v_{xy}\}$, $V_{y,z}=\{v_{yz}\}$ and $v_{xy}v_{yz}\notin E(G)$,
\item when $V_{x,z}=\emptyset$ and $E(V_{x,y},V_{y,z})$ induces a bipartite complete graph,
\item when $V_{x,z}=\emptyset$ and $E(V_{x,y},V_{y,z})$ induces a bipartite complete graph minus an edge,
\item when $V_{a,b}=\{v_{ab}\}$, $V_{a,c}=\{v_{ac}\}$, $V_{b,c}=\{v_{bc}\}$ and $v_{ab}v_{ac},v_{ab}v_{bc},v_{ac}v_{bc}\notin E(G)$,
\item when $V_{x,z}=\{v_{xz}\}$, $V_{y,z}=\{v_{yz}\}$, $v_{xz}v_{yz}\notin E(G)$, and the edge sets $E(v_{xz},V_{x,y})$ and $E(v_{yz},V_{x,y})$ induce a complete bipartite graph,
\item when $V_{x,z}=\{v_{xz}\}$, $V_{y,z}=\{v_{yz}\}$, $v_{xz}v_{yz}\notin E(G)$, and  there exists $v_{xy}\in V_{x,y}$ 
such that $E(v_{xz},V_{x,y})=\{v_{xz}v : v\in V_{x,y}-v_{xy}\}$ and $E(v_{yz},V_{x,y})=\{v_{yz}v : v\in V_{x,y}-v_{xy}\}$.
\item when $G[V_{a,b}\cup V_{a,c} \cup V_{b,c}]$ is isomorphic to a complete tripartite graph,
\item when $G[V_{a,b}\cup V_{a,c} \cup V_{b,c}]$ is isomorphic to a complete tripartite graph minus an edge,
\item when $G[V_{a,b}\cup V_{a,c} \cup V_{b,c}]$ is isomorphic to a complete tripartite graph, where $v_1\in V_{x,y}$, $v_2\in V_{y,z}$, $v_3\in V_{x,z}$ and $v_1 v_2, v_2 v_3, v_3 v_1 \notin E(G)$.
\end{enumerate}

Now we describe the vertex set $V_\emptyset$.
\begin{Remark}\label{rmk:VaVbVcempty:V0}
Let $w,w'\in V_\emptyset$.
Suppose $w$ is adjacent with a vertex in $V_{x,y}$ and with $w'$.
Then the vertex $w'$ is adjacent with a vertex in $V_{x,y}$, because otherwise the shortest path from $w'$ to $z$ would contains a graph isomorphic to $\Gaa$.
Thus each vertex in $V_\emptyset$ is adjacent with a vertex in $V_{x,y}$ for some $\{x,y\}\subset\{a,b,c\}$.
\end{Remark}

\begin{Claim}\label{clm:VxythenEV0Vxy}
If $w\in V_\emptyset$ is adjacent with $v\in V_{x,y}$, then either $w$ is adjacent only with $v$ and with no other vertex in $V_{x,y}$, or $w$ is adjacent with each vertex in $V_{x,y}$.
Moreover, if each vertex in $V_\emptyset$ is adjacent with a vertex in $V_{x,y}$, then either exists a vertex $v\in V_{v,x}$ 
such that each vertex in $V_\emptyset$ is adjacent with $v$, or each vertex in $V_{\emptyset}$ is adjacent with each vertex in $V_{x,y}$.
\end{Claim}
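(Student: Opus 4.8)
The plan is to prove both assertions by producing forbidden induced subgraphs, in the same spirit as the clique‑number‑two analysis of Theorem~\ref{pro::G3W2}. Throughout I will use the three structural facts already established in this subsection: $\{x,y,z\}=\{a,b,c\}$ is a triangle, the set $V_{x,y}$ is stable and each of its vertices is adjacent to $x$ and $y$ but not to $z$, and every vertex of $V_\emptyset$ is non‑adjacent to all of $a,b,c$.

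For the first assertion, suppose $w$ is adjacent to $v$ and, contrary to the dichotomy, is adjacent to a further vertex $v'\in V_{x,y}$ while missing some $v''\in V_{x,y}$; this is the only way the statement can fail, and it forces $|V_{x,y}|\ge 3$. I would then inspect the six vertices $\{y,z,v,v',v''\}\cup\{w\}$. In this induced subgraph $y$ is adjacent to $z,v,v',v''$, the vertices $z$ and $v''$ are pendant neighbours of $y$ (because $z\not\sim V_{x,y}$ and $w\not\sim v''$), and $y,v,w,v'$ form a four‑cycle $y-v-w-v'-y$; there are no further edges since $V_{x,y}$ is stable and $w,z$ meet $\{a,b,c\}$ as prescribed. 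This is exactly the graph $\Gad$, contradicting $\mathcal F_3$‑freeness; hence $N_G(w)\cap V_{x,y}$ is either $\{v\}$ or all of $V_{x,y}$. The only delicate point is to use the third triangle vertex $z$ (and not $x$) as the second pendant, so that the configuration collapses onto $\Gad$ precisely as in the $\omega(G)=2$ case.

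For the second assertion, assume every vertex of $V_\emptyset$ is adjacent to a vertex of $V_{x,y}$; by the first part each $w\in V_\emptyset$ has $N_G(w)\cap V_{x,y}$ equal to a single vertex or to all of $V_{x,y}$. If every $w$ is of the latter type, the second alternative holds. Otherwise I would fix $w_1\in V_\emptyset$ with unique neighbour $v_1\in V_{x,y}$ and show that every $w\in V_\emptyset$ is adjacent to $v_1$, so that $v_1$ is the common vertex. This is immediate if $w$ is adjacent to all of $V_{x,y}$, so suppose $w$ has a unique neighbour $v_w$ and, for contradiction, $v_w\neq v_1$. I would split on the adjacency of $w_1$ and $w$: if $w_1w\notin E(G)$, then $\{w_1,v_1,x,v_w,w\}$ induces the path $w_1-v_1-x-v_w-w$; if $w_1w\in E(G)$, then $\{z,x,v_1,w_1,w\}$ induces the path $z-x-v_1-w_1-w$. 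In either case the induced subgraph is $\Gaa$, which is forbidden, so $v_w=v_1$ and $v_1$ is adjacent to all of $V_\emptyset$.

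The hard part will be the bookkeeping for the second assertion: one must verify that each candidate five‑vertex set really induces a chordless path. The absence of chords rests on $V_{x,y}$ being stable (which gives $v_1\not\sim v_w$) and on the uniqueness statement just proved (which gives $w_1\not\sim v_w$ and $w\not\sim v_1$), while the two cases for the edge $w_1w$ are exactly what make the two different copies of $P_5$ appear. Recognising the correct six vertices in the first assertion—especially that $z$ can play the role of a pendant in $\Gad$—is the other place where a little care is required, but after that both parts reduce to matching a small explicit subgraph against $\mathcal F_3$.
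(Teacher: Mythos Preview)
Your proof is correct and follows essentially the same strategy as the paper: both parts are reduced to spotting members of $\mathcal F_3$ inside carefully chosen five- or six-vertex sets. For the first assertion you use $\{y,z,v,v',v'',w\}\simeq\Gad$ while the paper uses $\{x,z,v,v',v'',w\}$, which is the same configuration up to the $x\leftrightarrow y$ symmetry. For the second assertion your organisation differs slightly: the paper first argues $ww'\notin E(G)$ via the $P_5$ hidden in Remark~\ref{rmk:VaVbVcempty:V0} and then splits on whether $w$ sees the neighbour $v'$ of $w'$, invoking $\Gaa$ in one branch and $\Gaj$ in the other; you instead fix a $w_1$ with a \emph{unique} neighbour $v_1$ and split directly on the edge $w_1w$, which lets you finish both branches with $\Gaa$ alone and avoids $\Gaj$ altogether. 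Either bookkeeping works, and your version is marginally cleaner in that it needs one fewer forbidden graph.
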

\begin{proof}
Since the first statement is easy when $V_{x,y}$ has cardinality at most 2, then we assume that $V_{x,y}$ has cardinality at least 3.
Let $v',v''\in V_{x,y}$.
Suppose $w$ is adjacent with $v$ and $v'$ but not adjacent with $v''$.
Since $G[\{x,z,v,v',v'',w\}]$ is isomorphic to $\Gad$, then we get a contradiction.
And then $w$ is adjacent only with $v$ or with $v,v'$ and $v''$.

Let $w,w'\in V_{\emptyset}$.
Suppose there is $v\in V_{x,y}$ such that $wv\in E(G)$ and $w'v\notin E(G)$.
The vertex $w$ is not adjacent with $w'$, because otherwise by Remark \ref{rmk:VaVbVcempty:V0} we get a contradiction.
Let $v'\in V_{x,y}$ such that $w'$ is adjacent with $w'$.
Thus there are two possible cases:
\begin{itemize}
\item $ww',wv'\notin E(G)$, and
\item $wv'\in E(G)$ and $ww'\notin E(G)$.
\end{itemize}
Since in the first case $G[\{w,v,x,v',w'\}]$ is isomorphic to $\Gaa$ and in the second case $G[\{x,z,v,v',w,w'\}]$ is isomorphic to $\Gaj$, then we get a contradiction and thus there is no vertex in $V_{x,y}$ adjacent with a vertex in $V_\emptyset$ and not adjacent with other vertex in $V_\emptyset$.
\end{proof}

\begin{Claim}\label{clm:VxythenV0Kortrivial}
Let $v\in V_{x,y}$.
If each vertex in $V_\emptyset$ is adjacent with $v$, then $V_\emptyset$ induces either $K_2$ or it is a trivial graph.
If furthermore there exists $v'\in V_{x,y}$ such that no vertex in $V_\emptyset$ is adjacent with $v'$, then $V_\emptyset$ is a clique of cardinality at most 2.
\end{Claim}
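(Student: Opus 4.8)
The plan is to fix notation by assuming without loss of generality that $\{x,y\}=\{a,b\}$, so that $v\in V_{a,b}$ is adjacent to $a$ and $b$ but not to $c$, and, by hypothesis, $v$ is adjacent to every vertex of $V_\emptyset$. The first thing I would record is that $G[V_\emptyset]$ is triangle-free: if $w_1,w_2,w_3\in V_\emptyset$ were pairwise adjacent, then, since each $w_i$ is adjacent to $v$, the set $\{v,w_1,w_2,w_3\}$ would induce $K_4$, contradicting $\omega(G)=3$. Hence $\omega(G[V_\emptyset])\le 2$, and the first assertion reduces to showing that $G[V_\emptyset]$ cannot contain an edge together with a third vertex; equivalently, as soon as $V_\emptyset$ contains one edge it consists of exactly two vertices.

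To establish that, suppose $w_1w_2\in E(G)$ with $w_1,w_2\in V_\emptyset$ and that there is a third vertex $w_3\in V_\emptyset$. By triangle-freeness $w_3$ is adjacent to at most one of $w_1,w_2$, so $\{w_1,w_2,w_3\}$ induces either a $P_3$ (say with edges $w_1w_2$ and $w_1w_3$) or $K_2+K_1$ (with $w_3$ adjacent to neither). In both configurations all three vertices are adjacent to $v$, none of them is adjacent to any of $a,b,c$, and $\{a,b,v\}$ is a triangle; these adjacencies are completely determined. I would then exhibit, on the six vertices $\{a,b,v,w_1,w_2,w_3\}$, a fixed forbidden subgraph of $\mathcal F_3$ in each of the two configurations, reaching a contradiction. (Note that the five-vertex subset $\{a,v,w_1,w_2,w_3\}$ does not suffice in the $K_2+K_1$ case, since it only realizes the {\sf cricket}, which is allowed in $\Gamma_{\leq 3}$; the triangle $\{a,b,v\}$ must be used.) Consequently an edge in $V_\emptyset$ forces $|V_\emptyset|=2$, i.e. $G[V_\emptyset]\simeq K_2$; otherwise $V_\emptyset$ has no edge and is a trivial graph, which proves the first assertion.

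For the second assertion I would use that $V_{a,b}$ induces a stable set, so $v$ and $v'$ are non-adjacent while both are joined to $a$ and $b$, and that no vertex of $V_\emptyset$ is adjacent to $v'$. By the first assertion the only remaining case is that $V_\emptyset$ is a trivial graph with at least two vertices, since $K_2$, a single vertex, and the empty set are already cliques of cardinality at most $2$. So suppose $w_1,w_2\in V_\emptyset$ are non-adjacent. Then on $\{a,b,v,v',w_1,w_2\}$ the adjacencies are forced: $\{a,b\}$ is an edge, $v$ and $v'$ form a non-adjacent pair each joined to $a$ and $b$, and $w_1,w_2$ form a non-adjacent pair joined to $v$ only. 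I would check that this configuration is isomorphic to a member of $\mathcal F_3$, contradicting $\mathcal F_3$-freeness; hence $V_\emptyset$ cannot be a trivial graph of size $\ge 2$, and $V_\emptyset$ is a clique of cardinality at most $2$.

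The conceptual steps—the triangle-free reduction and the reduction to finitely many explicit six-vertex configurations—are immediate; the genuine work, and the main obstacle, is the pattern-matching that identifies precisely which graph of $\mathcal F_3$ each configuration realizes. This is delicate because several members of $\mathcal F_3$ are drawn in inflated form, with a single node representing a clique or a stable set of cardinality $2$ (the black/white/gray convention of Figure \ref{figure:FamilyGraphF3}); thus a bona fide six-vertex configuration, such as the non-adjacent pair $\{w_1,w_2\}$ attached to $v$, must first be recognized as a doubled (white) vertex before the isomorphism with a forbidden graph becomes apparent. Carrying out this verification systematically—most naturally with the computer algebra assistance already used elsewhere in this section—is the step that demands the most care.
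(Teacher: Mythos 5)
Your outline reproduces the paper's skeleton---triangle-freeness of $G[V_\emptyset]$, reduction to an induced $P_3$ or $K_2+K_1$ inside $V_\emptyset$, and a non-adjacent pair for the second assertion---but the verification you defer is not merely delicate: it is impossible as you have set it up, because you committed to the wrong six vertices. Since $a$ and $b$ are adjacent to each other and to $v$, while no $w_i$ is adjacent to $a$ or $b$, the set $\{a,b,v,w_1,w_2,w_3\}$ induces the cone with apex $v$ over $K_2+P_3$ in the $P_3$ case (degree sequence $(5,3,2,2,2,2)$) and the cone over $2K_2+K_1$ in the $K_2+K_1$ case (degree sequence $(5,2,2,2,2,1)$). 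Neither graph contains any member of $\mathcal F_3$: every member except $P_5$ has at least six vertices once its black/white vertices are expanded, no six-vertex member of $\mathcal F_3$ has either of these degree sequences, and there is no induced $P_5$ (a vertex adjacent to all five others lies on no induced path with more than three vertices, and $K_2+P_3$, $2K_2+K_1$ contain none). The same failure occurs in your second part: $\{a,b,v,v',w_1,w_2\}$ induces the diamond $K_2\vee(K_1+K_1)$ on $\{a,b,v,v'\}$ with the two pendant vertices $w_1,w_2$ attached at $v$; its degree sequence $(4,3,3,2,1,1)$ occurs among six-vertex members of $\mathcal F_3$ only for $\Gak$, which is not isomorphic to it (in $\Gak$ the two degree-one vertices hang from vertices of different degrees), and again there is no induced $P_5$. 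So all three of your configurations are $\mathcal F_3$-free, no contradiction can be extracted from them, and your parenthetical remark draws exactly the wrong moral: the cricket obstruction is not cured by adjoining the second common neighbor $b$ of $a$ and $v$.

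The idea your proposal is missing is that one must use $c$, the third vertex of the clique $W=\{a,b,c\}$: because $v,v'\in V_{a,b}$ and $w_i\in V_\emptyset$, the vertex $c$ is adjacent to $a$ and $b$ but to none of $v$, $v'$, $w_1$, $w_2$, $w_3$, and it is precisely this asymmetry that produces forbidden subgraphs. Concretely, $G[\{a,c,v,w_1,w_2,w_3\}]\simeq\Gah$ in the $P_3$ case, $G[\{a,c,v,w_1,w_2,w_3\}]\simeq\Gag$ in the $K_2+K_1$ case, and $G[\{a,c,v,v',w_1,w_2\}]\simeq\Gab$ for a non-adjacent pair $w_1,w_2$; the last of these is exactly the paper's argument, which takes the set $\{w,w',v,v',x,z\}$ with $z$ the element of $\{a,b,c\}$ outside $\{x,y\}$. (The paper's printed proof of the first assertion contains the same slip as yours: its sets $\{x,y,v,w_1,w_2,w_3\}$ should read $\{x,z,v,w_1,w_2,w_3\}$, and in the $K_2+K_1$ case the resulting graph is $\Gag$ rather than $\Gai$.) Your preliminary steps---the $K_4$ argument excluding a triangle in $V_\emptyset$, which the paper leaves implicit, and the reduction to the two three-vertex configurations---are correct; but the decisive step, as you propose to carry it out, would fail no matter how carefully the pattern-matching against $\mathcal F_3$ were performed.
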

\begin{proof}
First note that $P_3$ is forbidden as induced subgraph in $G[V_\emptyset]$.
It is because if the vertices $w_1, w_2, w_3 \in V_\emptyset$ induce a graph isomorphic to $P_3$, then $G[\{ x, y, v, w_1, w_2, w_3\}]\simeq \Gah$.
Now we will see that each component in $G[V_\emptyset]$ is a clique.
Let $C$ be a component in $G[V_\emptyset]$.
Suppose $C$ is not a clique, then there are two vertices not adjacent in $C$, say $w$ and $w'$.
Let $P$ be the smallest path contained in $C$ between $w$ and $w'$.
The length of $P$ is greater or equal to $3$. 
So $P_3$ is an induced subgraph of $P$, and hence of $C$.
Which is a contradiction, and therefore, $C$ is a clique.
On the other hand, the graph $K_2+K_1$ is forbidden as induced subgraph in $G[V_\emptyset]$.
It is because if $w_1, w_2, w_3 \in V_\emptyset$ such that $G[\{ w_1, w_2, w_3\}]\simeq K_2+K_1$, then $G[\{ x, y, v, w_1, w_2, w_3\}]\simeq \Gai$.
Therefore, if $G[V_\emptyset]$ has more than one component, then each component has cardinality one.

Let $v,v'\in V_{x,y}$ such that each vertex in $V_\emptyset$ is adjacent with $v$, and no vertex in $V_\emptyset$ is adjacent with $v'$.
Suppose $V_\emptyset$ induces a stable set of cardinality at least 2.
Take $w,w'\in V_\emptyset$.
Then we get a contradiction since the induced graph $G[\{w,w',v,v',x,z\}]$ is isomorphic to $\Gab$.
Thus $V_\emptyset$ is a clique of cardinality at most 2.
\end{proof}

Thus by Claims \ref{clm:VxythenEV0Vxy} and \ref{clm:VxythenV0Kortrivial}, in case (1) we have the following possible cases:
\begin{itemize}
\item $V_\emptyset$ is a clique of cardinality at most 2, and each vertex in $V_\emptyset$ is adjacent with only one vertex in $V_{x,y}$, and
\item $V_\emptyset$ is a trivial graph and $E(V_\emptyset,V_{x,y})$ induces a complete bipartite graph.
\end{itemize}
Note that these graphs are isomorphic to an induced subgraph of a graph in $\mathcal F_1^1$.

\begin{Claim}\label{clm:VabVbcVbcEV0VxyVyz1}
If $E(V_{x,y},V_{y,z})=\emptyset$ and $E(V_{\emptyset},V_{x,y}\cup V_{y,z})\neq\emptyset$, then $V_\emptyset$ is a clique of cardinality at most 2, and each vertex in $V_\emptyset$ is adjacent each vertex in $V_{x,y}\cup V_{y,z}$
\end{Claim}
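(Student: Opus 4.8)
The plan is to set $V_{x,y}=\{v_1\}$ and $V_{y,z}=\{v_2\}$ and to prove the three assertions—adjacency to both $v_1,v_2$, the clique property, and the cardinality bound—in that order, each time exhibiting a forbidden induced subgraph from $\mathcal F_3$. First I would record the reduction: since $E(V_{x,y},V_{y,z})=\emptyset$ with both sets nonempty, Claim \ref{claim:EUVempty} forces $|V_{x,y}|=|V_{y,z}|=1$, so indeed $V_{x,y}=\{v_1\}$, $V_{y,z}=\{v_2\}$ with $v_1v_2\notin E(G)$; and by Remark \ref{rmk:VaVbVcempty:V0} every vertex of $V_\emptyset$ is adjacent to $v_1$ or to $v_2$, these being the only nonempty two-element sets.

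Next I would show that each $w\in V_\emptyset$ is adjacent to \emph{both} $v_1$ and $v_2$. Suppose $w$ meets $v_1$ but not $v_2$. Then the five vertices $w,v_1,x,z,v_2$ form the path $w-v_1-x-z-v_2$ with no chords (among $\{x,z,v_1,v_2\}$ the only edges are $v_1x$, $zv_2$, $xz$, and $w$ sees none of $x,z,v_2$), so $G[\{w,v_1,x,z,v_2\}]\simeq \Gaa$, which is forbidden. The symmetric path $w-v_2-z-x-v_1$ rules out the opposite case. Together with the previous paragraph this already gives the last assertion of the claim.

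For the clique property, suppose $w,w'\in V_\emptyset$ are nonadjacent; by the previous step both are joined to $v_1$ and to $v_2$. The crux is choosing the right forbidden subgraph, and this is where I expect the main difficulty: the ``obvious'' six vertices $\{x,z,v_1,v_2,w,w'\}$ do \emph{not} work, since they induce a $5$-cycle with one vertex doubled into a stable pair, and one checks this graph is $\mathcal F_3$-free (it has no induced $\Gaa$ and is isomorphic to none of the six-vertex members of $\mathcal F_3$), hence lies in $\Gamma_{\leq3}$. The correct choice instead feeds in the apex $y$ common to $V_{x,y}$ and $V_{y,z}$: I claim $G[\{y,z,v_1,v_2,w,w'\}]\simeq \Gap$. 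Its edge set is $\{yz,yv_1,yv_2,zv_2,v_1w,v_1w',v_2w,v_2w'\}$, and mapping the twin pair $\{w,w'\}$ to the white (stable) vertex of $\Gap$, sending $v_2$ to the degree-$4$ vertex, $v_1$ to its degree-$3$ neighbour, and $y,z$ to the two remaining vertices, identifies the two graphs edge for edge. Since $\Gap$ is forbidden we obtain a contradiction, so $V_\emptyset$ induces a clique.

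Finally, for the cardinality bound I would observe that, $V_\emptyset$ being a clique all of whose vertices are adjacent to $v_1$, the set $V_\emptyset\cup\{v_1\}$ is a clique; hence $|V_\emptyset|+1\leq\omega(G)=3$, giving $|V_\emptyset|\leq2$, which completes the proof. To summarise the obstacle: it is tempting to search for the contradiction within $\{x,z,v_1,v_2,w,w'\}$, but that configuration is genuinely realizable in $\Gamma_{\leq3}$, and the nonadjacency of $w,w'$ only becomes forbidden once the common neighbour $y$ is included, yielding $\Gap$.
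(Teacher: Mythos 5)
Your proof is correct and takes essentially the same route as the paper's: the paper likewise gets double-adjacency from an induced $P_5$ ($\Gaa$) and then rules out a nonadjacent pair $w,w'\in V_\emptyset$ by exhibiting $\Gap$ on a six-vertex set containing the common triangle vertex $y$ (the paper uses $\{w,w',x,y,v_{xy},v_{yz}\}$, the mirror image of your $\{w,w',y,z,v_1,v_2\}$). Your explicit reduction to singletons, the derivation of $|V_\emptyset|\leq 2$ from $\omega(G)=3$, and the observation that $\{x,z,v_1,v_2,w,w'\}$ alone is $\mathcal F_3$-free are details the paper leaves implicit.
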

\begin{proof}
Let $v_{xy}\in V_{x,y}$, $v_{yz}\in V_{y,z}$ such that $v_{xy}v_{yz}\notin E(G)$.
It is easy to see that if $w$ is adjacent with $v_{xy}$ or $v_{yz}$, then $w$ is adjacent with both vertices, because otherwise $G$ has an induced subgraph isomorphic to $\Gaa$.
Thus each vertex in $V_{\emptyset}$ is adjacent with each vertex in $V_{x,y}\cup V_{y,z}$.
Now suppose $w,w'\in V_\emptyset$ such that $w$ and $w'$ are not adjacent.
Since the induced subgraph $G[\{w,w',x,y,v_{xy},v_{yz}\}]$ is isomorphic to $\Gap$, then we get a contradiction and $V_\emptyset$ induces a clique of cardinality at most 2.
\end{proof}

By previous Claim we get that in case (2), $V_\emptyset$ is a clique of cardinality at most 2 and each vertex in $V_\emptyset$ is adjacent each vertex in $V_{x,y}\cup V_{y,z}$.
This graph is isomorphic to an induced subgraph of a graph in $\mathcal F_1^1$.

\begin{Claim}\label{clm:VabVbcVbcEV0VxyVyz2}
If the edge set $E(V_{x,y},V_{y,z})$ induces a complete bipartite graph and $E(V_{\emptyset},V_{x,y}\cup V_{y,z})\neq\emptyset$, then $V_\emptyset$ is a clique of cardinality at most 2, and each vertex in $V_\emptyset$ is adjacent only to one vertex in $V_{x,y}\cup V_{y,z}$.
\end{Claim}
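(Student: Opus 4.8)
The plan is to mimic the structure of Claim~\ref{clm:VabVbcVbcEV0VxyVyz1}, but now exploiting the complete join between $V_{x,y}$ and $V_{y,z}$ to force the \emph{opposite} conclusion: each vertex of $V_\emptyset$ reaches $V_{x,y}\cup V_{y,z}$ through a single edge. Throughout I fix distinct $x,y,z\in\{a,b,c\}$ and vertices $v_{xy}\in V_{x,y}$, $v_{yz}\in V_{y,z}$; since $v_{xy}\sim v_{yz}$, the set $\{v_{xy},v_{yz},y\}$ is a maximal triangle. By Remark~\ref{rmk:VaVbVcempty:V0} together with connectedness, every vertex of $V_\emptyset$ has a neighbour in $V_{x,y}\cup V_{y,z}$, so it suffices to analyse such vertices, and by Claim~\ref{clm:VxythenEV0Vxy} (applied inside each class) any $w\in V_\emptyset$ is adjacent either to exactly one vertex of $V_{x,y}$ or to all of $V_{x,y}$, and likewise for $V_{y,z}$.

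First I would establish the single-neighbour statement. Fix $w\in V_\emptyset$ and suppose it has two neighbours $p\neq q$ in $V_{x,y}\cup V_{y,z}$. If $p$ and $q$ lie in different classes, say $p\in V_{x,y}$, $q\in V_{y,z}$, then $p\sim q$, and the six vertices $\{a,b,c,p,q,w\}$ induce a copy of $\Gay$: here $w$ is the unique degree-$2$ vertex, $\{y,p,q\}$ is the triangle of degree-$4$ vertices, and $x,z$ are the two adjacent degree-$3$ vertices; this is forbidden. If instead $p,q$ lie in the same class, say $p,q\in V_{x,y}$ (so $p\not\sim q$), Claim~\ref{clm:VxythenEV0Vxy} forces $w$ to be adjacent to \emph{all} of $V_{x,y}$, and I would derive a contradiction by adjoining $\{a,b,c\}$ and a vertex of $V_{y,z}$, which is joined to every vertex of $V_{x,y}$. \textbf{This same-class sub-case is the main obstacle:} because the join makes the $V_{y,z}$-vertex a common neighbour of $p$ and $q$, the natural five- and six-vertex candidates all acquire chords (they are triangle-free bipartite graphs such as $K_{3,3}$ minus an edge, which is \emph{not} itself in $\mathcal F_3$), so the contradicting configuration does not reduce to $\Gaa$ and must be located as a specific, possibly seven-vertex, member of $\mathcal F_3$, verified by a computer algebra system as in Claim~\ref{cl:unstablecase}.

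With the single-neighbour property in hand, write $u(w)$ for the unique neighbour of $w\in V_\emptyset$ in $V_{x,y}\cup V_{y,z}$. To prove $V_\emptyset$ is a clique I would rule out a non-adjacent pair $w\not\sim w'$ by cases on $u(w),u(w')$. If $u(w)=u(w')=:u$, then choosing $v$ in the class opposite to $u$, the set $\{x,z,u,v,w,w'\}$ induces $\Gad$ (the four-cycle $u\!-\!x\!-\!z\!-\!v$ with the two pendants $w,w'$ hung on $u$). If $u(w)\neq u(w')$ lie in the same class, then $w\!-\!u(w)\!-\!v\!-\!u(w')\!-\!w'$, with $v$ in the opposite class, is an induced $\Gaa=P_5$. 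If they lie in different classes, then $\{x,y,u(w),u(w'),w,w'\}$ induces $\Gak$. For the cardinality bound, suppose $w_1,w_2,w_3\in V_\emptyset$ form a triangle: if they share a common neighbour $u$ then $\{w_1,w_2,w_3,u\}$ is a $K_4$, contradicting $\omega(G)=3$; otherwise two of them, $w_i$ with $u(w_i)$ and $w_j$ with $u(w_j)\neq u(w_i)$, yield the induced path $w_j\!-\!w_i\!-\!u(w_i)\!-\!v\!-\!z$ (a copy of $\Gaa$), where $v$ is chosen in the opposite class with $v\neq u(w_j)$ and $z$ is the element of $\{a,b,c\}$ adjacent to $v$ but not to $u(w_i)$. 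Hence $|V_\emptyset|\le 2$.

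The only delicate points, apart from the same-class sub-case above, are the choices of the auxiliary vertices $v$ and $z$ that complete each forbidden subgraph: the complete bipartite join must be invoked to supply the edge $u(w)\sim v$, while $z$ (respectively $x$) must be taken in the class \emph{not} joined to $v$, so that the long paths remain induced; in the cardinality argument one must additionally check that $v$ can be chosen distinct from $u(w_j)$, which is possible unless all three neighbours coincide, a situation already excluded by the $K_4$ case.
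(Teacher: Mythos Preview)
Your overall architecture is close to the paper's, and several of your subgraph identifications are correct (the cross-class case via $\Gay$, the induced $P_5$ when $u(w)\neq u(w')$ lie in the same class, and the $\Gak$ configuration when they lie in different classes). However, the proof is incomplete precisely at the point you flag as ``the main obstacle'': you never actually exhibit the forbidden induced subgraph in the same-class sub-case, deferring instead to an unspecified CAS check. The paper does not leave this open. After ruling out neighbours in both classes (your $\Gay$ step), one takes $w\in V_\emptyset$ adjacent to $v,v'\in V_{x,y}$, picks any $u\in V_{y,z}$ (so $u\sim v,v'$ by the complete join and $u\not\sim w$ by the cross-class step), and observes that the seven vertices $\{w,v,v',u,a,b,c\}$ induce $\Gbl$. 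This is exactly the seven-vertex member of $\mathcal F_3$ you anticipated; without naming it your argument is not a proof.

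There is a second, smaller gap in your cardinality argument. When $w_1,w_2,w_3$ form a triangle in $V_\emptyset$ with not all $u(w_k)$ equal, your $P_5$ construction $w_j\!-\!w_i\!-\!u(w_i)\!-\!v\!-\!z$ requires a vertex $v$ in the class opposite to $u(w_i)$ with $v\neq u(w_j)$. If $u(w_i)$ and $u(w_j)$ lie in different classes and both $|V_{x,y}|=|V_{y,z}|=1$, no such $v$ exists in either direction, and your fallback (``already excluded by the $K_4$ case'') does not apply, since the three neighbours do \emph{not} coincide. The paper avoids this by first proving that all vertices of $V_\emptyset$ have their unique neighbour in the \emph{same} class (using $\Gaj$ on $\{w,w',u,v,x,z\}$), so the problematic mixed case never arises; with that in hand, Claim~\ref{clm:VxythenEV0Vxy} forces a common neighbour, and then a nonadjacent pair $w,w'$ together with $\{u,v,x,z\}$ gives a six-vertex forbidden configuration directly.
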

\begin{proof}
First we prove that there each vertex in $V_\emptyset$ is adjacent with a vertex in only one of the sets $V_{x,y}$ or $V_{y,z}$.
Suppose there exists a vertex $w\in V_\emptyset$ adjacent with $v\in V_{x,y}$ and $u\in V_{y,z}$.
Since the induced subgraph $G[\{w,a,b,c,v,u\}]$ is isomorphic to $\Gay$, then we get a contradiction and each vertex in $V_\emptyset$ is adjacent only with vertices of one of the vertex sets $V_{x,y}$ or $V_{y,z}$.
Suppose there are two vertices $w,w'\in V_\emptyset$ such that $w$ is adjacent with $v\in V_{x,y}$, and $w'$ is adjacent with $u\in V_{y,z}$.
Since $G[\{w,w',u,v,x,z\}]$ is isomorphic to $\Gaj$, then this is impossible and the vertices of $V_\emptyset$ are adjacent only to vertices in one of the vertex sets either $V_{x,y}$ or $V_{y,z}$.
Now suppose that $w\in V_\emptyset$ is adjacent with two vertices in $V_{x,y}$, say $v$ and $v'$.
Take $u\in V_{y,z}$.
So $u$ is adjacent with both $v$ and $v'$.
Since the induced subgraph $G[\{w,v,v',u,a,b,c\}]$ is isomorphic to $\Gbl$, then this cannot occur.
Thus each vertex in $V_{\emptyset}$ is adjacent only with one vertex in $V_{x,y}\cup V_{y,z}$.
Finally suppose $V_{\emptyset}$ induces a trivial graph of cardinality at least 2.
Let $w,w'\in V_{\emptyset}$ adjacent with $v\in V_{x,y}$.
Take $u\in V_{y,z}$, so $u$ is adjacent with both $v$.
Since the induced subgraph $G[\{w,w,v,u,x,z\}]$ is isomorphic to $\Gag$, we get a contradiction and the result follows.
\end{proof}

By previous Claim we have that in case (3) the vertex set $V_\emptyset$ induces a clique of cardinality at most 2 and each vertex in $V_\emptyset$ is adjacent only to one vertex in $V_{x,y}\cup V_{y,z}$.
In this case, the graph is isomorphic to an induced subgraph of a graph in $\mathcal F_1^1$.

\begin{Claim}\label{clm:VabVbcVbcEV0VxyVyz3}
Let $u\in V_{x,y}$ and $v,v'\in V_{y,z}$ such that $u$ is adjacent with $v$ but not with $v'$.
If $w\in V_\emptyset$, then $w$ is not adjacent with $v$.
\end{Claim}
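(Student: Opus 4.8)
The plan is to assume, for contradiction, that $w$ is adjacent to $v$, and then to exhibit a forbidden induced subgraph, contradicting the standing hypothesis that $G$ is $\mathcal F_3$-free. First I would record all the adjacencies already forced by the hypotheses: since $u\in V_{x,y}$ we have $u\sim x$, $u\sim y$ and $u\not\sim z$; since $v,v'\in V_{y,z}$ we have $v,v'\sim y,z$ and $v,v'\not\sim x$; the set $V_{y,z}$ is stable, so $v\not\sim v'$; by hypothesis $u\sim v$ and $u\not\sim v'$; and $w\in V_\emptyset$ gives $w\not\sim x,y,z$. Hence, among the seven vertices $\{x,y,z,u,v,v',w\}$, the only adjacencies not yet determined are $w\sim u$ and $w\sim v'$, so I would split the argument into four cases according to the presence or absence of these two edges.

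The case $w\not\sim u$ and $w\sim v'$ is the easiest: I claim that $G[\{x,u,v,w,v'\}]$ is the path $x-u-v-w-v'$, i.e.\ an induced $\Gaa$. Indeed the four path edges are present, and each of the remaining pairs $xv$, $xw$, $xv'$, $uw$, $uv'$, $vv'$ is a non-edge by the list above, so the subgraph is an induced $P_5$. In the other three cases I would instead use the common induced subgraph $H=G[\{u,v,v',y,z,w\}]$. The induced subgraph on $\{u,v,v',y,z\}$ is fixed, with edge set $\{uv,uy,vy,vz,v'y,v'z,yz\}$, and $w$ is joined to $v$ together with a subset of $\{u,v'\}$. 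A direct comparison of neighbourhoods (conveniently checked against degree sequences) then shows that $H\cong\Gao$ when $w\not\sim u$ and $w\not\sim v'$ (degree sequence $(1,2,2,3,4,4)$), that $H\cong\Gaw$ when $w\sim u$ and $w\not\sim v'$ (degree sequence $(2,2,3,3,4,4)$), and that $H\cong\Gaz$ when $w\sim u$ and $w\sim v'$ (degree sequence $(3,3,3,3,4,4)$). Since $\Gao,\Gaw,\Gaz\in\mathcal F_3$, each case contradicts $\mathcal F_3$-freeness, and I conclude that $w\not\sim v$.

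The main obstacle I anticipate is matching each induced subgraph to the correct member of $\mathcal F_3$, rather than producing the contradiction itself. The subtlety is that $\Gaa$ is the \emph{only} five-vertex graph in $\mathcal F_3$, so a forbidden five-vertex configuration must be exactly $P_5$; in three of the four cases no induced $P_5$ is available (the triangle on $x,y,z$ together with the edge $u\sim v$ repeatedly creates chords), which forces me to pass to six-vertex subgraphs and bring in the clique vertices $y$ and $z$. Verifying the three isomorphisms $H\cong\Gao,\Gaw,\Gaz$ is therefore the delicate bookkeeping step, and this is exactly where a computer algebra check, or a careful degree-sequence and neighbourhood comparison, would be used to ensure that the labelling of $H$ agrees with the corresponding graph in Figure \ref{figure:FamilyGraphF3}.
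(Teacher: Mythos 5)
Your proof is correct, and it follows the same overall strategy as the paper's: assume $w\sim v$ and produce a member of $\mathcal F_3$ as an induced subgraph, by case analysis on the two undetermined adjacencies $wu$ and $wv'$. The differences are worth recording. The paper first collapses the two mixed cases by asserting (without proof) that adjacency of $w$ to one of $u,v'$ forces adjacency to both, and then settles the two surviving cases with $G[\{x,z,u,v,v',w\}]\simeq\Gaj$ and $G[\{y,z,u,v,v',w\}]\simeq\Gar$; you instead treat all four cases directly, handling the mixed case $w\sim v'$, $w\not\sim u$ by the induced path on $x,u,v,w,v'$ (which is precisely the kind of $P_5$ argument that justifies the paper's unproven ``Note''), and the other three cases by $\Gao$, $\Gaw$, $\Gaz$ on $\{u,v,v',y,z,w\}$. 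I verified your three isomorphisms and they all hold. Moreover, in the case $w\sim u$, $w\sim v'$ your identification is the correct one and the paper's is not: $\Gar$ has $9$ edges and degree sequence $(2,2,3,3,4,4)$, whereas $G[\{y,z,u,v,v',w\}]$ in that case has $10$ edges and degree sequence $(3,3,3,3,4,4)$, which matches $\Gaz$ exactly. So your argument is not only valid but silently repairs what is evidently a misprint in the paper's proof; the only cost of your route is one extra explicit case in place of the paper's (unjustified) reduction step.
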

\begin{proof}
Suppose $w$ is adjacent with $v$.
Note that if $w$ is adjacent with $u$ or $v'$, then $w$ is adjacent with both $u$ and $v'$.
Thus there are two cases: either $w$ is adjacent only with $v$, or $w$ is adjacent with $v,v'$ and $u$.
Both cases are impossible because in the former case $G[\{x,z,u,v,v',w\}]$ is isomorphic to $\Gaj$, meanwhile in the second case $G[\{y,z,u,v,v',w\}]$ is isomorphic to $\Gar$; which is a contradiction.
\end{proof}
Consider case (4).
Let $u\in V_{x,y}$ and $v\in V_{y,z}$ such that $u$ is not adjacent with $v$.
By Claim \ref{clm:VabVbcVbcEV0VxyVyz3}, each vertex in $V_\emptyset$ is adjacent with $u$ or with $v$.
It is not difficult to see that in fact each vertex in $V_\emptyset$ is adjacent with both $u$ and $v$, because otherwise $G$ would contain $\Gaa$ as induced subgraph.
By applying Claim \ref{clm:VabVbcVbcEV0VxyVyz1} to the induced subgraph $G[\{u,v\}\cup V_\emptyset]$, we get that $V_\emptyset$ is a clique of cardinality at most 2.
And this graph is isomorphic to an induced subgraph of a graph in $\mathcal F_1^1$.

Now consider case (5), by Claim \ref{clm:VabVbcVbcEV0VxyVyz1}, we get that $V_\emptyset$ is a clique of cardinality at most 2, 
and each vertex in $V_\emptyset$ is adjacent each vertex in $V_{a,b}\cup V_{b,c}\cup V_{a,c}$.
And this graph is isomorphic to an induced subgraph of a graph in $\mathcal F_1^1$.

\begin{Claim}\label{clm:VabVbcVbcEV0VxyVyz4}
Let $u_1\in V_{x,y}$, $u_2\in V_{y,z}$ and $u_3\in V_{x,z}$ such that $u_1$ is adjacent with both $u_2$ and $u_3$, and $u_2 u_3\notin E(G)$.
If $w\in V_\emptyset$, then $w$ is not adjacent with $u_1$.
\end{Claim}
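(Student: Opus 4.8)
The plan is to argue by contradiction: I assume $wu_1\in E(G)$ and exhibit an induced subgraph of $G$ isomorphic to a member of $\mathcal F_3$. Two facts are available from the start. First, since $w\in V_\emptyset$ we have $wa,wb,wc\notin E(G)$, so in particular $wx,wy,wz\notin E(G)$. Second, the four vertices $u_1,u_2,u_3,z$ induce a $4$-cycle $u_1u_2zu_3$: indeed $u_1u_2,u_2z,zu_3,u_3u_1\in E(G)$ while the two diagonals $u_1z$ and $u_2u_3$ are non-edges ($u_1\notin N_G(z)$ because $u_1\in V_{x,y}$, and $u_2u_3\notin E(G)$ by hypothesis). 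The only undetermined adjacencies among the seven vertices $a,b,c,u_1,u_2,u_3,w$ are the two edges $wu_2$ and $wu_3$, so I would split the argument according to their presence. The involution fixing $z,u_1,w$ and interchanging $x\leftrightarrow y$ and $u_2\leftrightarrow u_3$ is an automorphism of the configuration, hence the subcase $wu_2\in E(G),\,wu_3\notin E(G)$ is isomorphic to $wu_2\notin E(G),\,wu_3\in E(G)$, and it suffices to treat three cases: (i) $wu_2,wu_3\notin E(G)$; (ii) $wu_2\in E(G),\,wu_3\notin E(G)$; and (iii) $wu_2,wu_3\in E(G)$.

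A preliminary observation guides the choice of witness. In every case the neighbourhood of $w$ inside $\{u_1,u_2,u_3\}$ is a clique (a single vertex in case (i), the adjacent pair $u_1,u_2$ in case (ii)), so $w$ can never sit in the interior of an induced $\Gaa$; moreover the five-vertex sets one is tempted to use (for instance $\{w,u_1,u_2,u_3,z\}$, which in case (i) is the $4$-cycle with a pendant at $u_1$) are not themselves members of $\mathcal F_3$. Consequently I expect to need the full seven-vertex set $\{a,b,c,u_1,u_2,u_3,w\}$, retaining all three clique vertices. For each case I would write down the complete adjacency pattern on these seven vertices --- all of it is forced: the clique $xy,xz,yz$; the memberships $u_1\in V_{x,y}$, $u_2\in V_{y,z}$, $u_3\in V_{x,z}$; the edges $u_1u_2,u_1u_3$ and the non-edge $u_2u_3$; the assumed edge $wu_1$; the edges $wu_2$ and/or $wu_3$ dictated by the case; and $wx,wy,wz$ as non-edges --- and then identify the resulting graph with a member of $\mathcal F_3$, contradicting the $\mathcal F_3$-freeness of $G$. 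In case (iii) one may first note that $\{w,u_1,z\}$ and $\{u_2,u_3\}$ form a complete bipartite pattern together with the extra edge $wu_1$, which already pins down the dense part of the witness before adjoining a clique vertex.

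I expect the main obstacle to be the correct matching of each seven-vertex witness with the appropriate graph of $\mathcal F_3$, together with the careful verification of every adjacency and non-adjacency --- especially the point, noted above, that shrinking the witness to five or six vertices can produce a graph that is \emph{not} forbidden (it lies in $\Gamma_{\leq3}$), so that the three clique vertices cannot be discarded. As in the proofs of Proposition \ref{theorem:F1belongstoGamma3} and Proposition \ref{lem::G3W2}, the identification of each witness and the confirmation that it is $\gamma$-critical of algebraic co-rank $4$ can be carried out with a computer algebra system via the determinantal criterion of Theorem \ref{Theo:trivialcriticalideal}. The bookkeeping is kept manageable by exploiting the $x\leftrightarrow y$, $u_2\leftrightarrow u_3$ symmetry to collapse the four a priori subcases to the three listed above.
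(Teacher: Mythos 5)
Your setup (argue by contradiction, list the forced adjacencies, split into the three cases (i)--(iii) via the $x\leftrightarrow y$, $u_2\leftrightarrow u_3$ symmetry) matches the paper's proof, but the plan you build on top of it contains a genuine error. Your preliminary observation, which steers you to the full seven-vertex witness, is false. In case (ii) the vertex $w$ \emph{does} lie on an induced $\Gaa$: it is an \emph{endpoint}, not an interior vertex, of the path $w\,u_2\,y\,x\,u_3$ (all required non-adjacencies hold, since $u_2\in V_{y,z}$ misses $x$, $u_3\in V_{x,z}$ misses $y$, $u_2u_3\notin E(G)$, and $w$ misses $x,y,u_3$); your ``clique neighbourhood'' remark only excludes $w$ from the interior of such a path, and in any case that remark fails in case (iii), where $N_G(w)\cap\{u_1,u_2,u_3\}=\{u_1,u_2,u_3\}$ is not a clique. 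Likewise, in cases (i) and (iii) a clique vertex \emph{can} be discarded: the paper's proof exhibits the six-vertex witnesses $G[\{x,y,u_1,u_2,u_3,w\}]\simeq\Gam$ in case (i) and $G[\{x,z,u_1,u_2,u_3,w\}]\simeq\Gaz$ in case (iii). So your claim that five- or six-vertex subsets are never forbidden here is simply wrong.

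This is not a cosmetic slip, because it breaks your final step. You propose to ``identify the resulting \mbox{[seven-vertex]} graph with a member of $\mathcal F_3$,'' but no such identification can exist: members of $\mathcal F_3$ are minimal forbidden ($\gamma$-critical) graphs, so none of them properly contains another forbidden graph as an induced subgraph, whereas your seven-vertex configurations contain $\Gaa$, $\Gam$ or $\Gaz$ on five or six of their vertices and are therefore not $\gamma$-critical. A computer check at your identification step would come back negative in all three cases and the proof would stall. The repair is to search for a member of $\mathcal F_3$ as a \emph{proper} induced subgraph of the configuration (equivalently, verify that the configuration has algebraic co-rank at least $4$ and invoke the fact that $\mathcal F_3$ contains every minimal forbidden graph on at most $8$ vertices); carrying out that search is exactly what produces the paper's witnesses $\Gaa$, $\Gam$ and $\Gaz$.
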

\begin{proof}
Suppose $w$ is adjacent with $u_1$.
Note that if $w$ is adjacent with $u_2$ or $u_3$, then $w$ is adjacent with both $u_2$ and $u_3$.
Thus there are two cases: either $w$ is adjacent only with $u_1$, or $w$ is adjacent with $u_1,u_2$ and $u_3$.
Both cases are impossible because in the former case $G[\{x,y,u_1,u_2,u_3,w\}]$ is isomorphic to $\Gam$, 
meanwhile in the second case $G[\{x,z,u_1,u_2,u_3,w\}]$ is isomorphic to $\Gaz$; which is a contradiction.
\end{proof}

Consider the Cases (6) and (9).
Let $v_{xz}\in V_{x,z}$ and $v_{yz}\in V_{y,z}$ such that $v_{xz}$ is not adjacent with $v_{yz}$.
By Claims \ref{clm:VabVbcVbcEV0VxyVyz1} and \ref{clm:VabVbcVbcEV0VxyVyz4}, each vertex in $V_\emptyset$ is adjacent only with both $v_{xz}$ and $v_{yz}$.
And by Claim \ref{clm:VabVbcVbcEV0VxyVyz1}, we get that $V_\emptyset$ is a clique of cardinality at most 2.
And these graphs are isomorphic to an induced subgraph of a graph in $\mathcal F_1^1$.

Now consider the Case (7) and (10).
Let $v_{xy}\in V_{x,y}$, $v_{xz}\in V_{x,z}$ and $v_{yz}\in V_{y,z}$ such that $v_{xy}$ is not adjacent with $v_{xz}$ and $v_{yz}$, and $v_{xz}$ is not adjacent with $v_{yz}$. 
By Claims \ref{clm:VabVbcVbcEV0VxyVyz4} and \ref{clm:VabVbcVbcEV0VxyVyz1}, each vertex in $V_\emptyset$ is adjacent only with $v_{xy},v_{xz}$ and $v_{yz}$.
And by Claim \ref{clm:VabVbcVbcEV0VxyVyz1}, we get that $V_\emptyset$ is a clique of cardinality at most 2.
And these graphs are isomorphic to an induced subgraph of a graph in $\mathcal F_1^1$.

Finally in the case (8), by Claim \ref{clm:VabVbcVbcEV0VxyVyz2} we have that the vertex set $V_\emptyset$ is a clique of cardinality at most 2 and each vertex in $V_\emptyset$ is adjacent with only one vertex in $V_{a,b}\cup V_{b,c}\cup V_{a,c}$.
This case corresponds to a graph that is isomorphic to an induced subgraph of a graph in $\mathcal F_1^1$.

\subsection{Case $V_x=T_n$, where $n\geq 2$.}
First we will obtain that $E(V_x,V_{y,z})$ satisfies one of the following statements:
\begin{itemize}
\item it induces a complete bipartite graph,
\item there exists a vertex $s\in V_{x}$, we called the apex, such that $E(V_x,V_{y,z})=\{uv : u\in V_x-s \text{ and }v\in V_{y,z}\}$, or
\item there exists a vertex $s\in V_{y,z}$, we called the apex, such that $E(V_x,V_{y,z})=\{uv : u\in V_x \text{ and }v\in V_{y,z}-s\}$.
\end{itemize}
To do it, we will prove the following claims.

\begin{Claim}\label{VxequalT2:noparallelnonedges}
Let $u_1, u_2\in V_x$ and $v_1, v_2 \in V_{y,z}$ such that $u_1 v_1, u_2 v_2 \notin E(G)$.
Then either $u_1=u_2$ or $v_1=v_2$.
\end{Claim}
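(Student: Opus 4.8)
The plan is to argue by contradiction: suppose $u_1\neq u_2$ and $v_1\neq v_2$, and produce a forbidden induced subgraph. First I would record the adjacencies forced by the setting of this subsection. Writing $\{x,y,z\}=\{a,b,c\}$, the set $V_x$ is stable, so $u_1u_2\notin E(G)$; the set $V_{y,z}$ is stable, so $v_1v_2\notin E(G)$; moreover $x$ is adjacent to $u_1,u_2$ but not to $v_1,v_2$, while $z$ is adjacent to $v_1,v_2$ but not to $u_1,u_2$, and $xz\in E(G)$ since $\{x,y,z\}$ is a clique. With all of these fixed, the only undetermined adjacencies are the two ``cross'' pairs $u_1v_2$ and $u_2v_1$, so I would split the proof into three cases according to how many of these are edges.

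In the case $u_1v_2,u_2v_1\in E(G)$, I would exhibit the path $v_2-u_1-x-u_2-v_1$; using the recorded non-edges one checks it is induced, so $G[\{v_2,u_1,x,u_2,v_1\}]\cong\Gaa$, which is forbidden. This path deliberately omits $z$, since the clique edge $xz$ would otherwise be a chord. In the case $u_1v_2,u_2v_1\notin E(G)$, the six vertices $\{x,z,u_1,u_2,v_1,v_2\}$ carry exactly the edges $xz,xu_1,xu_2,zv_1,zv_2$, i.e. the double star with adjacent centers $x,z$, so $G[\{x,z,u_1,u_2,v_1,v_2\}]\cong\Gab$, again forbidden. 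The remaining case is that exactly one cross pair is an edge; by the symmetry exchanging $(u_1,v_1)$ with $(u_2,v_2)$ I may assume $u_1v_2\in E(G)$ and $u_2v_1\notin E(G)$. Then on $\{x,z,u_1,u_2,v_1,v_2\}$ the edges are $xz,xu_1,xu_2,zv_1,zv_2,u_1v_2$, which form a $4$-cycle $x-u_1-v_2-z-x$ with a pendant $u_2$ at $x$ and a pendant $v_1$ at $z$; this graph is isomorphic to $\Gaj$, which is forbidden. In each case a member of $\mathcal F_3$ appears as an induced subgraph, so $u_1=u_2$ or $v_1=v_2$.

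The step I expect to be the main obstacle is the middle case. The natural hope is to find one uniform induced $P_5$ in every case, but the clique edge $xz$ produces a chord that kills the obvious length-four paths, so once a cross edge is present no induced $P_5$ survives. The real content is thus to recognize that the correct obstructions are instead the two six-vertex graphs $\Gab$ (the double star, when no cross edge is present) and $\Gaj$ (the $C_4$ with two pendants attached at adjacent cycle vertices, when exactly one cross edge is present); matching these against Figure \ref{figure:FamilyGraphF3} by degree sequence and adjacencies is routine, but it is where care is needed.
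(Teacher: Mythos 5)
Your proof is correct and follows essentially the same route as the paper: the identical three-way case split on the cross pairs $u_1v_2$, $u_2v_1$, with the same forbidden subgraphs --- $\Gaa$ (a $P_5$ through $x$) when both cross edges are present, $\Gab$ (the double star) when neither is, and $\Gaj$ (the $C_4$ with two pendants at adjacent cycle vertices) when exactly one is. Your use of $z$ in place of the paper's $y$ as the second clique vertex is immaterial, since both are adjacent to all of $V_{y,z}$, to $x$, and to neither $u_1$ nor $u_2$.
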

\begin{proof}
Suppose $u_1\neq u_2$ and $v_1\neq v_2$.
There are three possible cases:
\begin{itemize}
\item $u_1 v_2, u_2 v_1 \notin E(G)$,
\item $u_1 v_2 \notin E(G)$ and $u_2 v_1 \in E(G)$,  or
\item $u_1 v_2, u_2 v_1 \in E(G)$.
\end{itemize}
The first two cases are not possible since the induced subgraph $G[\{ x, y, u_1, u_2, v_1, v_2\}]$ would be isomorphic to $\Gab$ and $\Gaj$, respectively.
In the last case, the induced subgraph $G[\{ x, u_1, u_2, v_1, v_2\}]$ is isomorphic to $\Gaa$; which is impossible.
Thus the result follows.
\end{proof}
The last claim implies that all {\it non-edges} in $E(V_x, V_{y,z})$ are incident to a one vertex: the apex $s$.

Suppose the vertex $s$ is in $V_x$, and there are vertices $v_1, v_2\in V_{y,z}$ such that $sv_1\in E(G)$, and $sv_2\notin E(G)$.
By Claim \ref{VxequalT2:noparallelnonedges}, each vertex in $V_x-s$ is adjacent with $v_1$ and $v_2$.
Then the induced subgraph $G[ \{a, b, c, u_1, u_2, v_1, v_2\}]\simeq \Gbk$, that is impossible.
This implies that if the vertex $s\in V_x$ is not adjacent with a vertex in $V_{y,z}$, then $s$ is not adjacent with all vertices in $V_{y,z}$.
A similar argument yields that  if the apex vertex $s$ is in $V_{y,z}$ and $s$ is not adjacent with a vertex in $V_{x}$, then $s$ is not adjacent with all vertices in $V_{x}$.

Thus, we have three cases:
\begin{enumerate}[(a)]
\item $E(V_x, V_{y,z})$ is complete bipartite minus the edges between a vertex $s$ (the apex) in $V_x$ and all vertices of $V_{y,z}$,
\item $E(V_x, V_{y,z})$ is complete bipartite minus the edges between a vertex $s$ (the apex) in $V_{y,z}$ and all vertices in $V_x$, and
\item $E(V_x, V_{y,z})$ is complete bipartite.
\end{enumerate}

\begin{Claim}
If $|V_x|\geq 3$ and $v\in V_{y,z}$, then $E(v,V_x)$ either it induces a complete bipartite graph or it is empty.
\end{Claim}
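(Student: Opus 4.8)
The plan is to reduce the claim to the single case left open by the trichotomy (a)--(c) just obtained, namely case (a) with $|V_x|\geq 3$, and then to rule that case out by exhibiting an induced copy of the forbidden graph $\Gbd$. First I would dispose of cases (b) and (c), where the statement is immediate: in case (c) every $v\in V_{y,z}$ is adjacent to all of $V_x$, so $E(v,V_x)$ is complete bipartite; in case (b) the apex $s\in V_{y,z}$ has $E(s,V_x)=\emptyset$, while every other $v\in V_{y,z}$ is adjacent to all of $V_x$. Thus the real content is that case (a) cannot occur once $|V_x|\geq 3$, and I would argue by contradiction, assuming $E(v,V_x)$ is neither complete bipartite nor empty, so that $v$ has both a neighbor $u_1\in V_x$ and a non-neighbor in $V_x$.

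Next I would pin down the precise adjacencies. Since $v$ has both a neighbor and a non-neighbor in $V_x$, the trichotomy forces case (a): the apex $s$ lies in $V_x$, the only non-edge from $v$ to $V_x$ is $vs$, and $v$ is adjacent to every vertex of $V_x\setminus\{s\}$. (One sees $s\in V_x$ rather than $s\in V_{y,z}$ because by Claim \ref{VxequalT2:noparallelnonedges} all non-edges between $V_x$ and $V_{y,z}$ pass through a single apex, and $v$ having the neighbor $u_1$ prevents $v$ itself from being that apex.) Because $|V_x|\geq 3$, I can choose a second non-apex vertex $u_3\in V_x\setminus\{u_1,s\}$, so that $vu_1,vu_3\in E(G)$ while $vs\notin E(G)$.

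Finally I would exhibit the forbidden subgraph on the seven vertices $\{x,y,z,s,u_1,u_3,v\}$, where $y,z$ are the remaining two vertices of the clique $\{a,b,c\}$. The structure is: $\{y,z\}$ is a clique of size two, both of whose vertices are adjacent to $x$ and to $v$; $\{u_1,u_3\}$ is a stable set of size two, both of whose vertices are adjacent to $x$ and to $v$; the hubs $x$ and $v$ are non-adjacent; and $s$ is adjacent to $x$ only. Matching this against the drawing convention (a black pair for $\{y,z\}$, a white pair for $\{u_1,u_3\}$, and single vertices $x,v,s$) shows $G[\{x,y,z,s,u_1,u_3,v\}]\simeq \Gbd$, which is forbidden by Proposition \ref{lem::G3W2}; this contradicts the hypothesis that $G$ is $\mathcal F_3$-free. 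Hence case (a) is impossible when $|V_x|\geq 3$, and $E(v,V_x)$ is complete bipartite or empty.

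The hard part will be the bookkeeping in this last step rather than any conceptual difficulty: I must check that the seven chosen vertices induce \emph{exactly} $\Gbd$, with no spurious or missing edges. The delicate points are the non-edge $xv$, the fact that $s$ is adjacent to $x$ and to nothing else among the six, and the absence of any edge between the clique $\{y,z\}$ and the stable set $\{u_1,u_3\}$ (which holds because both sets consist of vertices of $V_x\cup V_{y,z}$ whose mutual adjacency is controlled by the clique/stable structure of $V_x$ and the definition of $V_{y,z}$). A single overlooked adjacency would change the isomorphism type, so I expect to verify each of the $\binom{7}{2}$ pairs explicitly, exactly as the authors do with a computer algebra system for such identifications.
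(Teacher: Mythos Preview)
Your proposal is correct, and the seven-vertex identification with $\Gbd$ is accurate. The route differs slightly from the paper's, however. The paper does not invoke the trichotomy (a)--(c) at all here: it argues directly by picking any three vertices $u_1,u_2,u_3\in V_x$ and splitting into two subcases according to whether $v$ has one neighbor and two non-neighbors among them (yielding $G[\{u_1,u_2,u_3,v,x,y\}]\simeq\Gad$) or two neighbors and one non-neighbor (yielding $G[\{u_1,u_2,u_3,v,x,y,z\}]\simeq\Gbd$). Your approach instead uses the trichotomy to observe that the first subcase is already impossible: if $v$ had two non-neighbors in $V_x$, then by Claim~\ref{VxequalT2:noparallelnonedges} the apex would have to be $v\in V_{y,z}$, forcing case (b) and hence $E(v,V_x)=\emptyset$, contradicting the existence of the neighbor $u_1$. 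So you need only the $\Gbd$ configuration, and the two arguments converge on the same seven-vertex subgraph.

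What each buys: your argument is a bit more economical in context, since the $\Gad$ case is in fact redundant once the trichotomy is in hand. The paper's argument is more self-contained---it would stand even without the preceding discussion of the apex---at the cost of one extra forbidden-subgraph check.
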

\begin{proof}
Let $u_1, u_2, u_3 \in V_x$.
Suppose one of the two following possibilities happen: $vu_1\in E(G)$ and $vu_2, vu_3\notin E(G)$ , or $vu_1, vu_2 \in E(G)$ and $vu_3\notin E(G)$.
In the first case the induced subgraph $G[\{ u_1, u_2, u_3, v, x, y\}]$ is isomorphic to $\Gad$, meanwhile in the second case the induced subgraph $G[\{ u_1, u_2, u_3, v, x, y, z\}]$ is isomorphic to $\Gbd$. 
Then both cases cannot occur, and we get the result.
\end{proof}

Thus case (a) occur only when $|V_x|=2$.

In what follows we describe the vertex set $V_\emptyset$, that is, the vertex set whose vertices have no edge in common with the vertex set $\{ a,b,c\}$.
\begin{Remark}\label{re:V0Vx}
Let $w\in V_\emptyset$.
The vertex $w$ is adjacent with a vertex in $V_x \cup V_{y,z}$, because otherwise the shortest path from $w$ to $\{a,b,c\}$ would contains the graph $\Gaa$ as induced subgraph.
Let $u_1,u_2\in V_{x}$.
If $w$ is adjacent with $u_1$ or $u_2$, then $w$ is adjacent with both vertices, because otherwise the induced subgraph $G[\{a, b, c, u_1, u_2, w\}]$ would be isomorphic to $\Gag$, which is forbidden.
\end{Remark}
In case (a), we will see that each vertex in $V_\emptyset$ is adjacent with each vertex in $V_x\cup V_{y,z}$. 
Let $w$ in $V_\emptyset$.
Supppose $s\in V_x$ is the vertex that is not adjacent with any vertex in $V_{y,z}$.
If $w\in V_\emptyset$ is adjacent with one of the vertices in $\{ s\}\cup V_{y,z}$, then $w$ must to be adjacent with $s$ and each vertex in $V_{y,z}$, because otherwise let $v\in V_{y,z}$, 
then the induced subgraph $G[\{ x, y, s, w, v\}]$ would be isomorphic to $\Gaa$.  
Then by Remark \ref{re:V0Vx}, $w$ is adjacent with each vertex in $V_x\cup V_{y,z}$.
\begin{Claim}\label{claim:V0T}
The vertex set $V_\emptyset$ induces a stable set.
\end{Claim}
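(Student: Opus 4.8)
The plan is to derive a contradiction with $\omega(G)=3$: as soon as $V_\emptyset$ fails to be independent I would produce a $K_4$. The key point is that case (a) supplies an edge inside $V_x\cup V_{y,z}$ to which every vertex of $V_\emptyset$ is joined, and adding one edge of $V_\emptyset$ completes a clique of size four.

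First I would collect the ingredients already established. Being in case (a) forces $V_{y,z}\neq\emptyset$, since if $V_{y,z}=\emptyset$ then $E(V_x,V_{y,z})$ is empty, which is the complete-bipartite situation of case (c); and case (a) designates one vertex $u\in V_x$, the non-apex, adjacent to every vertex of $V_{y,z}$. Fixing any $v\in V_{y,z}$ we thus have $uv\in E(G)$. Moreover the paragraph immediately preceding the claim shows that in case (a) every $w\in V_\emptyset$ is adjacent to each vertex of $V_x\cup V_{y,z}$, hence in particular to both $u$ and $v$.

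Now suppose, for contradiction, that $V_\emptyset$ is not a stable set, and choose $w_1,w_2\in V_\emptyset$ with $w_1w_2\in E(G)$. By the previous paragraph $w_1$ and $w_2$ are each adjacent to $u$ and to $v$; together with $uv\in E(G)$ and the assumed edge $w_1w_2$, the four vertices $u,v,w_1,w_2$ are pairwise adjacent, so $\{u,v,w_1,w_2\}$ induces a $K_4$. This contradicts $\omega(G)=3$, so no edge $w_1w_2$ with $w_1,w_2\in V_\emptyset$ can exist, and $V_\emptyset$ induces a stable set.

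The only point that needs care is the existence of the edge $uv$, that is, that case (a) genuinely provides a vertex of $V_x$ completely joined to a nonempty $V_{y,z}$; this is immediate from the defining description of case (a) and from $|V_x|=2$. If the claim is meant to cover the whole section $V_x=T_n$ (cases (b) and (c) as well), the same clique argument applies verbatim whenever one can locate an edge between $V_x$ and $V_{y,z}$ both of whose endpoints are joined to all of $V_\emptyset$; the degenerate situations that do not furnish such an edge (for instance $V_{y,z}=\emptyset$, or a vertex of $V_\emptyset$ meeting only one of $V_x$, $V_{y,z}$) would instead have to be excluded by exhibiting a forbidden induced subgraph from $\mathcal F_3$, and that bookkeeping—rather than the idea—would be the main obstacle.
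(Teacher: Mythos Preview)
Your argument is correct and is essentially identical to the paper's: both pick adjacent $w_1,w_2\in V_\emptyset$, use the already-established fact that every vertex of $V_\emptyset$ is joined to all of $V_x\cup V_{y,z}$, choose a non-apex $u\in V_x\setminus\{s\}$ and any $v\in V_{y,z}$ so that $uv\in E(G)$, and conclude that $\{u,v,w_1,w_2\}$ induces a $K_4$, contradicting $\omega(G)=3$. Your final paragraph about cases (b) and (c) is unnecessary, since the claim in the paper is stated and used only within case (a).
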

\begin{proof}
Suppose $w_1, w_2 \in V_\emptyset$ are adjacent.
Since both $w_1$ and $w_2$ are adjacent with $u\in V_x-s$ and $v\in V_{y,z}$, then the induced subgraph $G[\{ u, v, w_1, w_2\}]$ is isomorphic to $K_4$; which is forbidden.
Thus $w_1$ and $w_2$ are not adjacent, and therefore $V_\emptyset$ is a stable set.
\end{proof}
Thus this case corresponds to a graph that is isomorphic to an induced subgraph of a graph in $\mathcal F_1^1$.

Now consider case (b).
Let $u_1,u_2\in V_x$ and $s\in V_{y,z}$ such that $s$ is not adjacent with $u_1$ and $u_2$.
\begin{Claim}\label{claim:V0VxVyz}
If $w\in V_\emptyset$ is adjacent with a vertex in $V_{y,z}\setminus \{s\}$, then each vertex in $V_\emptyset$ is adjacent with each vertex in $V_x \cup V_{y,z}$. 
\end{Claim}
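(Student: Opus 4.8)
The plan is to reduce the statement to a single‑vertex version and then propagate it across $V_\emptyset$. First I would fix a vertex $w\in V_\emptyset$ adjacent to some $v\in V_{y,z}\setminus\{s\}$; in case (b) such a $v$ is adjacent to every vertex of $V_x$, and I would show that this forces $w$ to be adjacent to all of $V_x$, to the apex $s$, and to every vertex of $V_{y,z}\setminus\{s\}$. Each step is an edge‑forcing argument: assume a prescribed edge is missing and exhibit a forbidden induced subgraph, using the clique $\{x,y,z\}$, the fact that $V_x$ and $V_{y,z}$ induce stable sets, and the all‑or‑nothing behaviour on $V_x$ supplied by Remark \ref{re:V0Vx}. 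Once the single‑vertex version holds, the given $w$ is fully adjacent and serves as a reference; I would then rerun the argument at each $w'\in V_\emptyset$ after showing that every such $w'$ must itself meet $V_{y,z}\setminus\{s\}$.

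For the single‑vertex version the two cleanest steps are as follows. By Remark \ref{re:V0Vx} the vertex $w$ is adjacent either to all of $V_x$ or to none; if to none, then for $u_1,u_2\in V_x$ the set $\{w,v,x,y,z,u_1,u_2\}$ induces $\Gbd$, the clique edge $\{y,z\}$ playing the role of the black (clique‑$2$) vertex, the stable pair $\{u_1,u_2\}$ the role of the white (stable‑$2$) vertex, and $w$ the pendant; this is impossible, so $w$ is adjacent to all of $V_x$. For the apex, if $w\not\sim s$ then, since $w\sim u_1$, $u_1\sim x$, $x\sim y$, $y\sim s$, and there are no further edges among $\{w,u_1,x,s\}$, the set $\{w,u_1,x,y,s\}$ induces $\Gaa$; hence $w\sim s$. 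The adjacency of $w$ to the remaining vertices of $V_{y,z}\setminus\{s\}$ I would obtain by the same scheme, exhibiting the appropriate member of $\mathcal F_3$ whenever an edge is missing.

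To globalise, take an arbitrary $w'\in V_\emptyset$; by Remark \ref{re:V0Vx} it meets $V_x\cup V_{y,z}$. If $w'$ meets only $s$, then $w'-s-y-x-u_1$ is an induced $\Gaa$. If $w'$ meets $V_x$ (hence all of $V_x$) but no vertex of $V_{y,z}\setminus\{s\}$, then either $w'\not\sim s$, again giving the induced $\Gaa$ on $\{w',u_1,x,y,s\}$, or $w'\sim s$, in which case I would compare $w'$ with the fully adjacent reference $w$: on $\{w,w',u_1,v,s,x\}$ the configuration induces a forbidden subgraph such as $\Gal$, with a split according to whether $w\sim w'$. In either case a contradiction results, so every $w'$ meets $V_{y,z}\setminus\{s\}$ and the single‑vertex version applies to it, yielding the full adjacency claimed.

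The hard part will be the apex together with the propagation. The apex $s$ is invisible to every argument routed through $V_x$, because $s$ is adjacent to no vertex of $V_x$; reaching it needs the dedicated induced $\Gaa$ above and, in the propagation, a careful comparison of a candidate $w'$ with a fully adjacent reference vertex, where the subcase $w\sim w'$ is the most resistant and where I am least confident that one fixed small forbidden graph suffices. I expect the genuine bookkeeping to lie in cataloguing which member of $\mathcal F_3$ appears in each subcase; several of these are realisations in which a clique edge of $\{x,y,z\}$ or the stable pair $\{u_1,u_2\}$ is a blown‑up (black or white) vertex, and their membership in $\mathcal F_3$ is confirmed through the evaluation criterion of Theorem \ref{Theo:trivialcriticalideal}, equivalently by direct computation.
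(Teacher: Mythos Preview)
Your plan is essentially the paper's own argument: first pin down the single vertex $w$ (using $\Gbd$ to force adjacency to all of $V_x$, then $\Gaa$ on $\{w,u_1,x,y,s\}$ for the apex, and a further forbidden subgraph for the remaining vertices of $V_{y,z}\setminus\{s\}$), and then propagate to every $w'\in V_\emptyset$ by comparison with the fully adjacent reference vertex. The only difference is in the witnesses chosen for the propagation step: the paper works on $\{x,y,s,v,w,w'\}$ and gets $\Gap$ when $ww'\notin E$, and drops to $\{x,y,v,w,w'\}$ to get the induced $\Gaa$ when $ww'\in E$; your set $\{w,w',u_1,v,s,x\}$ also succeeds (it induces $\Gal$ and $\Gao$ in the two subcases, respectively), so your hesitation there is unwarranted, though the paper's five-vertex $\Gaa$ in the adjacent case is the cleaner choice.
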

\begin{proof}
Let $w\in V_\emptyset$.
Suppose $w$ is adjacent with $v\in V_{y,z}\setminus\{s\}$.
If $w$ is not adjacent with both vertices $u_1, u_2\in V_x$, then we get that the induced subgraph $G[\{ a, b, c, w, u_1, u_2, v\}]$ is isomorphic to $\Gbd$, which cannot be.
Thus $w$ is adjacent with at least one vertex in $V_x$.
Then by Remark \ref{re:V0Vx}, $w$ is adjacent with both vertices $u_1$ and $u_2$.
If there exist $v'\in V_{y,z}\setminus\{s\}$ such that $w$ is not adjacent with $v'$, then the vertex set $\{ w, u_1, x, y, v, v'\}$ would induce a graph isomorphic to $\Gap$.
On the other hand, if $w$ is not adjacent with $s$, then the vertex set $\{ w, u_1, x, y, s\}$ induces the subgraph $\Gaa$.
Therefore, $w$ is adjacent with each vertex in $V_x \cup V_{y,z}$.

Suppose there is another vertex $w'\in V_\emptyset$.
By the above argument if $w'$ is adjacent with a vertex in $V_{y,z}\setminus\{s\}$, then it must be adjacent with each vertex in $V_x \cup V_{y,z}$.
Also if $w'$ is adjacent with a vertex in $\{s\}\cup V_x$, then $w'$ must be adjacent with each vertex in $\{s\}\cup V_x$.
So suppose $w'$ is adjacent with each vertex in $s\cup V_x$ and $w'$ is not adjacent with each vertex in $V_{y,z}\setminus \{s\}$.
Then there are two possibilities: either $ww'\notin E(G)$ or $ww'\in E(G)$.
Let $v\in V_{y,z}\setminus\{s\}$.
In the first case $G[\{ x, y, s, v, w, w'\}]\simeq \Gap$ and in the second  case $G[\{ x, y, v, w, w'\}]\simeq \Gaa$.
Since both graphs are forbidden, then we get a contradiction and thus $w'$ is adjacent with $v$.
And therefore $w'$ is adjacent with each vertex in $V_x\cup V_{y,z}$.
\end{proof}
\begin{Claim}\label{lemma:V02paVxVxVyz}
Either each vertex in $V_\emptyset$ is adjacent with each vertex in $V_x\cup\{s\}$, or each vertex in $V_\emptyset$ is adjacent with each vertex in $V_x\cup V_{y,z}$.
\end{Claim}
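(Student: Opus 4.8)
The plan is to split $V_\emptyset$ according to whether it sends an edge into $V_{y,z}\setminus\{s\}$. If some $w\in V_\emptyset$ is adjacent with a vertex of $V_{y,z}\setminus\{s\}$, then Claim \ref{claim:V0VxVyz} already forces every vertex of $V_\emptyset$ to be adjacent with every vertex of $V_x\cup V_{y,z}$, which is precisely the second alternative of the statement. So the only work left is the complementary case, in which no vertex of $V_\emptyset$ meets $V_{y,z}\setminus\{s\}$; there I will prove the first alternative, namely that each $w\in V_\emptyset$ is adjacent with every vertex of $V_x\cup\{s\}$.

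Fix $w\in V_\emptyset$ in this complementary case. By Remark \ref{re:V0Vx}, $w$ has a neighbor in $V_x\cup V_{y,z}$; since $w$ avoids $V_{y,z}\setminus\{s\}$ by hypothesis, that neighbor must lie in $V_x\cup\{s\}$. Moreover, Remark \ref{re:V0Vx} tells us that $w$ is adjacent either to all of $V_x$ or to none of it. Hence it suffices to establish the two implications ``$w$ adjacent with $s$ implies $w$ adjacent with every vertex of $V_x$'' and ``$w$ adjacent with a vertex of $V_x$ implies $w$ adjacent with $s$''. Combined with the previous observation, these force $w$ to be adjacent with all of $V_x\cup\{s\}$: if $w$ meets $V_x$ it meets all of $V_x$ and then, by the second implication, also $s$; while if instead $w$ meets $s$, the first implication gives all of $V_x$, so again all of $V_x\cup\{s\}$.

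Both implications will be proved by exhibiting an induced $\Gaa$. For the first, assume $w$ is adjacent with $s$ but with no vertex of $V_x$, and pick $u_1\in V_x$ (available since $|V_x|\ge 2$); using $s\not\sim u_1$ (the apex property), $s\not\sim x$, $u_1\not\sim y$, and the fact that $w$ is nonadjacent to $x,y,u_1$, the set $\{w,s,y,x,u_1\}$ induces a path on five vertices, isomorphic to $\Gaa$, a contradiction. For the second, assume $w$ is adjacent with $u_1\in V_x$ but $w\not\sim s$; then by the same adjacency bookkeeping the set $\{w,u_1,x,y,s\}$ induces a path isomorphic to $\Gaa$, again a contradiction.

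I expect no conceptual obstacle here: the argument is a clean dichotomy followed by two forbidden-subgraph detections. The only point requiring care is purely combinatorial, namely checking in each configuration that the five chosen vertices induce \emph{exactly} a path with no extra chords. This is where all the standing hypotheses are used simultaneously: $V_x=T_n$ and $V_{y,z}$ are stable sets, $s$ is the apex so it is joined to no vertex of $V_x$, and $w\in V_\emptyset$ so $w$ is nonadjacent to $a,b,c$. Since $|V_x|\ge 2$ a witness $u_1$ always exists, so no degenerate subcase arises, and the dichotomy on $V_\emptyset$ is manifestly exhaustive; hence exactly one of the two alternatives of the claim holds.
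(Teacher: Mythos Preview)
Your proof is correct and follows essentially the same route as the paper: split on whether some $w\in V_\emptyset$ meets $V_{y,z}\setminus\{s\}$, invoke Claim~\ref{claim:V0VxVyz} in the affirmative case, and in the negative case use an induced $\Gaa$ to force adjacency with all of $V_x\cup\{s\}$. Your explicit verification of the two $P_5$ configurations is more detailed than the paper's terse ``otherwise $\Gaa$ would be an induced subgraph'', but the content is the same; one cosmetic point is that the final phrase ``exactly one of the two alternatives holds'' slightly overstates the claim, since when $V_{y,z}=\{s\}$ the two alternatives coincide.
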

\begin{proof}
Let $w\in V_\emptyset$.
Clearly, if $w$ is adjacent with a vertex in $V_x\cup \{ v\}$, then $w$ is adjacent with each vertex in $V_x\cup \{v\}$, because otherwise $\Gaa$ would be an induced subgraph.
By Claim \ref{claim:V0VxVyz}, we have that if there is a vertex $w\in V_\emptyset$ adjacent with a vertex in $V_{y,z}$, different to the apex $s$, then each vertex in $V_\emptyset$ is adjacent with each vertex in $V_x$ and $V_{y,z}\setminus \{ s\}$.
Thus we get the result.
\end{proof}
\begin{Claim}\label{clm:VxTnV0K2T}
The vertex set $V_\emptyset$ induces either a clique of cardinality at most 2 or a trivial graph.
\end{Claim}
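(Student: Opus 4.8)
The plan is to reduce the situation to the one already handled in Claim~\ref{clm:VxythenV0Kortrivial}. The crucial point is that in case (b) the apex $s\in V_{y,z}$ is adjacent to \emph{every} vertex of $V_\emptyset$: by Claim~\ref{lemma:V02paVxVxVyz} each vertex of $V_\emptyset$ is adjacent to every vertex of $V_x\cup\{s\}$, so $sw\in E(G)$ for all $w\in V_\emptyset$. Since moreover $s\sim y$, $s\sim z$ and $y\sim z$, the triple $\{y,z,s\}$ is a triangle whose vertex $s$ dominates $V_\emptyset$. This is precisely the configuration of a triangle-apex dominating $V_\emptyset$ treated before, with $(y,z,s)$ now playing the role of $(x,y,v)$; hence the same forbidden graphs $\Gah$ and $\Gai$, together with the bound $\omega(G)=3$, control the induced structure of $G[V_\emptyset]$.

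First I would check that $P_3$ cannot occur as an induced subgraph of $G[V_\emptyset]$: if $w_1,w_2,w_3\in V_\emptyset$ induced a path, then because $s$ is adjacent to the three of them and $\{y,z,s\}$ is a triangle, the set $\{y,z,s,w_1,w_2,w_3\}$ would induce $\Gah$, which is forbidden. A $P_3$-free graph is a disjoint union of cliques (take a shortest path between two non-adjacent vertices of a component; it has length at least two and thus yields an induced $P_3$), so each component of $G[V_\emptyset]$ is a clique. Next I would check that $K_2+K_1$ is forbidden in $G[V_\emptyset]$: if $w_1w_2\in E(G)$ while $w_3$ is adjacent to neither, then $\{y,z,s,w_1,w_2,w_3\}$ induces $\Gai$, again forbidden. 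Consequently $G[V_\emptyset]$ cannot contain a component of size at least two alongside any other component.

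Finally I would bound the size of a clique inside $V_\emptyset$: if $w_1,w_2,w_3\in V_\emptyset$ formed a triangle, then, $s$ being adjacent to all three, $\{s,w_1,w_2,w_3\}$ would induce $K_4$, contradicting $\omega(G)=3$; so every clique of $G[V_\emptyset]$ has at most two vertices. Putting the three facts together, $G[V_\emptyset]$ is a disjoint union of cliques of size at most $2$ containing no $K_2+K_1$, whence it is either a single clique of cardinality at most $2$ or a graph whose components are all singletons, that is, a trivial graph. I expect the only real care to be needed in verifying that the six-vertex sets genuinely induce $\Gah$ and $\Gai$ rather than some supergraph; this is guaranteed because no vertex of $V_\emptyset$ is adjacent to $y$ or $z$, so the only edges among the six are the triangle $\{y,z,s\}$, the three edges from $s$ to the $w_i$, and the prescribed edges inside $\{w_1,w_2,w_3\}$. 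The small cases $|V_\emptyset|\le 2$ are immediate.
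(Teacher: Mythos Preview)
There is a genuine gap: your identification of the forbidden subgraphs is incorrect. Since $s\in V_{y,z}$, the vertex $s$ is adjacent to both $y$ and $z$, so $\{y,z,s\}$ induces a \emph{triangle}. When $w_1,w_2,w_3\in V_\emptyset$ induce $P_3$, the graph $G[\{y,z,s,w_1,w_2,w_3\}]$ therefore has $8$ edges and degree sequence $(5,3,2,2,2,2)$, whereas $\Gah$ has only $7$ edges and degree sequence $(4,3,2,2,2,1)$. A check against Figure~\ref{figure:FamilyGraphF3} shows that no six-vertex member of $\mathcal F_3$ has degree sequence $(5,3,2,2,2,2)$, and the graph contains no induced $P_5$ either, so this configuration yields no contradiction at all. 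The $K_2+K_1$ step fails for the same reason: your six-vertex graph then has degree sequence $(5,2,2,2,2,1)$, again not realised in $\mathcal F_3$. Your own verification that ``the only edges among the six are the triangle $\{y,z,s\}$, the three edges from $s$ to the $w_i$, and the prescribed edges inside $\{w_1,w_2,w_3\}$'' pinpoints the problem: that extra edge $yz$ is one edge too many for $\Gah$ or $\Gai$.

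The paper's proof avoids this by taking the dominating apex in $V_x$ rather than in $V_{y,z}$. It uses $u_1\in V_x$; since $u_1$ is adjacent to $x$ but \emph{not} to $y$, the triple $\{x,y,u_1\}$ induces only the path $y\text{--}x\text{--}u_1$. Every $w\in V_\emptyset$ is still adjacent to $u_1$ (by Claim~\ref{lemma:V02paVxVxVyz} each $w$ is adjacent to all of $V_x$), and now $G[\{x,y,u_1,w_1,w_2,w_3\}]$ really is isomorphic to $\Gah$ in the $P_3$ case. (For the $K_2+K_1$ case the correct forbidden graph with this path-based apex is $\Gag$ rather than the $\Gai$ printed in the paper, but the argument shape is the same.) Your final bound on cliques in $V_\emptyset$ via $\omega(G)=3$ and the apex $s$ is correct and supplies the ``cardinality at most $2$'' part of the statement.
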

\begin{proof}
First note that $P_3$ is forbidden as induced subgraph in $V_\emptyset$.
It is because if $w_1, w_2, w_3 \in V_\emptyset$ induce $P_3$, then $G[\{ x, y, u_1, w_1, w_2, w_3\}]\simeq \Gah$.
Now we will get that each component in $G[V_\emptyset]$ is a clique.
Let $C$ be a component in $G[V_\emptyset]$.
Suppose $C$ is not a clique, then it has two vertices not adjacent, say $u$ and $v$.
Let $P$ be the smallest path in $C$ between $u$ and $v$.
Thus the length of $P$ is greater or equal to $3$. 
So $P_3$ is an induced subgraph of $P$, and hence of $C$.
Therefore, $C$ is a complete graph.

On the other hand, the graph $K_2+K_1$ is forbidden for $G[V_\emptyset]$.
It is because if $w_1, w_2, w_3 \in V_\emptyset$ such that $G[\{ w_1, w_2, w_3\}]\simeq K_2+K_1$, then $G[\{ x, y, u_1, w_1, w_2, w_3\}]\simeq \Gai$; which cannot happen.
Therefore, if $G[V_\emptyset]$ has more than one component, then each component has cardinality one.
\end{proof}
In the first case of Claim \ref{lemma:V02paVxVxVyz}, if $|V_{y,z}|\geq 2$, then $V_\emptyset$ is either $K_1$ or $K_2$, 
because if $u_1\in V_x$, $v\in V_{y,z}\setminus\{s\}$, and $w_1, w_2\in V_\emptyset$ are adjacent, then $G[\{ w_1, w_2, u_1, v, x, y\}]\simeq \Gad$.
Otherwise if $V_{y,z}=\{s\}$, then both possibilities in Claim \ref{clm:VxTnV0K2T} are allowed.
In the second case of Claim \ref{lemma:V02paVxVxVyz}, if $|V_{y,z}|\geq 3$, we have that $V_\emptyset=\emptyset$, 
because if $w\in V_\emptyset$ is adjacent with $u_1, u_2 \in V_x$ and with two vertices $v_1,v_2 \in V_{y,z}\setminus\{s\}$, we get $G[\{x,y,u_1,u_2, v_1, v_2, w\}]\simeq\Gbo$ as forbidden subgraph. 
If $|V_{y,z}|=2$, then $V_\emptyset$ is trivial since $\omega(G)=3$.
And if $V_{y,z}=\{s\}$, then both possibilities in Claim \ref{clm:VxTnV0K2T} are allowed.
With no much effort the reader can see that each of these cases corresponds to a graph isomorphic to an induced subgraph of a graph in $\mathcal F_1^1$.

Case (c).
By Claim \ref{lemma:V02paVxVxVyz}, there are two possible cases:
\begin{itemize}
\item either each vertex in $V_\emptyset$ is adjacent with each vertex in $V_x \cup V_{y,z}$, or 
\item each vertex in $V_\emptyset$ is adjacent with each vertex in $V_x$, and no vertex in $V_\emptyset$ is adjacent with any vertex in $V_{y,z}$.
\end{itemize}
In first case when $|V_{y,z}|\geq 2$, the vertex set $V_\emptyset$ is empty.
Because if $w\in V_\emptyset$ is adjacent with the vertices $v_1,v_2 \in V_{y,z}$, and $u_1, u_2 \in V_x$, then the induced subgraph $G[\{x,y,u_1,u_2, v_1, v_2, w\}]\simeq\Gbo$ which is forbidden. 
Then the vertex set $V_\emptyset$ is empty.
Otherwise when $|V_{y,z}|=1$, then the vertex set $V_\emptyset$ must be a stable set, because if there exist two adjacent vertices in $V_\emptyset$, 
then by taking a vertex in $V_x$ and a vertex in $V_{y,z}$ we get $K_4$ that is forbidden.
Finally, in the case when each vertex in $V_\emptyset$ is adjacent with each vertex in $V_x$, and no vertex in $V_\emptyset$ is adjacent with a vertex in $V_{y,z}$, 
we get that $V_\emptyset$ is a clique of cardinality at most 2. 
It is because if $w_1, w_2 \in V_\emptyset$ are such that $w_1w_2\notin E(G)$, then we get $G[\{ w_1, w_2, u_1, v, x, y\}]\simeq \Gad$ that is forbidden.
And each of these graphs are isomorphic to an induced subgraph of a graph in $\mathcal F_1^1$.

\subsection{Case $V_x$ is a complete bipartite graph of cardinality at least 3.}
Assume $V_x$ is a complete bipartite graph of cardinality at lest three with $(A,B)$ the bipartition of $V_x$. 

\begin{Claim}\label{claim:E(v,Vx)isnotempty}
If $v\in V_{y,z}$, then $E(v,V_x)\neq \emptyset$.
\end{Claim}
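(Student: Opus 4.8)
The plan is to argue by contradiction. Suppose $E(v,V_x)=\emptyset$, i.e. $v$ has no neighbour in $V_x$. The aim is to produce a six-vertex induced subgraph isomorphic to a member of $\mathcal F_3$, and I expect the relevant forbidden graph to be $\Gah$. The key preparatory move is to extract the right three vertices from $V_x$: since $V_x$ is a complete bipartite graph on at least three vertices with bipartition $(A,B)$, both parts are nonempty and one of them, say $B$, contains two distinct vertices $u_1,u_2$; choose any $u_3\in A$. Because $V_x$ is complete bipartite, $u_3$ is adjacent to both $u_1$ and $u_2$, whereas $u_1$ and $u_2$ lie in the same part and are therefore non-adjacent. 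Thus $u_1,u_2$ are false twins whose common neighbourhood inside the forthcoming subgraph is exactly $\{x,u_3\}$.

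Next I would record the remaining adjacencies dictated by the vertex partition: $\{x,y,z\}$ is a clique; each $u_i\in V_x$ is adjacent to $x$ but to neither $y$ nor $z$; and $v\in V_{y,z}$ is adjacent to $z$ (and to $y$) but not to $x$, while by our standing assumption $v$ is adjacent to none of $u_1,u_2,u_3$. I then consider the induced subgraph on the six vertices $\{x,z,u_1,u_2,u_3,v\}$. Its edges are precisely $xz$ (the clique edge), $xu_1,xu_2,xu_3$ (the $u_i$ lie in $V_x$), $u_1u_3,u_2u_3$ (bipartite edges of $V_x$), and $zv$ (as $v\in V_{y,z}$); every other pair is a non-edge, using $v\not\sim x$, $v\not\sim u_i$, $u_i\not\sim z$, and $u_1\not\sim u_2$. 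Reading off the structure — $x$ a degree-$4$ vertex, $u_3$ a degree-$3$ vertex, $\{u_1,u_2\}$ the stable pair of cardinality two, and $z,v$ a pendant path hung at $x$ — one checks this is exactly $\Gah$. As $\Gah$ is forbidden, this contradiction forces $E(v,V_x)\neq\emptyset$.

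The main obstacle, and the point that makes the statement nontrivial, is choosing the correct six vertices rather than a contradiction falling out of any two vertices of $V_x$. If one uses only a non-adjacent pair $u_1,u_2$ of $V_x$ together with $v$ and the whole clique $\{x,y,z\}$, the resulting graph is a diamond on $\{x,y,z,v\}$ with two pendants at $x$, which is \emph{not} forbidden (it contains no induced $\Gaa$ and is not among the $\mathsf G_{6,\ast}$). The remedy is to retain the common neighbour $u_3$ and to drop one of $y,z$: keeping $u_3$ is what turns $u_1,u_2$ into the false-twin (white) pair of $\Gah$, and this whole configuration is guaranteed exactly by the hypothesis that $V_x$ is complete bipartite on at least three vertices. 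I would present the symmetric choice of $y$ in place of $z$ as interchangeable, so no loss of generality is incurred.
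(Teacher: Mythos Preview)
Your proof is correct and follows essentially the same route as the paper: assume $E(v,V_x)=\emptyset$, pick three vertices of $V_x$ inducing a $P_3$, and observe that together with $x$, one of $\{y,z\}$, and $v$ they induce $\Gah$. The paper uses $y$ where you use $z$, but as you note this is immaterial; your write-up is simply more detailed in justifying the isomorphism and in explaining why the third vertex $u_3$ (the centre of the $P_3$) must be retained.
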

\begin{proof}
Suppose $E(v,V_x)=\emptyset$.
There exist $u_1, u_2, u_3 \in V_x$ such that $G[\{ u_1, u_2, u_3\}]\simeq P_3$.
And then we get contradiction since the induced subgraph $G[\{ u_1, u_2, u_3, x, y, v\}]\simeq \Gah$; which is forbidden.
\end{proof}

\begin{Claim}
Let $u\in V_x$ and $v\in V_{y,z}$.
If $u$ and $v$ are adjacent, then $v$ is adjacent with each vertex in the part ($A$ or $B$) containing $u$.
\end{Claim}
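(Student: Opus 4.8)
The plan is to argue by contradiction, producing a forbidden induced subgraph whenever $v$ misses a vertex of the part containing $u$. By the symmetry between the two parts of the bipartition I may assume $u\in A$. If $|A|=1$ the statement is vacuous, so I assume $|A|\geq 2$ and suppose, for contradiction, that there is a vertex $u'\in A\setminus\{u\}$ with $u'v\notin E(G)$. Since in the present case $V_x$ is a genuine complete bipartite graph (this is what distinguishes it from the case $V_x=T_n$), the part $B$ is nonempty; fix any $w\in B$. The hypotheses then force all adjacencies among these seven vertices except one: the vertices $u,u',w\in V_x$ are each adjacent to $x$ and to none of $y,z$; by completeness of $V_x$ and $u,u'\in A$ we have $u\sim w$, $u'\sim w$ but $u\not\sim u'$; as $v\in V_{y,z}$ we have $v\sim y$, $v\sim z$, $v\not\sim x$; and $v\sim u$, $v\not\sim u'$ by hypothesis. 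The single undetermined pair is $vw$, so the argument branches on it.

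First I would treat the case $vw\notin E(G)$. Here the five vertices $y,v,u,w,u'$ form the path $y-v-u-w-u'$, and the remaining six pairs $yu,\,yw,\,yu',\,vw,\,vu',\,uu'$ are all non-edges by the adjacencies recorded above. Hence $G[\{y,v,u,w,u'\}]\simeq\Gaa$, contradicting that $\Gaa\in\mathcal F_3$.

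In the remaining case $vw\in E(G)$ I would instead examine the six vertices $\{x,u,v,w,u',y\}$. Now $\{u,v,w\}$ is a triangle, $x$ is adjacent to $u,u',w$ but not to $v$, and $y$ is adjacent only to $x$ and $v$. Listing the nine edges $xu,\,xw,\,xu',\,xy,\,uv,\,uw,\,vw,\,vy,\,wu'$ and checking that the other six pairs are non-edges identifies this induced subgraph with $\Gas$ (the vertex $w$ lies in all three triangles, which pins down the isomorphism). As $\Gas\in\mathcal F_3$, this is again impossible. Both cases being contradictory, no such $u'$ exists, so $v$ is adjacent to every vertex of $A$, the part containing $u$.

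The one genuine subtlety, and the step I would be most careful about, is that the edge $vw$ is not fixed by the hypotheses, so the proof must branch on it rather than use a single fixed vertex set: with $vw$ present the set $\{x,u,v,w,u',y\}$ yields $\Gas$, but with $vw$ absent that same set is \emph{not} forbidden, which is why I switch to the induced $\Gaa$ instead. Everything else reduces to a routine reading-off of which pairs among the chosen vertices are edges, using only the definitions of $V_x$, $V_{y,z}$ and the complete-bipartite structure of $V_x$.
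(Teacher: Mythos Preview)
Your approach is essentially the same as the paper's: pick $u'\in A$ with $u'v\notin E(G)$, pick a vertex of $B$ (your $w$, the paper's $u''$), and branch on whether $v$ sees that $B$-vertex, exhibiting a forbidden induced subgraph in each case. Your treatment of the branch $vw\notin E(G)$ is correct and matches the paper: $\{y,v,u,w,u'\}$ induces $\Gaa$.

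However, in the branch $vw\in E(G)$ you have misidentified the forbidden graph. Your nine-edge list for $G[\{x,u,v,w,u',y\}]$ is correct, and so is your observation that it has exactly three triangles, all through $w$. But that graph has degree sequence $4,4,3,3,2,2$, whereas $\Gas$ has ten edges and degree sequence $5,4,4,3,2,2$ (in $\Gas$ there are six triangles, not three). The graph you actually obtain is $\Gar$, which differs from $\Gas$ by the single edge $v_5v_6$; the isomorphism is $w\leftrightarrow v_4$, $x\leftrightarrow v_6$, $u\leftrightarrow v_1$, $v\leftrightarrow v_5$, $u'\leftrightarrow v_2$, $y\leftrightarrow v_3$. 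Since $\Gar\in\mathcal F_3$ as well, the argument goes through once the label is corrected, and indeed $\Gar$ is exactly the graph the paper invokes in this branch.
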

\begin{proof}
Suppose $u\in A$ and $v$ is not adjacent with any vertex in $A\setminus \{u\}$.
If $|A|=1$, then the result follows, so we may assume $|A|\geq 2$.
Since $G[V_x]$ is connected and has cardinality at least 3, then there exists $u'\in A$ and $u''\in B$ such that $G[\{u, u'', u'\}]\simeq P_3$.
We have $u'$ and $v$ are not adjacent.
There are two possibilities: either $u''v\in E(G)$ or $u''v\notin E(G)$.
In the first case the induced subgraph $G[\{u', u''. u, v, y\}]$ is isomorphic to $\Gaa$, and in the second case the induced subgraph $G[\{ x, y, u, u', u'', v\}]$ is isomorphic to $\Gar$.
Since both cases are forbidden, we get a contradiction.
\end{proof}
Previous Claims suggest to divide the vertex set $V_{y,z}$ in three subsets:
\begin{itemize}
\item $V_{y,z}^A$, the vertices in $V_{y,z}$ that are adjacent with each vertex in $A$,
\item $V_{y,z}^B$, the vertices in $V_{y,z}$ that are adjacent with each vertex in $B$, and
\item $V_{y,z}^{AB}$, the vertices in $V_{y,z}$ that are adjacent with each vertex in $A\cup B$.
\end{itemize}

In what follows, we assume $|A|\geq |B|$.

\begin{Claim}
The cardinality of the sets $V_{y,z}^A$ and $V_{y,z}^B$ is no more than 1.
\end{Claim}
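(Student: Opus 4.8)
The plan is to argue by contradiction and exhibit the forbidden graph $\Gap$. By symmetry it suffices to prove $|V_{y,z}^A|\le 1$; interchanging the roles of the two parts $A,B$ of $V_x$ then gives $|V_{y,z}^B|\le1$ verbatim. First I would collect the facts already available: $V_{y,z}$ induces a stable set, $V_x$ is complete bipartite on $A\cup B$ with both parts nonempty (as $|V_x|\ge3$ and $V_x$ is connected), and, by the Claim just before the definition of $V_{y,z}^A,V_{y,z}^B,V_{y,z}^{AB}$, any vertex of $V_{y,z}$ adjacent to one vertex of a part is adjacent to that whole part. Reading the three sets as disjoint, a vertex $v\in V_{y,z}^A$ is then adjacent to every vertex of $A$ and to \emph{no} vertex of $B$; this is what pins down the non-neighbours of $v$, and is exactly where the hypothesis on $V_{y,z}^A$ enters.

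Assume $|V_{y,z}^A|\ge2$ and choose distinct $v_1,v_2\in V_{y,z}^A$, a vertex $u\in A$, a vertex $u'\in B$, and a vertex $y\in\{a,b,c\}\setminus\{x\}$. I would then read off $G[S]$ for $S=\{x,y,u,u',v_1,v_2\}$: its edges are $xy,xu,xu',uu',yv_1,yv_2,uv_1,uv_2$, while $xv_1,xv_2,yu,yu',u'v_1,u'v_2$ and $v_1v_2$ are non-edges. Here $u,u'\sim x$ and $u\sim u'$ come from $V_x$; $v_i\sim y$ and $v_i\not\sim x$ from $V_{y,z}$; $v_i\sim u$ and $v_i\not\sim u'$ from $v_i\in V_{y,z}^A$; and $v_1\not\sim v_2$ from stability. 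This is precisely $\Gap$: a triangle $\{x,u,u'\}$ and a $4$-cycle through $x$, $y$, the stable pair $\{v_1,v_2\}$ and $u$, glued along the edge $xu$. Since $\Gap\in\mathcal F_3$ and $G$ is $\mathcal F_3$-free, this is the desired contradiction, so $|V_{y,z}^A|\le1$, and symmetrically $|V_{y,z}^B|\le1$.

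The step I expect to be the real obstacle is the \emph{selection} of the witness set $S$, not the final bookkeeping. The obvious alternatives are not forbidden at all: taking two vertices $u_1,u_2$ from the same part, the set $\{x,y,u_1,u_2,v_1,v_2\}$ induces $K_{3,3}$ (parts $\{x,v_1,v_2\}$ and $\{y,u_1,u_2\}$), which lies in $\mathcal F_2\subseteq\Gamma_{\le3}$ by Proposition \ref{theorem:F1belongstoGamma3}; and adjoining the whole clique, the set $\{x,y,z,u_1,u_2,v_1,v_2\}$ induces a blow-up of $C_4$ (clique-pair $\{y,z\}$, stable pairs $\{u_1,u_2\}$ and $\{v_1,v_2\}$, single $x$), whose $4$-th critical ideal is nontrivial by Theorem \ref{Theo:trivialcriticalideal}, so it too lies in $\Gamma_{\le3}$. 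The decisive idea is therefore to use exactly one vertex from each part $A$ and $B$: this breaks the symmetry between the neighbours and the non-neighbours of $v_1,v_2$ and is what forces the asymmetric pattern of $\Gap$. I would finish by confirming $G[S]\cong\Gap$ directly from Figure \ref{figure:FamilyGraphF3}, or, equivalently, via Theorem \ref{Theo:trivialcriticalideal} by evaluating $I_4$ of the underlying graph at $\phi({\bf d})$ and checking that it is trivial, exactly as the neighbouring claims in this section are verified.
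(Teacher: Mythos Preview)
Your proof is correct and takes essentially the same approach as the paper: assume two vertices $v,v'\in V_{y,z}^A$, pick $u\in A$, $u'\in B$, and show $G[\{x,y,u,u',v,v'\}]\cong\Gap$. The paper states this in one line without the edge-by-edge verification or the discussion of why other witness sets fail, but the core argument is identical.
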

\begin{proof}
Suppose there exist $v, v' \in V_{y,z}^A$.
Let $u\in A$, and $u' \in B$.
Since $G[\{ u, u', v, v', x, y\}]$ is isomorphic to $\Gap$, which is forbidden, then we get a contradiction.
The case $V_{y,z}^B$ is similar.
\end{proof}

\begin{Claim}
If $|B|\geq 2$ and $V_{y,z}\neq\emptyset$, then one of the sets $V_{y,z}^B$ or $V_{y,z}^{AB}$ is empty.
\end{Claim}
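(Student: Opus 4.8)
The plan is to argue by contradiction. Suppose both $V_{y,z}^B$ and $V_{y,z}^{AB}$ are non-empty, and fix $v\in V_{y,z}^B$ and $v'\in V_{y,z}^{AB}$. Since $V_{y,z}$ induces a stable set, $vv'\notin E(G)$. Because $|B|\geq 2$ we may choose two distinct vertices $u_1,u_2\in B$, and because $|A|\geq|B|\geq 2$ the part $A$ is non-empty, so we may choose $w\in A$. I would then consider the induced subgraph $G[\{x,y,u_1,u_2,w,v,v'\}]$ and show it is isomorphic to the forbidden graph $\Gbo$, contradicting that $G$ is $\mathcal F_3$-free.

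First I would record the adjacencies among these seven vertices, all of which follow from the structure already set up. The three vertices $u_1,u_2,w$ lie in $V_x$, so $x$ is adjacent to each of them and $y$ to none; dually, $y$ is adjacent to $v$ and $v'$ while $x$ is adjacent to neither (as $v,v'\in V_{y,z}$). Since $V_x$ is complete bipartite with parts $A$ and $B$, we get $u_1u_2\notin E(G)$ but $u_1w,u_2w\in E(G)$. By the definition of $V_{y,z}^B$ and the preceding claims, $v$ is adjacent to all of $B$ and to no vertex of $A$, whence $vu_1,vu_2\in E(G)$ and $vw\notin E(G)$; by the definition of $V_{y,z}^{AB}$, the vertex $v'$ is adjacent to every vertex of $A\cup B$, whence $v'u_1,v'u_2,v'w\in E(G)$. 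Together with $xy\in E(G)$ and $vv'\notin E(G)$, this determines the whole induced subgraph.

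With these adjacencies in hand, the seven-vertex graph obtained matches $\Gbo$ under the correspondence in which the two non-adjacent pairs $\{u_1,u_2\}$ and $\{x,v'\}$ realize the two stable (white) nodes of cardinality $2$, the vertex $w$ realizes the single (gray) node joined to both white nodes, and $y,v$ realize the remaining two single nodes, whose unique connecting edge is $yv$. As a consistency check, this graph has degree sequence $(4,4,4,4,4,3,3)$ with the two degree-$3$ vertices $y,v$ adjacent, matching $\Gbo$. Since $\Gbo\in\mathcal F_3\subseteq{\bf Forb}(\Gamma_{\leq 3})$, this contradicts the hypothesis that $G$ is $\mathcal F_3$-free, so one of $V_{y,z}^B,V_{y,z}^{AB}$ must be empty.

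The only real work is the adjacency bookkeeping and the identification with $\Gbo$; I expect the subtle point to be making sure the vertex $w\in A$ is genuinely needed. Indeed, deleting $w$ leaves exactly the complete bipartite graph $K_{3,3}$ on $\{x,v,v'\}$ and $\{y,u_1,u_2\}$, which is allowed in $\Gamma_{\leq 3}$; it is precisely the vertex $w$, adjacent to $v'$ but not to $v$, that distinguishes $V_{y,z}^{AB}$ from $V_{y,z}^B$ and forces the forbidden configuration. This also explains why both the hypothesis $|B|\geq 2$ (to supply the non-adjacent pair $u_1,u_2$) and the non-emptiness of $A$ are essential.
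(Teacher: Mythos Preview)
Your argument is correct and matches the paper's proof exactly: both pick $v\in V_{y,z}^B$, $v'\in V_{y,z}^{AB}$, two vertices of $B$, one vertex of $A$, together with $x$ and $y$, and identify the resulting seven-vertex induced subgraph with the forbidden graph $\Gbo$. Your write-up simply supplies the adjacency bookkeeping and the explicit isomorphism that the paper leaves to the reader.
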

\begin{proof}
Suppose $v\in V_{y,z}^B$ and $v'\in V_{y,z}^{AB}$.
Let $u, u'\in B$, and $u'' \in A$.
Thus $G[\{v, v', u, u', u'', x, y\}]\simeq \Gbo$.
Which is impossible.
\end{proof}

Since $|A|\geq2$, then by applying previous Claim to $A$, one of the sets $V_{y,z}^A$ or $V_{y,z}^{AB}$ is empty.

Thus the possible cases we have are the following:
\begin{enumerate}[(a)]
\item $V_{y,z}=\emptyset$,
\item $V_{y,z}^B\cup V_{y,z}^{AB}=\emptyset$ and $|V_{y,z}^A|=1$,
\item $V_{y,z}^A\cup V_{y,z}^{AB}=\emptyset$ and $|V_{y,z}^B|=1$,
\item $V_{y,z}^{AB}=\emptyset$ and $|V_{y,z}^A|=|V_{y,z}^B|=1$,
\item $V_{y,z}^A=\emptyset$, $|B|=1$, $|V_{y,z}^B|=1$ and $|V_{y,z}^{AB}|\geq 1$, and
\item $V_{y,z}^A\cup V_{y,z}^B=\emptyset$ and $|V_{y,z}^{AB}|\geq1$.
\end{enumerate}

Now we describe $V_\emptyset$, that is, the set of vertices not adjacent with any vertex in $\{a,b,c\}$.
Let $w\in V_\emptyset$.
The vertex $w$ is adjacent with a vertex in $V_x \cup V_{y,z}$, because otherwise the shortest path from $w$ to $\{x,y\}$ would contains the graph $\Gaa$ as induced subgraph.

\begin{Claim}\label{cla:wV0A}
Let $w\in V_\emptyset$.
If $w$ is adjacent with a vertex in $V_x$, then $w$ is adjacent with each vertex in the parts of the partition $(A,B)$ with cardinality greater or equal to 2.
\end{Claim}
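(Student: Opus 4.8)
The plan is to mimic the argument of Remark~\ref{re:V0Vx} from the case $V_x=T_n$. The essential observation is that each part of the complete bipartite graph $V_x$ is an independent set, so two vertices lying in the same part of $(A,B)$ are non-adjacent and therefore play exactly the role of the two vertices $u_1,u_2$ of the stable set $V_x$ in Remark~\ref{re:V0Vx}. This reduces the claim to a single forbidden-subgraph computation with $\Gag$, applied part by part.

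Concretely, let $P\in\{A,B\}$ be a part with $|P|\geq 2$ that contains a neighbour $u$ of $w$, and suppose toward a contradiction that there exists $u'\in P$ with $wu'\notin E(G)$. First I would record the adjacencies forced by the definitions: since $u,u'\in V_x$, both are adjacent to $x$ and adjacent to neither $y$ nor $z$; since $u$ and $u'$ lie in the same part of the bipartition, they are non-adjacent; and since $w\in V_\emptyset$ it is adjacent to none of $a,b,c$, while by hypothesis $wu\in E(G)$ and $wu'\notin E(G)$. With these in hand, the induced subgraph $G[\{a,b,c,u,u',w\}]$ consists of the triangle on $\{a,b,c\}$ together with $u$ and $u'$ each joined only to $x$ and with $w$ joined only to $u$. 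I would then verify that this is precisely $\Gag$, via the isomorphism sending $x$ to the vertex of degree $4$, the edge $yz$ to the clique of cardinality $2$, $u$ to the degree-$2$ vertex carrying the pendant $w$, and $u'$ to the remaining pendant at $x$. Since $G$ is $\mathcal F_3$-free this is impossible, so $w$ is adjacent to every vertex of $P$. Applying the same reasoning to each part of cardinality at least $2$ in which $w$ has a neighbour then yields the statement.

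The only delicate point is the bookkeeping in the final isomorphism check: one must be certain that no unintended edges arise, i.e.\ that $w$ is genuinely non-adjacent to $y$ and $z$ (which is exactly membership of $w$ in $V_\emptyset$) and to $u'$, and that $u$ and $u'$ are genuinely non-adjacent (which is exactly that they share a part of the complete bipartite graph $V_x$). There is no deeper obstacle; once a vertex $u'$ witnessing a missing edge is fixed, the forbidden configuration $\Gag$ appears immediately, so the heart of the proof is simply the correct translation of the membership conditions on $V_x$ and $V_\emptyset$ into adjacencies.
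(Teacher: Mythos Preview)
Your within-part argument is correct and in fact cleaner than the paper's (which brings in a vertex from the other part and splits into cases yielding $\Gak$ or $\Gao$, whereas your use of $\{a,b,c,u,u',w\}\simeq\Gag$ works directly). However, your proof has a genuine gap: you only treat parts that already contain a neighbour of $w$. The claim asserts that $w$ is adjacent to every vertex of \emph{every} part of size at least $2$, not merely those parts in which $w$ happens to have a neighbour. Your final sentence ``applying the same reasoning to each part of cardinality at least $2$ in which $w$ has a neighbour'' does not deliver this.

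The failure is most visible when $|B|=1$ and the unique neighbour of $w$ in $V_x$ is the single vertex $v\in B$. Then $|A|\geq 2$ (since $|V_x|\geq 3$), so the claim requires $w$ to be adjacent to every vertex of $A$; but your argument says nothing about $A$, because $A$ contains no neighbour of $w$ by hypothesis. The same issue arises when both parts are large but all of $w$'s neighbours lie in one of them. The paper closes this gap with a separate cross-part step: assuming $w$ is adjacent to $v\in B$ but to no vertex of $A$, one takes $u,u'\in A$ and checks that $G[\{w,v,u,u',x,y\}]\simeq\Gaf$ (here the edges $vu,vu'$ coming from the complete bipartite structure of $V_x$ are essential). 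This forces $w$ to have a neighbour in $A$, after which your $\Gag$ argument can be applied to $A$. You need to add this step (or an equivalent one) before the proof is complete.
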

\begin{proof}
Let $v\in V_x$ be a vertex adjacent with $w$.
Suppose $v\in B$.
we will prove two things: (1) if $|B|\geq 2$, then $w$ is adjacent with each vertex in $B$, and (2) $w$ is adjacent with each vertex  in $A$.

Let us consider case when $|B|\geq 2$.
We will see that $w$ is adjacent with each vertex in $B$. 
Suppose there is a vertex $v'\in B$ not adjacent with $w$.
Take $u\in A$.
Thus there are two possibilities: either $u$ and $w$ are adjacent or not.
The case $uw\in E(G)$ is impossible because $G[\{ u, v, v', w, x, y\}]\simeq \Gak$, which is forbidden.
Meanwhile, the case $uw\notin E(G)$ is impossible because $G[\{ u, v, v', w, x, y\}]\simeq \Gao$, which is forbidden.
Thus $w$ is adjacent with $v'$, and therefore $w$ is adjacent with each vertex in $B$.

Now we see that $w$ is adjacent with each vertex in $A$.
Note that in this case $|B|$ may be equal to 1.
Suppose $w$ is not adjacent with any vertex in $A$.
Let $u, u'\in A$.
Since the induced subgraph $G[\{ w, v, u, u', x, y\}]$ is isomorphic to the forbidden graph $\Gaf$, then we get a contradiction.
Thus $w$ is adjacent with a vertex in $A$.
Now applying the previous case (1) to $A$, we get that $w$ is adjacent with each vertex in $A$. 
\end{proof}

Next Claim show us what happens in the case when $|B|=1$.

\begin{Claim}\label{cla:wV0B=1}
If $|B|=1$ and $E(V_\emptyset,V_x)\neq\emptyset$, then only one of the edges sets $E(V_\emptyset,V_x)$ or $E(V_\emptyset,A)$ induces a complete bipartite graph.
\end{Claim}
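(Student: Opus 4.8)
The plan is to reduce the whole statement to the single adjacency that Claim \ref{cla:wV0A} leaves undetermined, namely whether the vertices of $V_\emptyset$ see the centre of the star $V_x$, and then to show that this adjacency is the same for all of them. Write $B=\{b_0\}$, so that $|A|\geq 2$ and $V_x=A\cup\{b_0\}$ is a complete bipartite graph in which $b_0$ is adjacent to every vertex of $A$. First I would record, directly from Claim \ref{cla:wV0A} applied to the part $A$ (which has cardinality at least $2$), that every $w\in V_\emptyset$ possessing a neighbour in $V_x$ is in fact adjacent to \emph{every} vertex of $A$. Hence the edges between such vertices and $A$ already form a complete bipartite graph, and the only remaining freedom is the presence or absence of the edges $wb_0$.

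The heart of the matter is then the dichotomy: among the vertices of $V_\emptyset$ adjacent to $V_x$, either all are adjacent to $b_0$ or none is. I would prove this by contradiction, choosing $w_1,w_2\in V_\emptyset$, both adjacent to all of $A$, with $w_1b_0\in E(G)$ but $w_2b_0\notin E(G)$, and splitting on the single edge $w_1w_2$. If $w_1w_2\in E(G)$, then, letting $y$ be a second vertex of the triangle $\{a,b,c\}$, the set $\{y,x,b_0,w_1,w_2\}$ induces the path $y$–$x$–$b_0$–$w_1$–$w_2$: the only adjacencies inside it are the four consecutive ones, because $b_0$ meets only $x$ among $\{a,b,c\}$, the $w_i$ meet none of $\{a,b,c\}$, and $w_2b_0\notin E(G)$; thus it induces $\Gaa$, which is forbidden. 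If instead $w_1w_2\notin E(G)$, I would fix $u\in A$ and check that $\{x,y,u,b_0,w_1,w_2\}$ induces $\Gak$: here $u$ is adjacent to $x,b_0,w_1,w_2$, the vertex $b_0$ to $x,u,w_1$, and $x$ to $y$, with all other pairs non-adjacent, which is exactly the adjacency pattern of $\Gak$. Both cases being impossible, adjacency to $b_0$ is uniform.

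Combining the two steps yields the statement. If every vertex of $V_\emptyset$ adjacent to $V_x$ is also adjacent to $b_0$, then each such vertex is adjacent to all of $V_x=A\cup\{b_0\}$, so $E(V_\emptyset,V_x)$ induces a complete bipartite graph; if, on the contrary, none of them is adjacent to $b_0$, then the edges to $A$ still form a complete bipartite graph while the $b_0$-column is empty, so $E(V_\emptyset,A)$ is complete bipartite whereas $E(V_\emptyset,V_x)$ is not. Exactly one of the two descriptions therefore applies, which is the asserted dichotomy.

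The delicate point is the second case of the dichotomy: whereas the induced $\Gaa$ in the first case is visible at a glance, the $6$-vertex subgraph $\{x,y,u,b_0,w_1,w_2\}$ must be matched carefully to $\Gak$ (for instance by confirming the degree sequence $4,3,3,2,1,1$ and tracing the isomorphism through the degree-$4$ vertex $u$), to be sure it is not some admissible graph. This is a finite, mechanical verification of the same nature as the computer-algebra checks invoked throughout the paper, so I expect the only real work to be the bookkeeping of adjacencies rather than any conceptual obstacle.
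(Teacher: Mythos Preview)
Your proof is correct and follows essentially the same route as the paper: you invoke Claim~\ref{cla:wV0A} to get full adjacency to $A$, then assume two vertices $w_1,w_2\in V_\emptyset$ disagree on the edge to the unique vertex of $B$ and derive $\Gaa$ on $\{y,x,b_0,w_1,w_2\}$ when $w_1w_2\in E(G)$ and $\Gak$ on $\{x,y,u,b_0,w_1,w_2\}$ when $w_1w_2\notin E(G)$. The paper uses the identical five- and six-vertex configurations (with $u\in B$ and $u'\in A$ playing the roles of your $b_0$ and $u$), so the two arguments coincide up to relabelling.
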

\begin{proof}
Let $w,w'\in V_\emptyset$, $u\in B$, and $u'\in A$.
By Claim \ref{cla:wV0A}, vertices $w$ and $w'$ are adjacent with each vertex in $A$.
Suppose $w$ is adjacent with $u$, and $w'$ is not adjacent with $u$.
There are two possibilities: either $w$ and $w'$ are adjacent or not. 
If $ww'\in E(G)$, then the induced subgraph $G[w,w',u,x,y]$ is isomorphic to $\Gaa$; which is impossible.
On the other hand, if $ww'\notin E(G)$, then $G[\{ w, w', u, u', x, y\}]$ is isomorphic to $\Gak$, which is forbidden.
So we get a contradiction, and the result follows.
\end{proof}

\begin{Claim}\label{cla:wVyzAB2}
Let $w\in V_\emptyset$.
If $w$ is adjacent with $v\in V_{y,z}$, then $w$ is adjacent with each vertex in the parts of partition $(A,B)$ with cardinality greater or equal to 2.
Moreover, if $v\in V_{y,z}^A \cup V_{y,z}^{AB}$ and $|B|=1$, then $w$ is adjacent with the unique vertex in $B$.
\end{Claim}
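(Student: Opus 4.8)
The plan is to argue by contradiction and, for the first assertion, to reduce it to the single statement that $w$ is adjacent to at least one vertex of $V_x$. Indeed, once we know $w$ has a neighbour in $V_x$, Claim~\ref{cla:wV0A} immediately yields that $w$ is adjacent to every vertex of each part of $(A,B)$ of cardinality at least $2$, which is exactly the desired conclusion. So the entire weight of the first part rests on producing one edge between $w$ and $V_x$.

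To produce that edge, suppose on the contrary that $w$ is adjacent to no vertex of $V_x$. I would distinguish whether or not $v$ is adjacent to all of $V_x$. If $v\notin V_{y,z}^{AB}$, then, both parts of $V_x$ being nonempty, there is a vertex $u\in V_x$ with $uv\notin E(G)$; the five vertices $w,v,y,x,u$ then induce the path $w-v-y-x-u$, i.e.\ a copy of $\Gaa$, since $w$ avoids $x,y$ and all of $V_x$, the vertex $v$ avoids $x$ and $u$, and $u$ avoids $y$. This contradiction settles the case $v\in V_{y,z}^A\cup V_{y,z}^B$. If instead $v\in V_{y,z}^{AB}$, the short $P_5$ is no longer available, because $v$ is now adjacent to every vertex of $V_x$; here I would use the connectivity of $V_x$ to select an induced $P_3$, say $u_1-u_3-u_2$ with $u_1,u_2$ in the larger part $A$ (possible since $|V_x|\geq 3$ forces $|A|\geq 2$), and examine the six vertices $x,u_1,u_2,u_3,v,w$. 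Their induced subgraph is completely determined by the known adjacencies ($x$ and $v$ each complete to $\{u_1,u_2,u_3\}$, $u_3$ adjacent to $u_1,u_2$, and the non-edges $u_1u_2$, $xv$, $xw$ and $wu_i$), and one checks that it is isomorphic to a forbidden member of $\mathcal F_3$. This contradiction forces $w$ to meet $V_x$ and completes the first assertion.

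For the \emph{moreover}, assume $v\in V_{y,z}^A\cup V_{y,z}^{AB}$ and $|B|=1$ with $B=\{u_B\}$. Since $|V_x|\geq 3$ we have $|A|\geq 2$, so by the first part $w$ is already adjacent to every vertex of $A$; suppose for contradiction that $wu_B\notin E(G)$. I would split according to whether $v$ is adjacent to $u_B$: if $v\in V_{y,z}^{AB}$ then $vu_B\in E(G)$, whereas if $v\in V_{y,z}^A$ then $vu_B\notin E(G)$. In each situation I adjoin $x$ (and, where needed, $y$ or a second vertex $u_2\in A$) to the vertices $w,v,u_B$ and a neighbour $u_1\in A$, and read off an induced subgraph isomorphic to a forbidden graph of $\mathcal F_3$, the controlling non-edges being $wu_B$, $u_1u_2$, $xv$ and $xw$. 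The resulting contradiction gives $wu_B\in E(G)$, as claimed.

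The main obstacle is precisely the subcases in which $v$ is complete to $V_x$ (that is, $v\in V_{y,z}^{AB}$), together with the \emph{moreover}: there the clean five-vertex $\Gaa$ obstruction disappears, and the argument must instead exhibit the correct six- or seven-vertex member of $\mathcal F_3$. The bulk of the work lies in confirming that the fully determined induced subgraph on $\{x,u_1,u_2,u_3,v,w\}$ (respectively on the $w,v,u_B,u_1,\dots$ configuration) really is one of the forbidden graphs. As elsewhere in this paper, these identifications are finite and routine, and can be confirmed with a computer algebra system.
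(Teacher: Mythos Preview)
Your overall strategy—reduce the first assertion to ``$w$ meets $V_x$'' and then invoke Claim~\ref{cla:wV0A}—is exactly right, and your $P_5$ argument for the case $v\in V_{y,z}^A\cup V_{y,z}^B$ is correct (and in fact slightly cleaner than the paper's, which uses the seven-vertex graph $\Gbd$ in the $V_{y,z}^A$ case).

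The genuine gap is in the case $v\in V_{y,z}^{AB}$. Your proposed six-vertex set $\{x,u_1,u_2,u_3,v,w\}$ (with $u_1,u_2\in A$, $u_3\in B$, and $w$ assumed to miss all of $V_x$) induces a graph with edges
\[
xu_1,\ xu_2,\ xu_3,\ vu_1,\ vu_2,\ vu_3,\ vw,\ u_1u_3,\ u_2u_3,
\]
hence degree sequence $(4,4,3,3,3,1)$. No six-vertex member of $\mathcal F_3$ has this degree sequence, and one checks directly that this graph contains no induced $P_5$ either. So it is \emph{not} a forbidden configuration, and the contradiction you promise does not materialise. Saying ``can be confirmed with a computer algebra system'' does not rescue this: the computer would confirm the opposite. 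The paper handles this case by keeping the whole clique $\{a,b,c\}$ instead of just $x$: taking $u,u'\in A$, the induced subgraph $G[\{a,b,c,w,v,u,u'\}]$ is isomorphic to $\Gbd$, which supplies the needed contradiction. Your six-vertex attempt effectively threw away the vertex $z$, and with it the obstruction.

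For the ``moreover'' your outline is too vague to evaluate, but the right witnesses do exist: when $v\in V_{y,z}^A$ the five vertices $w,v,y,x,u_B$ give an induced $P_5$, and when $v\in V_{y,z}^{AB}$ one needs a six-vertex set such as $\{w,u_B,u',x,y,v\}$ with $u'\in A$, which is isomorphic to a forbidden graph with degree sequence $(4,4,3,3,2,2)$. You should name the forbidden graph explicitly rather than defer to a computation.
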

\begin{proof}
Let $w\in V_\emptyset$ and $v\in V_{y,z}$ such that $w$ and $v$ are adjacent.
By Claim \ref{claim:E(v,Vx)isnotempty}, $E(v,V_x)\neq\emptyset$, and therefore there are three cases: $v\in V_{y,z}^A$, $v\in V_{y,z}^{AB}$, or $v\in V_{y,z}^B$.

Suppose $v\in V_{y,z}^A$.
If $E(w,V_x)=\emptyset$, then by taking $u,u\in A$, the forbidden induced subgraph $G[W\cup\{ w,v,u,u'\}]\simeq \Gbd$ would appear and we get a contradiction.
Thus $E(w,V_x)\neq\emptyset$.
By Claim \ref{cla:wV0A}, $w$ is adjacent with each vertex in the parts of $V_x$ with cardinality greater or equal to 2.
When $|B|=1$, take $u''\in B$.
If $wu''\notin E(G)$, then $G[\{ w, v, y, x, u''\}]$ would be isomorphic to the forbidden graph $\Gaa$.
Therefore, $wu''\in E(G)$.

Suppose $v\in V_{y,z}^{AB}$.
In a similar way than in previous case we get that $E(w,V_x)\neq \emptyset$, and $v$ is adjacent with each vertex in the parts of the partition $(A,B)$ of cardinality greater or equal to 2.
In the case when $|B|=1$, suppose $u\in B$ with $wu\notin E(G)$.
Take $u'\in A$, we know that $u'w\in E(G)$.
Since $G[\{w, u, u', x, y, v\}]\simeq \Gay$, then we get a contradiction.
Thus $wu\in E(G)$.

Now suppose $v\in V_{y,z}^B$.
It is easy to see that $v$ is adjacent with each vertex in $A$, because otherwise $\Gaa$ would appear.
Therefore $E(w,V_x)\neq \emptyset$, and by Claim \ref{cla:wV0A}, we have that $w$ is adjacent with each vertex in the parts of the partition $(A,B)$ of cardinality greater or equal to 2.
\end{proof}	

\begin{Claim}\label{cla:V0BKthenV0VABempty}
If $E(V_\emptyset, B)\neq \emptyset$, then $E(V_\emptyset,V_{y,z}^{AB})=\emptyset$.
Moreover, if $E(V_\emptyset, B)\neq \emptyset$ and $E(V_\emptyset, V_{y,z}^B)\neq\emptyset$, then $V_{y,z}^{AB}=\emptyset$.
\end{Claim}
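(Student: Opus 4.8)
The plan is to prove both statements by assuming a forbidden configuration and producing either a $K_4$ (contradicting $\omega(G)=3$) or an induced subgraph lying in $\mathcal F_3$. Throughout I use that $|A|\ge 2$ (since $V_x$ is complete bipartite with $|V_x|\ge 3$ and $|A|\ge|B|$), that $V_{y,z}$ is a stable set, and Claims \ref{cla:wV0A} and \ref{cla:wVyzAB2}, which force a vertex of $V_\emptyset$ that meets $A$, $B$, or $V_{y,z}$ to be adjacent to entire parts of $(A,B)$.

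For the first statement I would in fact establish the stronger fact $E(V_\emptyset,V_{y,z}^{AB})=\emptyset$ outright. Suppose some $w\in V_\emptyset$ is adjacent to $p\in V_{y,z}^{AB}$. By Claim \ref{cla:wVyzAB2} (its main part applied to $A$, and to $B$ when $|B|\ge 2$; its ``moreover'' part, valid because $p\in V_{y,z}^{AB}$, when $|B|=1$), the vertex $w$ is adjacent to every vertex of $A\cup B=V_x$. Since $p$ is also adjacent to all of $A\cup B$ and $V_x$ is complete bipartite, choosing $a\in A$ and $b\in B$ makes $\{w,a,b,p\}$ induce $K_4$, contradicting $\omega(G)=3$. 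Hence no such $w$ exists, which in particular yields the first statement (the hypothesis $E(V_\emptyset,B)\neq\emptyset$ is not even needed here).

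For the second statement I would split on $|B|$. If $|B|\ge 2$, then $E(V_\emptyset,V_{y,z}^B)\neq\emptyset$ forces $V_{y,z}^B\neq\emptyset$, hence $V_{y,z}\neq\emptyset$, and the earlier claim asserting that for $|B|\ge 2$ one of $V_{y,z}^B$ and $V_{y,z}^{AB}$ must be empty immediately gives $V_{y,z}^{AB}=\emptyset$. The substantive case is $|B|=1$, with unique vertex $b_1\in B$. Assume for contradiction $p\in V_{y,z}^{AB}$, and fix witnesses $w_1\in V_\emptyset$ with $w_1b_1\in E$ and $w_2\in V_\emptyset$ with $w_2v\in E$ for some $v\in V_{y,z}^B$. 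Claims \ref{cla:wV0A} and \ref{cla:wVyzAB2} then pin the relevant adjacencies: $w_1$ and $w_2$ are adjacent to all of $A$, $w_1\sim b_1$, the vertex $p$ is adjacent to all of $A\cup\{b_1\}$ and to $y,z$, while $v\sim b_1,y,z$ but $v\not\sim A$ and $v\not\sim p$ (both lie in the stable set $V_{y,z}$); moreover $w_1\not\sim p$ and $w_2\not\sim p$ by the first statement. With these edges fixed, I would exhibit, among $\{w_1,w_2,a_1,a_2,b_1,v,p\}$ together with the clique $\{x,y,z\}$, an induced subgraph isomorphic to a member of $\mathcal F_3$ — the natural candidates being graphs of the type $\Gbd$, $\Gbk$ or $\Gbo$ that already arose in Claims \ref{cla:wVyzAB2} and \ref{cla:wV0B=1}, each encoding a $V_\emptyset$-vertex seeing two vertices of $V_x$ alongside vertices of $V_{y,z}$.

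The main obstacle is precisely this last case ($|B|=1$). Several cross-adjacencies are left undetermined by the previous claims — namely whether $w_1=w_2$, whether $w_1\sim v$, whether $w_2\sim b_1$, and whether $w_1\sim w_2$ — so the argument requires a short case analysis, and in each branch one must identify the correct forbidden induced subgraph and verify the isomorphism. I expect to reduce the distinct-witness subcases to the coincident one $w_1=w_2$, where the six vertices $\{w,a_1,b_1,v,p,y\}$ form the natural candidate (with $b_1$ a vertex of large degree), and then confirm that each remaining subcase likewise lands inside $\mathcal F_3$ rather than producing a graph that is merely $\mathcal F_3$-free; checking this is the delicate point, best discharged with the computer algebra verification already used for Proposition \ref{lem::G3W2}.
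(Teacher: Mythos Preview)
Your argument for the first statement is correct and coincides with the paper's: both produce a $K_4$ on $\{w,a,b,p\}$. You are also right that the hypothesis $E(V_\emptyset,B)\neq\emptyset$ is not actually needed there; Claim~\ref{cla:wVyzAB2} alone forces $w$ to be adjacent to all of $A\cup B$.

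For the second statement, however, you are making your life much harder than necessary and leave the argument unfinished. The missing observation is that the two hypotheses already force a \emph{single} common witness, so no case analysis on $w_1,w_2$ is needed. Indeed, take any $w\in V_\emptyset$ adjacent to some $v'\in V_{y,z}^B$. By Claim~\ref{cla:wVyzAB2} (case $v'\in V_{y,z}^B$) this $w$ is adjacent to all of $A$; since $E(V_\emptyset,B)\neq\emptyset$, Claim~\ref{cla:wV0B=1} (when $|B|=1$) or Claim~\ref{cla:wV0A} (when $|B|\ge 2$) then forces $w$ to be adjacent to $B$ as well. So this $w$ is simultaneously adjacent to some $u_2\in B$ and to $v'$. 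Now suppose $v\in V_{y,z}^{AB}$ exists; by the first statement $w\not\sim v$. The six vertices $\{w,v,v',u_2,x,y\}$ then induce exactly $\Gap$ (with $u_2$ playing the role of the degree-$4$ vertex), a contradiction. That is the paper's entire proof of the ``moreover'' part---one forbidden induced subgraph, no branching on $|B|$, no undetermined cross-adjacencies, and no computer check.
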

\begin{proof}
Let $w\in V_\emptyset$ and $v\in V_{y,z}^{AB}$.
Suppose $w$ and $v$ are adjacent.
Take $u_1\in A$ and $u_2\in B$.
Since $v$ is in $V_{y,z}^{AB}$ and $u_1$ is adjacent with $u_2$, then $G[\{v, u_1,u_2\}]$ is isomorphic to $K_3$.
On the other hand, by Claims \ref{cla:wV0A} and \ref{cla:wV0B=1}, $w$ is adjacent with each vertex in $A\cup B$.
Thus the induced subgraph $G[\{ w, v, u_1, u_2\}]$ is isomorphic to $K_4$ that is forbidden, and therefore $w$ cannot be adjacent with $v$.

Let $w$ is adjacent with $u_2\in B$ and $v'\in V_{y,z}^B$, and suppose there exists a vertex $v\in V_{y,z}^{AB}$.
Since $w$ is not adjacent with $v$, then $G[\{ w, v, v', u_2, x, y\}]$ is isomorphic to $\Gap$; which it is not possible.
\end{proof}

\begin{Claim}	
Let $w\in V_\emptyset$ and $v\in V_{y,z}$.
If $w$ and $v$ are adjacent, then each vertex in $V_\emptyset$ is adjacent with $v$.
\end{Claim}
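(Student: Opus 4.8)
The plan is to fix an arbitrary $w'\in V_\emptyset$, assume for contradiction that $w'v\notin E(G)$, and exhibit a forbidden induced subgraph. The basic gadget is the induced path on $x,y,v,w$: indeed $wv\in E(G)$ by hypothesis, $vy\in E(G)$ since $v\in V_{y,z}$, and $yx\in E(G)$ since $\{x,y,z\}$ is a clique, while $vx,wx,wy\notin E(G)$ because $v\in V_{y,z}$ and $w\in V_\emptyset$. Thus $G[\{x,y,v,w\}]\simeq P_4$, and any vertex attached to this $P_4$ adjacent to $w$ only, or reached from the $x$-end by a single extra edge, will create a $P_5\simeq\Gaa$. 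The whole proof is a hunt for such an extension through $w'$.

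First I would split on whether $ww'\in E(G)$. If $ww'\in E(G)$, then $G[\{w',w,v,y,x\}]\simeq\Gaa$: the only adjacencies among these five vertices are $w'w,\,wv,\,vy,\,yx$, all remaining pairs being non-edges because $w'\in V_\emptyset$ and $w'v\notin E(G)$ by assumption. If $ww'\notin E(G)$, I use the observation made just before Claim \ref{cla:wV0A}, that $w'$ must be adjacent to some $p\in V_x\cup V_{y,z}$, and branch on where $p$ lies. When $p=v'\in V_{y,z}$, note that $v'\neq v$ (as $w'v\notin E(G)$) and $vv'\notin E(G)$ since $V_{y,z}$ is stable; then $G[\{w',v',y,v,w\}]\simeq\Gaa$ as long as $wv'\notin E(G)$. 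When $p=u'\in V_x$ and $w'$ has no neighbour in $V_{y,z}$, then using $u'x\in E(G)$ and $u'y\notin E(G)$ we get $G[\{w',u',x,y,v\}]\simeq\Gaa$ as long as $u'v\notin E(G)$.

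This leaves the two degenerate sub-cases, $wv'\in E(G)$ and $u'v\in E(G)$, in which the intended $P_5$ acquires a chord. Here I would fall back on a direct identification. In the first, $w$ is adjacent to the two non-adjacent vertices $v,v'$ of $V_{y,z}$ while $w'$ is adjacent to $v'$ but not $v$, so the self-contained set $\{x,y,v,v',w,w'\}$ carries exactly the edges $xy,yv,yv',vw,v'w,v'w'$ (a $4$-cycle with two pendants), which is isomorphic to a forbidden member of $\mathcal F_3$. In the second, $u'v\in E(G)$ together with $wv\in E(G)$ and the $V_x$-adjacencies of $w$ and $w'$ forced by Claims \ref{cla:wV0A} and \ref{cla:wVyzAB2} makes the subgraph on $\{w',u',v,w\}$ extend (by $x$ or $y$) to a forbidden graph. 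Each such identification is checked exactly as in Proposition \ref{lem::G3W2} and the preceding claims.

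The main obstacle is not any single gadget but organising the case analysis so that it is exhaustive: one must be sure that every possibility for the neighbourhood of $w'$ in $V_x\cup V_{y,z}$ is covered, and that the two degenerate branches where a chord appears genuinely land on graphs in $\mathcal F_3$ rather than on a new, non-forbidden configuration. The earlier results do the heavy lifting here: Claim \ref{claim:E(v,Vx)isnotempty} guarantees $v$ has a neighbour in $V_x$, while Claims \ref{cla:wV0A} and \ref{cla:wVyzAB2} pin down how $w$, $w'$ and the auxiliary vertex $u'$ attach to the parts of $V_x$. It is precisely this rigidity that keeps the degenerate subgraphs inside the finite forbidden list and prevents the case tree from branching uncontrollably.
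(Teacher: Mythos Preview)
Your case $ww'\in E(G)$ is fine and coincides with the paper's. The trouble is the $ww'\notin E(G)$ branch, where you hunt for a neighbour $p$ of $w'$ and then split further. Two of your four sub-cases end in an honest $P_5$, but the two you call ``degenerate'' are not actually argued: you assert that $\{x,y,v,v',w,w'\}$ is ``a forbidden member of $\mathcal F_3$'' without naming it (it is $\Gaj$, but you should say so and check the isomorphism), and for the sub-case $u'v\in E(G)$ you only say the configuration ``extends to a forbidden graph'', which is not a proof. In that last sub-case one really has to split again on whether $v$ is adjacent to a chosen $u\in A$, and the non-$P_5$ outcome there is a specific six-vertex graph ($\Gal$), not something one can wave through.

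The paper's proof avoids all of this branching by using one observation up front: by Claims~\ref{cla:wV0A} and~\ref{cla:wVyzAB2} (together with the fact that every vertex of $V_\emptyset$ meets $V_x\cup V_{y,z}$), \emph{both} $w$ and $w'$ are automatically adjacent to every vertex of $A$. Once you fix any $u\in A$, the entire analysis collapses to the four cases given by $ww'\in E(G)$ or not, and $uv\in E(G)$ or not (equivalently, $v\in V_{y,z}^A\cup V_{y,z}^{AB}$ versus $v\in V_{y,z}^B$). Three of these yield $P_5=\Gaa$ on five explicit vertices, and the remaining one ($ww'\notin E(G)$, $uv\in E(G)$) gives $G[\{w',w,x,y,v,u\}]\simeq\Gal$. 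You already cite the right claims in your closing paragraph; the fix is to invoke them \emph{before} the case split rather than inside a degenerate sub-case, and then the whole argument is four lines with no unidentified forbidden graphs.
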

\begin{proof}
Suppose $w'\in V_\emptyset$ such that $w'$ is not adjacent with $v$.
By Claims \ref{cla:wV0A} and \ref{cla:wV0B=1}, both vertices $w$ and $w'$ are adjacent with each vertex in $A$.
Let $u\in A$.
There are four cases obtained by the combinations of the following possible cases: either $ww'\in E(G)$ or $ww'\notin E(G)$, and either $v\in V_{y,z}^{A}$ or $v\in V_{y,z}^{B}$.
When $ww'\in E(G)$ and $v\in V_{y,z}^{B}$, the induced subgraph $G[\{ w', w, v, y, x\}]$ is isomorphic to $\Gaa$; which is not possible.
When $ww'\notin E(G)$ and $v\in V_{y,z}^{B}$, the induced subgraph $G[\{ w', u, w, v, y\}]$ is isomorphic to $\Gaa$; which is not possible.
When $ww'\in E(G)$ and $v\in V_{y,z}^{A}$, the induced subgraph $G[\{ w', w, v, y, x\}]$ is isomorphic to $\Gaa$; which is not possible.
Finally, when $ww'\notin E(G)$ and $v\in V_{y,z}^{A}$, the induced subgraph $G[\{ w', w, x, y, v, u\}]$ is isomorphic to $\Gal$; which is not possible.
Thus $w'$ is adjacent with $v$.
\end{proof}

\begin{Claim}
Let $v\in V_{y,z}^A$ and $v'\in V_{y,z}^B$.
If $w\in V_\emptyset$ is adjacent with $v$ or $v'$, then $w$ is adjacent with both $v$ and $v'$.
\end{Claim}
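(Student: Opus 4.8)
The plan is to argue by contradiction and to collapse the two halves of the statement into one by symmetry. Interchanging the parts $A$ and $B$ of $V_x$ swaps the roles of $v\in V_{y,z}^A$ and $v'\in V_{y,z}^B$, so it suffices to assume that $w$ is adjacent with $v$ and to deduce $wv'\in E(G)$; the case $w\sim v'$ is then obtained verbatim after exchanging $A$ and $B$ (the only point needing care is $|B|=1$, where the ``moreover'' clause of Claim \ref{cla:wVyzAB2} is stated for $V_{y,z}^A\cup V_{y,z}^{AB}$ rather than $V_{y,z}^B$). So I would suppose, for contradiction, that $wv\in E(G)$ but $wv'\notin E(G)$.

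First I would pin down the neighbourhood forced by the hypotheses. Since $w\in V_\emptyset$ is adjacent with $v\in V_{y,z}$, Claim \ref{cla:wVyzAB2} applies: because $|A|\ge 2$ (as $|A|\ge|B|$ and $|V_x|\ge 3$), $w$ is adjacent with every vertex of $A$, and since $v\in V_{y,z}^A$ the ``moreover'' clause settles the case $|B|=1$, so $w$ is adjacent with every vertex of $B$ as well. Thus $w$ is joined to all of $V_x$. To this I would add the structural facts already available: $v$ is adjacent with all of $A$ and none of $B$; $v'$ is adjacent with all of $B$ and none of $A$; $v\not\sim v'$ because $V_{y,z}$ is stable; both $v$ and $v'$ are adjacent with $y$ and $z$ but not with $x$; and every vertex of $V_x$ is adjacent with $x$ but with neither $y$ nor $z$.

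With these adjacencies recorded I would exhibit a forbidden induced subgraph. Choosing $u\in A$ and $u'\in B$ we have $w\sim u,u'$, $v\sim u$, $v'\sim u'$, $u\sim u'$, $v\not\sim u'$ and $v'\not\sim u$. The aim is that a suitable subset of $\{w,v,v',u,u',y,z\}$ (together with $x$ if required) realizes one of the graphs of $\mathcal F_3$, contradicting that $G$ is $\mathcal F_3$-free. As elsewhere in the paper, the precise isomorphism type would be confirmed by evaluating the relevant critical ideal via Theorem \ref{Theo:trivialcriticalideal}, equivalently by a direct check with a computer algebra system.

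The main obstacle is isolating the \emph{correct} forbidden subgraph. The obvious five-vertex candidate $\{w,v,y,v',u'\}$ is only an induced $C_5$, which lies in $\Gamma_{\le 3}$, and several of its natural six-vertex augmentations (adjoining $z$, or a second representative of one part) are again members of $\Gamma_{\le 3}$ and hence not forbidden. This is precisely why the step ``$w$ is joined to all of $V_x$'' must be used in full and why enough of the clique $\{x,y,z\}$ has to be brought in, so that the chosen induced subgraph genuinely attains algebraic co-rank $4$ rather than $3$. Once the offending subgraph is identified the contradiction is immediate, and, as noted, the symmetric case $w\sim v'$, $w\not\sim v$ follows by exchanging $A$ and $B$.
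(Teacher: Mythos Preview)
Your setup is correct, but you overlook the five-vertex forbidden subgraph that settles the first case immediately. With $w\sim v$, $w\not\sim v'$ and $u\in A$ (so $w\sim u$ by Claim~\ref{cla:wVyzAB2}), the set $\{w,u,x,y,v'\}$ induces the path $w\text{--}u\text{--}x\text{--}y\text{--}v'$, i.e.\ $\sf P_5$ itself: $u\in A$ is not adjacent to $v'\in V_{y,z}^B$, and $x\not\sim v'$ since $v'\in V_{y,z}$. There is no need for six or seven vertices, nor for a computer check. Your candidate $\{w,v,y,v',u'\}$ brings in both $v$ and a vertex $u'\in B$, which closes the path into a $C_5$; replacing $(v,u')$ by $(u,x)$ with $u\in A$ is what breaks the cycle.

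The symmetry reduction to one case is not sound as stated. When $|B|=1$ and you are in the second half ($w\sim v'$, $w\not\sim v$), Claim~\ref{cla:wVyzAB2} gives $w$ adjacent to all of $A$, but its ``moreover'' clause does \emph{not} apply to $v'\in V_{y,z}^B$, so you cannot conclude $w\sim u'$ for the unique $u'\in B$. You flag this as ``the only point needing care'' and then drop it; that is precisely the missing step. The paper handles the second case separately: if $w\sim u'$ for some $u'\in B$ one again gets $\sf P_5$ on $\{w,u',x,y,v\}$, while if $w\not\sim u'$ then $G[\{w,x,y,u',v,v'\}]\simeq\Gaj$. Either supply this two-branch argument, or restrict your symmetry claim to the situation $|B|\ge 2$ and treat $|B|=1$ explicitly.
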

\begin{proof}
First suppose $w$ is adjacent with $v$ and not with $v'$.
Let $u\in A$.
By Claim \ref{cla:wVyzAB2}, $w$ is adjacent with a vertex $u\in A$.
Thus $G[\{ w, u, x, y, v'\}]$ is isomorphic to $\Gaa$ that is a contradiction.
Therefore $w$ and $v'$ are adjacent.
Now suppose $w$ is adjacent with $v'$ and not with $v$.
Let $u\in B$.
If $u$ is adjacent with $w$, then a $\Gaa$ is obtained in a similar way than previous case.
Thus assume $u$ is not adjacent with $w$.
Then $G[\{ w, x, y, u, v, v'\}]$ is isomorphic to $\Gaj$; which is impossible.
Therefore, $w$ and $v$ are adjacent.
\end{proof}

\begin{Claim}
If $V_{y,z}^{AB}\neq\emptyset$, then $E(V_\emptyset,B)=\emptyset$.
\end{Claim}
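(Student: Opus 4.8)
The plan is to argue by contradiction: assume $V_{y,z}^{AB}\neq\emptyset$ yet $E(V_\emptyset,B)\neq\emptyset$, and produce a forbidden induced subgraph. First I would fix the data. Choose $w\in V_\emptyset$ and $u_2\in B$ with $wu_2\in E(G)$, and choose $v\in V_{y,z}^{AB}$. By Claim \ref{cla:wV0A}, adjacency of $w$ to the vertex $u_2$ of $V_x$ forces $w$ to be adjacent to every vertex of any part of $(A,B)$ of cardinality at least $2$; since $|V_x|\ge 3$ and $|A|\ge|B|$ we always have $|A|\ge 2$, so $w$ is adjacent to all of $A$. This is the crucial point, so I would single out \emph{two} distinct vertices $u_1,u_1'\in A$, both adjacent to $w$. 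Because $v\in V_{y,z}^{AB}$ is adjacent to every vertex of $A\cup B$, it is adjacent to $u_1,u_1',u_2$; and by Claim \ref{cla:V0BKthenV0VABempty} (equivalently, directly: otherwise $\{w,v,u_1,u_2\}$ would induce $K_4$) we get $wv\notin E(G)$.

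Next I would record all adjacencies inside the seven vertices $\{x,y,u_1,u_1',u_2,v,w\}$, using only definitional facts: $x$ is adjacent to $y$ (clique $\{a,b,c\}$) and to $u_1,u_1',u_2$ (these lie in $V_x$) but not to $v$ or $w$; $y$ is adjacent to $x$ and to $v$ (since $v\in V_{y,z}$) and to nothing else here; each vertex of $V_x$ is non-adjacent to $y$; and $w$ is adjacent exactly to $u_1,u_1',u_2$. A direct check then shows that $\{x,v\}$ and $\{u_1,u_1'\}$ are each a pair of non-adjacent vertices with identical neighbourhoods in this subgraph, that $u_2$ is adjacent to all of $x,v,u_1,u_1',w$, that $y$ is adjacent only to $x,v$, and that $w$ is adjacent only to $u_1,u_1',u_2$. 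Comparing with Figure \ref{figure:FamilyGraphF3}, this is precisely $\mathsf G_{7,10}=\Gbl$: the two non-adjacent pairs realize its two white vertices, $u_2$ its dominating gray vertex, and $y,w$ the remaining two gray vertices. Hence $G[\{x,y,u_1,u_1',u_2,v,w\}]\cong\Gbl$, contradicting that $G$ is $\mathcal F_3$-free, and therefore $E(V_\emptyset,B)=\emptyset$.

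I expect the main obstacle to be pinning down the correct forbidden subgraph. The naive attempt, taking a single vertex $u_1\in A$ together with $x,y,u_2,v,w$, does \emph{not} succeed: that six-vertex graph reduces to a path and is not among the graphs of $\mathcal F_3$ (it lies in $\Gamma_{\le 3}$), so no contradiction arises. The resolution is to exploit $|A|\ge 2$ and feed \emph{both} $u_1$ and $u_1'$ into the subgraph, which is exactly what promotes the configuration to the genuinely forbidden $\mathsf G_{7,10}$. Two bookkeeping points deserve care: invoking Claim \ref{cla:wV0A} correctly so that $u_1$ \emph{and} $u_1'$ are both adjacent to $w$, and using Claim \ref{cla:V0BKthenV0VABempty} (or the $K_4$ shortcut) to secure the single missing edge $vw$, since that non-edge is precisely what distinguishes $\Gbl$ from a subgraph containing $K_4$.
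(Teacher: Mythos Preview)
Your argument is correct and is essentially identical to the paper's own proof: the paper also takes two vertices of $A$, one vertex of $B$ adjacent to $w$, the vertex $v\in V_{y,z}^{AB}$, and $x,y,w$, invokes Claims \ref{cla:wV0A} and \ref{cla:V0BKthenV0VABempty} exactly as you do, and identifies the resulting seven-vertex induced subgraph as $\Gbl$. Your write-up is in fact slightly cleaner in that you fix the edge $wu_2\in E(G)$ before applying Claim \ref{cla:wV0A}, whereas the paper states the adjacency of $w$ to the two $A$-vertices before explicitly supposing $w$ is adjacent to $u_3\in B$.
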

\begin{proof}
Let $u_1, u_2\in A$, $u_3\in B$, $w\in V_\emptyset$ and $v\in V_{y,z}^{AB}$.
By Claim \ref{cla:wV0A}, $w$ is adjacent with $u_1$ and $u_2$.
Suppose $w$ is adjacent with $u_3$.
By Claim \ref{cla:V0BKthenV0VABempty}, the vertices $w$ and $v$ are not adjacent.
Thus $G[\{x,y,u_1,u_2,u_3,v,w\}]$ is isomorphic to $\Gbl$, which is a contradiction.
Therefore, $E(V_\emptyset,B)=\emptyset$.
\end{proof}

Thus applying previous Claims to cases (a) - (f), we obtain the following possibilities:
\begin{enumerate}
\item $V_{y,z}=\emptyset$, $|B|=1$, and $E(V_\emptyset,A)$ induces a complete bipartite graph,
\item $V_{y,z}=\emptyset$, $|B|\geq1$, and $E(V_\emptyset,V_x)$ induces a complete bipartite graph,
\item $V_{y,z}^B\cup V_{y,z}^{AB}=\emptyset$, $|V_{y,z}^A|=1$, $|B|\geq 1$ and $E(V_\emptyset,V_x\cup V_{y,z})$ induces a complete bipartite graph,
\item $V_{y,z}^A\cup V_{y,z}^{AB}=\emptyset$, $|V_{y,z}^B|=1$, $|B|=1$ and $E(V_\emptyset,A\cup V_{y,z})$ induces a complete bipartite graph,
\item $V_{y,z}^A\cup V_{y,z}^{AB}=\emptyset$, $|V_{y,z}^B|=1$, $|B|\geq 1$ and $E(V_\emptyset,V_x\cup V_{y,z})$ induces a complete bipartite graph,
\item $V_{y,z}^{AB}=\emptyset$, $|V_{y,z}^A|=|V_{y,z}^B|=1$, $|B|\geq 1$ and $E(V_\emptyset, V_x\cup V_{y,z})$ induces a complete bipartite graph,
\item $V_{y,z}^A\cup V_{y,z}^{B}=\emptyset$, $|V_{y,z}^{AB}|\geq 1$, $|B|= 1$ and $E(V_\emptyset, A)$ induces a complete bipartite graph,
\item $V_{y,z}^A\cup V_{y,z}^{B}=\emptyset$, $|V_{y,z}^{AB}|\geq 1$, $|V_{y,z}^B|=1$, $|B|= 1$ and $E(V_\emptyset, A\cup V_{y,z}^B)$ induces a complete bipartite graph.
\end{enumerate}

With a similar argument as in Claim \ref{clm:VxTnV0K2T}, we obtain that $V_\emptyset$ is either trivial or $K_2$.
In cases (1), (4), (7) and (8), $V_\emptyset$ cannot be $T_n$ with $n\geq 2$, since taking $w,w'\in V_\emptyset$, $u\in A$, $u'\in B$, then $G[\{ w, w', u, u', x, y\}]\simeq \Gac$.
On the other hand, since $\omega(G)=3$, then $V_\emptyset$ is not isomorphic to $K_2$ in the cases (2), (3), (5) and (6).
It is not difficult to see that in each case $G$ is isomorphic to an induced subgraph of a graph in $\mathcal F_1^1$.

\subsection{Case when $V_x$ induces $K_1+K_2$ or $2K_2$.}
Through this case we assume that $V_x=\{u_1, u_2,u_3,u_4\}$ such that $u_1 u_2, u_3 u_4\in E(G)$.
That is, $G[\{u_1,u_2,u_3,u_4\}]\simeq 2K_2$.
Let $A=\{u_1,u_2\}$ and $B=\{u_3,u_4\}$.
The following discussion also applies when one of the vertex set $A$ or $B$ has cardinality 1.

\begin{Claim}\label{clm:K2K1vVX}
If $v\in V_{y,z}$, then $E(v,V_x)\neq \emptyset$.
Moreover, if $v\in V_{y,z}$, then $v$ is adjacent with each vertex in one of the following sets $A$, $B$ or $V_x$.
\end{Claim}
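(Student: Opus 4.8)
The plan is to prove both assertions by producing a forbidden induced subgraph whenever the conclusion fails, exploiting the adjacencies that are \emph{forced} by the definitions involved. Recall that every $u_i\in V_x$ is adjacent to $x$ but to neither $y$ nor $z$, that the vertex $v\in V_{y,z}$ is adjacent to $y$ and $z$ but not to $x$, that $\{x,y,z\}$ is a triangle, and that inside $V_x$ we have the fixed edges $u_1u_2,u_3u_4\in E(G)$ with no other edges among the $u_i$. The statement for $K_1+K_2$ is just the specialization in which one of $A,B$ is a single vertex, so I will phrase everything in terms of ``an edge of $V_x$'' and ``a vertex in another component of $G[V_x]$'', which exists in both cases.

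For the first assertion, suppose $E(v,V_x)=\emptyset$. Pick an edge $u_1u_2$ of $G[V_x]$ together with a vertex $u_3$ lying in a different component of $G[V_x]$, and consider $G[\{x,y,u_1,u_2,u_3,v\}]$. In this set $x$ is adjacent to the triangle-forming pair $u_1,u_2$, to the pendant $u_3$, and to $y$, while $v$ hangs off $y$ and, by hypothesis, is adjacent to nothing else. A direct check shows this graph is isomorphic to $\Gah$, with $\{u_1,u_2\}$ playing the role of its $K_2$-vertex, $x$ its centre, $u_3$ the pendant, and $y$--$v$ the length-two leg. Since $\Gah\in\mathcal F_3$ is forbidden, this is impossible, and hence $E(v,V_x)\neq\emptyset$.

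For the second assertion it suffices to prove an \emph{all-or-none} property on each part of size two: whenever $|A|=2$ one has $A\subseteq N_G(v)$ or $A\cap N_G(v)=\emptyset$, and likewise for $B$ (for a part of size one the property is vacuous). Granting this together with the first assertion, $N_G(v)\cap V_x$ is a nonempty union of full parts and therefore equals $A$, $B$, or $A\cup B=V_x$, which is exactly the claim. To establish all-or-none on $A$, assume $v$ is adjacent to $u_1$ but not to its partner $u_2$. If moreover $v$ misses the whole opposite part, then for any vertex $u_3$ in the other component the set $\{x,y,u_1,u_2,u_3,v\}$ induces $\Gal$, a contradiction; note that the five vertices $\{x,y,u_1,u_2,v\}$ alone only form a ``house'', which is not forbidden, so the sixth vertex and this case split are genuinely needed. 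The complementary situation is that $v$ is adjacent to $u_1$ but not $u_2$ while also meeting the opposite part; these amount to finitely many adjacency patterns of $v$ with the at most four vertices of $V_x$, realized on the six or seven vertices of $\{x,y\}\cup V_x\cup\{v\}$, and each such configuration contains a member of $\mathcal F_3$, which can be confirmed directly or with a computer algebra system exactly as in the proof of Claim~\ref{cl:unstablecase}.

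The main obstacle lies entirely in this second assertion, precisely in the mixed subcases where $v$ simultaneously touches one endpoint of an edge of one part and some vertices of the other part: unlike the clean subcase (settled uniformly by $\Gal$), these do not collapse to a single forbidden graph, so one is forced into a bounded case analysis over the adjacency vector of $v$ on $(u_1,u_2,u_3,u_4)$, checking in each case that a forbidden induced subgraph appears. Organizing this analysis so that it stays uniform across $2K_2$ and $K_1+K_2$ — treating a size-one part as an edge whose second endpoint is absent, which simply trivializes the corresponding all-or-none instance — is the last point requiring care.
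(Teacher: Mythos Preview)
Your approach is essentially the paper's: exhibit a forbidden induced subgraph on six vertices for each failure mode, and reduce the second assertion to an all-or-none statement on each size-two part. Two points deserve correction, though.

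First, a labeling slip in the first assertion: the graph you describe (``$\{u_1,u_2\}$ playing the role of its $K_2$-vertex, $x$ its centre, $u_3$ the pendant, and $y$--$v$ the length-two leg'') is $\Gag$, not $\Gah$. The graph $\Gah$ has a white (stable-set) vertex, not a black ($K_2$) vertex; your own structural description is correct and identifies $\Gag$, so this is just a wrong name.

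Second, you need no case analysis for the mixed subcase, and deferring it to a computer check is unnecessary. If $v$ is adjacent to $u_1$ but not to $u_2$, and $v$ meets the opposite part in some $u_3$, then replace $x$ by $z$ in your six-vertex set: the induced subgraph $G[\{u_1,u_2,u_3,v,y,z\}]$ already lies in $\mathcal F_3$ (it has the triangle $\{v,y,z\}$, the pendant $u_3$ on $v$, and the tail $v$--$u_1$--$u_2$), independently of whether $v$ hits $u_4$. This single configuration disposes of all the ``mixed'' patterns at once, so the bounded case analysis you flag as the main obstacle collapses. The paper proceeds exactly this way, handling the clean subcase with $\Gal$ on $\{x,y,u_1,u_2,u_3,v\}$ and the mixed subcase by swapping in $z$ for $x$.
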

\begin{proof}
Suppose there is no edge joining $v$ and a vertex in $V_x$.
Since $G[\{ u_1,u_2,u_3,x,y,v\}]\simeq \Gag$ is a forbidden induced subgraph, then a contradiction is obtained.
Then $v$ is adjacent with some vertex in $V_x$.

Suppose $v$ is adjacent with one of the vertices in $A$.
We will prove that $v$ cannot be adjacent only with one vertex of $A$, say $u_1$.
Thus suppose $v$ is adjacent with $u_1$, and $v$ is not adjacent with $u_2,u_3$ and $u_4$.
Since $G[\{ u_1, u_2, u_3, v, x, y\}]\simeq \Gal$, then $v$ is adjacent with $u_2$ or a vertex in $B$, say $u_3$.
If $v$ is adjacent with $u_2$, then we are done.
So we assume $v$ is adjacent with $u_1$, but $v$ is not adjacent with $u_2$.
This is not possible because $G[\{u_1, u_2, u_3, v, y, z\}]\simeq\Gak$.
Therefore, either $v$ is adjacent with both $u_1$ and $u_2$, or $v$ is not adjacent with neither of $u_1$ and $u_2$
\end{proof}

\begin{Claim}\label{clm:K2K1VYZVX}
If $|V_{y,z}|\geq2$, then each vertex in $V_{y,z}$ is adjacent with each vertex of only one of the following sets $A$, $B$ or $V_x$.
\end{Claim}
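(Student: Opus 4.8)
The plan is to argue by contradiction. By Claim \ref{clm:K2K1vVX}, every vertex of $V_{y,z}$ is adjacent to all of exactly one of $A$, $B$ or $V_x$ (within each of $A$ and $B$ the adjacency is all-or-nothing), so I classify each $v\in V_{y,z}$ as type $A$ (all of $A$, none of $B$), type $B$ (all of $B$, none of $A$), or type $V$ (all of $V_x$). The claim asserts that a single type occurs throughout $V_{y,z}$, so assuming $|V_{y,z}|\ge 2$ I must rule out the coexistence of two vertices $v,v'\in V_{y,z}$ of different types. Throughout I will use that $V_{y,z}$ induces a stable set (hence $vv'\notin E(G)$), that every vertex of $V_{y,z}$ is adjacent to $y$ and $z$ but not $x$, that $\{x,y,z\}$ is a triangle with $x$ adjacent to all of $V_x$, and that $V_x\simeq 2K_2$ (or $K_1+K_2$) with $A=\{u_1,u_2\}$, $B=\{u_3,u_4\}$, $u_1u_2,u_3u_4\in E(G)$ and no edges between $A$ and $B$. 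By the $A\leftrightarrow B$ symmetry it suffices to treat two clashes: type $A$ against type $B$, and type $A$ against type $V$.

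For the type $A$/type $B$ clash the argument is immediate. If $v$ is type $A$ and $v'$ is type $B$, then the vertex set $\{v,u_1,x,u_3,v'\}$ induces the path $v-u_1-x-u_3-v'$: the edges $vu_1,u_1x,xu_3,u_3v'$ are present, while every other pair is a non-edge (since $v$ misses $B$, $v'$ misses $A$, the sets $A$ and $B$ are non-adjacent, $x$ misses $V_{y,z}$, and $vv'\notin E(G)$). This induced subgraph is $\Gaa$, which is forbidden, a contradiction.

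For the type $A$/type $V$ clash, and symmetrically the type $B$/type $V$ clash, I expect to exhibit a larger forbidden induced subgraph. Here $v$ and $v'$ agree on $A$ but differ on $B$, and the obstruction is that the type-$V$ vertex $v'$, together with $x$, is adjacent to every vertex of $V_x$; this double adjacency creates chords that rule out a clean copy of $\Gaa$, so I would pass to seven vertices and consider $\{x,u_1,u_2,u_3,u_4,v,v'\}$ (which already contains the triangles $vu_1u_2$, $v'u_1u_2$, $v'u_3u_4$ and all edges $xu_i$), identifying the resulting graph with one of the larger members of $\mathcal F_3$. As in Claim \ref{cl:unstablecase}, the finitely many adjacency patterns arising here can be verified directly, by hand or with a computer algebra system. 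The main obstacle is precisely this last step: unlike the pure $A$/$B$ clash, the mixed clash involving a type-$V$ vertex does not collapse to a small, transparently named forbidden graph, so the delicate work is choosing the correct vertex subset and matching it to the right graph in $\mathcal F_3$.
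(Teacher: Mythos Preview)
Your argument for the type-$A$/type-$B$ clash is correct and essentially the paper's (the paper uses the path $u_1\!-\!v\!-\!y\!-\!v'\!-\!u_3$ rather than your $v\!-\!u_1\!-\!x\!-\!u_3\!-\!v'$, but both give $\Gaa$).

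The gap is exactly where you flag it: the mixed clash. You propose the set $\{x,u_1,u_2,u_3,u_4,v,v'\}$ but never identify which forbidden graph it induces, and in fact this choice is problematic. First, the subsection is meant to cover $V_x\simeq K_1+K_2$ as well as $2K_2$, so $u_4$ may not exist. Second, even in the $2K_2$ case, the graph you describe has $x$ and $v$ as non-adjacent twins (both with neighbourhood $\{u_1,u_2,u_3,u_4\}$ inside your set), and it is not clear that it lies in $\mathcal F_3$ at all; merely appealing to ``finitely many patterns'' does not close the argument. The paper sidesteps all of this by choosing a different $7$-vertex set: it takes \emph{all three} of $a,b,c$ together with $v,v'$ and one vertex from each part, namely $u_1\in A$ and $u_3\in B$, and observes that $G[\{a,b,c,v,v',u_1,u_3\}]\simeq\Gbk$ in both the type-$V$/type-$A$ and type-$V$/type-$B$ cases. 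Bringing in the full triangle $\{a,b,c\}$ rather than all of $V_x$ is the missing idea; it gives a clean match with a specific member of $\mathcal F_3$ and works uniformly for $K_1+K_2$ and $2K_2$.
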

\begin{proof}
Let $v,v'\in V_{y,z}$.
Consider the following cases:
\begin{enumerate}[(a)]
\item $v$ is adjacent with each vertex in $A$ and $v'$ is adjacent with each vertex in $B$,
\item $v$ is adjacent with each vertex in $V_x$ and $v'$ is adjacent with each vertex in $B$, and 
\item $v$ is adjacent with each vertex in $V_x$ and $v'$ is adjacent with each vertex in $A$.
\end{enumerate}
Case (a) is impossible because $G[\{ u_1, v, y, v', u_3\}]\simeq P_5$, which is forbidden.
On the other hand, cases (b) and (c) are not allowed because $G[\{a,b,c, v,v',u_1,u_3\}]\simeq\Gbk$; which is forbidden.
Thus, the result follows.
\end{proof}
Thus $E(V_x,V_{y,z})$ satisfies only one of the following three cases:
\begin{enumerate}
\item $E(V_{y,z}, A)$ induces a complete bipartite graph,
\item $E(V_{y,z}, B)$ induces a complete bipartite graph, or
\item $E(V_{y,z}, V_x)$ induces a complete bipartite graph.
\end{enumerate}

Now we describe $V_\emptyset$, the set of vertices not adjacent with any vertex in $\{a, b, c\}$.
Let $w\in V_\emptyset$.
The vertex $w$ is adjacent with a vertex in $V_x \cup V_{y,z}$, because otherwise the shortest path from $w$ to $\{x,y\}$ would contains the graph $\Gaa$ as induced subgraph.

\begin{Claim}\label{clm:K1K2V0Vx}
If $w\in V_\emptyset$ is adjacent with a vertex in $V_x$, then $w$ is adjacent with each vertex in $V_x$.
\end{Claim}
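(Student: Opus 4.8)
The plan is to fix $w\in V_\emptyset$ adjacent to some vertex of $V_x$ and to show that missing any vertex of $V_x=A\cup B$ (with $A=\{u_1,u_2\}$, $B=\{u_3,u_4\}$, the matching edges $u_1u_2,u_3u_4\in E(G)$, and no edges between $A$ and $B$) forces a forbidden induced subgraph. The whole argument rests on two facts: $w$ is adjacent to none of $a,b,c$, while every vertex of $V_x$ is adjacent to $x$ but to neither $y$ nor $z$. Consequently $V_x\cup\{x\}$ is the vertex $x$ joined to a $2K_2$, so $\{u_1,u_2,x\}$ and $\{u_3,u_4,x\}$ are triangles sharing $x$, and $w$ attaches to some subset of $V_x$ without seeing $x$. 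The goal is to rule out every proper nonempty subset. Without loss of generality assume $w\sim u_1$.

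First I would record a \emph{matched-pair principle}: $w$ cannot be adjacent to exactly one endpoint of each of the two matching edges. Indeed, if $w\sim u_1$, $w\not\sim u_2$, $w\sim u_3$ and $w\not\sim u_4$, then $u_2-u_1-w-u_3-u_4$ is an induced path, since the only possible chords $u_2w,\,u_2u_3,\,u_2u_4,\,u_1u_3,\,u_1u_4,\,wu_4$ are all absent; hence $G[\{u_2,u_1,w,u_3,u_4\}]\simeq \Gaa$, which is forbidden, and the symmetric configurations are excluded identically. This removes every ``one from each component'' pattern and forces $w$ to see each matched pair either entirely or not at all for at least one of the two pairs.

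It then remains to exclude the patterns in which $w$ sees a full pair but fails the other pair, e.g.\ $w$ adjacent to all of $A$ and to none, or to exactly one, of $B$ (and, symmetrically, with the roles of $A$ and $B$ swapped). For these I would test the six-vertex sets $\{w,u_1,u_2,u_3,u_4,x\}$ and, where the extra clique vertex is needed, $\{w,u_1,u_2,u_3,x,y\}$: the two triangles on $\{u_1,u_2,x\}$ and $\{u_3,u_4,x\}$ together with $w\not\sim x,y,z$ pin the induced graph down to one of the forbidden six-vertex members of $\mathcal F_3$, in the same spirit as the graphs $\Gak$, $\Gal$ and $\Gaf$ that arise in Claims \ref{clm:K2K1vVX} and \ref{cla:wV0A}; splitting on the one remaining edge $wu_4$ dispatches the last subcase. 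Combining the matched-pair principle with these exclusions leaves only the possibility $w\sim V_x$, as claimed.

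The main obstacle is not conceptual but the finite bookkeeping: each pattern of the neighborhood $N_G(w)\cap V_x$ produces a slightly different six-vertex graph, and one must confirm that each is genuinely isomorphic to a member of $\mathcal F_3$ rather than to an allowed graph, together with the parallel verification in the degenerate cases $|A|=1$ or $|B|=1$ (the $K_1+K_2$ case), where the matching argument for the singleton part is vacuous. As in Claim \ref{cl:unstablecase}, this finite verification is most safely carried out with a computer algebra check; the $P_5$ observation above is what keeps that check short, since it eliminates all of the cross-component patterns by hand.
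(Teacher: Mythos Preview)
Your approach would work once the residual casework is filled in, but it is considerably more roundabout than the paper's. The paper dispatches the claim in one stroke: pick any $u_i$ in one matched pair and any $u_j$ in the other, suppose $w\sim u_i$ and $w\not\sim u_j$, and look at $G[\{a,b,c,w,u_i,u_j\}]$. Since $u_iu_j\notin E(G)$, both $u_i,u_j$ are adjacent only to $x$ among $\{a,b,c\}$, and $w$ is adjacent to none of $a,b,c$, this six-vertex graph is completely determined---no information about $wu_{i'}$ or $wu_{j'}$ is needed---and it is forbidden (the paper cites $\Gbf$; the six-vertex graph that actually arises is $\Gag$). The resulting cross-component equivalence $w\sim u_1\Leftrightarrow w\sim u_3\Leftrightarrow w\sim u_2\Leftrightarrow w\sim u_4$ then gives the claim immediately, and it degenerates harmlessly in the $K_1+K_2$ case.

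By contrast, your decomposition first uses a neat $P_5$ on $\{u_2,u_1,w,u_3,u_4\}$ to kill the ``one from each pair'' configurations, and is then left with several residual patterns ($w$ seeing all of $A$ and a proper subset of $B$, or exactly one of $A$ and all or none of $B$) that you only sketch and defer to a computer check. Every one of those residual patterns is already subsumed by the single cross-component observation above, so the extra case analysis is avoidable. The key difference is that the paper brings in the full triangle $W=\{a,b,c\}$ together with just two \emph{non-adjacent} vertices of $V_x$, whereas you work mainly inside $V_x\cup\{x\}$; the former packs enough structure into six vertices to make any split on the mates $u_2,u_4$ unnecessary.
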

\begin{proof}
Suppose $w$ is adjacent with $u_1$ and not with $u_3$.
Since $G[W\cup \{ w, u_1, u_3\}]\simeq \Gbf$ is forbidden, $w$ also must be adjacent with $u_3$.
In a similar way we get the opposite case and it turns out the result.
\end{proof}

Consider Cases (1) and (2).
The following arguments works on both cases.
First note that if there exists $w\in V_\emptyset$ adjacent with $v\in V_{y,z}$, then $w$ is adjacent with each vertex in $V_x$.
The reason is the following.
Suppose $w$ is not adjacent with any vertex in $V_x$.
Since $v$ is not adjacent with a vertex in $V_x$, say $u$, and the vertices $w$ and $u$ are not adjacent, then we have that $G[\{w,v,y,x,u\}]\simeq \Gaa$; which is a contradiction.
Then $w$ and $u$ are adjacent.
And by Claim \ref{clm:K1K2V0Vx}, the edge set $E(w,V_x)$ induces a complete bipartite graph.
On the other hand, we can prove in a similar way that if there exists $w\in V_\emptyset$ adjacent with each vertex in $V_x$, then $w$ is adjacent with each vertex in $V_{y,z}$.
Therefore, each vertex in $V_\emptyset$ is adjacent with each vertex in $V_x\cup V_{y,z}$.
Furthermore, the set $V_\emptyset$ is a stable set.
It is because if $w,w'\in V_\emptyset$ were adjacent, then by taking $u\in V_x$ and $v\in V_{y,z}$ such that $u$ and $v$ are adjacent, the induced subgraph $G[\{w,w',u, v, x, y\}]$ would be isomorphic to $\Gau$; which is impossible.
These cases correspond to a graph isomorphic to an induced subgraph of a graph in $\mathcal F_1^1$.

Now let us consider case (3).
\begin{Claim}\label{claim:K1K2:V0VxVyz}
If $w\in V_\emptyset$ is adjacent with a vertex in $V_{y,z}$, then each vertex in $V_\emptyset$ is adjacent with each vertex in $V_x \cup V_{y,z}$. 
\end{Claim}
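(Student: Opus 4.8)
The plan is to follow the template of Claim~\ref{claim:V0VxVyz}, which settles the completely analogous statement in the $T_n$ subsection; the only structural difference is that here $V_x$ induces $2K_2$ (or $K_1+K_2$) rather than a stable set, so wherever the earlier argument used two non-adjacent vertices of $V_x$ I will instead pick one vertex $u_1\in A$ and one vertex $u_3\in B$, which are non-adjacent because they lie in different components of $V_x$. I split the proof into two stages: first I show that a single $w\in V_\emptyset$ adjacent to some $v\in V_{y,z}$ is in fact joined to every vertex of $V_x\cup V_{y,z}$, and then I propagate this to the whole of $V_\emptyset$.

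For the first stage, suppose $w\in V_\emptyset$ and $v\in V_{y,z}$ with $wv\in E(G)$. I first claim $w$ meets $V_x$. If not, then $w$ is adjacent to neither $u_1\in A$ nor $u_3\in B$; since we are in Case~(3), the join $E(V_{y,z},V_x)$ is complete bipartite, so $v$ is adjacent to both $u_1$ and $u_3$, while $u_1u_3\notin E(G)$. Then $G[\{a,b,c,w,u_1,u_3,v\}]$ realizes exactly the adjacency pattern that Claim~\ref{claim:V0VxVyz} identifies as $\Gbd$, a contradiction; hence $w$ meets $V_x$, and Claim~\ref{clm:K1K2V0Vx} upgrades this to: $w$ is joined to all of $V_x$. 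Next I show $w$ is joined to all of $V_{y,z}$. If some $v'\in V_{y,z}$ had $wv'\notin E(G)$, then picking any $u_1\in V_x$ (joined to $w$ and, by Case~(3), to both $v$ and $v'$) and using that $V_{y,z}$ is a stable set, the set $\{w,u_1,x,y,v,v'\}$ induces $\Gap$, exactly as in Claim~\ref{claim:V0VxVyz}; this contradiction shows $w$ is joined to all of $V_{y,z}$. Note that, unlike the $T_n$ case, Case~(3) has no apex vertex, so a single such configuration handles every $v'$.

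For the second stage, let $w'\in V_\emptyset$ be arbitrary. By the shortest-path observation preceding Claim~\ref{clm:K1K2V0Vx}, $w'$ is adjacent to some vertex of $V_x\cup V_{y,z}$ (otherwise $\Gaa$ appears on a shortest path to $\{a,b,c\}$). If $w'$ meets $V_{y,z}$, the first stage applied to $w'$ gives the conclusion. Otherwise $w'$ meets only $V_x$, so by Claim~\ref{clm:K1K2V0Vx} it is joined to all of $V_x$, and I must still force $w'$ to be joined to $V_{y,z}$; here the already fully joined vertex $w$ does the work. Fix $v\in V_{y,z}$ (nonempty, since $wv\in E(G)$); if $w'v\notin E(G)$ then either $ww'\in E(G)$, whence $\{w',w,v,y,x\}$ induces the path $\Gaa$, or $ww'\notin E(G)$, whence $\{x,y,u_1,v,w,w'\}$ (for any $u_1\in V_x$) induces a forbidden graph of Figure~\ref{figure:FamilyGraphF3}; both are impossible, so $w'v\in E(G)$ for every $v\in V_{y,z}$.

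The step I expect to be the main obstacle is showing that the single vertex $w$ meets $V_x$ at all. In Cases~(1) and~(2) this was immediate because $v$ fails to be adjacent to some vertex of $V_x$, producing an induced $\Gaa$; but in Case~(3) the complete bipartite join $E(V_{y,z},V_x)$ removes that missing vertex, so the contradiction must instead be extracted from the $2K_2$ structure of $V_x$ by using the non-adjacent cross pair $u_1\in A$, $u_3\in B$ to build the seven-vertex graph $\Gbd$. Once this is in place, every remaining step is a direct transcription of the corresponding argument in Claim~\ref{claim:V0VxVyz}, and the only care needed is the routine verification, by inspection of Figure~\ref{figure:FamilyGraphF3}, that each listed vertex set induces the claimed member of $\mathcal F_3$.
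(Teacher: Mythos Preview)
Your proposal is correct and follows essentially the same two-stage argument as the paper's proof: first force $w$ onto $V_x$ via a seven-vertex $\Gbd$, upgrade to all of $V_x$ by Claim~\ref{clm:K1K2V0Vx}, then to all of $V_{y,z}$ via $\Gap$, and finally propagate to every $w'\in V_\emptyset$ by the $\Gaa$/$\Gal$ dichotomy on $ww'$. The one place you leave unspecified---the forbidden graph in the case $ww'\notin E(G)$---is $\Gal$, and your choice of \emph{non-adjacent} vertices $u_1\in A$, $u_3\in B$ for the $\Gbd$ configuration is in fact the right one (the paper's ``$u_2u_3\in E(G)$'' appears to be a slip, since adjacent $u_2,u_3$ give the wrong degree sequence).
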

\begin{proof}
Let $w\in V_\emptyset$ and $v\in V_{y,z}$ such that $wv\in E(G)$.
Suppose $w$ is not adjacent with any vertex in $V_x$.
Take $u_2,u_3\in V_x$ such that $u_2 u_3\in E(G)$.
Since the induced subgraph $G[\{ a, b, c, w, u_2, u_3, v\}]$ is isomorphic to $\Gbd$, then we get a contradiction; and $w$ is adjacent with a vertex in $V_x$.
Thus by Claim \ref{clm:K1K2V0Vx}, $w$ is adjacent with each vertex in $V_x$.

Suppose that there exists $v'\in V_{y,z}$ such that $w$ is not adjacent with $v'$.
Since the vertex set $\{ w, u_1, x, y, v, v'\}$ would induce $\Gap$, this does not occur.
Therefore, $w$ is adjacent with each vertex in $V_x \cup V_{y,z}$.

Suppose there is another vertex $w'\in V_\emptyset$.
By the above argument, if $w'$ is adjacent with a vertex in $V_{y,z}$, then it must be adjacent with each vertex in $V_x \cup V_{y,z}$.
And we are done.
On the other hand, by Claim \ref {clm:K1K2V0Vx}, if $w'$ is adjacent with a vertex in $V_x$, then $w'$ must be adjacent with each vertex in $V_x$.
So suppose $w'$ is adjacent with each vertex in $V_x$, but not adjacent with each vertex in $V_{y,z}$.
Then there are two possibilities: either $ww'\notin E(G)$ or $ww'\in E(G)$.
In the first case $G[\{ x, y, u_1, v, w, w'\}]\simeq \Gal$ and in the second  case $G[\{ x, y, v, w, w'\}]\simeq \Gaa$.
Since both graphs are forbidden, then $w'$ is adjacent with $v$.
And therefore $w'$ is adjacent with each vertex in $V_x\cup V_{y,z}$.
\end{proof}	
 
By Claims \ref{clm:K1K2V0Vx} and \ref{claim:K1K2:V0VxVyz}, we obtain that there are two possible cases: either each vertex in $V_\emptyset$ is adjacent with each vertex in $V_x \cup V_{y,z}$, or each vertex in $V_\emptyset$ is adjacent only with each vertex in $V_x$.
Consider first case. 
If there exists a vertex $w\in V_\emptyset$, then $w$ adjacent with the vertices $v\in V_{y,z}$, and $u_1, u_2 \in V_x$.
Thus $G[\{w,v, u_1,u_2\}]$ is isomorphic to $K_4$ that is not allowed, then $V_\emptyset$ is empty.
Now consider second case.
Let $w\in V_\emptyset$ and $v\in V_{y,z}$.
Thus $w$ is adjacent with $u_1,u_2$ and $u_3$.
Since $G[\{w,u_1,u_2,u_3,v,x,y\}]$ is isomorphic to $\Gbk$, we get a contradiction.
Then $V_\emptyset=\emptyset$. 
The graph in this case is isomorphic to an induced subgraph of a graph in $\mathcal F_1^1$.

\subsection{Cases when $G[V_a \cup V_b \cup V_c]=V_x\vee(V_y+V_z)$ is one of the following graphs: $K_1\vee 2K_1$, $K_1\vee (K_1+K_2)$, $K_1\vee 2K_2$, or $K_2\vee 2K_1$.}
For the sake of clarity, we suppose $E(V_a,V_b)$ and $E(V_a,V_c)$ induce a complete bipartite graph, and $E(V_b,V_c)$ is empty.
Now we are going to obtain some claims that describe the edge sets joining $V_x$ and $V_{y,z}$.

\begin{Claim}\label{lem:unstabLLLG1}
Let $x,y\in \{a,b,c\}$.
If $E(V_x,V_y)$ is not empty, then $E(V_x,V_{xy})$ and $E(V_y,V_{xy})$ are empty.
\end{Claim}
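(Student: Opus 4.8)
The plan is to argue by contradiction, exhibiting a member of $\mathcal F_3$ as an induced subgraph. By the standing hypothesis of this subsection, $E(V_a,V_b)$ and $E(V_a,V_c)$ induce complete bipartite graphs while $E(V_b,V_c)=\emptyset$; hence $E(V_x,V_y)\neq\emptyset$ forces $\{x,y\}$ to be $\{a,b\}$ or $\{a,c\}$, and since $b$ and $c$ play symmetric roles in $V_a\vee(V_b+V_c)$ I may assume $\{x,y\}=\{a,b\}$, so $V_{xy}=V_{ab}$. The key feature I will exploit is that $E(V_a,V_b)\neq\emptyset$ makes both $V_a$ and $V_b$ nonempty, and, being complete bipartite, forces \emph{every} vertex of $V_a$ to be adjacent to \emph{every} vertex of $V_b$. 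This lets me collapse the argument onto a single apex vertex instead of tracking two separate vertices of $V_a$.

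First I would prove $E(V_a,V_{ab})=\emptyset$. Suppose not, and choose $u\in V_a$, $w\in V_{ab}$ with $uw\in E(G)$, together with any $v\in V_b$; then $uv\in E(G)$ automatically. In the six vertices $\{a,b,c,u,v,w\}$ every adjacency is now determined except the single pair $vw$: the set $W=\{a,b,c\}$ is a triangle, $u$ meets $W$ only in $a$, $v$ only in $b$, $w$ in both $a$ and $b$ but not $c$, and $uv,uw\in E(G)$. I would then split on the status of $vw$. If $vw\notin E(G)$ the induced subgraph $G[\{a,b,c,u,v,w\}]$ is isomorphic to $\Gar$, and if $vw\in E(G)$ it is isomorphic to $\Gay$. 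Both belong to $\mathcal F_3$, contradicting $\mathcal F_3$-freeness, so $E(V_a,V_{ab})=\emptyset$.

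The symmetric equality $E(V_b,V_{ab})=\emptyset$ follows by interchanging $a$ and $b$: one picks $u'\in V_b$, $w\in V_{ab}$ with $u'w\in E(G)$ and any $\tilde u\in V_a$, uses $\tilde u u'\in E(G)$ from complete bipartiteness, and inspects $\{a,b,c,\tilde u,u',w\}$ with $\tilde u w$ now the only free pair. This relabeling is legitimate because $c$ is adjacent to both $a$ and $b$, so the configuration around $W$ is symmetric in $a$ and $b$; the two subcases again yield $\Gar$ and $\Gay$.

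The only genuine work is the pair of isomorphism checks, which are routine. In each subcase the subgraph is pinned down by its fingerprint: the three top-degree vertices form a triangle, and the remaining three vertices each attach to a distinct pair among that triangle, with one extra edge ($uv$, plus possibly $vw$) recording the behavior of $u$ and $v$; comparing this fingerprint against $\Gar$ (degree sequence $4,4,3,3,2,2$) and $\Gay$ (degree sequence $4,4,4,3,3,2$) is mechanical and can be confirmed with a computer algebra system, as elsewhere in the paper. The main thing to get right is the bookkeeping that reduces everything to a single apex vertex through the complete-bipartite property, keeping the free pairs to exactly one so that the case analysis stays at two cases rather than proliferating.
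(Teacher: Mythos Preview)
Your proof is correct and follows essentially the same approach as the paper: assume an edge from $V_{xy}$ to one of $V_x,V_y$, use the complete-bipartite structure of $E(V_x,V_y)$ to fix all remaining adjacencies among $\{a,b,c,v_x,v_y,v_{xy}\}$ except one, and split on that last pair to obtain $\Gar$ or $\Gay$. The paper's version is terser (it picks $v_x,v_y,v_{xy}$ and cases directly on whether $v_{xy}$ is adjacent to both or just $v_x$), but the forbidden subgraphs and the logic are identical; you are simply more explicit about invoking the complete-bipartite hypothesis and the $a\leftrightarrow b$ symmetry.
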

\begin{proof}
Let $v_x\in V_x$, $v_y\in V_y$ and $v_{xy}\in V_{x,y}$.
Suppose $v_{xy}$ is adjacent with both $v_x$ and $v_y$. 
Then $G[\{a,b,c,v_{xy}, v_x, v_y\}]\simeq \Gay$; which is a contradiction.
Now suppose $v_{xy}$ is adjacent with $v_x$ and not with $v_y$.
In this case $G[\{a,b,c,v_{xy}, v_x, v_y\}]\simeq \Gar$; which is impossible.
And therefore result turns out.
\end{proof}

Claim \ref{lem:unstabLLLG1} implies that $E(V_{a,b},V_a\cup V_b)=\emptyset$ and $E(V_{a,c},V_a\cup V_c)=\emptyset$.

\begin{Claim}\label{lem:unstabLLLG2}
Let $x,y\in \{a,b,c\}$.
If $V_{x,y}\neq\emptyset$ and $E(V_x,V_y)=\emptyset$, then each edge set $E(V_x,V_{x,y})$ and $E(V_y,V_{x,y})$ induces a complete bipartite graph.
\end{Claim}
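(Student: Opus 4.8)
The plan is to exploit the symmetry of the hypothesis in $x$ and $y$ and to reduce the whole statement to one six‑vertex forbidden‑subgraph computation. First I would record that, under the standing assumptions of this subsection ($E(V_a,V_b)$ and $E(V_a,V_c)$ complete bipartite and nonempty, $E(V_b,V_c)=\emptyset$, and $V_a,V_b,V_c$ all nonempty), the only pair meeting the hypothesis $E(V_x,V_y)=\emptyset$ is $\{x,y\}=\{b,c\}$, with $z=a$; in particular $V_x$ and $V_y$ are both nonempty. Since the statement is symmetric under interchanging $x$ and $y$ (the conditions $V_{x,y}\neq\emptyset$ and $E(V_x,V_y)=\emptyset$ are symmetric), it suffices to prove that $E(V_x,V_{x,y})$ induces a complete bipartite graph, that is, that every $u\in V_x$ is adjacent to every $v\in V_{x,y}$; the assertion for $E(V_y,V_{x,y})$ then follows by relabeling.

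Next I would argue by contradiction. Suppose $u\in V_x$ and $v\in V_{x,y}$ with $uv\notin E(G)$. Using $V_y\neq\emptyset$, choose $t\in V_y$; since $E(V_x,V_y)=\emptyset$ we get $ut\notin E(G)$ for free. Now consider the induced subgraph $H=G[\{x,y,z,u,v,t\}]$. By the definitions of the sets $V_x$, $V_y$, $V_{x,y}$, every adjacency of $H$ is already fixed: $\{x,y,z\}$ is a triangle, $u$ meets the triangle only in $x$, $v$ meets it exactly in $\{x,y\}$, and $t$ meets it only in $y$, while $uv,ut\notin E(G)$. The single remaining degree of freedom is the pair $vt$. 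If $vt\in E(G)$ then $H\simeq\Gao$, and if $vt\notin E(G)$ then $H\simeq\Gaf$; in the latter case $z$ and $v$ share the neighborhood $\{x,y\}$ and are nonadjacent, which is precisely the stable pair represented by the white vertex of $\Gaf$. Both $\Gao$ and $\Gaf$ lie in $\mathcal F_3$, contradicting that $G$ is $\mathcal F_3$‑free; hence $uv\in E(G)$.

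The substantive point — and the only place where any choice matters — is the selection of the sixth vertex: taking $t$ in $V_y$ (rather than in $V_z$, which would leave the adjacency between the apex part and $V_{x,y}$ undetermined) makes $ut$ a non‑edge automatically and isolates $vt$ as the sole undetermined pair, so the two resulting graphs are forced. I expect the rest to be pure bookkeeping: confirming the two isomorphisms, most easily by matching degree sequences $(4,4,3,2,2,1)$ in the case $vt\in E(G)$, and, after identifying the twin pair $\{z,v\}$, the "triangle with two pendants" blow‑down for $\Gaf$ in the case $vt\notin E(G)$; and verifying that $V_y\neq\emptyset$ in each of the configurations $K_1\vee 2K_1$, $K_1\vee(K_1+K_2)$, $K_1\vee 2K_2$, and $K_2\vee 2K_1$, so that a vertex $t$ is always available.
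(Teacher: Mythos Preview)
Your argument is correct and is essentially the paper's own proof: both take the six-vertex set $\{x,y,z,u,t,v\}=\{a,b,c,v_x,v_y,v_{xy}\}$ and obtain $\Gaf$ when $v_{xy}$ misses both $v_x,v_y$ and $\Gao$ when it misses exactly one, with only cosmetic differences in how the case split is organized. One small remark: the claim is stated for arbitrary $x,y\in\{a,b,c\}$ and is later reused in another subsection with $E(V_a,V_b)=\emptyset$, so your opening restriction to $\{x,y\}=\{b,c\}$ is unnecessary; fortunately your actual six-vertex computation makes no use of that identification and goes through verbatim in general.
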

\begin{proof}
Let $v_x\in V_x$, $v_y\in V_y$ and $v_{xy}\in V_{x,y}$.
Suppose $v_{xy}$ is not adjacent with both $v_x$ and $v_y$. 
Then $G[\{a,b,c,v_{xy}, v_x, v_y\}]\simeq \Gaf$; which is a contradiction.
Finally, suppose $v_{xy}$ is adjacent with $v_x$ and not with $v_y$.
Since $G[\{a,b,c,v_{x,y}, v_x, v_y\}]\simeq \Gao$ is forbidden, then we get a contradiction.
And the result turns out.
\end{proof}

Claim \ref{lem:unstabLLLG2} implies that each vertex in $V_{b,c}$ is adjacent with each vertex in $V_b\cup V_c$.

\begin{Claim}\label{lem:unstabLLLG3}
If $V_{b,c}\neq\emptyset$, then $E(V_a,V_{b,c})$ is empty.
\end{Claim}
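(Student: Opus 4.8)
The plan is to argue by contradiction, exhibiting a forbidden induced subgraph on six vertices. Assume $E(V_a,V_{b,c})\neq\emptyset$, so there are $v_a\in V_a$ and $v_{bc}\in V_{b,c}$ with $v_av_{bc}\in E(G)$. Since we are in the case $G[V_a\cup V_b\cup V_c]=V_a\vee(V_b+V_c)$, the set $V_b$ is nonempty, so I would fix a vertex $v_b\in V_b$. The goal is to show that the six vertices $a,b,c,v_a,v_b,v_{bc}$ induce a copy of the forbidden graph $\Gaz$.

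First I would record every adjacency and non-adjacency among these six vertices that is forced by the hypotheses. The clique $W=\{a,b,c\}$ contributes the edges $ab,ac,bc$; the definitions of the sets $V_a$, $V_b$ and $V_{b,c}$ give $av_a\in E(G)$, $bv_b\in E(G)$ and $bv_{bc},cv_{bc}\in E(G)$, together with the non-edges $av_b,av_{bc},bv_a,cv_a,cv_b$. The case assumption that $E(V_a,V_b)$ induces a complete bipartite graph gives $v_av_b\in E(G)$, and Claim \ref{lem:unstabLLLG2} (applicable because $V_{b,c}\neq\emptyset$ and $E(V_b,V_c)=\emptyset$) shows that $E(V_b,V_{b,c})$ is complete bipartite, whence $v_bv_{bc}\in E(G)$; finally $v_av_{bc}\in E(G)$ by assumption. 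These ten edges and five non-edges determine $G[\{a,b,c,v_a,v_b,v_{bc}\}]$ completely.

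To finish, I would identify this graph. Its five non-edges $av_b,av_{bc},bv_a,cv_a,cv_b$ form the path $v_{bc}-a-v_b-c-v_a-b$, so the complement of $G[\{a,b,c,v_a,v_b,v_{bc}\}]$ is $P_6$; equivalently $G[\{a,b,c,v_a,v_b,v_{bc}\}]\simeq\overline{P_6}\simeq\Gaz$, which is forbidden, and this contradiction yields $E(V_a,V_{b,c})=\emptyset$. The only delicate point is the bookkeeping of the second paragraph: one must check that exactly these edges and non-edges occur, so that the induced subgraph is genuinely $\Gaz$ rather than a neighbouring graph such as $\Gar$ or $\Gay$; once the complement is recognized as $P_6$ the identification is immediate. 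By the symmetry between $b$ and $c$, using $v_c\in V_c$ in place of $v_b$ produces the same forbidden subgraph, so the choice is harmless.
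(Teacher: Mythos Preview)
Your proof is correct and follows exactly the same approach as the paper: both take $v_a\in V_a$, $v_b\in V_b$, $v_{bc}\in V_{b,c}$ with $v_av_{bc}\in E(G)$ and observe that $G[\{a,b,c,v_a,v_b,v_{bc}\}]\simeq\Gaz$. Your version simply spells out the adjacency bookkeeping and the identification $\Gaz\simeq\overline{P_6}$ that the paper leaves implicit.
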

\begin{proof}
Let $v_a\in V_a$, $v_b\in V_b$ and $v_{bc}\in V_{b,c}$.
Suppose $v_{bc}$ is adjacent with $v_a$. 
Then $G[\{a,b,c,v_a, v_b, v_{bc}\}]$ is isomorphic to $\Gaz$ that is forbidden.
Therefore, there is no edge joining a vertex in $V_a$ with a vertex in $V_{b,c}$.
\end{proof}

\begin{Claim}\label{lem:unstabLLLG4}
Let $x\in \{b,c\}$. 
If $V_{a,x}\neq\emptyset$, then $E(V_{a,x},V_x)$ induces a complete bipartite graph.
\end{Claim}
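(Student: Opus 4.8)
The plan is to argue exactly as in the proofs of Claims \ref{lem:unstabLLLG1} and \ref{lem:unstabLLLG2}: fix representative vertices and exhibit a member of $\mathcal F_3$ whenever a required edge is absent. Write $y$ for the leaf of $\{b,c\}$ other than $x$, so that $V_a$ is the apex and $E(V_a,V_x)$ is a complete bipartite graph; thus if $v_a\in V_a$ and $u\in V_x$ then $v_au\in E(G)$, while by Claim \ref{lem:unstabLLLG1} every vertex of $V_{a,x}$ is non-adjacent to every vertex of $V_a\cup V_x$. First I would fix $v\in V_{a,x}$ (which exists by hypothesis) and an arbitrary $u\in V_x$, and suppose towards a contradiction that $vu\notin E(G)$. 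Choosing $v_a\in V_a$ (nonempty, since $V_a$ is the apex in each of the four shapes of $G[V_a\cup V_b\cup V_c]$ listed at the start of the subsection), the vertices $a,x,v,u,v_a$ satisfy $v\sim a,x$; $u\sim x$; $v_a\sim a$; $v_a\sim u$; and $v_a\not\sim v$.

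Next I would adjoin the remaining vertex of the clique $W=\{a,b,c\}$ and, if needed, a vertex of the opposite leaf $V_y$, and read off the resulting induced subgraph, checking it against the list $\mathcal F_3$ to identify a forbidden graph (the relevant configurations here are the coned versions of $\Gaa$ that already drive the proofs of Claims \ref{lem:unstabLLLG1}–\ref{lem:unstabLLLG3}). This contradiction would force $vu\in E(G)$, and since $u\in V_x$ was arbitrary, every vertex of $V_{a,x}$ would be adjacent to every vertex of $V_x$, which is exactly the statement that $E(V_{a,x},V_x)$ induces a complete bipartite graph. To finish, I would rule out a complete bipartite graph minus a matching of size at least two by the argument of Claim \ref{claim:EUVnotempty}: two independent missing edges would produce a $\Gaa$ on the four endpoints together with a suitable apex vertex.

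The hard part will be producing the contradiction in the first place. Because Claim \ref{lem:unstabLLLG1} has already forced $E(V_{a,x},V_a\cup V_x)=\emptyset$, the five-vertex configuration on $\{a,x,v,u,v_a\}$ is merely a house, and adjoining only the third clique vertex yields a $6$-vertex graph of degree sequence $(4,4,2,2,2,2)$ that is \emph{not} on the list $\mathcal F_3$; so no contradiction is available at this size. The delicate step is therefore to locate the correct enlargement — bringing in a vertex of $V_y$ (which is adjacent to $v_a$ but not to $u$) and/or a second vertex of $V_x$ — and to verify, uniformly over the four possibilities for $G[V_a\cup V_b\cup V_c]$, that the induced subgraph is one of the $7$-vertex forbidden graphs such as $\Gbk$ or $\Gbl$. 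Pinning this down, and reconciling it with the non-adjacencies imposed by Claim \ref{lem:unstabLLLG1}, is where I expect the real work to lie, and it is naturally checked with a computer algebra system as in Proposition \ref{lem::G3W2}.
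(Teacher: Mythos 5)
There is a genuine gap, and it begins with the reading of the statement rather than with the combinatorics. As you yourself noticed, Claim \ref{lem:unstabLLLG1} has already forced $E(V_{a,x},V_a\cup V_x)=\emptyset$ in this subsection, so the literal conclusion that $E(V_{a,x},V_x)$ is a (nonempty) complete bipartite graph is incompatible with the standing hypotheses; configurations with $V_{a,x}$ and $V_x$ both nonempty and no edges between them genuinely occur inside graphs of $\mathcal F_1^1$, which are $\mathcal F_3$-free by Proposition \ref{theorem:F1belongstoGamma3}, so \emph{no} enlargement -- seven vertices, $\Gbk$, $\Gbl$, or anything a computer search could find -- can ever yield the contradiction you are hunting for. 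The ``$V_x$'' in the statement is a typo for $V_y$ with $y\in\{b,c\}-x$: this is what the paper's own proof actually establishes, and it is how the claim is used later (``each vertex in $V_{a,c}$ is adjacent with each vertex in $V_b$, and each vertex in $V_{b,c}$ is adjacent with each vertex in $V_a$''). The dead end you correctly identified -- the house plus the third clique vertex giving a $6$-vertex graph with degree sequence $(4,4,2,2,2,2)$ that is not in $\mathcal F_3$ -- is the symptom that the target edge is the wrong one, not a technical obstacle to be overcome by a more delicate enlargement.

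Once the statement is read correctly, the paper's proof is a single six-vertex step that uses neither a vertex of $V_a$ nor any seventh vertex: fix $v_b\in V_b$, $v_c\in V_c$ (both leaves are nonempty in every shape of $G[V_a\cup V_b\cup V_c]$ treated in this subsection) and $v_{ax}\in V_{a,x}$, and suppose $v_{ax}$ is not adjacent to the \emph{opposite}-leaf vertex $v_y$. Then every adjacency on $\{a,b,c,v_{ax},v_b,v_c\}$ is determined -- $v_{ax}\not\sim v_x$ by Claim \ref{lem:unstabLLLG1}, and $v_b\not\sim v_c$ because $E(V_b,V_c)=\emptyset$ -- and the induced subgraph is isomorphic to $\Gak$, which is forbidden. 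Hence each vertex of $V_{a,x}$ is adjacent to each vertex of $V_y$ outright; in particular there is no ``complete bipartite minus a matching'' case to exclude, so the final step you borrowed from Claim \ref{claim:EUVnotempty} is unnecessary. Note also that even on its own terms your proposal is incomplete: it never exhibits a forbidden subgraph, deferring the decisive step to an unspecified computer verification.
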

\begin{proof}
Let $v_b\in V_b$, $v_c\in V_c$ and $v_{ax}\in V_{a,x}$.
Suppose $v_{ax}$ is not adjacent with $v_x$.
Since $G[\{a,b,c,v_{ax}, v_b, v_c\}]$ is isomorphic to $\Gak$ that is forbidden, then each vertex in $V_x$ is adjacent with each vertex in $V_{a,x}$.
\end{proof}

\begin{Claim}
The edge set $E(V_{a,b},V_{a,c})$ induces a complete bipartite graph.
\end{Claim}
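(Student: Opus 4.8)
The plan is to prove that any two vertices $v_{ab}\in V_{a,b}$ and $v_{ac}\in V_{a,c}$ are adjacent, which is exactly the assertion that $E(V_{a,b},V_{a,c})$ induces a complete bipartite graph. I would argue by contradiction, assuming $v_{ab}v_{ac}\notin E(G)$, and then exhibit a forbidden induced subgraph. The point to keep in mind is that the five vertices $\{a,b,c,v_{ab},v_{ac}\}$ alone do not suffice: together with the triangle $abc$ and the adjacencies $v_{ab}\sim a,b$ and $v_{ac}\sim a,c$ they induce a gem ($K_1$ joined to a $P_4$), which is \emph{not} a member of $\mathcal F_3$, so it cannot yield a contradiction. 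Hence a sixth vertex is needed.

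The sixth vertex comes from the apex class. Since in this case $E(V_a,V_b)$ induces a nonempty complete bipartite graph, the set $V_a$ is nonempty; fix $v_a\in V_a$. By the definition of $V_a$ the vertex $v_a$ is adjacent to $a$ and to neither $b$ nor $c$, and by Claim \ref{lem:unstabLLLG1} (applied to the pairs $\{a,b\}$ and $\{a,c\}$, whose edge sets $E(V_a,V_b)$ and $E(V_a,V_c)$ are nonempty) we get $E(V_a,V_{a,b})=E(V_a,V_{a,c})=\emptyset$, so $v_a$ is adjacent to neither $v_{ab}$ nor $v_{ac}$. Collecting all adjacencies on $\{a,b,c,v_{ab},v_{ac},v_a\}$: the vertex $a$ is adjacent to each of the other five, the set $\{a,b,c\}$ is a triangle, $v_{ab}$ is adjacent only to $a$ and $b$, $v_{ac}$ only to $a$ and $c$, and $v_a$ only to $a$.

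I would then identify this induced subgraph with $\Gam$: the universal vertex $a$ plays the role of the degree-five vertex, the induced path $v_{ab}\text{--}b\text{--}c\text{--}v_{ac}$ plays the role of the $P_4$ obtained after deleting that universal vertex, and $v_a$ is the remaining pendant. Since $\Gam\in\mathcal F_3$, Proposition \ref{lem::G3W2} shows $\Gam$ is forbidden for $\Gamma_{\leq 3}$, contradicting the hypothesis that $G$ is $\mathcal F_3$-free. Therefore $v_{ab}v_{ac}\in E(G)$, and as $v_{ab},v_{ac}$ were arbitrary, $E(V_{a,b},V_{a,c})$ induces a complete bipartite graph.

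The only delicate step is realising that the five-vertex configuration is not by itself forbidden, so that one is forced to bring in the auxiliary vertex $v_a$ from the nonempty apex class $V_a$ and to supply its nonadjacencies via Claim \ref{lem:unstabLLLG1}; once $v_a$ is in place the isomorphism with $\Gam$ is immediate and, as elsewhere in the paper, can be confirmed by a computer algebra check.
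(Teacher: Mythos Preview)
Your argument is correct: the six vertices $\{a,b,c,v_a,v_{ab},v_{ac}\}$ with the adjacencies you list do induce a copy of $\Gam$ (universal vertex $a$, the path $v_{ab}\text{--}b\text{--}c\text{--}v_{ac}$, and the pendant $v_a$), and the nonadjacencies $v_a\not\sim v_{ab},v_{ac}$ are legitimately supplied by Claim~\ref{lem:unstabLLLG1} since both $E(V_a,V_b)$ and $E(V_a,V_c)$ are nonempty in this case.

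The paper proceeds differently. It brings in a vertex $v_b\in V_b$ rather than the vertex $a$, works with the set $\{b,c,v_a,v_b,v_{ab},v_{ac}\}$, and identifies the resulting induced subgraph with $\Gaj$ instead of $\Gam$. For this the paper needs, in addition to Claim~\ref{lem:unstabLLLG1}, the adjacency $v_b v_{ac}\in E(G)$ coming from Claim~\ref{lem:unstabLLLG4}, together with $v_a v_b\in E(G)$ from the standing hypothesis that $E(V_a,V_b)$ is complete bipartite. Your route is therefore a bit more economical: it avoids invoking Claim~\ref{lem:unstabLLLG4} and does not need any vertex of $V_b$; the paper's route, on the other hand, illustrates how the already-established adjacency $E(V_{a,c},V_b)$ can be exploited. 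Either choice of sixth vertex works, and your observation that the five-vertex gem on $\{a,b,c,v_{ab},v_{ac}\}$ is \emph{not} in $\mathcal F_3$ is exactly the reason an auxiliary vertex is required.
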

\begin{proof}
Let $v_a\in V_a$, $v_b\in V_b$, $v_{ab}\in V_{a,b}$, and $v_{ac}\in V_{a,c}$. 
We know that $v_a$ is not adjacent with both $v_{ab}$ and $v_{ac}$, and $v_b$ is adjacent with $v_{ac}$, but $v_b$ is not adjacent with $v_{ab}$.
Suppose $v_{ab}$ and $v_{ac}$ are not adjacent.
Then $G[\{ b,c, v_a, v_b, v_{ab}, v_{ac},\}]\simeq \Gaj$; which is impossible.
And therefore, each vertex in $V_{a,b}$ is adjacent with each vertex in $V_{a,c}$.
\end{proof}

\begin{Claim}
Let $x\in\{b,c\}$
The edge set $E(V_{a,x},V_{b,c})$ induces a complete bipartite graph.
\end{Claim}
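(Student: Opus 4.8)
The plan is to show that a single missing edge between $V_{a,b}$ and $V_{b,c}$ already forces an induced $P_5=\Gaa$. Because the apex structure $G[V_a\cup V_b\cup V_c]=V_a\vee(V_b+V_c)$ is symmetric in $b$ and $c$, it suffices to treat $x=b$; the case $x=c$ is identical after interchanging $b\leftrightarrow c$ and $V_b\leftrightarrow V_c$. So I would fix $v_{ab}\in V_{a,b}$ and $v_{bc}\in V_{b,c}$ and suppose, for contradiction, that $v_{ab}v_{bc}\notin E(G)$.

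First I would record the edges already present among $\{v_{ab},a,c,v_{bc}\}$. Since $v_{ab}$ is adjacent to $a,b$ but not $c$, and $v_{bc}$ is adjacent to $b,c$ but not $a$, these four vertices induce the path $v_{ab}-a-c-v_{bc}$: the present edges are $v_{ab}a$, $ac$, $cv_{bc}$, while $v_{ab}c$ and $av_{bc}$ are non-edges by the definitions of $V_{a,b}$ and $V_{b,c}$, and $v_{ab}v_{bc}$ is a non-edge by assumption. The idea is to extend this induced $P_4$ to an induced $P_5$ by appending one further vertex at the $v_{bc}$-endpoint.

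The vertex to append is any $v_b\in V_b$, which exists because in the present configuration $V_b\neq\emptyset$. I then need $v_b$ to be adjacent to $v_{bc}$ and to nothing else in $\{v_{ab},a,c,v_{bc}\}$. By definition $v_b$ is adjacent to $b$ only among $\{a,b,c\}$, so $av_b,cv_b\notin E(G)$. Since $E(V_b,V_c)=\emptyset$, Claim~\ref{lem:unstabLLLG2} makes $E(V_b,V_{b,c})$ complete bipartite, whence $v_bv_{bc}\in E(G)$; and since $E(V_a,V_b)$ is complete bipartite and hence non-empty, Claim~\ref{lem:unstabLLLG1} gives $E(V_b,V_{a,b})=\emptyset$, so $v_bv_{ab}\notin E(G)$. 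Consequently $G[\{v_{ab},a,c,v_{bc},v_b\}]$ is exactly the path $v_{ab}-a-c-v_{bc}-v_b$, an induced $\Gaa$, which is forbidden. This contradiction forces $v_{ab}v_{bc}\in E(G)$ for every such pair, so $E(V_{a,b},V_{b,c})$ induces a complete bipartite graph; the symmetric case $x=c$ uses $v_c\in V_c$ and the path $v_{ac}-a-b-v_{bc}-v_c$.

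I expect the only delicate point to be the bookkeeping of the non-adjacencies of the appended vertex: the argument hinges entirely on $v_b$ creating \emph{no} accidental edge to $a$, $c$, or $v_{ab}$, which is precisely what the definition of $V_b$ together with Claims~\ref{lem:unstabLLLG1} and~\ref{lem:unstabLLLG2} guarantee. It is also worth emphasizing that the step uses $V_b\neq\emptyset$, which is automatic here since we are in the join case $V_a\vee(V_b+V_c)$ with both $V_b$ and $V_c$ non-empty.
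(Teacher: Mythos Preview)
Your proof is correct. It differs from the paper's argument, though: the paper takes both $v_b\in V_b$ and $v_c\in V_c$, looks at the six vertices $\{b,c,v_b,v_c,v_{ax},v_{bc}\}$, and recognises the forbidden graph $\Gao$. You instead append only a single vertex $v_b$ (resp.\ $v_c$) to the induced $P_4$ on $\{v_{ax},a,c,v_{bc}\}$ (resp.\ $\{v_{ax},a,b,v_{bc}\}$) and obtain $P_5=\Gaa$ directly. Both arguments rely on Claims~\ref{lem:unstabLLLG1} and~\ref{lem:unstabLLLG2} to certify the needed adjacencies and non-adjacencies of $v_b$ (or $v_c$) with $v_{ax}$ and $v_{bc}$, and both use that $V_b,V_c\neq\emptyset$ in this subsection. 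Your route is a bit more economical: five vertices instead of six, and the obstruction is the most basic forbidden graph $P_5$ rather than $\Gao$, so there is less to verify. The paper's version has the minor advantage of handling $x=b$ and $x=c$ with literally the same vertex set, but that is cosmetic here.
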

\begin{proof}
Let $v_b\in V_b$, $v_c\in V_c$, $v_{ax}\in V_{a,x}$, and $v_{bc}\in V_{b,c}$. 
We know that $v_b v_{bc},v_{bc} v_c, v_c v_{ab}, v_b v_{ac}\in E(G)$ and $v_x v_{ax}, v_b v_c\notin E(G)$.
Suppose $v_{ax}$ and $v_{bc}$ are not adjacent.
Then $G[\{ b,c, v_b, v_c, v_{ax}, v_{bc},\}]\simeq \Gao$; which is impossible.
Therefore each vertex in $V_{a,x}$ is adjacent with each vertex in $V_{b,c}$.
\end{proof}

Now we analyze $V_\emptyset$.

\begin{Claim}\label{lem:unstabLLLG5}
If $w\in V_\emptyset$, then $w$ cannot be adjacent with any vertex in $V_a\cup V_b\cup V_c$.
\end{Claim}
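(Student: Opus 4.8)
The plan is to argue by contradiction: assume some $w\in V_\emptyset$ is adjacent to a vertex of $V_a\cup V_b\cup V_c$ and exhibit a member of $\mathcal F_3$ as an induced subgraph. Recall that in this subsection $V_a$ plays the role of the join centre, so $E(V_a,V_b)$ and $E(V_a,V_c)$ induce complete bipartite graphs while $E(V_b,V_c)=\emptyset$, and all three sets $V_a,V_b,V_c$ are nonempty (each is $K_1$ or $K_2$). Because $b$ and $c$ are interchangeable, it suffices to treat two starting cases: $w$ is adjacent to some $v_a\in V_a$, or $w$ is adjacent to some $v_b\in V_b$.

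First I would force $w$ to be adjacent simultaneously to a vertex of $V_a$ and a vertex of $V_b$. Fix $v_a\in V_a$ and $v_b\in V_b$, and note that $v_av_b\in E(G)$. If $w\sim v_b$ but $w\not\sim v_a$, then $w-v_b-v_a-a-c$ is an induced $P_5$: every potential chord is excluded, since $v_a,v_b$ meet the clique only in their own vertex (so $v_ac,v_bc,v_ba\notin E(G)$) and $w$ meets none of $a,b,c,v_a$; this is a copy of $\Gaa$. Symmetrically, if $w\sim v_a$ but $w\not\sim v_b$, then $w-v_a-v_b-b-c$ is an induced $\Gaa$. Both are forbidden, so in either starting case $w$ must in fact be adjacent to both $v_a$ and $v_b$.

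The second and decisive step is then immediate. With $w\sim v_a$ and $w\sim v_b$, consider $G[\{a,b,c,v_a,v_b,w\}]$: here $\{a,b,c\}$ and $\{v_a,v_b,w\}$ are triangles (the latter because $v_av_b\in E(G)$ and $w$ is adjacent to both), the only edges between the two triangles are $av_a$ and $bv_b$, and $c,w$ are the two non-adjacent ``free'' vertices. This is exactly the prism-minus-an-edge $\Gav$, which is forbidden, and the contradiction proves the claim. The one point requiring care is the bookkeeping of non-edges that makes each configuration \emph{induced}: one must use repeatedly that a vertex of $V_x$ meets the clique only in $x$, that $w$ meets the clique in nothing, and that $V_b,V_c$ are mutually non-adjacent. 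The only genuinely nontrivial step is recognizing this six-vertex configuration as $\Gav$ (rather than as some larger forbidden graph) and checking that the asymmetry of the centre $V_a$ does not spoil either $P_5$ argument.
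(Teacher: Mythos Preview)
Your proof is correct and essentially identical to the paper's: both argue that if $w$ is adjacent to only one of $v_a,v_b$ an induced $P_5=\Gaa$ appears (the paper writes $G[\{w,v_a,v_x,x,y\}]$ and $G[\{w,v_x,v_a,a,y\}]$, which with $x=b$, $y=c$ are exactly your two paths), and that if $w$ is adjacent to both then $G[\{a,b,c,v_a,v_b,w\}]\simeq\Gav$. Your identification of $\Gav$ as the prism minus an edge and your bookkeeping of the non-edges are accurate.
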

\begin{proof}
Let $x\in \{b,c\}$, $y\in\{b,c\}-x$, $v_a\in V_a$ and $v_x\in V_x$.
Suppose $w\in V_\emptyset$ such that $E(w,V_a\cup V_b\cup V_c)\neq\emptyset$.
Consider the following cases:
\begin{enumerate}[(a)]
\item $w$ is adjacent only with $v_a$,
\item $w$ is adjacent only with $v_x$, or
\item $w$ is adjacent with both $v_a$ and $v_x$.
\end{enumerate}
Cases (a) and (b) are impossible, because $G$ would have $\Gaa$ as induced subgraph obtained by $G[\{w,v_a,v_x,x,y\}]$ and $G[\{w,v_x,v_a,a,y\}]$, respectively.
Finally in case (c), the induced subgraph $G[\{ a,b,c,w, v_a, v_x\}]$ is isomorphic to $\Gav$; which is impossible.
\end{proof}

\begin{Claim}\label{lem:unstabLLLG6}
There exists no vertex in $V_\emptyset$ adjacent with a vertex in $V_{b,c}$.
\end{Claim}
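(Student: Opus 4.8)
The plan is to argue by contradiction and produce a forbidden induced $\Gaa$ (that is, an induced $P_5$). Suppose, toward a contradiction, that some $w\in V_\emptyset$ is adjacent with a vertex $v_{bc}\in V_{b,c}$. Since in the present subsection $G[V_a\cup V_b\cup V_c]=V_x\vee(V_y+V_z)$ with $V_a$ playing the role of the apex $V_x$, the set $V_a$ is nonempty, so I can fix a vertex $v_a\in V_a$. The five vertices $v_a, a, b, v_{bc}, w$ are pairwise distinct (they lie in distinct sets $V_X$, or in $W$), and I claim they induce a path.

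The heart of the argument is to assemble the adjacencies and non-adjacencies among these five vertices from the definitions of the sets $V_X$ and the claims already proved. First, $v_a$ is adjacent to $a$ and, being in $V_a$, not adjacent to $b$ (nor $c$); the edge $ab$ is present because $W=\{a,b,c\}$ is a clique. The vertex $v_{bc}$ is adjacent to $b$ (since $v_{bc}\in V_{b,c}$) but not to $a$, and by Claim \ref{lem:unstabLLLG3} it is not adjacent to $v_a$, since $E(V_a,V_{b,c})=\emptyset$. Finally, $w\in V_\emptyset$ is not adjacent to any of $a,b,c$, and Claim \ref{lem:unstabLLLG5} forbids $w$ from being adjacent to any vertex of $V_a\cup V_b\cup V_c$, in particular to $v_a$; together with the assumed edge $wv_{bc}$ this leaves $w$ adjacent only to $v_{bc}$ among the five chosen vertices.

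Chaining these facts, $G[\{v_a,a,b,v_{bc},w\}]$ is exactly the induced path $v_a - a - b - v_{bc} - w$, i.e.\ an induced $\Gaa$, which is forbidden since $\Gaa\in\mathcal F_3$ and $G$ is $\mathcal F_3$-free. This contradiction shows that no vertex of $V_\emptyset$ can be adjacent with a vertex of $V_{b,c}$.

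I do not expect a genuine obstacle here: every adjacency and non-adjacency needed is already packaged in Claims \ref{lem:unstabLLLG3} and \ref{lem:unstabLLLG5} and in the defining membership conditions of the sets $V_X$. The only points requiring a line of care are that $V_a$ is nonempty in the present case (immediate from the hypothesis $G[V_a\cup V_b\cup V_c]=V_x\vee(V_y+V_z)$) and that the five selected vertices are genuinely distinct so that they really do span an induced $P_5$.
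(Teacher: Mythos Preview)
Your proof is correct. You produce an induced $\Gaa$ ($P_5$) on the five vertices $v_a, a, b, v_{bc}, w$, and every adjacency and non-adjacency you need is justified by the membership conditions of the $V_X$ together with Claims~\ref{lem:unstabLLLG3} and~\ref{lem:unstabLLLG5}.

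The paper takes a different route: it uses the vertices $w, v_a, v_b, v_c, v_{bc}$ (together with a vertex of $W$; note the typo in the paper where $w$ is listed twice) and identifies the induced subgraph as the six-vertex forbidden graph $\Gal$. That argument relies additionally on Claim~\ref{lem:unstabLLLG2} (to get $v_{bc}$ adjacent to both $v_b$ and $v_c$) and on the structural hypothesis $E(V_a,V_b),E(V_a,V_c)$ complete. Your approach is more economical: it uses one fewer vertex, invokes only $P_5$ rather than a more complicated forbidden graph, and leans on fewer of the preceding claims. Both are valid; yours is arguably the cleaner argument.
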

\begin{proof}
Let $w\in V_\emptyset$, $v_a\in V_a$, $v_b\in V_b$, $v_c\in V_c$, and $v_{bc}\in V_{b,c}$.
Suppose $w$ is adjacent with $v_{bc}$.
A contradiction is obtained since $G[\{w, v_a, v_b, v_c, v_{bc}, w\}]\simeq \Gal$.
Thus $w$ is not adjacent with any vertex in $V_{b,c}$
\end{proof}

\begin{Claim}\label{lem:unstabLLLG7}
There exists no vertex in $V_\emptyset$ adjacent with a vertex in $V_{a,b}\cup V_{a,c}$.
\end{Claim}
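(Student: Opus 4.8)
The plan is to argue exactly as in Claims \ref{lem:unstabLLLG5} and \ref{lem:unstabLLLG6}: assume for contradiction that some $w\in V_\emptyset$ is adjacent to a vertex of $V_{a,b}\cup V_{a,c}$, and then exhibit a forbidden induced subgraph. Since $b$ and $c$ play symmetric roles throughout this subsection (we have fixed $E(V_a,V_b)$ and $E(V_a,V_c)$ complete bipartite and $E(V_b,V_c)=\emptyset$, so simultaneously swapping $b\leftrightarrow c$, $V_b\leftrightarrow V_c$ and $V_{a,b}\leftrightarrow V_{a,c}$ preserves all the hypotheses and the claims already derived), it suffices to rule out adjacency to $V_{a,b}$; the case of $V_{a,c}$ follows verbatim after relabeling.

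So suppose $w\in V_\emptyset$ is adjacent to some $v_{ab}\in V_{a,b}$. First I observe that in each of the four cases of this subsection the sets $V_a=V_x$, $V_b=V_y$, $V_c=V_z$ are nonempty (each is $K_1$ or $K_2$), so we may fix $v_a\in V_a$ and $v_b\in V_b$; these five vertices $w,v_{ab},b,v_b,v_a$ lie in pairwise distinct classes and are therefore distinct. I then claim that $\{w,v_{ab},b,v_b,v_a\}$ induces the path $w-v_{ab}-b-v_b-v_a$, that is, the graph $\Gaa=P_5$, which is forbidden.

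The verification is a short check of the ten pairs, each justified by an earlier result. The edges are: $wv_{ab}\in E$ by assumption; $v_{ab}b\in E$ and $bv_b\in E$ because $v_{ab}\in V_{a,b}$ and $v_b\in V_b$; and $v_bv_a\in E$ because $E(V_a,V_b)$ is complete bipartite. The non-edges are: $wb,wv_b,wv_a\notin E$ by Claim \ref{lem:unstabLLLG5} together with $w\in V_\emptyset$; $v_{ab}v_b,v_{ab}v_a\notin E$ by Claim \ref{lem:unstabLLLG1}; and $bv_a\notin E$ because $v_a\in V_a$ meets $\{a,b,c\}$ only at $a$. Hence the induced subgraph is exactly $P_5$, a contradiction, so no such $w$ exists.

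I do not expect a genuine obstacle here; the only delicate point is the choice of the five vertices. The naive attempt to extend the induced path $w-v_{ab}-a-c$ by a vertex of $V_c$ would force a case split on whether $v_cv_{ab}\in E$, which is not yet controlled. Routing the path outward through $V_b$ and then $V_a$ avoids this entirely, and the symmetric choice $\{w,v_{ac},c,v_c,v_a\}$ disposes of the $V_{a,c}$ case with no further work.
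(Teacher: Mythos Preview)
Your proof is correct. Both you and the paper derive a contradiction by exhibiting a forbidden induced subgraph, but you choose different five-vertex sets. The paper takes $\{w, v_{ax}, v_a, v_b, v_c\}$, relying (via the earlier Claim~\ref{lem:unstabLLLG4}, which in context forces $v_{ab}v_c\in E$) on the path $w\,$--$\,v_{ab}\,$--$\,v_c\,$--$\,v_a\,$--$\,v_b$; the paper labels the resulting graph $\Gaf$, though the five-vertex set actually induces $P_5=\Gaa$. You instead route through the clique vertex $b$ and use $\{w, v_{ab}, b, v_b, v_a\}$, obtaining $P_5$ directly from Claims~\ref{lem:unstabLLLG1} and~\ref{lem:unstabLLLG5} together with the standing hypothesis that $E(V_a,V_b)$ is complete bipartite. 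Your choice has the small advantage of bypassing Claim~\ref{lem:unstabLLLG4} (whose statement in the paper has an index slip) and of making the $P_5$ identification immediate; otherwise the two arguments are equivalent in spirit.
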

\begin{proof}
Let $x\in \{b,c\}$, $y\in\{b,c\}-x$, $v_a\in V_a$, $v_b\in V_b$ and $v_{ax}\in V_{a,x}$.
Suppose $w$ is adjacent with $v_{ax}$.
Since the induced subgraph $G[\{w, v_{ax}, v_a, v_b, v_c\}]$ is isomorphic to $\Gaf$, then we get a contradiction.
And then the result follows.
\end{proof}

Claims \ref{lem:unstabLLLG5}, \ref{lem:unstabLLLG6} and \ref{lem:unstabLLLG7} imply that no vertex in $V_\emptyset$ 
is adjacent with a vertex in $G\setminus V_\emptyset$, which implies that $V_\emptyset=\emptyset$.
Thus the graph is isomorphic to an induced subgraph of a graph in $\mathcal F_1^1$.

\subsection{Case $G[V_a \cup V_b \cup V_c]=K_{1,1,1}$, where each vertex set $V_x=K_1$.}
Let $V_a=\{ v_a\}$, $V_b=\{ v_b\}$, and $V_c=\{ v_c\}$. 
By Claim \ref{lem:unstabLLLG1}, the edge sets $E(V_{x,y},V_x)$ and $E(V_{x,y},V_y)$ are empty for $x,y\in \{a,b,c\}$.
By Claim \ref{lem:unstabLLLG3}, the edge set $E(V_{x,y}, V_z)$ is empty for $x,y,z\in \{a,b,c\}$.
Now let $v_{xy}\in V_{x,y}$.
Since $G[\{ v_y, v_z,z,x, v_{x,y}\}]$ is isomorphic to $\Gaa$ which is a forbidden, then $V_{xy}$ is empty for each pair $x,y\in \{a,b,c\}$.
On the other hand, by Claim \ref{lem:unstabLLLG5} the edge set $V_\emptyset$ is empty.
This graph is isomorphic to $G_1$, see Figure \ref{figure:omega3}.i.

\subsection{Case $V_z=\emptyset$ and $G[V_x \cup V_y]=V_x+V_y$, where $V_x=K_m$, $V_y=K_n$ and $m,n\in\{1,2\}$.}
Without loss of generality, suppose $V_c=\emptyset$, and $E(V_a,V_b)$ is empty.
By Claim \ref{lem:unstabLLLG2}, each vertex in $V_{a,b}$ is adjacent with each vertex in $V_a\cup V_b$.
\begin{Claim}
Let $x\in\{a,b\}$.
If $V_{x,c}\neq \emptyset$, then $E(V_{x},V_{x,c})=\emptyset$.
\end{Claim}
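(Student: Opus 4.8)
The plan is to argue by contradiction in the style of the preceding claims: assume the forbidden edge exists and exhibit a small induced subgraph isomorphic to a member of $\mathcal F_3$. By the symmetry between $a$ and $b$ in the standing hypotheses of this subsection ($V_c=\emptyset$, $E(V_a,V_b)=\emptyset$, $V_a=K_m$, $V_b=K_n$), it suffices to treat $x=a$; the case $x=b$ follows by interchanging the roles of $a$ and $b$. So I would fix $v_a\in V_a$ and $v_{ac}\in V_{a,c}$ (using $V_{a,c}\neq\emptyset$) and suppose toward a contradiction that $v_av_{ac}\in E(G)$. The crucial extra vertex is some $v_b\in V_b$, which exists because $V_b=K_n$ with $n\geq 1$; note that $v_a,v_b,v_{ac}$ all lie outside $W=\{a,b,c\}$, so they are pairwise distinct and distinct from $a,b,c$.

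Next I would record the adjacencies forced by the definitions of $V_a$, $V_b$ and $V_{a,c}$: the set $\{a,b,c\}$ is a triangle; $v_a\sim a$ only among $W$; $v_b\sim b$ only among $W$; $v_{ac}\sim a,c$ and $v_{ac}\not\sim b$; and $v_a\not\sim v_b$ since $E(V_a,V_b)=\emptyset$. The only pair not yet pinned down is $\{v_b,v_{ac}\}$, so I would split into two cases. If $v_b\not\sim v_{ac}$, then $v_a-v_{ac}-c-b-v_b$ is an induced path: the four edges $v_av_{ac}$, $v_{ac}c$, $cb$, $bv_b$ hold, while $v_a\not\sim c$, $v_a\not\sim b$, $v_a\not\sim v_b$, $v_{ac}\not\sim b$, $c\not\sim v_b$ are non-edges by the list above and $v_b\not\sim v_{ac}$ is the case assumption. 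Hence $G[\{v_a,v_{ac},c,b,v_b\}]\cong\Gaa$, a contradiction.

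If instead $v_b\sim v_{ac}$, I would check that $\{a,b,c,v_a,v_b,v_{ac}\}$ induces exactly $\Gar$. Its nine edges $ab,ac,bc,av_a,av_{ac},cv_{ac},v_av_{ac},bv_b,v_bv_{ac}$ give the degree sequence $(4,4,3,3,2,2)$ with $a,v_{ac}$ the degree-$4$ vertices, and the bijection sending $a,v_{ac},c,v_a,b,v_b$ respectively to the nodes $v_4,v_6,v_1,v_2,v_5,v_3$ of $\Gar$ in Figure \ref{figure:FamilyGraphF3} matches these edges with the edge set of $\Gar$ one-to-one, yielding an isomorphism and hence a contradiction. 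I expect the only genuine work to be this last identification: confirming that the branch $v_b\sim v_{ac}$ produces precisely the forbidden $\Gar$ rather than some non-forbidden $6$-vertex graph. This is exactly why the sixth vertex $v_b$ is indispensable, since the five vertices $\{a,b,c,v_a,v_{ac}\}$ alone induce only the gem, which is \emph{not} in $\mathcal F_3$, and why the case split on $\{v_b,v_{ac}\}$ cannot be dropped; both branches are then routine to verify by hand or with a computer algebra system.
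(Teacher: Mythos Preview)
Your proof is correct and follows essentially the same route as the paper's own argument: the paper also fixes $v_a\in V_a$, $v_b\in V_b$, $v_{xc}\in V_{x,c}$, assumes $v_xv_{xc}\in E(G)$, splits on whether $v_y$ is adjacent to $v_{xc}$, and obtains $\Gar$ on $\{a,b,c,v_a,v_b,v_{xc}\}$ in the adjacent case and $\Gaa$ on $\{v_a,v_b,y,v_{xc},c\}$ in the non-adjacent case. Your version simply spells out the isomorphism with $\Gar$ in detail where the paper leaves it to the reader.
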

\begin{proof}
Let $y\in\{a,b\}-x$, $v_{xc}\in V_{x,c}$, $v_a\in V_a$ and $v_b\in V_b$.
Suppose $v_{x}$ and $v_{xc}$ are adjacent.
There are two possible cases: either $v_y$ and $v_{xc}$ are adjacent or not.
Since in the first case $G[\{a,b,c,v_a,v_b,v_{xc}\}]\simeq\Gar$ and in the second case $G[\{ v_a, v_b, y, v_{xc},c\}]$ is isomorphic to $\Gaa$, then we get a contradiction.
Thus $v_{x}$ and $v_{xc}$ are not adjacent.
\end{proof}

By Claim \ref{lem:unstabLLLG4}, each vertex in $V_{a,c}$ is adjacent with each vertex in $V_b$, and each vertex in $V_{b,c}$ is adjacent with each vertex in $V_a$.

\begin{Claim}
Each vertex in $V_{a,b}$ is adjacent with each vertex in $V_{a,c}\cup V_{b,c}$.
\end{Claim}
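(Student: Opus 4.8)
The plan is to prove the two halves of the statement---that every vertex of $V_{a,b}$ is adjacent to every vertex of $V_{a,c}$, and that every vertex of $V_{a,b}$ is adjacent to every vertex of $V_{b,c}$---by a single contradiction scheme: whenever an edge is missing I exhibit a forbidden induced $\Gaa$ (that is, a $P_5$). Because we are in the case where $V_a$ and $V_b$ are nonempty cliques ($K_m$ and $K_n$ with $m,n\in\{1,2\}$), I may fix vertices $v_a\in V_a$ and $v_b\in V_b$ to serve as the two endpoints of such a path.

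First I would assemble the adjacencies already available in this case. By Claim \ref{lem:unstabLLLG2}, a vertex $v_{ab}\in V_{a,b}$ is adjacent with both $v_a$ and $v_b$; by the Claim proved just above, $E(V_a,V_{a,c})=\emptyset$ and $E(V_b,V_{b,c})=\emptyset$, so $v_a$ misses every vertex of $V_{a,c}$ and $v_b$ misses every vertex of $V_{b,c}$; and by Claim \ref{lem:unstabLLLG4} each vertex of $V_{a,c}$ is adjacent with $v_b$ while each vertex of $V_{b,c}$ is adjacent with $v_a$. The membership conditions then fix the adjacencies to $\{a,b,c\}$: $v_{ab}$ meets $a,b$ but not $c$, a vertex $v_{ac}\in V_{a,c}$ meets $a,c$ but not $b$, a vertex $v_{bc}\in V_{b,c}$ meets $b,c$ but not $a$, while $v_a$ meets only $a$ and $v_b$ meets only $b$.

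For the first half I take $v_{ac}\in V_{a,c}$ and suppose, for a contradiction, that $v_{ab}v_{ac}\notin E(G)$. Running through the ten pairs inside $\{v_a,v_{ab},v_b,v_{ac},c\}$ with the facts above, the only surviving edges are $v_av_{ab}$, $v_{ab}v_b$, $v_bv_{ac}$ and $v_{ac}c$, so this set induces the path $v_a - v_{ab} - v_b - v_{ac} - c$, a copy of the forbidden $\Gaa$; hence $v_{ab}v_{ac}\in E(G)$. The second half is symmetric: taking $v_{bc}\in V_{b,c}$ and assuming $v_{ab}v_{bc}\notin E(G)$, the set $\{v_b,v_{ab},v_a,v_{bc},c\}$ induces the path $v_b - v_{ab} - v_a - v_{bc} - c$, again a forbidden $\Gaa$.

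The only step requiring genuine care---and the one most prone to a slip---is the exhaustive check that no stray chord hides inside these five-vertex sets. In particular one must invoke $E(V_a,V_{a,c})=\emptyset$ (respectively $E(V_b,V_{b,c})=\emptyset$) to rule out the chord $v_av_{ac}$ (respectively $v_bv_{bc}$), and the defining membership of $V_{a,c}$ (respectively $V_{b,c}$) together with $E(V_a,V_b)=\emptyset$ to guarantee that $c$ is joined only to $v_{ac}$ (respectively $v_{bc}$) and that $v_a,v_b$ are nonadjacent. Once every nonedge is accounted for, the induced subgraph is exactly $P_5$ and the contradiction closes the argument for both inclusions.
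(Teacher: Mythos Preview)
Your proof is correct and follows essentially the same approach as the paper: both arguments take the five vertices $\{v_a,v_b,v_{ab},v_{xc},c\}$ (with $x\in\{a,b\}$) and verify that, under the assumption $v_{ab}v_{xc}\notin E(G)$, this set induces a copy of $\Gaa=P_5$. Your write-up is simply more explicit in checking each of the ten pairs, but the underlying idea and the vertex set chosen are identical.
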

\begin{proof}
Let $v_{ab}\in V_{a,b}$, $v_a\in V_a$ and $v_b\in V_b$.
Suppose there exists $v_{xc}\in V_{xc}$ with $x\in\{a,b\}$ such that $v_{ab}v_{xc}\notin E(G)$.
Since $G[\{ v_a, v_b, v_{ab}, v_{xc},c\}]$ is isomorphic to $\Gaa$, then we get a contradiction; and the vertices $v_{ab}$ and $v_{xc}$ are adjacent.
And the result turns out.
\end{proof}

\begin{Claim}
Each vertex in $V_{a,c}$ is adjacent with each vertex in $V_{b,c}$.
\end{Claim}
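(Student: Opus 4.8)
The plan is to argue by contradiction and exhibit a forbidden induced $\Gaa$ (that is, $P_5$). Since the running hypothesis of this subsection is $G[V_a\cup V_b]=V_a+V_b$ with both parts nonempty cliques, I may fix vertices $v_a\in V_a$ and $v_b\in V_b$; and as the claim is vacuous otherwise, I assume there exist $v_{ac}\in V_{a,c}$ and $v_{bc}\in V_{b,c}$ with $v_{ac}v_{bc}\notin E(G)$, aiming to reach a contradiction.

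First I would read off the adjacencies among the five distinct vertices $v_a,v_{bc},c,v_{ac},v_b$ from the facts already established in this subsection. The four path-edges are present: $v_av_{bc},v_{ac}v_b\in E(G)$ by Claim \ref{lem:unstabLLLG4} (each vertex of $V_{b,c}$ is adjacent to each vertex of $V_a$, and each vertex of $V_{a,c}$ to each vertex of $V_b$), while $v_{bc}c,cv_{ac}\in E(G)$ hold by the defining memberships $v_{bc}\in V_{b,c}$ and $v_{ac}\in V_{a,c}$.

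Next I would check that the six remaining pairs are all non-edges, so the induced subgraph is exactly a path rather than something denser. Indeed $v_ac,v_bc\notin E(G)$ and $v_av_b\notin E(G)$ by the definitions of $V_a,V_b$ together with the emptiness of $E(V_a,V_b)$; moreover $v_av_{ac}\notin E(G)$ and $v_bv_{bc}\notin E(G)$ by the established fact that $E(V_x,V_{x,c})=\emptyset$ for $x\in\{a,b\}$; and $v_{ac}v_{bc}\notin E(G)$ is the contradiction hypothesis. Hence $G[\{v_a,v_{bc},c,v_{ac},v_b\}]$ is precisely the path $v_a-v_{bc}-c-v_{ac}-v_b$, i.e.\ $\Gaa$, which is forbidden. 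This contradiction forces $v_{ac}v_{bc}\in E(G)$, and since $v_{ac},v_{bc}$ were arbitrary the claim follows.

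I do not anticipate a genuine obstacle; the only point requiring care is confirming that all six non-edges are truly forced, so that the chosen five vertices induce $P_5$ exactly and not a graph with extra chords (which might fail to lie in $\mathcal F_3$). Once this edge/non-edge bookkeeping is verified, the identification with $\Gaa$ is immediate.
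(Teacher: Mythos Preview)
Your proof is correct and essentially identical to the paper's: both suppose $v_{ac}v_{bc}\notin E(G)$, pick $v_a\in V_a$, $v_b\in V_b$, and observe that $G[\{v_a,v_{bc},c,v_{ac},v_b\}]\simeq P_5=\Gaa$ (the paper lists the same five vertices in the reverse order $v_b,v_{ac},c,v_{bc},v_a$). Your edge/non-edge verification is more explicit than the paper's one-line assertion, but the argument is the same.
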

\begin{proof}
Suppose there are $v_{ac}\in V_{a,c}$ and $v_{bc}\in V_{b,c}$ such that $v_{bc}v_{ac}\notin E(G)$.
Let $v_a\in V_a$ and $v_b\in V_b$
Since $G[\{ v_b, v_{ac}, c, v_{bc}, v_a\}]\simeq P_5$, then we get a contradiction.
\end{proof}

Now we describe the vertex set $V_\emptyset$, that is, the set of vertices that are not adjacent with any vertex in $\{a,b,c\}$.

\begin{Claim}
If $w\in V_\emptyset$, then $w$ cannot be adjacent with any vertex in $V_a\cup V_b$.
\end{Claim}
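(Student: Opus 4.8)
The plan is to argue by contradiction, and the main point is that the whole claim can be settled using only the single forbidden graph $\Gaa=P_5$. Suppose some $w\in V_\emptyset$ is adjacent to a vertex of $V_a\cup V_b$. Since the present situation is symmetric in $a$ and $b$ (both $V_a,V_b$ are nonempty cliques, $V_c=\emptyset$, and $E(V_a,V_b)=\emptyset$), I may assume $w$ is adjacent to some $v_a\in V_a$, and I fix an arbitrary $v_b\in V_b$, which exists because $V_b=K_n$ with $n\ge 1$. Throughout I will use the standing facts of this subsection: $v_a$ is adjacent to $a$ but to neither $b$ nor $c$, $v_b$ is adjacent to $b$ but to neither $a$ nor $c$, $w$ is adjacent to none of $a,b,c$, and $v_av_b\notin E(G)$.

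The argument then splits on whether $w$ is adjacent to $v_b$. First I would treat the easy case $wv_b\notin E(G)$: here $\{w,v_a,a,b,v_b\}$ induces the path $w-v_a-a-b-v_b$, since the four consecutive pairs are edges and all six non-consecutive pairs are non-edges by the facts above together with $wv_b\notin E(G)$. This is an induced $\Gaa$, which is forbidden, giving the contradiction.

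The hard part will be the case $wv_b\in E(G)$. The naive choice $\{w,v_a,a,b,v_b\}$ now induces a $5$-cycle rather than a path, and $C_5$ is \emph{not} forbidden (one checks it lies in $\Gamma_{\le 3}$), so that five-set is useless. The key idea is to bypass the vertex $a$ and route the path through the third clique vertex $c$ instead: I would show that $\{v_a,w,v_b,b,c\}$ induces $v_a-w-v_b-b-c$. Indeed $v_aw,\,wv_b,\,v_bb,\,bc\in E(G)$, while $v_av_b,\,v_ab,\,v_ac,\,wb,\,wc,\,v_bc\notin E(G)$ precisely because $v_a\in V_a$, $v_b\in V_b$, $w\in V_\emptyset$, and $E(V_a,V_b)=\emptyset$. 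This is again an induced $\Gaa$, a contradiction. As both cases are impossible, no vertex of $V_\emptyset$ can be adjacent to a vertex of $V_a\cup V_b$.

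I expect the only genuine subtlety to be this second case: the substitution of the endpoint $a$ by $c$ converts the admissible (and therefore useless) $C_5$ into a genuine forbidden induced $P_5$, and it works only because $c$ is adjacent to $b$ but to none of $w,v_a,v_b$. Once the two explicit five-vertex sets are written down, the verification of their edge/non-edge patterns is routine, so the final write-up should be short.
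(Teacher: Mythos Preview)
Your proposal is correct and is essentially the same as the paper's own proof: the paper also splits into the two cases (adjacent to $v_a$ only, or to both $v_a$ and $v_b$) and exhibits exactly the two induced $P_5$'s you found, namely $G[\{w,v_a,a,b,v_b\}]$ and $G[\{v_a,w,v_b,b,c\}]$. Your observation about routing through $c$ to avoid the $C_5$ is precisely the paper's second case.
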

\begin{proof}
Let $v_a\in V_a$ and $v_b\in V_b$.
Suppose $w\in V_\emptyset$ such that $E(w,V_a\cup V_b)\neq\emptyset$.
Consider the following cases:
\begin{enumerate}[(a)]
\item $w$ is adjacent only with $v_a$, or
\item $w$ is adjacent with both $v_a$ and $v_b$.
\end{enumerate}
Cases (a) and (b) are impossible, because $G$ would have $\Gaa$ as induced subgraph obtained by $G[\{w,v_b,b,a,v_a\}]$ and $G[\{v_a,w,v_b,b,c\}]$, respectively.
Thus $w$ is not adjacent with any vertex in $V_{a}\cup V_{b}$.
\end{proof}

\begin{Claim}
There is no vertex $w\in V_\emptyset$ adjacent with a vertex in $V_{a,b}$.
\end{Claim}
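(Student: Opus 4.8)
The plan is to argue by contradiction and produce a forbidden induced subgraph. Assume the statement fails, so there are $w\in V_\emptyset$ and $v_{ab}\in V_{a,b}$ with $wv_{ab}\in E(G)$. Since in the present case $V_a$ equals $K_1$ or $K_2$, it is nonempty, so I fix a vertex $v_a\in V_a$, and I claim that the six vertices $\{w,v_{ab},v_a,a,b,c\}$ induce a copy of $\Gao$.

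First I would record every adjacency and non-adjacency on this set, each of which is already available. The set $\{a,b,c\}$ is a triangle; from the definitions $v_a\sim a$ but $v_a\not\sim b,c$, while $v_{ab}\sim a,b$ and $v_{ab}\not\sim c$, and $w$ is adjacent with none of $a,b,c$. By Claim \ref{lem:unstabLLLG2} every vertex of $V_{a,b}$ is adjacent with every vertex of $V_a$, so $v_{ab}\sim v_a$; by the preceding claim no vertex of $V_\emptyset$ is adjacent with a vertex of $V_a$, so $w\not\sim v_a$. Adding the assumed edge $wv_{ab}$, the edge set on $\{w,v_{ab},v_a,a,b,c\}$ is exactly $\{wv_{ab},\,v_{ab}v_a,\,v_{ab}a,\,v_{ab}b,\,v_aa,\,ab,\,ac,\,bc\}$.

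This configuration has degree sequence $(1,2,2,3,4,4)$ and is readily checked to be isomorphic to $\Gao$, with $w$ the pendant vertex attached to one of the two vertices of degree four. Because $G$ is $\mathcal F_3$-free and $\Gao\in\mathcal F_3$, this is the desired contradiction, so no such edge exists. The only real point to watch is that $w$ picks up no adjacency beyond $v_{ab}$: it is precisely the non-edges $wv_a$ and $wa,wb,wc$, supplied by the preceding claim and by $w\in V_\emptyset$, that prevent the six vertices from collapsing onto a smaller, permitted graph (a bull, or merely an induced $P_4$); once these are in place the identification with $\Gao$ is routine bookkeeping rather than a genuine obstacle.
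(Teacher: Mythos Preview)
Your proof is correct and follows the same strategy as the paper: assume $w\sim v_{ab}$ and exhibit a forbidden induced subgraph. The only difference is the specific witness chosen---the paper uses $\{v_a,v_b,v_{ab},w,a,c\}$ to obtain $\Gac$, whereas you use $\{w,v_{ab},v_a,a,b,c\}$ to obtain $\Gao$; both identifications are valid and the underlying idea is identical.
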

\begin{proof}
Suppose $w\in V_\emptyset$ is adjacent with $v_{ab}\in V_{a,b}$.
Let $v_a\in V_a$ and $v_b\in V_b$.
Since $G[\{ v_a, v_b, v_{a,b}, w, a,c\}]$ is isomorphic to $\Gac$, then we get a contradiction and $w$ and $v_{ab}$ are not adjacent.
\end{proof}

\begin{Claim}
There is no vertex $w\in V_\emptyset$ adjacent with a vertex in $V_{a,c}\cup V_{b,c}$.
\end{Claim}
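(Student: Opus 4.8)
The plan is to argue by contradiction, producing a member of $\mathcal F_3$ as an induced subgraph. Suppose some $w\in V_\emptyset$ is adjacent with a vertex $v\in V_{a,c}\cup V_{b,c}$. The present configuration is symmetric under exchanging $a$ and $b$ (we have $V_c=\emptyset$ and $E(V_a,V_b)=\emptyset$, and this swap interchanges $V_{a,c}$ and $V_{b,c}$), so I may assume $v\in V_{a,c}$ and recover the other case by relabelling.

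The key observation is that a single auxiliary vertex together with the clique $\{a,b,c\}$ already forces a forbidden graph. Since $V_a=K_m$ with $m\geq 1$, I would choose $v_a\in V_a$ and look at the six vertices $\{a,b,c,v,v_a,w\}$. All their pairwise relations are forced by the part memberships: the triangle $abc$; the edges $va,vc$ together with the non-edges $vb$ and $v v_a$ (the latter because the earlier claim yields $E(V_a,V_{a,c})=\emptyset$ as soon as $V_{a,c}\neq\emptyset$); the edge $v_a a$ with non-edges $v_a b,v_a c$; and finally the single edge $vw$, with $w$ non-adjacent to $a,b,c$ and to $v_a$ (the non-edge $wv_a$ coming from the immediately preceding claim that no vertex of $V_\emptyset$ has a neighbour in $V_a\cup V_b$). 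I would then verify that the resulting graph is isomorphic to $\Gak$, most quickly by matching the degree sequence $(4,3,3,2,1,1)$ and then checking adjacencies ($a$ the degree-$4$ centre, $c$ and $v$ the two degree-$3$ vertices, $b$ the degree-$2$ vertex, and $v_a,w$ the two pendants). This contradicts $\mathcal F_3$-freeness, and the case $v\in V_{b,c}$ is verbatim after the $a\leftrightarrow b$ relabelling, using $v_b\in V_b$ and the symmetric claim $E(V_b,V_{b,c})=\emptyset$.

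The step I expect to demand the most care is not the logic but the bookkeeping: selecting the auxiliary vertex so that the six induced vertices reproduce an \emph{exact} member of $\mathcal F_3$ rather than merely containing one, and then pinning down the correct isomorphism. An equally valid route, parallel to the preceding claim's use of $\Gac$, is to keep both $v_a\in V_a$ and $v_b\in V_b$ (both parts are non-empty here) and drop $b$ instead: since $v$ is adjacent to every vertex of $V_b$, the set $\{a,c,v,v_a,v_b,w\}$ realizes $\Gac$, with $v_b$ and $w$ forming its stable twin pair. When writing the final argument I would choose whichever of $\Gak$ or $\Gac$ makes the adjacency checklist shortest and most transparent.
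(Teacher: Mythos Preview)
Your argument is correct. Fixing $v\in V_{a,c}$ and $v_a\in V_a$, all fifteen pairwise relations among $\{a,b,c,v,v_a,w\}$ are determined exactly as you list them, and the resulting graph is indeed isomorphic to $\Gak$ (with $a\mapsto v_6$, $v\mapsto v_1$, $c\mapsto v_5$, $b\mapsto v_2$, $v_a\mapsto v_3$, $w\mapsto v_4$). Your alternative via $\Gac$ on $\{a,c,v,v_a,v_b,w\}$ also works, since the section has already recorded that every vertex of $V_{a,c}$ is adjacent to every vertex of $V_b$; then $v_b$ and $w$ are the twin pendants at $v$.

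This is a genuinely different route from the paper. The paper argues with only five vertices, claiming that $G[\{v_y,y,c,v_{xc},w\}]\simeq P_5$. But earlier in the same subsection it is asserted that every vertex of $V_{x,c}$ is adjacent to every vertex of $V_y$ (for $\{x,y\}=\{a,b\}$), which forces the extra edge $v_y v_{xc}$; the five vertices then induce a $4$-cycle with a pendant rather than $P_5$, so the paper's verification does not go through as written. Your six-vertex configuration sidesteps this entirely because you never need to know whether $v$ meets $V_b$: the only cross-edges you use are $vv_a\notin E(G)$ and $wv_a\notin E(G)$, both of which are supplied by the two claims immediately preceding the one in question. In short, your $\Gak$ argument is both correct and more robust than the paper's intended $P_5$ shortcut.
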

\begin{proof}
Let $x\in\{a,b\}$, $v_{xc}\in V_{x,c}$, $y\in\{a,b\}-x$ and $v_y\in V_y$.
Suppose $w\in V_\emptyset$ is adjacent with $v_{xc}$.
Since $G[\{ v_y, y, c,v_{xc}, w\}]$ is isomorphic to $\Gaa$, then we get a contradiction and $w$ is not adjacent with $v_{xc}$.
\end{proof}

Thus there is no edge between $W$ and $G\setminus W$, and therefore $W$ is empty.
Therefore, the graph is isomorphic to an induced subgraph of a graph in $\mathcal F_1^1$.

\subsection{Case $V_z=\emptyset$ and $G[V_x \cup V_y]=V_x\vee V_y$, where $V_x=K_1$, $V_y=K_m$ and $m\in\{1,2\}$.}
Without loss of generality, suppose $V_c=\emptyset$, and $E(V_a,V_b)$ is complete.
By Claim \ref{lem:unstabLLLG1}, $E(V_{a,b},V_x)=\emptyset$ for $x\in\{a,b\}$.

\begin{Claim}
Let $x\in\{a,b\}$.
Either $E(V_{x,c},V_a)$ induces a complete bipartite graph and $E(V_{x,c},V_b)=\emptyset$, or $E(V_{x,c},V_a)=\emptyset$ and $E(V_{x,c},V_b)$ induces a complete bipartite graph.
\end{Claim}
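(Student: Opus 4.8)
The plan is to prove the dichotomy one vertex at a time and then force consistency across all of $V_{x,c}$. By the symmetry $a\leftrightarrow b$ (which exchanges $V_a\leftrightarrow V_b$ and $V_{a,c}\leftrightarrow V_{b,c}$) I may assume $x=a$, and I may assume $V_{a,c}\neq\emptyset$, the statement being vacuous otherwise. Fix $v\in V_{a,c}$ and representatives $v_a\in V_a$, $v_b\in V_b$. The base adjacencies are $v\sim a$, $v\sim c$, $v\not\sim b$, while $v_a\sim a$, $v_b\sim b$, $v_a\sim v_b$ (the join $V_a\vee V_b$), with $v_a,v_b\not\sim c$, $v_a\not\sim b$, $v_b\not\sim a$. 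Recall also that $V_{a,c}$ induces a stable set, and that $V_a,V_b$ are nonempty cliques of size at most $2$.

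First I would show $v$ is adjacent to at least one of $V_a,V_b$. If it were adjacent to neither, the five vertices $\{v_a,v_b,b,c,v\}$ would carry exactly the edges $v_av_b,\,v_bb,\,bc,\,cv$, i.e. the induced path $v_a-v_b-b-c-v$, so that $G[\{v_a,v_b,b,c,v\}]\simeq\Gaa$; a contradiction. Next I would exclude the \emph{mixed} pattern, namely $v$ adjacent both to a vertex of $V_a$ and to a vertex of $V_b$: in that case $\{v,v_a,v_b\}$ is a triangle sitting on the path $v-c-b-v_b$, and the six vertices $\{a,b,c,v,v_a,v_b\}$ induce a forbidden graph of $\mathcal F_3$ (this configuration contains no $K_4$, so it must be one of the forbidden six-vertex graphs, the identification being checked with a computer algebra system as elsewhere in the paper). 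Together with a completeness step — if one of $V_a,V_b$ is a $K_2$ then $v$ cannot be adjacent to exactly one of its two vertices, since splitting that edge produces a forbidden induced subgraph on $v$, the two vertices of the $K_2$, and two of $a,b,c$ — this shows each $v\in V_{a,c}$ is completely joined to exactly one of $V_a,V_b$ and has no edge to the other.

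Finally I would establish uniformity: all vertices of $V_{a,c}$ must choose the same side. If $v,v'\in V_{a,c}$ were joined completely to $V_a$ and to $V_b$ respectively, then using $v\not\sim v'$ (as $V_{a,c}$ is stable), $v\not\sim v_b$ and $v'\not\sim v_a$, the set $\{v,v',v_a,v_b,a\}$ induces a forbidden graph of $\mathcal F_3$; a contradiction. Hence $E(V_{a,c},V_a)$ and $E(V_{a,c},V_b)$ cannot both be nonempty, and combined with the per-vertex dichotomy this gives precisely that $V_{a,c}$ is completely joined to one of $V_a,V_b$ and has no edge to the other. I expect the main obstacle to be the per-vertex analysis (the no-mixing and completeness steps): because the triangle $abc$ and the join $V_a\vee V_b$ lie adjacent to $V_{a,c}$, many candidate $5$- and $6$-vertex configurations differ by a single edge, so the real work is selecting, in each subcase and for each choice of $|V_a|,|V_b|\in\{1,2\}$, the correct member of $\mathcal F_3$ that yields the contradiction.
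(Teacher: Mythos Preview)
Your overall plan matches the paper's: prove a per-vertex dichotomy (each $v\in V_{x,c}$ is adjacent to exactly one of $v_a,v_b$), extend to completeness over the whole side, then establish uniformity across $V_{x,c}$. The paper uses the same $P_5$ on $\{v_a,v_b,y,c,v_{xc}\}$ for the ``at least one'' step and names the ``both sides'' obstruction explicitly as $\Gaz$ on $\{a,b,c,v_a,v_b,v_{xc}\}$; your deferral of that identification to a CAS is in the spirit of the paper.

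The genuine gap is in your uniformity step. With $v,v'\in V_{a,c}$, $v\sim v_a$, $v\not\sim v_b$, $v'\sim v_b$, $v'\not\sim v_a$, $v\not\sim v'$, and $v_a\sim v_b$, the set $\{v,v',v_a,v_b,a\}$ carries exactly the six edges
\[
av,\quad av',\quad av_a,\quad vv_a,\quad v_av_b,\quad v_bv'
\]
(remember $a\not\sim v_b$ since $v_b\in V_b$). This graph has degree sequence $(3,3,2,2,2)$: it is the house graph, namely the $4$-cycle $a\,v'\,v_b\,v_a$ together with the triangle $a\,v_a\,v$. The only five-vertex member of $\mathcal F_3$ is $\Gaa=P_5$, so your chosen set yields no contradiction and the argument does not close. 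The paper takes the witness vertex to be $c$ rather than $a$, working with $\{v_a,v_b,v,c,v'\}$; since $c$ is nonadjacent to $v_a,v_b$ but adjacent to both $v,v'$, the extra triangle at $a$ is avoided and a $P_5$ can be extracted. You should revisit this step with a different choice of witness vertex from $\{a,b,c\}$.
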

\begin{proof}
Let $v_a \in V_a$, $v_b\in V_b$, $v_{xc}\in V_{x,c}$ and $y\in\{a,b\}-x$.
First note that $v_{xc}$ cannot be not adjacent with $v_a$ and $v_b$ at the same time, because otherwise $G[\{v_a,v_b,y,c,v_{xc}\}]$ would be isomorphic to $\Gaa$.
Also $v_{xc}$ cannot be adjacent with $v_a$ and $v_b$ at the same time, because otherwise $G[\{a,b,c,v_a,v_b,v_{xc}\}]$ would be isomorphic to $\Gaz$.
Thus $v_{xc}$ is adjacent only with one vertex either $v_a$ or $v_b$.

Suppose $v_{xc}$ is adjacent with $v_y$.
Let $v_y' \in V_y$ and $v_{xc}'\in V_{x,c}$.
If $v_{xc}$ is not adjacent with $v_y'$, then $G[\{v_y,v_y',x,c,v_x,v_{xc}\}]$ is isomorphic to $\Gav$.
Thus $E(V_y,v_{xc})$ induces a complete bipartite graphs.
On the other hand, if $v_y$ and $v_{xc}'$ are not adjacent, then $G[\{v_a,v_b,v_{xc},c,v_{xc}'\}]$ is isomorphic to $\Gaa$.
Then $E(V_{x,c},V_y)$ induces a complete bipartite graph.

Suppose $v_{xc}$ is adjacent with $v_x$.
Let $v_x' \in V_x$ and $v_{xc}'\in V_{x,c}$.
If $v_x'$ and $v_{xc}$ are not adjacent, then $G[\{v_a,v_b,v_x',x,c,v_{xc}\}]$ is isomorphic to $\Gaw$.
Thus $E(V_x,v_{xc})$ induces a complete bipartite graph.
On the other hand, if $v_{xc}'$ and $v_x$ are not adjacent, then $G[\{v_y,v_x,v_{xc},c,v_{xc}'\}]$ is isomorphic to $\Gaa$.
Thus $E(V_{x,c},V_x)$ induces a complete bipartite graph.
\end{proof}

\begin{Claim}
If $V_{a,c}$ and $V_{b,c}$ are not empty, then each vertex in $V_{a,c}\cup V_{b,c}$ is adjacent with each vertex in either $V_a$ or $V_b$ .
\end{Claim}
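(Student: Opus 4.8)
The plan is to lean on the preceding Claim, which tells us that each of $V_{a,c}$ and $V_{b,c}$ is joined \emph{completely} to exactly one of $V_a$ or $V_b$ and has no edge to the other. This leaves four configurations. In the two \emph{aligned} ones—both sets complete to $V_a$, or both complete to $V_b$—the assertion is immediate, since then every vertex of $V_{a,c}\cup V_{b,c}$ is adjacent to every vertex of the common part. Hence the real content is to discard the two \emph{crossed} configurations, namely ($V_{a,c}$ complete to $V_a$, $V_{b,c}$ complete to $V_b$) and the opposite crossing ($V_{a,c}$ complete to $V_b$, $V_{b,c}$ complete to $V_a$); both are handled by the same method, and I would spell out the first.

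First I would fix representatives $v_a\in V_a$, $v_b\in V_b$, $v_{ac}\in V_{a,c}$ and $v_{bc}\in V_{b,c}$; all four sets are nonempty, since $V_x=K_1$ and $V_y=K_m$ force $V_a,V_b\neq\emptyset$, while $V_{a,c},V_{b,c}\neq\emptyset$ by hypothesis. On the seven vertices $\{a,b,c,v_a,v_b,v_{ac},v_{bc}\}$ the adjacencies are then completely determined except for one edge: we have the triangle on $\{a,b,c\}$; the edges $v_a a$, $v_{ac}a$, $v_{ac}c$, $v_{bc}b$, $v_{bc}c$, $v_b b$; the join edge $v_a v_b$; the crossed edges $v_a v_{ac}$ and $v_b v_{bc}$; and the non-edges forced by the definitions of the sets ($v_a\not\sim b,c,v_{bc}$, $v_b\not\sim a,c,v_{ac}$, $v_{ac}\not\sim b,v_b$, $v_{bc}\not\sim a,v_a$ and $c\not\sim v_a,v_b$). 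The single undetermined adjacency is whether $v_{ac}\sim v_{bc}$, so I would split into these two sub-cases and, in each, exhibit an induced subgraph of $G$ on a subset of these seven vertices isomorphic to a member of $\mathcal F_3$, contradicting that $G$ is $\mathcal F_3$-free.

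The hard part will be producing that forbidden subgraph, because the triangle $\{a,b,c\}$ is dense: the natural five-vertex witness $G[\{v_a,v_{ac},c,v_{bc},v_b\}]$ is only a five-cycle (when $v_{ac}\not\sim v_{bc}$) or a ``house'' (when $v_{ac}\sim v_{bc}$), and every path routed through $a$, $b$ or $c$ acquires a chord, so there is no clean induced $P_5=\Gaa$. One is therefore forced to reintroduce $a$ and/or $b$ and to match the resulting six- or seven-vertex graph against $\mathcal F_3$. I expect the cleanest way to close both sub-cases, and likewise the opposite crossing, is to verify the isomorphism with a computer algebra check, exactly as is done for the base cases of $\mathcal F_3$ and in the proof of Claim \ref{cl:unstablecase}. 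Once the crossed configurations are excluded, only the aligned ones survive, which is precisely the desired statement.
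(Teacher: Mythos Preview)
Your plan is exactly the paper's: reduce via the previous claim to the two crossed configurations, split each on whether $v_{ac}\sim v_{bc}$, and exhibit a forbidden induced subgraph on six or seven of the vertices $\{a,b,c,v_a,v_b,v_{ac},v_{bc}\}$. The paper does not defer to a computer check here but names the witnesses explicitly: for the crossing $v_{ac}\sim v_a$, $v_{bc}\sim v_b$ one gets $G[\{a,b,c,v_b,v_{ac},v_{bc}\}]\simeq\Gay$ when $v_{ac}v_{bc}\in E(G)$ and $G[\{a,b,v_a,v_b,v_{ac},v_{bc}\}]\simeq\Gav$ when $v_{ac}v_{bc}\notin E(G)$; for the opposite crossing $v_{ac}\sim v_b$, $v_{bc}\sim v_a$ one gets $G[\{a,b,c,v_a,v_b,v_{ac},v_{bc}\}]\simeq\Gbo$ and $G[\{a,b,c,v_b,v_{ac},v_{bc}\}]\simeq\Gar$ in the two sub-cases respectively.
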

\begin{proof}
Let $v_{ac}\in V_{a,c}$, $v_{bc}\in V_{b,c}$, $v_{a}\in V_{a}$ and $v_b\in V_b$.

Suppose  $v_{ac}$ is adjacent with $v_a$, and $v_{bc}$ is adjacent with $v_b$.
Then there are two cases: either $v_{ac}$ and $v_{bc}$ are adjacent or not.
In the first case $G[\{a,b,c,v_b,v_{ac},v_{bc}\}]$ is isomorphic to $\Gay$; then this case is impossible.
And in the second case $G[\{a,b,v_a,v_b,v_{ac},v_{bc}\}]$ is isomorphic to $\Gav$; then this case is not possible.

Suppose  $v_{ac}$ is adjacent with $v_b$ and $v_{bc}$ is adjacent with $v_a$.
Then there are two cases: either $v_{ac}$ and $v_{bc}$ are adjacent or not.
In the first case $G[\{a,b,c,v_a,v_b,v_{ac},v_{bc}\}]$ is isomorphic to $\Gbo$; then this case is impossible.
And in the second case $G[\{a,b,c,v_b,v_{ac},v_{bc}\}]$ is isomorphic to $\Gar$; then this case is not possible.

Thus $v_{ac}$ and $v_{bc}$ are adjacent with the same vertex: either $v_a$ or $v_b$.
And the result follows.
\end{proof}

\begin{Claim}
If $V_{a,c}$ and $V_{b,c}$ are not empty, then the set $E(V_{ac},V_{bc})$ induces a complete bipartite graph.
\end{Claim}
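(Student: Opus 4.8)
The plan is to argue by contradiction and to exhibit a forbidden induced subgraph on six vertices. Recall that $V_{a,c}$ and $V_{b,c}$ are stable sets, so it suffices to prove that every $v_{ac}\in V_{a,c}$ is adjacent to every $v_{bc}\in V_{b,c}$. Suppose, on the contrary, that there exist $v_{ac}\in V_{a,c}$ and $v_{bc}\in V_{b,c}$ with $v_{ac}v_{bc}\notin E(G)$.

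First I would invoke the previous claim, which guarantees that all vertices of $V_{a,c}\cup V_{b,c}$ are completely joined to one and the same part, either $V_a$ or $V_b$; by the symmetry obtained from interchanging $a\leftrightarrow b$ and $V_{a,c}\leftrightarrow V_{b,c}$ (note that $V_c=\emptyset$ and that $E(V_a,V_b)$ being complete are both symmetric in $a$ and $b$), I may assume this common part is $V_a$. Since $V_a\neq\emptyset$ I can fix a vertex $v_a\in V_a$ and then record all the adjacencies on the set $\{a,b,c,v_a,v_{ac},v_{bc}\}$: the triple $W=\{a,b,c\}$ is a triangle, $v_a$ is adjacent only to $a$ inside $W$, $v_{ac}$ is adjacent to $a$ and $c$, $v_{bc}$ is adjacent to $b$ and $c$, and by the choice of the common part $v_av_{ac},v_av_{bc}\in E(G)$. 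Together with the hypothesis $v_{ac}v_{bc}\notin E(G)$, the only non-edges of this induced subgraph are $v_{ac}v_{bc}$, $av_{bc}$, $bv_a$, $bv_{ac}$ and $cv_a$, and these five non-edges form a path $P_6$ with consecutive vertices $a, v_{bc}, v_{ac}, b, v_a, c$. Hence the complement of $G[\{a,b,c,v_a,v_{ac},v_{bc}\}]$ is $P_6$, so the induced subgraph is isomorphic to $\Gaz$, which is forbidden; this contradiction proves the claim.

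The edge bookkeeping is routine, and the subsequent identification of the induced subgraph with $\Gaz$ is a direct check of the five non-edges. The one point that must be handled with care, and the main obstacle to watch, is the appeal to the previous claim: it must be used precisely in the strong form that $v_{ac}$ and $v_{bc}$ are joined to the \emph{same} part $V_a$ rather than to different parts. This is exactly what allows the six chosen vertices to close up into the fixed forbidden graph $\Gaz$ irrespective of the sizes $m\in\{1,2\}$ of the parts, since only a single vertex $v_a\in V_a$ is used in the witness.
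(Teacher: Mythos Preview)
Your argument is correct. The six vertices $\{a,b,c,v_a,v_{ac},v_{bc}\}$ indeed induce $\overline{P_6}$, and $\Gaz$ is exactly the complement of $P_6$ (its five non-edges are $v_1v_2,v_2v_3,v_3v_4,v_4v_5,v_5v_6$). The symmetry reduction to the case where the common part is $V_a$ is legitimate because you only ever use a single vertex of that part.

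The paper proceeds differently: instead of keeping $c$ in the witness set, it drops $c$ and adds a vertex $v_y$ from the other part of $V_a\cup V_b$. Concretely, with $\{x,y\}=\{a,b\}$ chosen so that $v_{xc},v_{yc}$ are both joined to $V_x$, the paper looks at $G[\{x,y,v_x,v_y,v_{xc},v_{yc}\}]$ and identifies it with $\Gap$ rather than $\Gaz$. Both witnesses work; yours has the mild advantage that it does not need the second part $V_y$ to be nonempty (which it is anyway in this subsection), while the paper's choice avoids $c$ and stays inside $\{a,b\}\cup V_a\cup V_b\cup V_{a,c}\cup V_{b,c}$. Neither approach is materially simpler than the other; they just land on different members of $\mathcal F_3$.
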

\begin{proof}
Let $x\in\{a,b\}$ and $y\in\{a,b\}-x$, $v_{ac}\in V_{a,c}$, $v_{bc}\in V_{b,c}$, $v_{a}\in V_{a}$ and $v_b\in V_b$.
Suppose $v_{xc}$ and $V_{yc}$ are not adjacent, and $v_{xc}$ and $v_{yc}$ are adjacent with $v_x$.
Since $G[\{ x,y, v_x, v_y, v_{x,c}, v_{y,c}\}]\simeq \Gap$, we get a contradiction.
And then $E(V_{ac},V_{bc})$ induces a complete bipartite graph.
\end{proof}

\begin{Claim}
Let $x\in \{a,b\}$.
If $V_{x,c}\neq \emptyset$, then $E(V_{x,c},V_{a,b})$ induces a complete bipartite graph.
\end{Claim}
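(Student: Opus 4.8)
The plan is to argue by contradiction. Fix $v_{x,c}\in V_{x,c}$ and $v_{ab}\in V_{a,b}$, suppose $v_{x,c}v_{ab}\notin E(G)$, and produce a forbidden induced subgraph. The adjacencies I would rely on are: $v_{ab}$ is joined to $a$ and $b$ but to neither $c$ nor to any vertex of $V_a\cup V_b$ (Claim \ref{lem:unstabLLLG1}); $v_{x,c}$ is joined to $x$ and $c$ but not to $y:=\{a,b\}\setminus\{x\}$; and, by the dichotomy established in the immediately preceding claim, $v_{x,c}$ is completely joined to exactly one of $V_a,V_b$. Let $v$ be a witness vertex in that side.

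First I would observe that the five vertices $\{a,b,c,v_{ab},v_{x,c}\}$ alone do not suffice: they induce the gem $K_1\vee P_4$ with apex $x$, which is $\mathcal F_3$-free, so a sixth vertex $v$ is essential. If $v_{x,c}$ is joined to $V_x$, choose $v\in V_x$; then $v$ is adjacent to $x$ and to $v_{x,c}$ but to none of $y,c,v_{ab}$, and the five vertices $\{v_{ab},y,c,v_{x,c},v\}$ induce the path $v_{ab}-y-c-v_{x,c}-v$, that is $\Gaa$, a contradiction. All the required non-edges hold: $v_{ab}v$ and $v_{ab}c$ are absent by Claim \ref{lem:unstabLLLG1} and the definition of $V_{a,b}$, while $yv_{x,c}$, $yv$ and $cv$ are absent because $y\neq x$ and $v\in V_x$ is adjacent among $\{a,b,c\}$ only to $x$.

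The remaining and harder case is when $v_{x,c}$ is joined to the opposite side $V_y$. Now the analogous path acquires the chord $yv$, the plain $P_5$ disappears, and one must keep the triangle $\{a,b,c\}$ in play. Here I would verify that $\{a,b,c,v_{ab},v_{x,c},v\}$ with $v\in V_y$ induces one of the six-vertex members of $\mathcal F_3$; the isomorphism type is a graph on six vertices with nine edges, pinned down by the fixed adjacencies above, and is a routine check against Figure \ref{figure:FamilyGraphF3}. I also expect the opposite-side alternative to be excludable outright: the set $\{a,b,c,v,v',v_{x,c}\}$ with $v,v'$ in the two sides of the complete join $V_a\vee V_b$ should already be forbidden, which would collapse the analysis to the $\Gaa$ case of the previous paragraph. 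Either way the contradiction forces $v_{x,c}v_{ab}\in E(G)$, and since $v_{x,c}$ and $v_{ab}$ were arbitrary, $E(V_{x,c},V_{a,b})$ induces a complete bipartite graph. The main obstacle is exactly this opposite-side configuration: because the gem is not forbidden, no five-vertex contradiction is available, so the argument must route through the complete join between $V_a$ and $V_b$ via the witness vertex $v$.
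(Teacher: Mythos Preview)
Your approach is correct and follows the same overall strategy as the paper: argue by contradiction and split into the two cases given by the preceding dichotomy (whether $v_{xc}$ is joined to $V_x$ or to $V_y$), then exhibit a forbidden induced subgraph in each. Your Case~1 is fine; the path $v_{ab}\hbox{--}y\hbox{--}c\hbox{--}v_{xc}\hbox{--}v$ is indeed a $P_5$. The paper also finds a $P_5$ in this case but on a different five-vertex set, namely $\{v_{ab},y,v_y,v_x,v_{xc}\}$, routing through the complete join $V_a\vee V_b$ rather than through $c$.

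In Case~2 your six-vertex set $\{a,b,c,v_{ab},v_{xc},v\}$ with $v\in V_y$ does work: with $x=a$, $y=b$ one checks that it is isomorphic to $\Gar$ (map $a\mapsto v_4$, $b\mapsto v_6$, $c\mapsto v_1$, $v_{ab}\mapsto v_2$, $v_{ac}\mapsto v_5$, $v\mapsto v_3$). You should name this graph explicitly rather than leaving it as a ``routine check''. The paper instead uses the set $\{v_{ab},v_a,v_b,a,b,v_{xc}\}$ in this case and claims $\Gaj$.

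One correction: your aside that the opposite-side alternative ``should already be forbidden'' via $\{a,b,c,v,v',v_{xc}\}$ is wrong. The previous claim explicitly allows $v_{xc}$ to be completely joined to $V_y$ and disjoint from $V_x$; that configuration (without $v_{ab}$) is $\mathcal F_3$-free and cannot be excluded outright. Fortunately this is only a side remark, and your primary argument through $\Gar$ already settles Case~2, so the proof stands once you drop that speculation.
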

\begin{proof}
Let $y\in\{a,b\}-x$, $v_{xc}\in V_{x,c}$, $v_{ab}\in V_{a,b}$, $v_{a}\in V_{a}$ and $v_b\in V_b$.
Suppose $v_{xc}$ and $v_{ab}$ are not adjacent.
There are two cases: either $v_{xc}$ is adjacent with $v_x$ or $v_{xc}$ is adjacent with $v_y$. 
If $v_{xc}$ is adjacent with $v_x$, then the induced subgraph $G[\{v_{ab},y,v_y,v_x,v_{xc}\}]$ is isomorphic to $\Gaa$; which is a contradiction.
Then this case is impossible.
On the other hand, if $v_{xc}$ is adjacent with $v_y$, then the induced subgraph $G[\{v_{ac},v_a,v_b,a,b,v_{xc}\}]$ is isomorphic to $\Gaj$; which is a contradiction.
Thus this case is also impossible, and therefore $E(V_{x,c},V_{a,b}$ induces a complete bipartite graph.
\end{proof}

Now let us describe $V_\emptyset$.
By Claim \ref{lem:unstabLLLG5}, there exists no vertex in $V_\emptyset$ adjacent with a vertex in $V_a\cup V_b$.

\begin{Claim}
There is no vertex in $V_\emptyset$ adjacent with a vertex in $V_{a,b}$.
\end{Claim}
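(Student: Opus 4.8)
The plan is to argue by contradiction and to exhibit the forbidden graph $\Gaa$ (that is, $P_5$) on just five vertices. Suppose, contrary to the claim, that some $w\in V_\emptyset$ is adjacent with a vertex $v_{ab}\in V_{a,b}$. Since $E(V_a,V_b)$ is complete in this case, both $V_a$ and $V_b$ are nonempty, so I may fix representatives $v_a\in V_a$ and $v_b\in V_b$; these are adjacent to each other because of the join hypothesis. The idea is that $w$ and $v_{ab}$ both sit ``far'' from $V_a\cup V_b$, so the edge $wv_{ab}$ together with the clique-incidences of $v_{ab}$ and $v_a$ and the single join edge $v_av_b$ is forced to stretch out into an induced path.

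Concretely, I would read off all adjacencies inside $\{w,v_{ab},a,v_a,v_b\}$ from the earlier structural claims. By Claim \ref{lem:unstabLLLG1} the vertex $v_{ab}$ has no neighbour in $V_a\cup V_b$, so $v_{ab}v_a,v_{ab}v_b\notin E(G)$; by Claim \ref{lem:unstabLLLG5} the vertex $w$ has no neighbour in $V_a\cup V_b$, so $wv_a,wv_b\notin E(G)$; and since $w\in V_\emptyset$ we also have $wa\notin E(G)$. On the positive side $wv_{ab}\in E(G)$ by assumption, while $v_{ab}a\in E(G)$ and $v_aa\in E(G)$ because $v_{ab}\in V_{a,b}$ and $v_a\in V_a$; moreover $v_ba\notin E(G)$ because $v_b\in V_b$, and finally $v_av_b\in E(G)$. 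Collecting these, the only edges of $G[\{w,v_{ab},a,v_a,v_b\}]$ form the path $w-v_{ab}-a-v_a-v_b$, so this induced subgraph is isomorphic to $\Gaa$, which is forbidden; this contradiction establishes the claim.

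The only point that needs genuine care is verifying that the claimed path has \emph{no} chords, i.e.\ that each of the six potential extra edges is truly absent: four of these non-adjacencies come directly from Claims \ref{lem:unstabLLLG1} and \ref{lem:unstabLLLG5}, and the remaining two ($wa$ and $v_ba$) are immediate from the definitions of $V_\emptyset$ and $V_b$. The one positive edge that is specific to this subsection, namely $v_av_b$ coming from the join, is exactly what upgrades a shorter configuration into a full $P_5$. Since $V_c=\emptyset$ here, one could instead bring in $c$ to land in a six-vertex member of $\mathcal F_3$, but the five-vertex $P_5$ is the most economical certificate and avoids any case analysis, so I would present that version.
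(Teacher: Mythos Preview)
Your proof is correct and takes exactly the same approach as the paper: both exhibit the induced $P_5$ on $\{w,v_{ab},a,v_a,v_b\}$ to derive a contradiction. Your write-up is more careful than the paper's in explicitly verifying each adjacency and non-adjacency via Claims~\ref{lem:unstabLLLG1} and~\ref{lem:unstabLLLG5}, but the argument is identical.
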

\begin{proof}
Let $v_{ab}\in V_{a,b}$ and $w\in V_\emptyset$.
Suppose $v_{ab}$ and $w$ are adjacent.
Since $G[\{w, v_{a,b}, a, v_a, v_b \}]$ is isomorphic to $\Gaa$, then we get a contradiction.
And therefore $v_{ab}$ and $w$ are not adjacent.
\end{proof}

\begin{Claim}
Let $x\in\{a,b\}$.
There is no vertex in $V_\emptyset$ adjacent with a vertex in $V_{x,c}$.
\end{Claim}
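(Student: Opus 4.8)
The plan is to argue by contradiction in the style used throughout this section: assume some $w\in V_\emptyset$ is adjacent to a vertex $v_{x,c}\in V_{x,c}$ and exhibit a forbidden induced subgraph from $\mathcal F_3$. Since the standing hypotheses of this subsection ($V_c=\emptyset$, $E(V_a,V_b)$ complete, and the preceding structural claims) are symmetric in $a$ and $b$, it suffices to treat $x=a$; so fix $v_{ac}\in V_{a,c}$ with $wv_{ac}\in E(G)$ and choose $v_a\in V_a$ and $v_b\in V_b$, both of which exist because $G[V_a\cup V_b]=V_a\vee V_b$. The facts I will lean on are: $w$ is non-adjacent to $a,b,c$, and by Claim \ref{lem:unstabLLLG5} to every vertex of $V_a\cup V_b$; the vertex $v_{ac}$ is adjacent to $a$ and $c$ but not to $b$; and, crucially, the immediately preceding dichotomy, which forces $v_{ac}$ to be completely joined to exactly one of $V_a,V_b$ and non-adjacent to the other. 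Accordingly I split into the \emph{aligned} case ($E(V_{a,c},V_a)$ complete, $E(V_{a,c},V_b)=\emptyset$) and the \emph{crossed} case ($E(V_{a,c},V_a)=\emptyset$, $E(V_{a,c},V_b)$ complete).

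In the aligned case, $v_{ac}\not\sim v_b$ and $c\not\sim v_b$, so the five vertices $w,v_{ac},c,b,v_b$ induce precisely the path $w-v_{ac}-c-b-v_b$. I would verify that all six non-consecutive pairs are non-edges: $w\not\sim c,b$ (as $w\in V_\emptyset$), $w\not\sim v_b$ (Claim \ref{lem:unstabLLLG5}), $v_{ac}\not\sim b$ (since $v_{ac}\in V_{a,c}$), $v_{ac}\not\sim v_b$ (aligned case), and $c\not\sim v_b$ (since $v_b\in V_b$). Hence $G[\{w,v_{ac},c,b,v_b\}]\simeq\Gaa$, which is forbidden, and this case is impossible.

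The main obstacle is the crossed case, for there $v_{ac}$ is adjacent to $a,c,w$ \emph{and} to all of $V_b$, so no induced $\Gaa$ survives and a larger configuration must be found. The key move is to drop $V_b$ from view and instead consider $\{a,b,c,v_a,v_{ac},w\}$: in this case $v_a\not\sim v_{ac}$, while $v_a\not\sim b,c,w$, the triangle $a,b,c$ is present, $a$ is adjacent to each of $b,c,v_a,v_{ac}$, $c\sim v_{ac}$, and $w$ is a pendant on $v_{ac}$. The induced subgraph therefore has edge set $\{ab,ac,av_a,av_{ac},bc,cv_{ac},v_{ac}w\}$ and degree sequence $(4,3,3,2,1,1)$, and I would check it is isomorphic to $\Gak$ via the explicit correspondence $a\mapsto v_6$, $c\mapsto v_5$, $v_{ac}\mapsto v_1$, $b\mapsto v_2$, $v_a\mapsto v_3$, $w\mapsto v_4$ (matching each of the seven edges of $\Gak$ listed in Figure \ref{figure:FamilyGraphF3}). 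Since $\Gak\in\mathcal F_3$ is forbidden, the crossed case is impossible as well. The two cases together contradict $wv_{ac}\in E(G)$, proving the claim for $x=a$; the case $x=b$ follows by exchanging the roles of $a$ and $b$.
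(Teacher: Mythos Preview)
Your proof is correct and follows essentially the same approach as the paper: split on the dichotomy for $E(V_{x,c},V_a\cup V_b)$ and exhibit a forbidden induced subgraph in each case. The only difference is the choice of witnesses: in the aligned case the paper uses $G[\{w,v_{xc},v_x,v_y,y\}]\simeq\Gaa$ (the path $w\!-\!v_{xc}\!-\!v_x\!-\!v_y\!-\!y$) rather than your $w\!-\!v_{ac}\!-\!c\!-\!b\!-\!v_b$, and in the crossed case the paper uses $G[\{w,y,v_{xc},c,v_a,v_b\}]\simeq\Gaj$ rather than your $G[\{a,b,c,v_a,v_{ac},w\}]\simeq\Gak$; both pairs of choices work.
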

\begin{proof}
Let $y\in\{a,b\}-x$, $v_{xc}\in V_{x,c}$, $v_{ab}\in V_{a,b}$, $v_{a}\in V_{a}$ and $v_b\in V_b$.
Suppose the vertex $w\in V_\emptyset$ is adjacent with $v_{xc}$.
There are two cases: either $v_{xc}$ is adjacent with $x$ or $v_{xc}$ is adjacent with $y$.
First case is impossible since $G[\{w,v_{xc}, v_x, v_y, y\}]$ is isomorphic to $\Gaa$; which is forbidden.
And second case cannot occur because $G[\{ w,y,v_{xc},c, v_a, v_b\}]\simeq \Gaj$.
Then it follows that $w$ is adjacent with no vertex in $V_{x,c}$.
\end{proof}

Thus by previous Claims, the vertex set $V_\emptyset$ is empty, because there is no vertex in $V_\emptyset$ adjacent with a vertex in $G\setminus V_\emptyset$.
Then $G$ is isomorphic to an induced subgraph of a graph in $\mathcal F_1^1$.

\subsection{Case $V_y\cup V_z=\emptyset$ and $V_x$ is $K_{1}$ or $K_2$.} 
Without loss of generality, suppose $V_b=V_c=\emptyset$ and $V_a=\{ u_1, u_2\}$.

\begin{Claim}
Let $x\in \{b,c\}$.
If $V_{a,x}\neq\emptyset$, then either $E(V_a,V_{a,x})$ induces a complete bipartite graph or $E(V_a,V_{a,x})$ is empty.
\end{Claim}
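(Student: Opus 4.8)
The plan is to fix an arbitrary vertex $v\in V_{a,x}$ and pin down exactly how it attaches to $V_a=\{u_1,u_2\}$; once I show that every vertex of $V_{a,x}$ attaches to $V_a$ in the same all-or-nothing fashion, the stated dichotomy is immediate. Recall the standing hypotheses of this case: $V_a$ is connected of cardinality two, so $u_1u_2\in E(G)$, and $v$ is adjacent to $a$ and to $x$ but to no other vertex of the triangle $\{a,b,c\}$. Hence there are three possibilities for $v$: it is adjacent to both of $u_1,u_2$, to exactly one of them, or to neither, and I would treat these in turn.

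First I would rule out that $v$ is adjacent to both $u_1$ and $u_2$: since $a$ is adjacent to $u_1,u_2,v$ (because $u_1,u_2\in V_a$ and $v\in V_{a,x}$) and $u_1u_2\in E(G)$, the set $\{a,u_1,u_2,v\}$ would induce $K_4$, contradicting $\omega(G)=3$. Next I would rule out that $v$ is adjacent to exactly one of them, say to $u_1$ but not to $u_2$. Let $y$ be the third vertex of the triangle, i.e. the element of $\{a,b,c\}\setminus\{a,x\}$. Then $y$ is adjacent to $x$ (both lie in the clique) but, being different from $a$ and $x$, is adjacent to neither $u_1,u_2$ (which lie in $V_a$) nor $v$ (which lies in $V_{a,x}$). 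Checking the remaining pairs shows that $\{y,x,v,u_1,u_2\}$ induces exactly the path $y-x-v-u_1-u_2$, that is $\Gaa$, which is forbidden. Therefore $v$ is adjacent to neither $u_1$ nor $u_2$, and since $v$ was arbitrary, $E(V_a,V_{a,x})=\emptyset$, which is the second alternative of the claim.

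I expect the main obstacle to be recognizing that the \emph{local} picture is not itself forbidden, so that the third clique vertex must be brought in. On $\{a,x,u_1,u_2,v\}$ alone the ``exactly one'' case induces the gem $K_1\vee P_4$ with apex $a$, and this graph lies in $\Gamma_{\leq 3}$; the forbidden $P_5$ surfaces only after adjoining $y$ and, crucially, discarding the universal vertex $a$ from the chosen five-set, so that the argument really exploits the part of the clique that $v$ does \emph{not} see. The two alternatives of the statement match the two admissible sizes of $V_a$: when $\lvert V_a\rvert=2$ the $K_4$ obstruction forces emptiness, whereas in the companion subcase $\lvert V_a\rvert=1$ the $K_4$ obstruction is vacuous and the complete-bipartite alternative can genuinely occur; there, excluding a ``mixed'' $V_{a,x}$ (one vertex seeing $u_1$, another not) is the single place a six-vertex pattern rather than $P_5$ is needed, the relevant forbidden graph being $\Gaq$, obtained by treating $y$ together with the $u_1$-avoiding vertex as a non-adjacent twin pair over the same gem.
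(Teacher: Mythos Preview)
Your proof is correct, and it takes a route that is dual to the paper's. The paper fixes $u\in V_a$ and first shows, via the forbidden graph $\Gaq$ on $\{a,b,c,u,v_1,v_2\}$, that each $u$ is adjacent to all of $V_{a,x}$ or to none of it; it then rules out the mixed case $u_1v\in E(G)$, $u_2v\notin E(G)$ with the same $P_5$ you found, on $\{b,c,v,u_1,u_2\}$. You instead fix $v\in V_{a,x}$ and analyze its attachment to $V_a$.

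The payoff of your organization is that when $|V_a|=2$ you never touch $\Gaq$: the $K_4$ obstruction from $\omega(G)=3$ kills the ``both'' case outright, the $P_5$ kills the ``exactly one'' case, and you land directly on $E(V_a,V_{a,x})=\emptyset$, which is strictly more than the stated dichotomy. The paper's argument is more uniform across the two sizes of $V_a$ and defers the observation that the complete-bipartite branch forces $|V_a|=1$ to a later claim. Your final paragraph correctly isolates the one place where $\Gaq$ is genuinely needed, namely the $|V_a|=1$ case, and your description of the six-vertex configuration as the gem $K_1\vee P_4$ with $y$ and the $u_1$-avoiding vertex of $V_{a,x}$ forming a non-adjacent twin pair is exactly the paper's $G[\{a,b,c,u,v_1,v_2\}]\simeq\Gaq$.
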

\begin{proof}
Let $u\in V_a$.
Suppose there exist $v_1,v_2 \in V_{a,x}$ such that $uv_1\in E(G)$ and $uv_2\notin E(G)$.
Since the induced subgraph $G[\{a,b,c,u,v_1,v_2\}]$ is isomorphic to $\Gaq$, then we get a contradiction.
Thus this case is not possible and therefore $u$ is adjacent with either each vertex in $V_{a,x}$ or no vertex in $V_{a,x}$.
Now we are going to discard the possibility that $E(u_1,V_{a,x})$  induces a complete bipartite graph and $E(u_2,V_{a,x})=\emptyset$.
Suppose there is a vertex $v\in V_{a,x}$ such that $u_1v\in E(G)$ and $u_2v\notin E(G)$.
Since the induced subgraph $G[\{c,b,v,u_1,u_2\}]$ is isomorphic to $\Gaa$, then we get a contradiction.
Thus either $E(V_a,V_{a,x})$ induces a complete bipartite graph or $E(V_a,V_{a,x})$ is empty.
\end{proof}

\begin{Claim}\label{claim:VbcupVcemptyAndVanotemptyEVaVbc_1}
Let $u\in V_a$.
If $V_{b,c}\neq\emptyset$, then $E(u,V_{b,c})$ satisfies only one of the following:
\begin{itemize}
\item it induces a complete bipartite graph,
\item it is an empty edge set, or
\item it induces a complete bipartite graph minus an edge.
\end{itemize}
\end{Claim}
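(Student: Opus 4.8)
The plan is to exploit the fact that, since $u$ is a single vertex, the edge set $E(u,V_{b,c})$ is completely determined by $N_G(u)\cap V_{b,c}$. The three alternatives in the statement are precisely the cases where $u$ is adjacent to all of $V_{b,c}$ (a complete bipartite graph between $\{u\}$ and $V_{b,c}$), to none of $V_{b,c}$ (the empty edge set), or to all but exactly one vertex of $V_{b,c}$ (a complete bipartite graph minus an edge). So the only configuration to be excluded is the ``mixed'' one, in which $u$ has at least one neighbour and at least two non-neighbours in $V_{b,c}$. Accordingly I would argue by contradiction, assuming there are $v\in V_{b,c}$ with $uv\in E(G)$ and distinct $v_1,v_2\in V_{b,c}$ with $uv_1,uv_2\notin E(G)$.

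Next I would collect the adjacencies forced by the hypotheses. Since $\{a,b,c\}$ is a clique and $u\in V_a$, we have $ab\in E(G)$, $ua\in E(G)$ and $ub\notin E(G)$. Since $v,v_1,v_2\in V_{b,c}$, each of them is adjacent to $b$ and non-adjacent to $a$; and because $V_{b,c}$ induces a stable set (established at the start of this section from $\omega(G)=3$), the vertices $v,v_1,v_2$ are pairwise non-adjacent. Combined with the assumed edge $uv$ and the non-edges $uv_1,uv_2$, this pins down the induced subgraph on the six vertices $\{a,b,u,v,v_1,v_2\}$ completely.

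I would then identify that subgraph. The four vertices $a,u,v,b$ induce the $4$-cycle $a-u-v-b-a$, its two diagonals $av$ and $ub$ being non-edges because $v\in V_{b,c}$ and $u\in V_a$, while $v_1$ and $v_2$ are two mutually non-adjacent vertices each joined only to $b$ among these six. This is exactly a $4$-cycle carrying two non-adjacent pendants at a common vertex, that is $G[\{a,b,u,v,v_1,v_2\}]\simeq\Gad$. As $\Gad$ belongs to $\mathcal F_3$ and $G$ is $\mathcal F_3$-free, this is the desired contradiction; hence the mixed situation cannot occur and the claim follows.

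The step I expect to be the main obstacle is realizing that a six-vertex witness already suffices: one needs only a single clique vertex (say $b$) to carry both non-neighbours as pendants, with the neighbour $v$ and the vertex $a$ closing up the $4$-cycle, rather than reaching for a bulkier seven-vertex configuration that also uses $c$. Once the right six vertices are selected, matching them to the black/white/gray description of $\Gad$ is routine, with the two false-twin pairs $\{a,v\}$ and $\{v_1,v_2\}$ playing the roles of its two $2$-stable-set (white) vertices; this last identification can be confirmed directly or with a computer algebra system.
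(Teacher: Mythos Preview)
Your proof is correct and follows essentially the same approach as the paper: both argue by contradiction from one neighbour and two non-neighbours of $u$ in $V_{b,c}$, and both use the six-vertex induced subgraph on $\{a,b,u\}$ together with those three vertices to produce $\Gad$. The paper's proof is terser and simply asserts the isomorphism, whereas you spell out the $4$-cycle-with-two-pendants structure and the role of the twin pairs, but the underlying argument is identical.
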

\begin{proof}
Suppose there exist $v_1,v_2,v_3 \in V_{b,c}$ such that $uv_1\in E(G)$ and $uv_2, uv_3\notin E(G)$.
Since the induced subgraph $G[\{a,b,u,v_1,v_2,v_3\}]$ is isomorphic to $\Gad$, then we get a contradiction.
Thus this case is not possible and the result follows. 
\end{proof}

\begin{Claim}\label{claim:VbcupVcemptyAndVanotemptyEVaVbc}
If $V_a=\{u_1,u_2\}$ and $V_{b,c}\neq\emptyset$, then $E(V_a,V_{b,c})$ satisfies one of the following:
\begin{itemize}
\item it induces a complete bipartite graph,
\item it is an empty edge set,
\item it induces a complete bipartite graph minus an edge,
\item it induces a perfect matching and $|V_a|=|V_{b,c}|=2$, or
\item it induces a complete bipartite graph minus two edges $u_1v$ and $u_2v$, where $v\in V_{b,c}$.
\end{itemize}
\end{Claim}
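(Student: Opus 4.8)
The plan is to leverage the single-vertex description just obtained in Claim \ref{claim:VbcupVcemptyAndVanotemptyEVaVbc_1}. Write $k=|V_{b,c}|$ and, for $i\in\{1,2\}$, let $t_i$ be the number of vertices of $V_{b,c}$ not adjacent to $u_i$. Applying Claim \ref{claim:VbcupVcemptyAndVanotemptyEVaVbc_1} to $u_1$ and to $u_2$ gives $t_i\in\{0,1,k\}$. Recall that in this subsection $V_a$ is $K_2$, so $u_1u_2\in E(G)$; this edge is exactly what lets short forbidden subgraphs appear. I would then run through the unordered pairs $(t_1,t_2)$ and match each to one line of the statement.

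First the realizable pairs. The pair $(0,0)$ yields the complete bipartite graph; $(k,k)$ yields the empty edge set; $\{0,1\}$ yields the complete bipartite graph minus one edge; $(1,1)$ with a common non-neighbor $v$ yields the complete bipartite graph minus the edges $u_1v$ and $u_2v$; and $(1,1)$ with distinct non-neighbors yields, precisely when $k=2$, a perfect matching (and then $|V_a|=|V_{b,c}|=2$). Thus it remains to exclude three families: family (2), $\{0,k\}$ with $k\ge 2$; family (5), $\{1,k\}$ with $k\ge 2$; and family (6b), $(1,1)$ with distinct non-neighbors and $k\ge 3$.

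Each excluded family is killed by exhibiting a single forbidden induced subgraph, so no computer check is needed. For family (5), say $u_1$ misses only $v_1$ while $u_2$ misses all of $V_{b,c}$; choosing $v_2\neq v_1$ with $u_1v_2\in E(G)$, the set $\{u_2,u_1,v_2,b,v_1\}$ induces $\Gaa$, since $u_2-u_1-v_2-b-v_1$ is chordless, the crucial non-edge being $u_1v_1$. For family (6b), with $u_1$ missing only $v_1$, $u_2$ missing only $v_2$ ($v_1\neq v_2$) and a common neighbor $v_3\in V_{b,c}$, the set $\{a,u_1,u_2,v_1,v_2,v_3\}$ induces $\Gaf$: the adjacent hubs are $u_1,u_2$, the stable twin pair is $\{a,v_3\}$ (each adjacent to exactly $u_1,u_2$ inside the set), and $v_2,v_1$ are pendants on $u_1,u_2$ respectively. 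For family (2), where $u_1$ is adjacent to all of $V_{b,c}$ and $u_2$ to none, taking any $v_1,v_2\in V_{b,c}$ the set $\{a,b,u_1,u_2,v_1,v_2\}$ induces $\Gap$, with $u_1$ the degree-$4$ vertex, $\{v_1,v_2\}$ the stable twin pair adjacent to $u_1$ and $b$, and $a,u_2$ filling in the rest.

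The one genuine subtlety — and the step I would be most careful about — is family (2): its induced subgraph on the five vertices $\{a,u_1,u_2,v_1,v_2\}$ is $K_1\vee(K_2+2K_1)$, which is $P_5$-free, so one cannot hope for an induced $\Gaa$ and must instead bring in $b$ and recognize the resulting $6$-vertex graph as $\Gap$. Everything else is routine bipartite-minus-a-few-edges bookkeeping, and the boundary value $k=1$ (where $\{0,1,k\}$ collapses to $\{0,1\}$) causes no trouble, since those profiles simply fold into the already-allowed complete, empty, and minus-one-edge types.
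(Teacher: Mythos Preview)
Your proof is correct and follows essentially the same route as the paper: both apply Claim~\ref{claim:VbcupVcemptyAndVanotemptyEVaVbc_1} to each $u_i$, enumerate the resulting pairs of profiles, and exclude the three bad families using exactly the forbidden subgraphs $\Gap$, $\Gaa$, and $\Gaf$ on the same vertex sets you chose. The only minor difference is that the paper defers the cases $|V_{b,c}|\le 2$ to a computer-algebra check, whereas your $t_i$-bookkeeping handles all $k$ uniformly (the perfect-matching outcome naturally falling out at $k=2$); this is a small cosmetic improvement but not a genuinely different argument.
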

\begin{proof}
Since cases where $|V_{b,c}|\leq 2$ can be checked easily with a computer algebra system or with similar arguments to the rest of the proof, then we asume $|V_{b,c}|\geq 3$.
By Claim \ref{claim:VbcupVcemptyAndVanotemptyEVaVbc_1}, we only have to check the possibilities of the edge sets $E(u_1,V_{b,c})$ and $E(u_2,V_{b,c})$.
The possible cases we have to discard are the following:
\begin{itemize}
\item $E(u_1,V_{b,c})=\emptyset$ and $E(u_2,V_{b,c})$ induces a complete bipartite graph,
\item $E(u_1,V_{b,c})=\emptyset$ and $E(u_2,V_{b,c})$ induces a complete bipartite graph minus an edge, and
\item each edge set $E(u_1,V_{b,c})$ and $E(u_2,V_{b,c})$ induces a complete bipartite graph  minus an edge and the two removed edges don't share a common vertex.
\end{itemize}
Let $v_1,v_2,v_3 \in V_{b,c}$.
Suppose we are in the first case.
Thus $u_1$ is adjacent with no vertex in $V_{b,c}$, and $u_2$ is adjacent with each vertex in $V_{b,c}$.
Since the induced subgraph $G[\{a,b,u_1,u_2,v_1,v_2\}]$ is isomorphic to $\Gap$, then we get a contradiction and this case is not possible.
Suppose we are in the second case.
Thus $u_1$ is adjacent with no vertex in $V_{b,c}$, and $u_2$ is adjacent only with each vertex in $V_{b,c}-v_1$.
Since the induced subgraph $G[\{v_1,b,v_2,u_2,u_1\}]$ is isomorphic to $\Gaa$, then we get a contradiction and this case is not possible.
Finally, suppose we are in the third case.
Thus $u_1$ is adjacent with each vertex in $V_{b,c}-v_1$, and $u_2$ is adjacent with each vertex in $V_{b,c}-v_2$.
Since the induced subgraph $G[\{a,u_1,u_2,v_1,v_2,v_3\}]$ is isomorphic to $\Gaf$, then we get a contradiction and this case is not possible.
And the result turns out.
\end{proof}

\begin{Claim}\label{claim:VbcupVcemptyAndVanotempty1}
If $E(V_a,V_{a,b})$ and $E(V_a,V_{a,c})$ are empty, then $E(V_{a,b},V_{a,c})$ induces a complete bipartite graph.
\end{Claim}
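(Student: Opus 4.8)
The plan is to reduce the claim to a statement about individual vertices: I will show that every $v_{ab}\in V_{a,b}$ is adjacent to every $v_{ac}\in V_{a,c}$, which is exactly the assertion that $E(V_{a,b},V_{a,c})$ induces a complete bipartite graph. If either $V_{a,b}$ or $V_{a,c}$ is empty the conclusion holds vacuously, so I may assume both are nonempty, and I recall that in this subsection $V_a=\{u_1,u_2\}$ is nonempty as well. The proof will be by contradiction in the usual style of this section: assume there are $v_{ab}\in V_{a,b}$ and $v_{ac}\in V_{a,c}$ with $v_{ab}v_{ac}\notin E(G)$, fix a vertex $u\in V_a$, and produce a forbidden induced subgraph on $\{a,b,c,u,v_{ab},v_{ac}\}$.

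The key step is to read off all adjacencies among these six vertices directly from the definitions of the sets $V_X$ and the two hypotheses. Since $W=\{a,b,c\}$ is a triangle, $ab,ac,bc\in E(G)$; since $v_{ab}\in V_{a,b}$ we have $v_{ab}a,v_{ab}b\in E(G)$ but $v_{ab}c\notin E(G)$; similarly $v_{ac}a,v_{ac}c\in E(G)$ and $v_{ac}b\notin E(G)$; and since $u\in V_a$ we have $ua\in E(G)$ but $ub,uc\notin E(G)$. The hypotheses $E(V_a,V_{a,b})=\emptyset$ and $E(V_a,V_{a,c})=\emptyset$ force $uv_{ab}\notin E(G)$ and $uv_{ac}\notin E(G)$, and the standing assumption gives $v_{ab}v_{ac}\notin E(G)$. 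Thus $a$ is adjacent to the other five vertices, and deleting $a$ leaves the path $v_{ab}-b-c-v_{ac}$ together with the isolated vertex $u$; that is, the configuration is $K_1\vee(P_4+K_1)$. I would then match this explicitly with the forbidden graph $\Gam$, sending $a$ to the universal vertex, $u$ to the degree-one vertex, and $v_{ab}-b-c-v_{ac}$ to the internal $P_4$, and verify that all eight edges and seven non-edges correspond. Because $G$ is $\mathcal F_3$-free and $\Gam\in\mathcal F_3$, this is the desired contradiction, whence $v_{ab}v_{ac}\in E(G)$.

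The step I would be most careful about is pinning down that this six-vertex subgraph is precisely $\Gam$ and not a nearby graph such as $\Gae=K_1\vee(P_3+2K_1)$: the two hypotheses $E(V_a,V_{a,b})=E(V_a,V_{a,c})=\emptyset$ are exactly what make $u$ a degree-one vertex of the configuration (rather than, say, adjacent to $v_{ab}$), so they cannot be dropped and their role should be emphasized. The only genuinely separate case to dispatch is the degenerate one in which $V_{a,b}$ or $V_{a,c}$ is empty, which is the vacuous-truth remark above. As in the companion claims of this section, once the correct member of $\mathcal F_3$ is identified the final verification is immediate, either by direct inspection of the adjacency pattern or with a computer algebra check.
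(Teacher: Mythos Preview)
Your proof is correct and follows exactly the paper's approach: assume $v_{ab}v_{ac}\notin E(G)$, observe that $G[\{a,b,c,u,v_{ab},v_{ac}\}]\simeq\Gam$ (the paper uses $u_1$ in place of your generic $u$), and derive a contradiction. Your careful enumeration of the eight edges and seven non-edges and the explicit identification of the configuration as $K_1\vee(P_4+K_1)$ simply make explicit what the paper leaves to the reader.
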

\begin{proof}
Let $v_{ab}\in V_{a,b}$ and $v_{ac}\in V_{a,c}$.
Suppose $v_{ab}$ and $v_{ac}$ are not adjacent.
Since the induced subgraph $G[\{a,b,c,u_1,v_{ab},v_{ac}\}]$ is isomorphic to $\Gam$, then we get a contradiction.
And thus each vertex in $V_{a,b}$ is adjacent with each vertex in $V_{a,c}$.
\end{proof}

\begin{Claim}\label{claim:VbcupVcemptyAndVanotempty2}
If each of the edge sets $E(V_a,V_{a,b})$ and $E(V_a,V_{a,c})$ induces a complete bipartite graph, then $|V_a|=|V_{a,b}|=|V_{a,c}|=1$ and $E(V_{a,b},V_{a,c})=\emptyset$.
\end{Claim}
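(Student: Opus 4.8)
The plan is to reduce every assertion of the claim to the single fact that $G$ contains no $K_4$, which holds because $\omega(G)=3$. Under the hypothesis that both $E(V_a,V_{a,b})$ and $E(V_a,V_{a,c})$ induce complete bipartite graphs, in particular $V_{a,b}$ and $V_{a,c}$ are nonempty, so I may fix $v_{ab}\in V_{a,b}$, $v_{ac}\in V_{a,c}$, and a vertex $u_1\in V_a$. I will also use two facts established earlier in this section: each $V_{x,y}$ induces a stable set, and in the present subsection $V_a=\{u_1,u_2\}$ is a clique, so $u_1u_2\in E(G)$.

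First I would prove $E(V_{a,b},V_{a,c})=\emptyset$. By the complete-bipartite hypothesis $u_1$ is adjacent to $v_{ab}$ and to $v_{ac}$, and by definition $a$ is adjacent to $u_1$, $v_{ab}$, and $v_{ac}$; hence if the edge $v_{ab}v_{ac}$ existed, the set $\{a,u_1,v_{ab},v_{ac}\}$ would induce a $K_4$, contradicting $\omega(G)=3$. So no such edge exists. Next I would settle the cardinalities through a second clique. Since $u_1u_2\in E(G)$, the vertex $a$ is adjacent to $u_1,u_2,v_{ab}$, the edge $u_1u_2$ is present, and both $u_1,u_2$ are adjacent to $v_{ab}$ by the complete-bipartite hypothesis; therefore $\{a,u_1,u_2,v_{ab}\}$ induces a $K_4$, again contradicting $\omega(G)=3$. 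This $K_4$ shows that the hypothesis cannot coexist with $|V_a|\ge 2$: equivalently, if the hypothesis holds then $|V_a|=1$, which contradicts the standing assumption $|V_a|=2$ and so already tells us that the configuration ``both joins complete bipartite'' does not occur in this subsection. With that forcing of $|V_a|=1$ recorded, the stated equalities $|V_a|=|V_{a,b}|=|V_{a,c}|=1$ and $E(V_{a,b},V_{a,c})=\emptyset$ are the only surviving possibility.

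The two explicit $K_4$'s are the routine part and are immediate once the adjacencies are unwound. The main obstacle is the cardinality statement $|V_{a,b}|=|V_{a,c}|=1$, which is precisely where a clique argument alone does \emph{not} suffice: when $|V_a|=1$ the candidate small configurations one naturally writes down are not forbidden. Indeed, with $V_a=\{u\}$ and a second vertex in $V_{a,b}$, the sets one forms such as $\{a,u,v_{ab},v_{ab}',v_{ac}\}\cong K_{1,1,3}$, or the six-vertex wheel $K_1\vee C_5$, or the cone over a $C_5$ with an extra chord-vertex, all lie in $\Gamma_{\le3}$ and hence contain no member of $\mathcal F_3$. Thus the singleton conclusion rests on the standing hypothesis $|V_a|=2$, under which the $K_4$ on $\{a,u_1,u_2,v_{ab}\}$ makes the premise untenable, so the cardinality equalities hold for the only reason they can. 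If one instead wished to treat $|V_a|=1$ directly, the delicate step would be to include enough vertices—exploiting the second vertex $u_2$ of $V_a$ or an additional vertex of $V_{a,c}$—so that a genuine element of $\mathcal F_3$ appears, and to verify the finitely many resulting patterns against $\mathcal F_3$ by the same computer-algebra check already invoked for the small cases of Claim \ref{claim:VbcupVcemptyAndVanotemptyEVaVbc}, rather than by exhibiting a single clique.
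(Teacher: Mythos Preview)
Your two $K_4$ observations are correct and in fact cleaner than the paper's arguments: where you exhibit $\{a,u_1,v_{ab},v_{ac}\}\simeq K_4$ the paper instead finds $G[\{a,b,c,v_{ab},v_{ac},v_a\}]\simeq\Gba$, and where you exhibit $\{a,u_1,u_2,v_{ab}\}\simeq K_4$ the paper finds $G[\{v_{ab},v_{ac},u_1,u_2,a,x\}]\simeq\Gax$. So the conclusions $E(V_{a,b},V_{a,c})=\emptyset$ and $|V_a|\le 1$ are established by your proposal.

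The gap is in your treatment of $|V_{a,b}|=|V_{a,c}|=1$. You argue that the claim becomes vacuous because ``the standing assumption'' is $|V_a|=2$, but that reading of the subsection is incorrect: its title is ``$V_x$ is $K_1$ or $K_2$'', the case list that follows the claims explicitly records cases (4) and (7) with $|V_a|=1$, and the paper's own proof of this very claim ends with ``Finally suppose that $|V_a|=2$'', showing that the earlier parts are meant to apply when $|V_a|=1$ as well. So once your $K_4$ argument forces $|V_a|=1$, the premise is \emph{not} vacuous, and you still owe an argument that $|V_{a,b}|,|V_{a,c}|\le 1$.

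You are right that no clique suffices here; one needs a member of $\mathcal F_3$. The paper's choice is to bring in the second vertex $y\in\{b,c\}\setminus\{x\}$ from the original triangle rather than a second vertex of $V_{a,c}$: with $v_{ax},v_{ax}'\in V_{a,x}$, $v\in V_{a,y}$, and $v_a\in V_a$, one has $G[\{a,y,v_{ax},v_{ax}',v_a,v\}]\simeq\Gaq$, using that $v_{ax}v,\,v_{ax}'v\notin E(G)$ from the first step. The five-vertex configurations you tried (e.g.\ $\{a,u,v_{ab},v_{ab}',v_{ac}\}\simeq K_{1,1,3}$) indeed lie in $\Gamma_{\le 3}$, so the extra vertex $y$ is exactly the missing ingredient.
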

\begin{proof}
First we will prove that $E(V_{a,b},V_{a,c})=\emptyset$.
Let $v_{ab}\in V_{a,b}$ and $v_{ac}\in V_{a,c}$.
Suppose $v_{ab}$ and $v_{ac}$ are adjacent.
Since the induced subgraph $G[\{a,b,c,v_{ab},v_{ac},v_a\}]$ is isomorphic to $\Gba$, then we get a contradiction.
And thus $E(V_{a,b},V_{a,c})=\emptyset$.

Now let $x\in \{b,c\}$.
Suppose $V_{a,x}$ has cardinality at least 2.
Let $y\in\{b,c\}-x$, $v_{ax},v_{ax}'\in V_{a,x}$ and $v\in V_{a,y}$.
Then $v_{ax}v,v_{ax}v\notin E(G)$.
Since the induced subgraph $G[\{a,y,v_{ax},v_{ax}',v_a,v\}]$ is isomorphic to $\Gaq$, then we get a contradiction.
And then $|V_{a,b}|=|V_{a,c}|=1$

Finally suppose that $|V_a|=2$.
Let $v_{ab}\in V_{a,b}$ and $v_{ac}\in V_{a,c}$.
Since $G[\{v_{ab},v_{ac},u_1,u_2,a,x\}]$ is isomorphic to $\Gax$, then we get a contradiction.
Thus $V_a$ has cardinality at most 1.
\end{proof}

\begin{Claim}\label{claim:VbcupVcemptyAndVanotempty3}
Let $x\in \{b,c\}$ and $y\in\{b,c\}-x$.
If $E(V_a,V_{a,x})=\emptyset$ and $E(V_a,V_{a,y})$ induces a complete bipartite graph, then $E(V_{a,b},V_{a,c})$ induces a complete bipartite graph.
\end{Claim}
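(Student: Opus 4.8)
The plan is to argue by contradiction and exhibit the induced path $P_5=\Gaa$, which is forbidden. Since the desired conclusion ``$E(V_{a,b},V_{a,c})$ induces a complete bipartite graph'' is symmetric in $b$ and $c$, I would first reduce to a single case: without loss of generality take $x=b$ and $y=c$, so the hypotheses read $E(V_a,V_{a,b})=\emptyset$ and $E(V_a,V_{a,c})$ induces a complete bipartite graph. If either $V_{a,b}$ or $V_{a,c}$ is empty the statement is vacuous, so I would assume both are nonempty and fix representatives $v_{ab}\in V_{a,b}$, $v_{ac}\in V_{a,c}$, together with a vertex $u\in V_a$ (which exists because $V_a$ is $K_1$ or $K_2$).

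Next I would suppose, toward a contradiction, that $v_{ab}v_{ac}\notin E(G)$ and read off all adjacencies inside the set $\{v_{ab},b,c,v_{ac},u\}$ purely from the definitions of the neighborhood classes and the fact that $\{a,b,c\}$ is a clique. Concretely: $v_{ab}b\in E(G)$ and $cv_{ac}\in E(G)$ by the definitions of $V_{a,b}$ and $V_{a,c}$; $bc\in E(G)$ since $W$ is a clique; $v_{ac}u\in E(G)$ because $E(V_a,V_{a,c})$ is complete bipartite; while $v_{ab}u\notin E(G)$ because $E(V_a,V_{a,b})=\emptyset$, $v_{ab}c\notin E(G)$ and $bv_{ac}\notin E(G)$ by the defining neighborhoods, and $bu,cu\notin E(G)$ because $u\in V_a$. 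The key point is that the only edges present are exactly $v_{ab}b$, $bc$, $cv_{ac}$, and $v_{ac}u$, so that $G[\{v_{ab},b,c,v_{ac},u\}]$ is the induced path $v_{ab}-b-c-v_{ac}-u$, i.e.\ a copy of $\Gaa$. Since $G$ is $\mathcal F_3$-free and $\Gaa\in\mathcal F_3$, this is a contradiction, forcing $v_{ab}v_{ac}\in E(G)$; as $v_{ab},v_{ac}$ were arbitrary, $E(V_{a,b},V_{a,c})$ induces a complete bipartite graph.

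The step that requires the most care is the bookkeeping of non-edges that certifies the subgraph is \emph{induced} rather than merely containing $P_5$ as a subgraph: I must verify each of the six potential chords is absent, and here the mixed character of the hypotheses is essential. The emptiness $E(V_a,V_{a,b})=\emptyset$ is what keeps $u$ away from $v_{ab}$, while completeness of $E(V_a,V_{a,c})$ is what attaches $u$ to $v_{ac}$ as the pendant end of the path; combined with the assumed non-edge $v_{ab}v_{ac}$, these two asymmetric conditions are precisely what stretch the configuration into a $P_5$. Apart from this verification the argument is short, and it parallels the contradiction-by-forbidden-subgraph scheme used in Claims \ref{claim:VbcupVcemptyAndVanotempty1} and \ref{claim:VbcupVcemptyAndVanotempty2}.
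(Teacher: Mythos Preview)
Your proof is correct and follows essentially the same approach as the paper: both argue by contradiction and exhibit the induced $P_5$ on the five vertices $\{u,v_{ac},c,b,v_{ab}\}$ (the paper writes this as $\{v_a,v_{ay},y,x,v_{ax}\}$ in generic notation), with the path $u-v_{ac}-c-b-v_{ab}$. Your explicit verification of all six non-chords is more detailed than the paper's terse presentation, but the argument is identical.
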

\begin{proof}
Let $v_{ab}\in V_{a,b}$ and $v_{ac}\in V_{a,c}$.
Suppose $v_{ab}$ and $v_{ac}$ are not adjacent.
Since the induced subgraph $G[\{v_a,v_{ay},y,x,v_{ax}\}]$ is isomorphic to $\Gaa$, then we get a contradiction.
And thus $E(V_{a,b},V_{a,c})$ induces a complete bipartite graph.
\end{proof}

\begin{Claim}\label{claim:VbcupVcemptyAndVanotempty4}
Let $x\in \{b,c\}$.
If $V_{a,x}\neq\emptyset$, $V_{b,c}\neq\emptyset$ and $E(V_a,V_{a,x})=\emptyset$, then only one of the following statements is true:
\begin{itemize}
\item $E(V_{a},V_{b,c})=\emptyset$ and $E(V_{a,x},V_{b,c})$ induces a complete graph, or
\item each edge set $E(V_{a},V_{b,c})$ and $E(V_{a,x},V_{b,c})$ induce a complete bipartite graph.
\end{itemize}
\end{Claim}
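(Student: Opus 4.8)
The plan is to combine the classification of $E(V_a,V_{b,c})$ already obtained in Claims \ref{claim:VbcupVcemptyAndVanotemptyEVaVbc_1} and \ref{claim:VbcupVcemptyAndVanotemptyEVaVbc} with the extra constraints forced by the hypotheses $V_{a,x}\neq\emptyset$ and $E(V_a,V_{a,x})=\emptyset$. Throughout I fix a vertex $v\in V_{a,x}$, write $y$ for the element of $\{b,c\}-x$, and keep in mind the adjacencies to the clique: every $u\in V_a$ satisfies $u\sim a$, $u\not\sim b,c$ and $uv\notin E(G)$; the chosen $v$ satisfies $v\sim a,x$ and $v\not\sim y$; and every $w\in V_{b,c}$ satisfies $w\sim b,c$ and $w\not\sim a$. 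Note $V_a\neq\emptyset$, so a vertex $u\in V_a$ is always available.

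First I would prove that $E(V_{a,x},V_{b,c})$ induces a complete bipartite graph, a conclusion common to both alternatives. Fixing $w\in V_{b,c}$ and supposing $vw\notin E(G)$, I would look at the set $\{a,b,c,u,v,w\}$ and split on the single undetermined edge $uw$: in each of the two cases this six-vertex set induces a member of $\mathcal F_3$, so the supposition is untenable and $vw\in E(G)$ for every such $v$ and $w$. This step is independent of the global shape of $E(V_a,V_{b,c})$, so there is no circularity.

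Next I would pin down $E(V_a,V_{b,c})$. By Claim \ref{claim:VbcupVcemptyAndVanotemptyEVaVbc} (or Claim \ref{claim:VbcupVcemptyAndVanotemptyEVaVbc_1} when $|V_a|=1$) this edge set is empty, complete bipartite, complete bipartite minus one edge, a perfect matching with $|V_a|=|V_{b,c}|=2$, or complete bipartite minus the two edges $u_1w,u_2w$ at a single $w\in V_{b,c}$. Using the complete bipartite structure of $E(V_{a,x},V_{b,c})$ obtained above, I would eliminate the three intermediate possibilities in turn: for each of them a prescribed non-edge $u_iw$, combined with the now-forced edges $vw$, the edges $va,vx$, and the clique $\{a,b,c\}$, yields a set of five or six vertices inducing a forbidden graph of $\mathcal F_3$. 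Discarding these leaves only the empty case (the first alternative) and the complete bipartite case (the second alternative). Since $V_a$ and $V_{b,c}$ are both nonempty, a complete bipartite $E(V_a,V_{b,c})$ is nonempty, so the two alternatives are mutually exclusive, which is precisely the ``only one'' assertion.

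The main obstacle is the bookkeeping in the elimination of the intermediate forms of $E(V_a,V_{b,c})$, especially the perfect-matching case and the ``minus two edges'' case when $|V_a|=2$, where two prescribed non-edges interact and one must select the correct six vertices and verify their induced isomorphism type. As with the analogous reductions earlier in this section, these are finitely many explicit small induced-subgraph identifications against $\mathcal F_3$ and can be confirmed with a computer algebra system.
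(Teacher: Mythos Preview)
Your plan is essentially the paper's argument, reorganized. The paper proceeds by casing directly on the shape of $E(V_a,V_{b,c})$: in the empty case it forces $E(V_{a,x},V_{b,c})$ to be complete bipartite via $\Gao$; in the complete bipartite case it does the same via $\Gar$; and it then eliminates the ``mixed'' configurations (your intermediate cases) by further subcases. Your step~1 simply merges the paper's first two cases into one argument---splitting on the single undetermined edge $uw$ is exactly what produces $\Gao$ when $uw\notin E$ and $\Gar$ when $uw\in E$---so that in step~2 the edges $v w$ are already known and the paper's four-way split in its third case collapses to a single subcase. This is a genuine simplification, though the overall logical content is the same.

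One small correction: the forbidden configurations you need in step~2 are not all on five or six vertices. In the paper's elimination of the mixed case $uv_1\in E$, $uv_2\notin E$ (with $v w_1,v w_2\in E$ now known from your step~1), the relevant obstruction is $\Gbl$ on the seven vertices $\{a,b,c,u,v,v_1,v_2\}$. So your parenthetical ``five or six'' should read ``five, six or seven''. With that adjustment your outline matches the paper's proof.
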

\begin{proof}
We will analyze the following four cases:
\begin{itemize}
\item when $E(V_{a},V_{b,c})=\emptyset$,
\item when $E(V_{a},V_{b,c})$ induces a complete bipartite graph,
\item when there exist $u\in V_a$ and $v_1,v_2\in V_{b,c}$ such that $uv_1\in E(G)$ and $uv_2\notin E(G)$, and
\item when there exists $v\in V_{b,c}$ such that $u_1v\in E(G)$ and $u_2v\notin E(G)$, where $u_1,u_2\in V_a$.
\end{itemize}

Consider the first case.
Let $v_{ax}\in V_{a,x}$ and $v_{bc}\in V_{b,c}$.
Suppose $v_{ax}$ and $v_{bc}$ are not adjacent.
Since the induced subgraph $G[\{a,b,c,u_1,v_{ax},v_{bc}\}]$ is isomorphic to $\Gao$, then we get a contradiction.
And thus $E(V_{a,x},V_{b,c})$ induces a complete bipartite graph.

Now consider the second case.
Let $v_{ax}\in V_{a,x}$ and $v_{bc}\in V_{b,c}$.
Suppose $v_{ax}$ and $v_{bc}$ are not adjacent.
Since the induced subgraph $G[\{a,b,c,u_1,v_{ax},v_{bc}\}]$ is isomorphic to $\Gar$, then we get a contradiction.
And thus $E(V_{a,x},V_{b,c})$ induces a complete bipartite graph.

In what follows we will prove that the last two cases are impossible which implies that the rest possibilities of Claim \ref{claim:VbcupVcemptyAndVanotemptyEVaVbc} are impossible.

Now consider the third case.
Let $v_{ax}\in V_{a,x}$.
Suppose there exist $u\in V_a$ and $v_1,v_2\in V_{b,c}$ such that $uv_1\in E(G)$ and $uv_2\notin E(G)$.
There are four possible cases:
\begin{itemize} 
\item $v_{ax}v_1,v_{ax}v_{2}\in E(G)$,
\item $v_{ax}v_1,v_{ax}v_2\notin E(G)$, 
\item $v_{ax}v_1\in E(G)$ and $v_{ax}v_2\notin E(G)$, or
\item $v_{ax}v_1\notin E(G)$ and $v_{ax}v_2\in E(G)$.
\end{itemize}
Since in first case $G[\{a,b,c,u,v_{ax},v_1,v_2\}]\simeq\Gbl$, in second case $G[\{b,c,u,v_{ax},v_1,v_2\}]\simeq\Gak$, in third case $G[\{b,c,u,v_{ax},v_1,v_2\}]\simeq\Gao$ and in fourth case $G[\{u,v_1,v_y,v_2,v_{ax}\}]\simeq\Gaa$, then we get a contradiction.

In the last case, we get a contradiction because otherwise by first case: $v_{a,x}v\notin E(G)$, but by second case: $v_{a,x}v\in E(G)$.
Thus this case is impossible.
\end{proof}

\begin{Claim}\label{claim:VbcupVcemptyAndVanotempty5}
Let $x\in \{b,c\}$.
If $V_{a,x}\neq\emptyset$, $V_{b,c}\neq\emptyset$, and $E(V_a,V_{a,x})$ induces a complete bipartite graph, then the edge sets $E(V_{a},V_{b,c})$ and $E(V_{a,x},V_{b,c})$ induce complete bipartite graphs.
\end{Claim}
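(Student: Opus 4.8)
The plan is to fix arbitrary representatives and reduce the statement to a single adjacency question settled by a three-way case analysis. Since the roles of $b$ and $c$ are interchangeable (the set $V_{b,c}$ is symmetric in $b,c$, and relabelling carries the case $x=c$ to the case $x=b$), I would assume without loss of generality that $x=b$. Fix $u\in V_a$, $v_{ab}\in V_{a,b}$ and $v_{bc}\in V_{b,c}$; these exist because $V_{a,x}\neq\emptyset$ and $V_{b,c}\neq\emptyset$. Recall the forced adjacencies inside $\{a,b,c,u,v_{ab},v_{bc}\}$: the triangle $ab,ac,bc$; $u\sim a$ with $u\not\sim b,c$; $v_{ab}\sim a,b$ with $v_{ab}\not\sim c$; and $v_{bc}\sim b,c$ with $v_{bc}\not\sim a$. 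The hypothesis that $E(V_a,V_{a,b})$ is complete bipartite supplies the extra edge $u\sim v_{ab}$. As both conclusions are assertions about arbitrary representatives, it suffices to prove that for every such triple one has $u\sim v_{bc}$ and $v_{ab}\sim v_{bc}$.

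The heart of the argument is then a case split on the two undetermined pairs $\{u,v_{bc}\}$ and $\{v_{ab},v_{bc}\}$. If at least one of these is a non-edge, I claim the six vertices $\{a,b,c,u,v_{ab},v_{bc}\}$ induce a member of $\mathcal F_3$, contradicting that $G$ is $\mathcal F_3$-free. Concretely, recording the configuration as the pair (whether $u\sim v_{bc}$, whether $v_{ab}\sim v_{bc}$): the configuration (no, no) induces $\Gaw$, the configuration (yes, no) induces $\Gaz$, and the configuration (no, yes) induces $\Gay$. In each case one checks the induced graph has exactly the prescribed edge set — in particular $\omega=3$ is preserved, so no $K_4$ shortcut is available — and then matches it against the corresponding drawing in Figure \ref{figure:FamilyGraphF3}; this is the verification style already used in Claims \ref{claim:VbcupVcemptyAndVanotempty4} and \ref{cl:unstablecase}. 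Since all three are forbidden, the only surviving configuration is (yes, yes), i.e.\ $u\sim v_{bc}$ and $v_{ab}\sim v_{bc}$. As $u,v_{ab},v_{bc}$ were arbitrary, both $E(V_a,V_{b,c})$ and $E(V_{a,x},V_{b,c})$ are complete bipartite, as required.

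The step I expect to be most delicate is correctly identifying the forbidden graph in the doubly-non-adjacent configuration (no, no): unlike the single-non-edge cases, here $v_{bc}$ attaches only to $b,c$, and the induced graph is the $9$-edge graph obtained from the diamond on $\{a,b,c,v_{ab}\}$ (a $K_4$ missing the edge $c\,v_{ab}$) by joining $u$ to $a,v_{ab}$ and $v_{bc}$ to $b,c$. Matching this against $\Gaw$ requires care with the degree sequence $(4,4,3,3,2,2)$, because $\Gar$ realizes the same sequence yet is non-isomorphic: in $\Gar$ the two degree-four vertices share a degree-two common neighbour, whereas here both common neighbours of $a$ and $b$ (namely $c$ and $v_{ab}$) have degree three, which pins the graph down as $\Gaw$. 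It is worth stressing where the hypothesis enters: the edge $u\sim v_{ab}$ occurs in all three forbidden subgraphs, and it is precisely this edge that eliminates the ``$E(V_a,V_{b,c})=\emptyset$'' alternative which survives in the companion Claim \ref{claim:VbcupVcemptyAndVanotempty4}, where $E(V_a,V_{a,x})$ is empty rather than complete.
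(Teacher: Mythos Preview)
Your proposal is correct and follows essentially the same approach as the paper: both arguments examine the induced subgraph on $\{a,b,c,u,v_{ax},v_{bc}\}$ and identify the three forbidden configurations $\Gaw$, $\Gay$, $\Gaz$ according to which of the edges $u\,v_{bc}$ and $v_{ax}\,v_{bc}$ are absent. The only cosmetic difference is that the paper proceeds in two stages (first establishing $u\sim v_{bc}$ via the cases $\Gay$/$\Gaw$, then using this to force $v_{ax}\sim v_{bc}$ via $\Gaz$), whereas you handle all three cases simultaneously.
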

\begin{proof}
Let $v_{ax}\in V_{a,x}$, $v\in V_{b,c}$ and $u\in V_{a}$.
Suppose $uv\notin E(G)$.
There are two cases: either $vv_{ax}\in E(G)$ or $vv_{ax}\notin E(G)$.
Since the induced subgraph $G[\{a,b,c,u,v,v_{ax}\}]$ is isomorphic to $\Gay$ and $\Gaw$, in the first case and in the second case, respectively, then we get a contradiction.
Thus $uv\in E(G)$ and therefore the edge set $E(V_{a},V_{b,c})$ induces a complete bipartite graph.

Now suppose $v v_{ax}\notin E(G)$.
By previous result, $uv\in E(G)$.
Since the induced subgraph $G[\{a,b,c,u,v,v_{ax}\}]$ is isomorphic to $\Gaz$, then we get a contradiction and thus $v v_{ax}\in E(G)$.
Therefore $E(V_{a,x},V_{b,c})$ induces a complete bipartite graph.
\end{proof}

By previous Claims we have the following cases:

\begin{enumerate}
\item $E(V_a,V_{a,b}\cup V_{ac}\cup V_{b,c})=\emptyset$, and each edge set $E(V_{a,b},V_{a,c})$ and $E(V_{a,b}\cup V_{a,c},V_{b,c})$ induces a complete bipartite graph,
\item $E(V_a,V_{a,b}\cup V_{ac})=\emptyset$, and each edge set $E(V_{a,b},V_{a,c})$ and $E(V_a\cup V_{a,b}\cup V_{a,c},V_{b,c})$ induces a complete bipartite graph,
\item $E(V_a,V_{a,b})=\emptyset$, and each edge set $E(V_a\cup V_{a,b},V_{a,c})$ and $E(V_a\cup V_{a,b}\cup V_{a,c},V_{b,c})$ induces a complete bipartite graph,
\item $E(V_{a,b},V_{a,c})=\emptyset$, and each edge set $E(V_a,V_{a,b}\cup V_{a,c}\cup V_{b,c})$ and $E(V_{a,b}\cup V_{a,c},V_{b,c})$ induces a complete bipartite graph and $|V_a|=|V_{a,b}|=|V_{a,c}|=1$,
\item $V_{b,c}=\emptyset$, $E(V_a,V_{a,b}\cup V_{ac})=\emptyset$, and $E(V_{a,b},V_{a,c})$ induces a complete bipartite graph,
\item $V_{b,c}=\emptyset$, $E(V_a,V_{a,x})=\emptyset$, and $E(V_{a}\cup V_{a,x},V_{a,y})$ induces a complete bipartite graph, where $x\in\{a,b\}$ and $y\in\{a,b\}-x$, 
\item $V_{b,c}=\emptyset$, $E(V_a,V_{a,b}\cup V_{a,c})$ induce a complete bipartite graph, $E(V_{a,b},V_{a,c})=\emptyset$ and $|V_a|=|V_{a,b}|=|V_{a,c}|=1$,
\item $V_{a,y}=\emptyset$, $E(V_a,V_{a,x}\cup V_{b,c})=\emptyset$, and $E(V_{a,x},V_{b,c})$ induces a complete bipartite graph, where $x\in\{a,b\}$ and $y\in\{a,b\}-x$, 
\item $V_{a,y}=\emptyset$, $E(V_a,V_{a,x})=\emptyset$, and $E(V_a\cup V_{a,x},V_{b,c})$ induces a complete bipartite graph, where $x\in\{a,b\}$ and $y\in\{a,b\}-x$, 
\item $V_{a,y}=\emptyset$, $E(V_a,V_{a,x}\cup V_{b,c})$ and $E(V_{a,x},V_{b,c})$ induce a complete bipartite graph, where $x\in\{a,b\}$ and $y\in\{a,b\}-x$, 
\item $V_{a,b}\cup V_{a,c}=\emptyset$, and $E(V_a, V_{b,c})$ induces a complete bipartite graph,
\item $V_{a,b}\cup V_{a,c}=\emptyset$ and $E(V_a, V_{b,c})=\emptyset$,
\item $V_{a,b}\cup V_{a,c}=\emptyset$, and $E(V_a, V_{b,c})$ induces a complete bipartite graph minus an edge, 
\item $V_{a,b}\cup V_{a,c}=\emptyset$, $|V_{b,c}|\geq 2$ and $E(V_a, V_{b,c})$ induces a complete bipartite graph minus two edges $u_1v$ and $u_2v$, where $v\in V_{b,c}$,
\item $V_{a,b}\cup V_{a,c}=\emptyset$, $|V_a|=|V_{b,c}|=2$ and $E(V_a, V_{b,c})$ induces a perfect matching, 
\item $V_{a,y}\cup V_{b,c}=\emptyset$ and $E(V_a, V_{a,x})$ induces a complete bipartite graph, where $x\in\{a,b\}$ and $y\in\{a,b\}-x$, and
\item $V_{a,y}\cup V_{b,c}=\emptyset$ and $E(V_a, V_{a,x})=\emptyset$, where $x\in\{a,b\}$ and $y\in\{a,b\}-x$.
\end{enumerate}

Now we describe $V_\emptyset$, the set of vertices that are not adjacent with any vertex in $\{a, b, c\}$.
Let $w\in V_\emptyset$.
The vertex $w$ is adjacent with a vertex in $V_a \cup V_{a,b}\cup V_{a,c}\cup V_{b,c}$, because otherwise the shortest path from $w$ to $\{a,b,c\}$ would contains the graph $\Gaa$ as induced subgraph.

\begin{Claim}\label{claim:VbcupVc0AndVanot0:Vaxnot0thenVaCupVax0}
If $V_{a,x}\neq\emptyset$ for some $x\in\{b,c\}$, then $E(V_\emptyset,V_a\cup V_{a,x})=\emptyset$.
\end{Claim}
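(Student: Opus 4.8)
The plan is to argue by contradiction, using the single witness vertex of $V_{a,x}$ to turn an otherwise admissible local picture into a forbidden one. Suppose some $w\in V_\emptyset$ has a neighbour in $V_a\cup V_{a,x}$. Fix a witness $v\in V_{a,x}$ (nonempty by hypothesis) and let $y$ be the element of $\{b,c\}\setminus\{x\}$. Since $w$ meets none of $a,b,c$, each $u\in V_a$ meets only $a$ among $\{a,b,c\}$, and $v$ meets $a$ and $x$ but not $y$, the adjacencies inside the six vertices $\{a,b,c,u,v,w\}$ are almost completely forced: the only edges not yet determined are $uv$, $wv$, and the chosen edge from $w$ into $V_a\cup V_{a,x}$. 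Moreover $uv$ is pinned by the first Claim of this subsection, which says $E(V_a,V_{a,x})$ is either complete bipartite or empty. Thus the configuration collapses to a short finite list of six-vertex graphs, and I would show each lies in $\mathcal F_3$.

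First I would rule out an edge into $V_a$. Assume $wu\in E(G)$ for some $u\in V_a$ and examine $G[\{a,b,c,u,v,w\}]$ (taking $x=b$, $y=c$ without loss of generality), splitting on the two possible values of $uv$ and of $wv$. Here $\{a,b,v\}$ is a triangle and, since $c$ is adjacent to $a,b$ but not $v$, the set $\{a,b,c,v\}$ induces a diamond; attaching the tail $a$–$u$–$w$ to this diamond reproduces a member of $\mathcal F_3$ in every branch. The point is that the bare ``triangle with a $2$-tail'' $G[\{a,b,c,u,w\}]$ is \emph{not} forbidden, so it is precisely the extra vertex $v\in V_{a,x}$ that manufactures the forbidden subgraph; this is where the hypothesis $V_{a,x}\neq\emptyset$ is used essentially.

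Symmetrically I would rule out an edge into $V_{a,x}$. Assume $wv'\in E(G)$ with $v'\in V_{a,x}$ (one may take $v'=v$). The key observation is that $w$–$v'$–$a$–$c$ is an induced $P_4$, since $w\not\sim a,c$, $v'\sim a$, $a\sim c$, and $v'\not\sim c=y$. Adjoining a suitable vertex of $V_a$ (whose $uv'$-edge is again pinned by the dichotomy) or the clique vertex $y$ yields, according to the remaining free edges, either the induced $P_5=\Gaa$ or one of the dense six/seven–vertex forbidden graphs already invoked in the neighbouring claims of this section (such as $\Gar$, $\Gao$, $\Gaz$, or $\Gbl$). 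Since $G$ is $\mathcal F_3$-free, every branch is impossible, so neither type of edge exists and $E(V_\emptyset,V_a\cup V_{a,x})=\emptyset$.

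The main obstacle is bookkeeping rather than idea: one must correctly enumerate the handful of adjacency patterns (the free edges $uv$ and $wv$, the location of $w$'s neighbour, and whether $V_a$ is $K_1$ or $K_2$) and confirm that each resulting small graph is isomorphic to a specific member of $\mathcal F_3$. As is done repeatedly elsewhere in this paper, the cleanest way to discharge these finitely many six- and seven-vertex cases is a direct computer-algebra check; the entire conceptual content is the remark that a single vertex of $V_{a,x}$ upgrades the admissible local configuration into a forbidden induced subgraph.
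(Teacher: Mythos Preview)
Your approach is essentially the same as the paper's: fix $u\in V_a$, $v\in V_{a,x}$, $w\in V_\emptyset$, and split on the three free edges $wu$, $wv$, $uv$ inside the six-vertex set $\{a,b,c,u,v,w\}$. The paper organises this as three cases according to which of $u,v$ the vertex $w$ sees (only $u$; only $v$; both), each split on whether $uv\in E(G)$, and in each of the six branches writes down explicitly the forbidden graph obtained: $\Gaa$ and $\Gah$ in case $wu$ only; $\Gao$ and $\Gak$ in case $wv$ only; $\Gaw$ and $\Gaa$ in case both.

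Where your write-up falls short is that you do not actually complete this finite check: you gesture at the right six-vertex set and defer to a computer-algebra verification, and the specific forbidden graphs you name as likely outcomes ($\Gar$, $\Gaz$, $\Gbl$) are mostly not the ones that arise. No seven-vertex graphs and no use of the second vertex of $V_a$ are needed here; every branch is settled by a five- or six-vertex induced subgraph. Since the whole content of this claim \emph{is} the six-case identification, you should carry it out explicitly rather than outsource it.
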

\begin{proof}
Let $w\in V_\emptyset$, $v_{ax}\in V_{a,x}$ and $u\in V_a$.
Suppose $w$ is adjacent with $u$ or $v_{ax}$.
Then there are three possible cases:
\begin{enumerate}[(a)]
\item $w$ is adjacent only with $u$,
\item $w$ is adjacent only with $v_{ax}$, or
\item $w$ is adjacent with both vertices $u$ and $v_{ax}$.
\end{enumerate}
First consider case (a).
This case has two subcases: either $u$ is adjacent with $v_{ax}$ or not.
If they are adjacent, then $G[\{u,w,v_{ax},b,c\}]$ is isomorphic to $\Gaa$.
Since it is forbidden, then $v_a$ is not adjacent with $v_{ax}$.
On the other hand, if they are not adjacent, then $G[\{a,b,c,w,u,v_{ax}\}]$ is isomorphic to $\Gah$.
Therefore, case (a) is impossible.
Now consider case (b).
There are two possible cases: either $u$ is adjacent with $v_{ax}$ or not.
If they are adjacent, then $G[\{a,b,c,w,u,v_{ax}\}]$ is isomorphic to $\Gao$; which is forbidden.
Thus  $u$ is not adjacent with $v_{ax}$.
But if they are not adjacent, then we have that $G[\{a,b,c,w,u,v_{ax}\}]$ is isomorphic to $\Gak$.
Thus case (b) cannot occur.
Finally, consider case (c).
The subcases are: either $u$ is adjacent to $v_{ax}$ or not.
If they are adjacent, then $G[\{a,b,c,w,u,v_{ax}\}]$ is isomorphic to $\Gaw$; which is forbidden.
Then $u$ is not adjacent with $v_{ax}$
But if they are not adjacent, then the induced subgraph $G[\{u,w,v_{ax},b,c\}]$ is isomorphic to $\Gaa$; which is forbidden.
Thus, we get that case (c) is impossible.
And therefore, $w$ is not adjacent with $u$ or $v_{ax}$.
From which the result follows.
\end{proof}

\begin{Claim}\label{claim:VbcupVc0AndVanot0:Vaxnot0thenVbc0}
Let $x\in \{b,c\}$.
If $V_{a,x}\neq\emptyset$, then $E(V_\emptyset,V_{b,c})=\emptyset$.
Moreover, if $V_{a,x}\neq\emptyset$, then $V_\emptyset=\emptyset$.
\end{Claim}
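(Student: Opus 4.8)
The plan is to show first that no vertex of $V_\emptyset$ can touch $V_{b,c}$, and then to deduce that $V_\emptyset$ has no admissible neighbour at all. If $V_{b,c}=\emptyset$ the first assertion is vacuous, so I would assume $V_{b,c}\neq\emptyset$, fix the $x\in\{b,c\}$ with $V_{a,x}\neq\emptyset$, and write $y$ for the remaining element of $\{b,c\}$. Suppose for contradiction that some $w\in V_\emptyset$ is adjacent to some $v_{bc}\in V_{b,c}$, and pick $v_{ax}\in V_{a,x}$ together with $u\in V_a$ (which is nonempty, being $K_1$ or $K_2$). The preceding Claim \ref{claim:VbcupVc0AndVanot0:Vaxnot0thenVaCupVax0} already gives $wu\notin E(G)$ and $wv_{ax}\notin E(G)$, while Claims \ref{claim:VbcupVcemptyAndVanotempty4} and \ref{claim:VbcupVcemptyAndVanotempty5} force $E(V_{a,x},V_{b,c})$ to be complete bipartite (so $v_{ax}v_{bc}\in E(G)$) and force $E(V_a,V_{b,c})$ to be either empty or complete bipartite.

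The argument then splits on that last dichotomy. If $E(V_a,V_{b,c})=\emptyset$, then $uv_{bc}\notin E(G)$; since $u$ meets $W=\{a,b,c\}$ only in $a$, $v_{bc}$ only in $b,c$, and $w$ in none of them, the five vertices $\{u,a,x,v_{bc},w\}$ carry exactly the edges $ua,\ ax,\ xv_{bc},\ v_{bc}w$, i.e. $G[\{u,a,x,v_{bc},w\}]\simeq \Gaa$, a contradiction. If instead $E(V_a,V_{b,c})$ is complete bipartite, then $uv_{bc}\in E(G)$, the chord $uv_{bc}$ kills the $P_5$, and I would pass to the six vertices $\{a,b,c,u,v_{bc},w\}$, whose induced edges are the triangle $ab,ac,bc$ together with $ua$, $bv_{bc}$, $cv_{bc}$, $uv_{bc}$ and the pendant $wv_{bc}$: a graph on $6$ vertices and $8$ edges with degree sequence $(4,3,3,3,2,1)$, which is a forbidden member of $\mathcal F_3$ (to be located in Figure \ref{figure:FamilyGraphF3}). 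Either branch yields a contradiction, proving $E(V_\emptyset,V_{b,c})=\emptyset$.

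For the ``moreover'' part I would simply combine the three exclusions. By the remark opening this $V_\emptyset$-analysis, every $w\in V_\emptyset$ has a neighbour in $V_a\cup V_{a,b}\cup V_{a,c}\cup V_{b,c}$; Claim \ref{claim:VbcupVc0AndVanot0:Vaxnot0thenVaCupVax0} removes $V_a\cup V_{a,x}$ and the part just proved removes $V_{b,c}$, so such a neighbour could lie only in $V_{a,y}$. If $V_{a,y}=\emptyset$ this is already impossible, while if $V_{a,y}\neq\emptyset$ then applying Claim \ref{claim:VbcupVc0AndVanot0:Vaxnot0thenVaCupVax0} with $y$ in place of $x$ removes $V_{a,y}$ as well. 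In both cases $w$ has no admissible neighbour, contradicting the remark, whence $V_\emptyset=\emptyset$.

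The hard part will be the second branch of the dichotomy, where $uv_{bc}\in E(G)$. Unlike the first branch it does not produce an induced $P_5$ (one checks that every five-vertex selection collapses to the \emph{allowed} ``banner'' consisting of a $C_4$ with a pendant), so one is genuinely forced up to a six-vertex configuration and must confirm that $G[\{a,b,c,u,v_{bc},w\}]$ is $\gamma$-critical of algebraic co-rank $4$, i.e. really lies in $\mathcal F_3$; I have checked only that it contains neither an induced $P_5$ nor a $K_4$, so that a smaller forbidden subgraph is not available. Identifying precisely which graph of Figure \ref{figure:FamilyGraphF3} it is, and dispatching the two six-vertex variants arising from the sub-subcases $uv_{ax}\in E(G)$ and $uv_{ax}\notin E(G)$ (the two possibilities left open by Claims \ref{claim:VbcupVcemptyAndVanotempty4} and \ref{claim:VbcupVcemptyAndVanotempty5}), is the one place where a brief computer verification, rather than a one-line $\Gaa$ argument, is needed.
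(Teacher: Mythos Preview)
Your first branch (the $P_5$ when $E(V_a,V_{b,c})=\emptyset$) and your ``moreover'' paragraph are fine and match the paper. The genuine problem is your second branch.

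The six-vertex graph you propose there, $G[\{a,b,c,u,v_{bc},w\}]$, is \emph{not} a forbidden graph: it lies in $\Gamma_{\leq 3}$. Indeed, with respect to the triangle $\{a,b,c\}$ this configuration has $V_a=\{u\}$, $V_b=V_c=\emptyset$, $V_{a,b}=V_{a,c}=\emptyset$, $V_{b,c}=\{v_{bc}\}$, $V_\emptyset=\{w\}$, and $E(V_a,V_{b,c})$ complete bipartite. That is precisely an instance of case~(11) of this subsection, which the paper shows is an induced subgraph of a graph in $\mathcal F_1^1$; hence $\gamma\leq 3$ and the graph cannot belong to ${\bf Forb}(\Gamma_{\leq 3})$. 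One can also see directly that no six-vertex member of $\mathcal F_3$ has degree sequence $(4,3,3,3,2,1)$. So the ``computer verification'' you defer to would actually fail.

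The fix is not to abandon $v_{ax}$ when passing to six vertices. The paper replaces $a$ by $v_{ax}$ and works with $\{u,\,v_{ax},\,b,\,c,\,v_{bc},\,w\}$; since $v_{ax}v_{bc}\in E(G)$ (while $a v_{bc}\notin E(G)$), this alters the induced structure enough to land in $\mathcal F_3$. One must still split on whether $u v_{ax}\in E(G)$, which is exactly the dichotomy you identified at the end: if $u v_{ax}\in E(G)$ the induced graph is $\Gam$, and if $u v_{ax}\notin E(G)$ it is $\Gae$. Both are explicit members of $\mathcal F_3$, so no computation is required.
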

\begin{proof}
Let $w\in V_\emptyset$, $v_{ax}\in V_{a,x}$, $v_a\in V_a$ and $v_{bc}\in V_{b,c}$.
Suppose $w$ is adjacent with $v_{bc}$.
By Claim \ref{claim:VbcupVc0AndVanot0:Vaxnot0thenVaCupVax0}, we have that $w$ is not adjacent with a vertex in $V_a\cup V_{a,x}$.
By Claims \ref{claim:VbcupVcemptyAndVanotempty4} and \ref{claim:VbcupVcemptyAndVanotempty5}, the vertex $v_{ax}$ is adjacent with $v_{bc}$, and one of the following three cases occur:
\begin{enumerate}[(a)]
\item $v_a$ is adjacent with $v_{ax}$ and $v_{bc}$,
\item $v_a$ is adjacent only with $v_{bc}$, and
\item $v_a$ is not adjacent with both $v_{ax}$ and $v_{bc}$.
\end{enumerate}
Case (a) is not possible because the induced subgraph $G[\{v_a, v_{ax}, b, c, v_{bc}, w\}]$ would be isomorphic to $\Gam$; which is impossible.
In case (b), we have that $G[\{v_a, v_{ax}, b, c, v_{bc}, w\}]$ is isomorphic to $\Gae$.
Since it is forbidden, then this case is not possible.
Finally, case (c) is not possible since $G[\{ v_a, a, x, v_{bc}, w\}]$ is isomorphic to $\Gaa$ that is forbidden.
Thus $w$ is not adjacent with $v_{bc}$.
And there is no vertex in $V_\emptyset$ adjacent with a vertex in $G\setminus V_\emptyset$.
Therefore $V_\emptyset$ is empty.
\end{proof}

Thus by previous Claim, in cases (1) to (10), (16) and (17) the vertex set $V_\emptyset$ is empty.
The cases when $V_{a,b}\cup V_{a,c}=\emptyset$ (cases (4) and (7)) correspond to a graph isomorphic to an induced subgraph of $\mathcal F_1^2$.
The rest of these cases correspond to a graph isomorphic to an induced subgraph of $\mathcal F_1^1$.

\begin{Claim}\label{claim:VbcupVc0AndVanot0:Vbcnot0EV_aV_bcComplere1}
Let $w_a\in V_\emptyset$ such that $w_a$ is adjacent with $u\in V_a$, and $w_a$ is not adjacent with a vertex in $V_{b,c}$.
Let $w_{bc}\in V_\emptyset$ such that $w_{bc}$ is adjacent with $v\in V_{b,c}$ and $w_{bc}$ is not adjacent with a vertex in $V_{a}$.
Let $w\in V_\emptyset$ such that $w$ is adjacent with $u'\in V_a$ and $v'\in V_{b,c}$.
If $E(V_a,V_{b,c})$ induces a complete bipartite graph, then  no two vertices of $\{w_a,w_{bc},w\}$ exist at the same time.
\end{Claim}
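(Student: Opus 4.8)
The plan is to recall the very rigid shape of $G$ in this case and then to eliminate, one pair at a time, the three ways in which two of $w_a,w_{bc},w$ could coexist. Since we are in the case $V_{a,b}\cup V_{a,c}=\emptyset$ with $E(V_a,V_{b,c})$ complete bipartite, the graph induced on $\{a,b,c\}\cup V_a\cup V_{b,c}$ is entirely determined: $a,b,c$ form a triangle, each $u\in V_a$ is adjacent to $a$ and to every vertex of $V_{b,c}$ (and among $\{a,b,c\}$ to $a$ only), and each $v\in V_{b,c}$ is adjacent to $b,c$ and to every vertex of $V_a$. Two facts drive everything. First, for any $u\in V_a$ and $v\in V_{b,c}$ the set $\{u,a,b,v\}$ induces a chordless $4$-cycle $u-a-b-v-u$, the non-edges being $ub$ and $av$. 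Second, the anchors of $w$ give a triangle: if $w$ is adjacent to $u'\in V_a$ and $v'\in V_{b,c}$, then $\{u',v',w\}$ is a triangle because $u'v'\in E(G)$. Note also that completeness of $E(V_a,V_{b,c})$ means any path trying to pass from the $V_a$-side to the $V_{b,c}$-side acquires the chord $u'v'$; this is why the obstructions are the six-vertex graphs $\Gaj$ and $\Gal$ together with $\Gaa$ (the latter realised only with the help of an edge between the two $V_\emptyset$-vertices), rather than a $P_5$ straddling both sides.

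First I would dispose of the pair $(w_a,w_{bc})$, which needs no choice of coincident anchors. Fix anchors $u\in V_a$ with $w_au\in E(G)$ and $v\in V_{b,c}$ with $w_{bc}v\in E(G)$. As $w_a,w_{bc}\in V_\emptyset$ they miss $a,b,c$; moreover $w_a$ misses $v$ (it avoids $V_{b,c}$) and $w_{bc}$ misses $u$ (it avoids $V_a$). If $w_aw_{bc}\in E(G)$, then the five vertices $a,b,v,w_{bc},w_a$ induce the path $a-b-v-w_{bc}-w_a$, i.e.\ a graph isomorphic to $\Gaa$, since all six possible chords are absent by the adjacencies just listed. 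If instead $w_aw_{bc}\notin E(G)$, then $\{a,b,u,v,w_a,w_{bc}\}$ induces the chordless $4$-cycle $u-a-b-v-u$ with a pendant $w_a$ at $u$ and a pendant $w_{bc}$ at $v$, which one checks is isomorphic to $\Gaj$. Either way we contradict $\mathcal F_3$-freeness.

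Next I would treat the pairs involving $w$, using the triangle $\{u',v',w\}$. For $(w_a,w)$: if $w_aw\in E(G)$ then $w_a-w-v'-b-a$ is an induced $P_5$ (again every chord is forbidden because $w_a$ avoids $V_{b,c}$ and $V_\emptyset$-vertices miss $a,b,c$), giving $\Gaa$. If $w_aw\notin E(G)$ and $w_a$ is anchored at the same $V_a$-vertex as $w$ (that is $u=u'$), then $\{a,u',b,v',w,w_a\}$ induces the ``house'' $u'-a-b-v'-w-u'$ with chord $u'v'$ together with the pendant $w_a$ at $u'$, a graph isomorphic to $\Gal$. The pair $(w_{bc},w)$ is handled symmetrically along the $V_{b,c}$-side: $w_{bc}w\in E(G)$ yields the induced $P_5$ given by $w_{bc}-w-u'-a-b$, while $w_{bc}w\notin E(G)$ with $v=v'$ yields, from $\{u',a,b,v',w,w_{bc}\}$, the same house-with-pendant $\Gal$. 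The remaining sub-cases are the anchor-mismatch ones: for $(w_a,w)$ when $|V_a|=2$ and $w_a,w$ are anchored at the two distinct (hence, in this subsection, \emph{adjacent}) vertices of $V_a$, and for $(w_{bc},w)$ when $|V_{b,c}|\ge2$ and the anchors $v,v'$ differ. In each of these one adjoins both anchors and one or two of $a,b,c$ and verifies that six or seven of the chosen vertices induce a member of $\mathcal F_3$ (the natural candidates being $\Gal$ and the seven-vertex graphs $\Gbd,\Gbk,\Gbl,\Gbo$ that already arise in the earlier claims of this section).

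The point that forces the clique vertices $a,b,c$ into every argument, and hence the main technical obstacle, is twofold. On the one hand the obvious five-vertex configurations produced by a triangle with pendants—the cricket, the bull, the dart—belong to ${\bf Forb}(\Gamma_{\le2})$ and therefore have algebraic co-rank exactly $3$; they are \emph{permitted} in $\Gamma_{\le3}$, so one must enlarge them with a vertex of $\{a,b,c\}$ to reach a $\gamma$-critical graph of co-rank $4$ in $\mathcal F_3$. On the other hand the mutual adjacency of the two $V_\emptyset$-vertices, the adjacency of each $V_\emptyset$-vertex to the \emph{foreign} anchor it is not guaranteed to meet, and (when $|V_a|=2$) the coincidence pattern of the $V_a$-anchors are all undetermined, so the hard part will be to organise the resulting finitely many six- and seven-vertex configurations and to check exhaustively that each one contains an induced member of $\mathcal F_3$. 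As in Claims \ref{cl:unstablecase} and \ref{claim:VbcupVcemptyAndVanotempty}, this last, purely finite, verification can be discharged by a computer-algebra computation, and its completeness is exactly what the claim asserts.
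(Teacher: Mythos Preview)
Your proof follows the same three-pair decomposition as the paper's and is correct in the cases you write out in full. For the pair $(w_a,w_{bc})$ you exhibit $\Gaj$ on $\{a,b,u,v,w_a,w_{bc}\}$ in the non-adjacent sub-case, while the paper exhibits $\Gag$ on $\{b,c,u,v,w_a,w_{bc}\}$; both are valid. For the same-anchor sub-cases of $(w_a,w)$ and $(w_{bc},w)$ your $P_5$ and $\Gal$ configurations coincide with the paper's up to the choice of which vertex of $\{a,b,c\}$ is included.

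The divergence is in the anchor-mismatch sub-cases, which you leave to a finite computer check. This would work, but the paper closes them with a one-line reduction that you have in fact already set up. For $(w_a,w)$ with $u\neq u'$: if $w$ happens to be adjacent to $u$ as well, take $u$ as the anchor and you are back in the $u=u'$ case; otherwise restrict to the induced subgraph on $\{a,b,c,u,v',w_a,w\}$. Inside this subgraph the only $V_a$-vertex is $u$ and the only $V_{b,c}$-vertex is $v'$, so $w$ now has no $V_a$-neighbour and plays exactly the role of a $w_{bc}$-type vertex, while $w_a$ is unchanged. The already-proven $(w_a,w_{bc})$ case then gives the contradiction. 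The same trick with the roles of $V_a$ and $V_{b,c}$ swapped handles the $v\neq v'$ sub-case of $(w_{bc},w)$. So no seven-vertex forbidden graphs and no computer verification are needed here; the missing idea is simply to drop the ``extra'' anchor from the picture and quote the first pair.
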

\begin{proof}
Suppose $w_a$ and $w_{bc}$ exist at the same time.
There there are two possible cases: either $w_a$ and $w_{bc}$ are adjacent or not.
If $w_a$ and $w_{bc}$ are adjacent, then $G[\{b,a,u,w_a,w_{bc}\}]$ is isomorphic to $\Gaa$; which is impossible.
If $w_a$ and $w_{bc}$ are not adjacent, then $G[\{b,c,u,v,w_a,w_{bc}\}]$ is isomorphic to $\Gag$; which is impossible.
Then $w_a$ and $w_{bc}$ do not exist at the same time.

Suppose $w_a$ and $w$ exist at the same time.
There there are two possible cases: either $u=u'$ or $u\neq u'$.
Suppose $u=u'$, then we have two possible cases: either $w_a$ and $w$ are adjacent or not.
If $w_a$ and $w$ are adjacent, then $G[\{w_a,w,v',b,a\}]$ is isomorphic to $\Gaa$; which is impossible.
But if $w_a$ and $w$ are not adjacent, then $G[\{b,c,u,v',w_a,w\}]$ is isomorphic to $\Gal$; which is impossible.
Then $u\neq u'$.
Thus suppose $u\neq u'$.
Note that in the induced subgraph $G[\{a,b,c,u,v',w_a,w\}]$, the vertex $w$ is only adjacent with $v'$, and $w_a$ is only adjacent with $u$.
Then applying previous $(w_a,w_{bc})$ case in this induced subgraph, we get that $w$ and $w_a$ do not exist at the same time.

Suppose $w_{bc}$ and $w$ exist at the same time.
There are two possible cases: either $v=v'$ or $v\neq v'$.
Suppose $v=v'$, then we have two possible cases: either $w_{bc}$ and $w$ are adjacent or not.
If $w_a$ and $w_{bc}$ are adjacent, then $G[\{w_{bc},w,u,a,b\}]$ is isomorphic to $\Gaa$; which is impossible.
But if $w_a$ and $w_{bc}$ are not adjacent, then $G[\{b,c,u,v,w_{bc},w\}]$ is isomorphic to $\Gal$; which is impossible.
Then $v\neq v'$.
Suppose $v\neq v'$.
Note that in the induced subgraph $G[\{a,b,c,u,v,w_a,w\}]$, the vertex $w$ is only adjacent with $u$, and $w_{bc}$ is only adjacent with $v$.
Thus by applying first case in this induced subgraph, we get that $w$ and $w_{bc}$ do not exist at the same time.
\end{proof}

\begin{Claim}	\label{claim:VbcupVc0AndVanot0:Vbcnot0EV_aV_bcComplere21}
Let $w\in V_\emptyset$, $u\in V_a$ and $v\in V_{b,c}$.
If $E(V_a,V_{b,c})$ induces a complete bipartite graph and $w$ is adjacent with $u$ and $v$, then $E(w,V_a\cup V_{b,c})$ induces a complete graph.
\end{Claim}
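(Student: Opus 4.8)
The plan is to argue by contradiction, producing in each subcase a forbidden induced subgraph from the family $\mathcal F_3$. Throughout I will lean on the structural facts already established in this section: $\omega(G)=3$, the set $V_{b,c}$ is stable, $V_a$ is a clique with $|V_a|\le 2$, and by hypothesis $E(V_a,V_{b,c})$ is complete bipartite, so in particular $u\sim v$ and $\{u,v,w\}$ is a triangle. Since $w\sim v$ with $v\in V_{b,c}$ and $w\sim u$ with $u\in V_a$, the conclusion that $E(w,V_a\cup V_{b,c})$ is complete amounts to showing that $w$ is adjacent to every \emph{remaining} vertex of $V_{b,c}$ and of $V_a$. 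I will treat these two tasks separately; each is vacuous unless the relevant set has at least two elements, so I only examine genuinely new vertices.

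First I would show $w$ is adjacent to each $v'\in V_{b,c}\setminus\{v\}$. Suppose not, so $w\not\sim v'$, and record the adjacencies on $\{a,b,u,v,v',w\}$: $a\sim b$ (clique), $a\sim u$ (as $u\in V_a$), $b\sim v$ and $b\sim v'$ (as $v,v'\in V_{b,c}$), $u\sim v,v',w$ (complete bipartiteness and the hypothesis), and $v\sim w$ (hypothesis); while $v\not\sim v'$ (stability), $a\not\sim v,v',w$, $b\not\sim u,w$, and $v'\not\sim w$. Here $a$ and $v'$ have the same neighbourhood $\{b,u\}$ and are non-adjacent, so the induced subgraph is isomorphic to $\Gap$ (drawn with $\{a,v'\}$ as its stable pair), which is forbidden; a contradiction. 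Hence $w$ is adjacent to all of $V_{b,c}$.

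Next I would show $w$ is adjacent to each $u'\in V_a\setminus\{u\}$; since $V_a$ is a clique, $u\sim u'$. Suppose $w\not\sim u'$ and consider $\{a,b,u,u',v,w\}$: here $a\sim b,u,u'$, $u\sim u',v,w$, $u'\sim v$, and $v\sim b,w$, with non-edges $a\not\sim v,w$, $b\not\sim u,u',w$, $u'\not\sim w$, and $w\not\sim a,b$. A direct check of the $15$ pairs identifies this graph with $\Gar$, again forbidden, giving a contradiction. Combining the two steps, $w$ is adjacent to every vertex of $V_a\cup V_{b,c}$, which is the assertion.

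The one genuinely delicate point is the \emph{choice} of the six vertices. The obvious symmetric candidate $\{b,c,u,v,v',w\}$ is \emph{not} a member of $\mathcal F_3$ (its algebraic co-rank is only $3$), so it yields no contradiction; the trick in the first step is to bring in the clique vertex $a$ rather than $c$, precisely because $a$ is adjacent to $u$ but not to $v'$, which is what creates the stable twin pair $\{a,v'\}$ forcing the configuration to be $\Gap$. I expect the only real work to be verifying these two isomorphisms — equivalently, confirming with a computer algebra system that the two six-vertex graphs indeed lie in $\mathcal F_3$ — while everything else is bookkeeping of the adjacencies dictated by the definitions of $V_a$, $V_{b,c}$ and $V_\emptyset$.
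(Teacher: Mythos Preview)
Your proof is correct and follows essentially the same two-step strategy as the paper: first force $w\sim v'$ for every $v'\in V_{b,c}$ via the forbidden induced subgraph on $\{a,b,u,v,v',w\}\simeq\Gap$, then force $w\sim u'$ for the remaining vertex of $V_a$ via the forbidden induced subgraph on $\{a,b,u,u',v,w\}$. The only difference is in the second step: the paper (with a visibly garbled vertex set) claims the six-vertex graph is $\Gap$, whereas you identify it as $\Gar$; your identification is the correct one, since the degree sequence $(4,4,3,3,2,2)$ rules out $\Gap$.
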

\begin{proof}
First we see that if $v'\in V_{b,c}-v$, then $w$ is adjacent with $v'$.
Suppose $w$ and $v'$ are not adjacent.
Since the induced subgraph $G[\{a,b,u,v,v',w\}]$ is isomorphic to the forbidden graph $\Gap$, then we get a contradiction.
Thus $w$ is adjacent with each vertex in $V_{b,c}$.
Now we see that if $u'\in V_a-u$, then $w$ is adjacent with $u'$.
Suppose $w$ and $u'$ are not adjacent.
Since the induced subgraph $G[\{a,b,u,u,u',w\}]$ is isomorphic to $\Gap$, then we get a contradiction.
Therefore, $w$ is adjacent with each vertex in $V_{a}\cup V_{b,c}$.
\end{proof}

\begin{Claim}\label{claim:VbcupVc0AndVanot0:Vbcnot0EV_aV_bcComplere2}
If $|V_a|=2$ and $E(V_\emptyset,V_{a})\neq\emptyset$, then either each vertex in $V_\emptyset$ is adjacent only with one vertex in 
$V_a$ and $V_\emptyset$ is a clique, or each vertex in $V_\emptyset$ is adjacent with $u_1$ and $u_2$, and $V_\emptyset$ is trivial.
\end{Claim}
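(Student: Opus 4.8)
Throughout we are in the case $V_b=V_c=\emptyset$ with $V_a=\{u_1,u_2\}$ and $u_1u_2\in E(G)$ (this is the subcase $V_x\simeq K_2$), so $u_1,u_2$ are adjacent to $a$ but not to $b$ or $c$, and $\{a,b,c\}$ induces a triangle. The plan is to classify the vertices of $V_\emptyset$ according to how they attach to $V_a$ and to establish three facts: first a \emph{dichotomy}, namely that the vertices of $V_\emptyset$ meeting $V_a$ all attach in the same way, either every such vertex is adjacent to exactly one of $u_1,u_2$ or every such vertex is adjacent to both; second, that in the ``both'' alternative $V_\emptyset$ is a stable set; and third, that in the ``exactly one'' alternative $V_\emptyset$ is a clique.

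The ``both'' alternative is immediate: if every vertex of $V_\emptyset$ is adjacent to both $u_1$ and $u_2$, then an edge $ww'$ inside $V_\emptyset$ together with $u_1u_2\in E(G)$ would make $G[\{u_1,u_2,w,w'\}]\simeq K_4$, contradicting $\omega(G)=3$, so $V_\emptyset$ is trivial. For the ``exactly one'' alternative I would show directly that any two $w,w'\in V_\emptyset$ are adjacent. Assuming $ww'\notin E(G)$: if $w,w'$ attach to different vertices of $V_a$, say $wu_1,w'u_2\in E(G)$, then $G[\{a,b,u_1,u_2,w,w'\}]$ is the net $\Gaj$ (the triangle $\{a,u_1,u_2\}$ with a pendant at each of its three vertices); if instead they attach to the same vertex, say $wu_1,w'u_1\in E(G)$, then $G[\{a,b,u_1,u_2,w,w'\}]\simeq\Gac$. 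Both lie in $\mathcal F_3$, so $ww'\in E(G)$ and $V_\emptyset$ is a clique (necessarily of size at most three, since $\omega(G)=3$).

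The dichotomy is the crux, and I expect it to be the main obstacle. To prove it I would rule out a mixed configuration: a vertex $w\in V_\emptyset$ adjacent to both $u_1,u_2$ coexisting with a vertex $w'\in V_\emptyset$ adjacent to exactly one, say $u_1$. Examining $G[\{a,b,u_1,u_2,w,w'\}]$, when $ww'\notin E(G)$ this induced subgraph is isomorphic to $\Gak$; when $ww'\in E(G)$ the six-vertex subgraph no longer lands in $\{\Gab,\dots,\Gbb\}$, and I would instead adjoin $c$ and check that $G[\{a,b,c,u_1,u_2,w,w'\}]$ is a seven-vertex member of $\mathcal F_3$. Either way $\mathcal F_3$-freeness is contradicted, forcing uniform attachment. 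The delicate point is precisely that the forbidden graph depends on whether $ww'$ is an edge, and that the $ww'\in E(G)$ branch seems to require a genuinely larger forbidden subgraph; pinning down that graph (by direct inspection or a computer algebra check) is the hard part.

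Finally, the phrase ``each vertex of $V_\emptyset$'' must be reconciled with the fact that in cases (11)--(15) a vertex of $V_\emptyset$ could a priori meet only $V_{b,c}$ rather than $V_a$. This is where Claims~\ref{claim:VbcupVc0AndVanot0:Vbcnot0EV_aV_bcComplere1} and \ref{claim:VbcupVc0AndVanot0:Vbcnot0EV_aV_bcComplere21} enter: when $E(V_a,V_{b,c})\neq\emptyset$ they already forbid a vertex attached only to $V_a$ from coexisting with one attached only to $V_{b,c}$, so that once $E(V_\emptyset,V_a)\neq\emptyset$ every vertex of $V_\emptyset$ touching $G\setminus V_\emptyset$ does so through $V_a$ in the uniform manner above. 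Combining this with the three facts yields the stated dichotomy together with the asserted structure of $V_\emptyset$.
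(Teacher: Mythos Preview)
Your overall plan matches the paper's: reduce to a pairwise case analysis on $w,w'\in V_\emptyset$ and rule out the bad configurations by exhibiting members of $\mathcal F_3$. Two concrete issues:

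\textbf{The mixed case with $ww'\in E(G)$.} You are right that the six-vertex graph $G[\{a,b,u_1,u_2,w,w'\}]$ is not itself in $\mathcal F_3$, but going \emph{up} to seven vertices is the wrong direction: the graph $G[\{a,b,c,u_1,u_2,w,w'\}]$ is not a minimal forbidden graph either, precisely because it already contains $P_5$ as an induced subgraph. The fix is to go \emph{down} to five vertices. If $w$ is adjacent to both $u_1,u_2$ and $w'$ only to $u_1$, with $ww'\in E(G)$, then $\{w',w,u_2,a,b\}$ induces the path $w'-w-u_2-a-b$ (check: $w'u_2,\,w'a,\,w'b,\,wa,\,wb,\,u_2b\notin E(G)$), which is $\Gaa=P_5$. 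This is exactly what the paper does for its case~(e), and it removes what you flagged as ``the hard part''.

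\textbf{A labelling slip.} In your ``exactly one, different attachment, $ww'\notin E(G)$'' subcase, the triangle $\{a,u_1,u_2\}$ with pendants $b,w,w'$ is the net, which in the paper's labelling is $\Gai$, not $\Gaj$ (the paper also uses $\Gai$ here, its case~(b)).

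Two remarks in your favour: your $K_4$ argument for the ``both, $ww'\in E(G)$'' case is cleaner than the paper's, which invokes the seven-vertex graph $\Gbp$; and you are right that the claim as stated does not require the ``exactly one'' vertices to attach to the \emph{same} $u_i$, so you need not rule out the configuration where $wu_1,w'u_2,ww'\in E(G)$ (the paper does rule it out---again via $P_5$ on $\{w,w',u_2,a,b\}$---but this is extra). Finally, your last paragraph about reconciling with $V_{b,c}$ via Claims~4.63--4.64 is commentary on how the claim is \emph{used}, not part of its proof; the claim itself is purely about $E(V_\emptyset,V_a)$.
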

\begin{proof}
Let $w,w'\in V_\emptyset$ and $i,j\in\{1,2\}$ with $i\neq j$.
By proving that the following cases are not possible, it follows that the only possible cases are that either $E(\{u_1,u_2\},\{w,w'\})$ 
is equal to $\{wu_i,w'u_j\}$ and $ww'\in E(G)$, or $E(\{u_1,u_2\},\{w,w'\})$ is equal to $\{wu_i,wu_j,w'u_i,w'u_j\}$ and $ww'\in E(G)$.
Which implies the result.
\begin{enumerate}[(a)]
\item $ww',wu_j,w'u_j\notin E(G)$ and $wu_i,w'u_i\in E(G)$,
\item $ww',wu_j,w'u_i\notin E(G)$ and $wu_i,w'u_j\in E(G)$,
\item $wu_j,w'u_i\notin E(G)$ and $ww',wu_i,w'u_j\in E(G)$,
\item $ww',w'u_j\notin E(G)$ and $wu_i,wu_j,w'u_i\in E(G)$,
\item $w'u_j\notin E(G)$ and $ww',wu_i,wu_j,w'u_i\in E(G)$, and
\item $w'u_j,ww',wu_i,wu_j,w'u_i\in E(G)$.
\end{enumerate}
Since in cases (a), (b) and (d) the induced subgraph $G[\{a,b,u_1,u_2,w,w'\}]$ is isomorphic to $\Gac$, $\Gai$, $\Gak$, respectively, then these cases are impossible.
On the other hand, in case (c) the induced graph $G[\{w,w',u_j,a,b\}]$ is isomorphic to $\Gaa$: which is impossible.
Also in case (e) the induced subgraph $G[\{w',w,u_j,a,b\}]$ is isomorphic to $\Gaa$; which is not possible.
Finally in case (f) the induced subgraph $G[\{a,b,c,u_1,u_2,w,w'\}]$ is isomorphic to $\Gbp$; which is not possible.
\end{proof}

\begin{Claim}\label{claim:VbcupVc0AndVanot0:Vbcnot0EV_aV_bcComplere3}
If $E(V_a,V_{b,c})$ induces a complete bipartite graph, and each vertex in $V_\emptyset$ is adjacent only with one vertex in 
$V_{a}$, then there is no vertex in $V_\emptyset$ adjacent with a vertex in $V_{b,c}$.
\end{Claim}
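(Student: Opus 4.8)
The plan is to argue by contradiction and reduce immediately to the previously established Claim~\ref{claim:VbcupVc0AndVanot0:Vbcnot0EV_aV_bcComplere21}. Suppose, toward a contradiction, that some vertex $w\in V_\emptyset$ is adjacent with a vertex $v\in V_{b,c}$. The standing hypothesis of the statement is that every vertex of $V_\emptyset$ meets $V_a$ in exactly one neighbour; in particular $w$ is adjacent with exactly one vertex $u\in V_a$ (recall that throughout this subsection $V_a=\{u_1,u_2\}$, so ``only one vertex'' is a genuine restriction). Thus $w$ is adjacent with $u\in V_a$ and with $v\in V_{b,c}$, while $E(V_a,V_{b,c})$ induces a complete bipartite graph.

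These are precisely the hypotheses of Claim~\ref{claim:VbcupVc0AndVanot0:Vbcnot0EV_aV_bcComplere21}, which I would invoke directly: it yields that $E(w,V_a\cup V_{b,c})$ induces a complete graph, so $w$ is adjacent with \emph{every} vertex of $V_a$. Since $|V_a|=2$, this forces $w$ to be adjacent with both $u_1$ and $u_2$, contradicting the assumption that $w$ is adjacent with only one vertex of $V_a$. Hence no vertex of $V_\emptyset$ can be adjacent with a vertex of $V_{b,c}$, which is the desired conclusion.

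The single point I would double-check — and essentially the only place where anything could go wrong — is that the hypothesis really forces $w$ to have a neighbour in $V_a$, so that Claim~\ref{claim:VbcupVc0AndVanot0:Vbcnot0EV_aV_bcComplere21} is applicable. This holds because the phrase ``each vertex in $V_\emptyset$ is adjacent only with one vertex in $V_a$'' (the first alternative of Claim~\ref{claim:VbcupVc0AndVanot0:Vbcnot0EV_aV_bcComplere2}) asserts that every vertex of $V_\emptyset$ has \emph{exactly} one neighbour in $V_a$, not merely at most one. Should one prefer a self-contained argument avoiding Claim~\ref{claim:VbcupVc0AndVanot0:Vbcnot0EV_aV_bcComplere21}, the same contradiction can be reached by exhibiting a small forbidden induced subgraph on $\{u_1,u_2,v,w\}$ together with a suitable vertex of $\{a,b,c\}$, playing the triangle edges $u_1u_2,u_1v,u_2v$ (coming from $V_a=K_2$ and complete bipartiteness) against the non-edge $u_2w$; but the appeal to Claim~\ref{claim:VbcupVc0AndVanot0:Vbcnot0EV_aV_bcComplere21} is cleaner and is clearly the route intended here.
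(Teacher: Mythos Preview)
Your argument is correct, but it differs from the paper's. The paper does not invoke Claim~\ref{claim:VbcupVc0AndVanot0:Vbcnot0EV_aV_bcComplere21}; instead it argues directly by exhibiting a forbidden induced subgraph: assuming $w$ is adjacent with $u_1$ and $v$ but not with $u_2$, one checks that $G[\{a,b,w,u_1,u_2,v\}]\simeq\Gar$, which is in $\mathcal F_3$. This is precisely the ``self-contained argument'' you alluded to in your final paragraph, carried out on six vertices rather than five.

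Your route via Claim~\ref{claim:VbcupVc0AndVanot0:Vbcnot0EV_aV_bcComplere21} is cleaner in that it reuses work already done (that claim was proved using $\Gap$), and it makes transparent that the obstruction is really ``$w$ would have to see all of $V_a$''. The paper's direct approach avoids any dependency on the earlier claim and pinpoints a single concrete forbidden configuration. Both implicitly require $|V_a|=2$, which as you correctly note is the standing assumption of the subsection.
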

\begin{proof}
Let $w\in V_\emptyset$, $v\in V_{b,c}$.
Suppose $w$ is adjacent with $u_1$ and $v$, but $w$ is not adjacent with $u_2$.
Since the induced subgraph $G[\{a,b,w,u_1,u_2,v\}]$ is isomorphic to $\Gar$, then we get a contradiction and the result follows.
\end{proof}

\begin{Claim}\label{claim:VbcupVc0AndVanot0:Vbcnot0EV_aV_bcComplere4}
Let $w\in V_\emptyset$ and $u\in V_a$.
If $E(V_a,V_{b,c})$ induces a complete bipartite graph, $E(w,V_{b,c})\neq \emptyset$ and $E(w,V_a)=\emptyset$, 
then each vertex in $V_\emptyset$ is adjacent with only one vertex in $V_{b,c}$, and $V_\emptyset$ is a clique of cardinality at most 2.
\end{Claim}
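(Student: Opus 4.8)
The plan is to first pin down how $V_\emptyset$ attaches to $V_a$, then to $V_{b,c}$, and finally to analyze $G[V_\emptyset]$ itself. Throughout I use the standing assumptions of this case: $V_a=\{u_1,u_2\}$ with $u_1u_2\in E(G)$, the set $V_{b,c}$ is stable, and every vertex of $V_a$ is adjacent to every vertex of $V_{b,c}$.

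The first step is to show that $E(V_\emptyset,V_a)=\emptyset$. Suppose not. By Claim \ref{claim:VbcupVc0AndVanot0:Vbcnot0EV_aV_bcComplere2} one of two things must happen: either every vertex of $V_\emptyset$ meets $V_a$ in exactly one vertex, or every vertex of $V_\emptyset$ is adjacent to both $u_1$ and $u_2$. The second alternative is impossible, since by hypothesis $w\in V_\emptyset$ satisfies $E(w,V_a)=\emptyset$; the first alternative is impossible because Claim \ref{claim:VbcupVc0AndVanot0:Vbcnot0EV_aV_bcComplere3} would then force $E(V_\emptyset,V_{b,c})=\emptyset$, contradicting $E(w,V_{b,c})\neq\emptyset$. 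Hence $E(V_\emptyset,V_a)=\emptyset$, and since every vertex of $V_\emptyset$ must be adjacent to some vertex of $V_a\cup V_{b,c}$, in fact every vertex of $V_\emptyset$ has at least one neighbour in $V_{b,c}$.

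For the first conclusion I would assume some $w'\in V_\emptyset$ has two neighbours $v_1,v_2\in V_{b,c}$ and derive a forbidden induced subgraph: taking $u_1\in V_a$ (adjacent to both $v_1,v_2$ but, by the previous step, not to $w'$) together with vertices of $\{a,b,c\}$, a suitable six- or seven-vertex subset such as $\{u_1,u_2,b,v_1,v_2,w'\}$ induces a member of $\mathcal F_3$, a contradiction. Combined with the previous paragraph this yields that each vertex of $V_\emptyset$ has exactly one neighbour in $V_{b,c}$. For the second conclusion I would show $V_\emptyset$ has no two non-adjacent vertices, so it is a clique, and then bound its size. If $w_1,w_2\in V_\emptyset$ are non-adjacent and attach to distinct $v_1,v_2\in V_{b,c}$, then $\{w_1,v_1,u_1,v_2,w_2\}$ induces $\Gaa$; if instead they attach to the same $v\in V_{b,c}$, then the two triangles $\{v,u_1,u_2\}$ and $\{v,b,c\}$ together with the pendant pair $w_1,w_2$ make $\{v,u_1,u_2,b,c,w_1,w_2\}$ induce $\Gbc$. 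Either way we reach a contradiction, so $V_\emptyset$ is a clique. To see $|V_\emptyset|\le 2$, suppose $w_1,w_2,w_3\in V_\emptyset$ are mutually adjacent: if all three share a neighbour $v\in V_{b,c}$ then $\{w_1,w_2,w_3,v\}$ is a $K_4$, impossible since $\omega(G)=3$; if the three attachments are pairwise distinct, then $\{w_1,w_2,w_3,v_1,v_2,v_3\}$ induces the net $\Gaj$, again forbidden.

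The main obstacle is the remaining mixed sub-case of the size bound, where two of the $w_i$ share a vertex $v$ of $V_{b,c}$ while the third attaches to some $v_3\neq v$, and, more generally, making each ``induces a forbidden graph'' step precise. These reduce to a small number of six- and seven-vertex configurations — for instance the ``diamond plus path'' graph obtained from $\{w_1,w_3,v,v_3,u_1,u_2\}$, built from the diamond on $\{u_1,u_2,v,v_3\}$ and the path $w_1\,v,\ w_1w_3,\ w_3\,v_3$ — each of which is a member of $\mathcal F_3$. Verifying the exact isomorphism type of each such configuration is the routine but essential part of the argument, and is naturally carried out with a computer algebra system, as elsewhere in the paper.
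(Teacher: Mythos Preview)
Your overall strategy is sound, and your Step 1 (extending $E(w,V_a)=\emptyset$ to all of $V_\emptyset$ via Claims \ref{claim:VbcupVc0AndVanot0:Vbcnot0EV_aV_bcComplere2} and \ref{claim:VbcupVc0AndVanot0:Vbcnot0EV_aV_bcComplere3}) is a genuine improvement over the paper, which tacitly relies on the case split from Claim \ref{claim:VbcupVc0AndVanot0:Vbcnot0EV_aV_bcComplere1}. Your argument that $V_\emptyset$ is a clique is also correct: the $P_5$ and $\Gbc$ identifications check out.

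There are, however, two concrete gaps. First, your suggested six-vertex witness $\{u_1,u_2,b,v_1,v_2,w'\}$ for ``$w'$ has two neighbours in $V_{b,c}$'' does not work: this graph has degree sequence $(4,4,3,3,2,2)$ with the two degree-$4$ vertices non-adjacent, and no graph in $\mathcal F_3$ matches (nor does it contain $P_5$). The paper instead takes the seven-vertex set $\{a,b,c,u,v_1,v_2,w'\}$, which is isomorphic to $\Gbk$. Second, in your size bound the ``pairwise distinct attachments'' case is wrong: the net graph (a triangle with three pendants) has degree sequence $(3,3,3,1,1,1)$, whereas $\Gaj$ has degree sequence $(3,3,2,2,1,1)$, so they are not isomorphic; and the ``mixed'' case you flag as the main obstacle is genuinely left open.

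Both gaps are resolved by one observation you overlook, which is the heart of the paper's proof: any two vertices of $V_\emptyset$ attach to the \emph{same} vertex of $V_{b,c}$. You already handled non-adjacent pairs with distinct attachments via $P_5$; for adjacent $w,w'$ with distinct attachments $v,v'$ the paper takes $G[\{b,c,v,u,w,w'\}]\simeq\Gag$. Once all of $V_\emptyset$ attaches to a single $v\in V_{b,c}$, your $\Gbc$ argument gives that $V_\emptyset$ is a clique (the paper uses $\Gad$ on $\{a,b,v,u,w,w'\}$ instead), and then $V_\emptyset\cup\{v\}$ is a clique, so $|V_\emptyset|\leq\omega(G)-1=2$ with no further case analysis.
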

\begin{proof}
Let $v,v'\in V_{b,c}$.
Suppose $w$ is adjacent with $v$ and $v'$.
Since $G[\{a,b,c,u,v,v',w\}]$ is isomorphic to $\Gbk$, then we get a contradiction.
And we have that $w$ is adjacent only with one vertex in $V_{b,c}$

Now let $w,w'\in V_\emptyset$ and $v,v'\in V_{b,c}$ such that $wv\in E(G)$ and $w'v'\notin E(G)$.
Suppose $v\neq v'$.
There are two cases: either $w$ and $w'$ are adjacent or not.
If $w$ and $w'$ are adjacent, then the induced subgraph $G[\{b,c,v,u,w,w'\}]$ is isomorphic to $\Gag$, then we get a contradiction and $ww'\notin E(G)$.
Now if $w$ and $w'$ are not adjacent, then the induced subgraph $G[\{w,v,u,v',w'\}]$ is isomorphic to $\Gaa$ and we get a contradiction.
Thus $v=v'$.

Finally, let $w,w'\in V_\emptyset$ adjacent with $v\in V_{b,c}$.
Suppose $w$ and $w'$ are not adjacent.
Since the induced subgraph $G[\{a,b,v,u,w,w'\}]$ is isomorphic to $\Gad$, then we get a contradiction and $ww'\in E(G)$.
\end{proof}

In case (11), by Claims \ref{claim:VbcupVc0AndVanot0:Vbcnot0EV_aV_bcComplere1}, \ref{claim:VbcupVc0AndVanot0:Vbcnot0EV_aV_bcComplere21}, 
\ref{claim:VbcupVc0AndVanot0:Vbcnot0EV_aV_bcComplere2}, \ref{claim:VbcupVc0AndVanot0:Vbcnot0EV_aV_bcComplere3} and 
\ref{claim:VbcupVc0AndVanot0:Vbcnot0EV_aV_bcComplere4}, we have the following possible cases:
\begin{itemize}
\item each vertex in $V_\emptyset$ is adjacent with each vertex in $V_a\cup V_{b,c}$, and $V_\emptyset$ is a trivial graph,
\item each vertex in $V_\emptyset$ is adjacent only with a vertex in $V_a$, and $V_\emptyset$ is a clique of cardinality at most 2, or
\item each vertex in $V_\emptyset$ is adjacent only with a vertex in $V_{b,c}$, and $V_\emptyset$ is a clique of cardinality at most 2.
\end{itemize}
Each of these cases corresponds to a graph isomorphic to an induced subgraph of $\mathcal F_1^1$.

\begin{Remark}\label{claim:VbcupVc0AndVanot0:WVaVbc0thenEV0VaVbcnot0}
Let $w\in V_\emptyset$, $u\in V_a$ and $v\in V_{b,c}$.
If $uv\notin E(G)$, and $w$ adjacent with $u$ or $v$, then $w$ is adjacent with both vertices $u$ and $v$.
\end{Remark}

In case (12), by Claim \ref{claim:VbcupVc0AndVanot0:Vbcnot0EV_aV_bcComplere2} and Remark \ref{claim:VbcupVc0AndVanot0:WVaVbc0thenEV0VaVbcnot0}, 
we have that each vertex in $V_\emptyset$ is adjacent with each vertex in $V_a\cup V_{b,c}$, and 
\begin{itemize}
\item $V_\emptyset$ is a clique or a trivial graph when $|V_a|=1$, or
\item $V_\emptyset$ is a clique when $|V_a|=2$.
\end{itemize}
It is not difficult to see that each of these graphs are isomorphic to an induced subgraph of $\mathcal F_1^1$

\begin{Claim}\label{claim:VbcupVc0AndVanot0:VaVbcKminuseThenV0:1}
Let $w\in V_\emptyset$, $u\in V_a$ and $v\in V_{b,c}$.
Suppose $u$ is adjacent with each vertex in $V_{b,c}-v$.
If $E(w,V_{b,c}\cup \{u\})\neq\emptyset$, then one of the following statements holds:
\begin{itemize}
\item each vertex in $V_\emptyset$ is adjacent with $u$ and $v$,or 
\item $E(w,V_{b,c}\cup \{u\})$ induces a complete bipartite graph and $|V_{b,c}-v|=1$.
\end{itemize}
\end{Claim}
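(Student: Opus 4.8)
The plan is to pin down, by forbidden-subgraph exclusion, exactly how a vertex $w\in V_\emptyset$ can attach to $V_a\cup V_{b,c}$, and then to propagate the pattern found for $w$ to the whole of $V_\emptyset$. Throughout I write $V_a=\{u,u'\}$, where $u$ is the endpoint of the unique missing edge $uv$ (so $u$ is adjacent to every vertex of $V_{b,c}$ except $v$, by hypothesis) and $u'$ is adjacent to all of $V_{b,c}$ (as we are inside case (13)). The engine of the argument is that this missing edge $uv$ turns many short attachment patterns into induced copies of $\Gaa=P_5$.

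First I would establish the equivalence that, for any $w\in V_\emptyset$, one has $wu\in E(G)$ if and only if $wv\in E(G)$: if $wu\in E(G)$ but $wv\notin E(G)$ then $\{w,u,a,b,v\}$ induces $\Gaa$ (the only edges are $wu,ua,ab,bv$, since $uv$ is missing), while if $wv\in E(G)$ but $wu\notin E(G)$ then $\{w,v,c,a,u\}$ induces $\Gaa$; both are forbidden. Next I would rule out the \emph{mixed} attachment in which $w$ is adjacent to some $v''\in V_{b,c}\setminus\{v\}$ while $wu\notin E(G)$ (equivalently $wv\notin E(G)$): here $u,u',b,c$ are all adjacent to $v''$, and the set $\{a,b,c,u,v'',w\}$ (or, if required, the seven vertices obtained by adjoining $u'$) induces a member of $\mathcal F_3$, a contradiction. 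Combining these two steps shows that whenever $E(w,V_{b,c}\cup\{u\})\neq\emptyset$ the vertex $w$ is in fact adjacent to both $u$ and $v$.

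With $wu,wv\in E(G)$ secured, I would split according to how much of $V_{b,c}$ the vertex $w$ sees. If $w$ is adjacent to two distinct vertices $v'',v'''$ of $V_{b,c}\setminus\{v\}$, then, since $u$ is adjacent to both of them, the set $\{a,b,c,u,v'',v''',w\}$ induces $\Gbk$, exactly as in the complete-bipartite Claim \ref{claim:VbcupVc0AndVanot0:Vbcnot0EV_aV_bcComplere4}; hence $w$ meets at most one vertex of $V_{b,c}\setminus\{v\}$. Consequently, if $w$ is adjacent to \emph{all} of $V_{b,c}$ then $|V_{b,c}\setminus\{v\}|\le 1$, and together with $wu\in E(G)$ this is precisely the second alternative: $E(w,V_{b,c}\cup\{u\})$ induces a complete bipartite graph and $|V_{b,c}-v|=1$.

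It remains to treat the case in which $w$ is not adjacent to all of $V_{b,c}$ (this includes $V_{b,c}=\{v\}$), where I would prove the first alternative by propagating ``adjacent to $u$ and $v$'' to every vertex of $V_\emptyset$. In case (13) each $w'\in V_\emptyset$ is adjacent to some vertex of $V_a\cup V_{b,c}$; if $w'$ touches $\{u,v\}$ or some vertex of $V_{b,c}\setminus\{v\}$, the two steps above already force $w'u,w'v\in E(G)$, so the only remaining possibility is that $w'$ attaches to $V_a\cup V_{b,c}$ solely through $u'$. This last possibility is the main obstacle, and it is here that the presence of our vertex $w$ with $wu,wv\in E(G)$ is decisive: one plays $w$ (on the $u,v$ side) against $w'$ (on the $u'$ side) and checks that a set such as $\{a,u,u',v,w,w'\}$ induces a forbidden six-vertex graph of $\mathcal F_3$ in each of the subcases determined by whether $ww'$ and $wu'$ are edges, so that $w'$ cannot avoid $u$ and $v$. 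The delicate point throughout is matching each configuration to the correct member of $\mathcal F_3$; once the excluded graphs are identified the verifications are routine, and they yield that every vertex of $V_\emptyset$ is adjacent to $u$ and $v$, which is the first alternative.
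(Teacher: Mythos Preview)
Your outline follows the paper's strategy closely, but the two key exclusion steps are incorrect because you omit the vertex $v$ from your test sets.

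In step 2 you try to exclude the pattern ``$w$ adjacent to some $v''\in V_{b,c}\setminus\{v\}$ but $wu,wv\notin E(G)$'' using the set $\{a,b,c,u,v'',w\}$ (or with $u'$ adjoined). But inside that set the missing edge $uv$ is invisible: $u$ is completely joined to the $V_{b,c}$-vertex present, so the induced subgraph is exactly a configuration from case~(11) with $V_a=\{u\}$ (or $\{u,u'\}$), $V_{b,c}=\{v''\}$ and $V_\emptyset=\{w\}$, which is $\mathcal F_3$-free. The paper instead keeps $v$ in the picture and uses $G[\{a,b,u,v,v'',w\}]\simeq\Gaj$.

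The same error recurs in step 3. With $wu\in E(G)$ established, your set $\{a,b,c,u,v'',v''',w\}$ is \emph{not} isomorphic to $\Gbk$: the analogy with Claim~\ref{claim:VbcupVc0AndVanot0:Vbcnot0EV_aV_bcComplere4} fails precisely because there one has $wu\notin E(G)$, whereas here $wu\in E(G)$ (your graph has one edge too many). Again the fix is to include $v$: once $w$ is adjacent to $u,v,v'',v'''$ the set $\{a,u,v,v'',v''',w\}$ induces $\Gaf$, which is what the paper uses. The paper actually reaches this in two sub-steps (first $\Gap$ to force $w$ adjacent to all of $V_{b,c}-v$, then $\Gaf$ to bound $|V_{b,c}-v|$), but your direct route via $\Gaf$ works once the right six vertices are chosen.

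Finally, your steps 5--6 try to upgrade the conclusion to all of $V_\emptyset$ and you flag this as the ``main obstacle''. The paper's own proof establishes the first alternative only for the given $w$ (via Remark~\ref{claim:VbcupVc0AndVanot0:WVaVbc0thenEV0VaVbcnot0}) and does not address other vertices of $V_\emptyset$; the phrase ``each vertex in $V_\emptyset$'' in the statement is almost certainly a slip for ``$w$'' (compare the parallel Claim~\ref{claim:VbcupVc0AndVanot0:VaVbcKminuseThenV0:2}, whose first bullet is stated for $w$). So that extra work is not needed for the claim as actually used.
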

\begin{proof}
First case follows by Remark \ref{claim:VbcupVc0AndVanot0:WVaVbc0thenEV0VaVbcnot0}.
Now we check when $w$ is adjacent with a vertex in $V_{b,c}-v$.
Let $v'\in V_{b,c}$.
Then $u$ is adjacent with $v'$.
Suppose $w$ is adjacent only with $v'$.
Since the induced subgraph $G[\{a,b,u,v,v',w\}]$ is isomorphic to $\Gaj$, then we have a contradiction and $w$ is adjacent also with $u$ or $v$.
Thus by Remark \ref{claim:VbcupVc0AndVanot0:WVaVbc0thenEV0VaVbcnot0}, $w$ is adjacent with $u$, $v$ and $v'$.
Finally by applying Claim \ref{claim:VbcupVc0AndVanot0:Vbcnot0EV_aV_bcComplere21} to the induced subgraph $G[\{w,u\}\cup (V_{b,c}-v)]$, we get that $w$ also is adjacent with each vertex in $V_{b,c}-v$.
Thus $w$ is adjacent with each vertex in $\{u\}\cup V_{b,c}$.

Suppose the cardinality of $V_{b,c}-v$ is at least 2.
Take $v,v'\in V_{b,c}-v$.
Thus $w$ is adjacent with $v,v'$ and $v''$.
Since the induced subgraph $G[\{u,v,v',v'',a,w\}]$ is isomorphic to $\Gaf$, then we get a contradiction.
Thus the cardinality of $V_{b,c}-v$ is at most 1.
\end{proof}

\begin{Claim}\label{claim:VbcupVc0AndVanot0:VaVbcKminuseThenV0:2}
Let $w\in V_\emptyset$ and $v\in V_{b,c}$.
Suppose $u_1$ is adjacent with each vertex in $V_{b,c}$, and $u_2$ is adjacent with each vertex in $V_{b,c}-v$.
If $E(w,V_{b,c})\neq\emptyset$, then either $w$ is adjacent only with $u_2$ and $v$, or $|V_{b,c}|=1$ and $E(w,V_a\cup V_{b,c})$ induces a complete bipartite graph.
\end{Claim}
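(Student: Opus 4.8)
The plan is to determine the adjacencies of $w$ to $V_a=\{u_1,u_2\}$ and to $V_{b,c}$ step by step, exploiting that under the hypotheses the unique missing edge between $V_a$ and $V_{b,c}$ is $u_2v$. The first move is to apply Remark~\ref{claim:VbcupVc0AndVanot0:WVaVbc0thenEV0VaVbcnot0} to the non-edge $u_2v$: it yields $w\sim u_2$ if and only if $w\sim v$, so $u_2$ and $v$ are tied together. The argument then splits on whether $w\sim v$, and the goal is threefold: rule out $w\not\sim v$, show that in the surviving case $w$ meets $V_{b,c}$ only in $v$, and finally read off the dichotomy from whether $w\sim u_1$. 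This mirrors the strategy of the preceding Claim~\ref{claim:VbcupVc0AndVanot0:VaVbcKminuseThenV0:1}, but the presence of the ``good'' vertex $u_1$ (adjacent to all of $V_{b,c}$) changes which forbidden graphs appear.

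The core of the proof is excluding $w\not\sim v$. Assuming it, the Remark forces $w\not\sim u_2$, while $E(w,V_{b,c})\neq\emptyset$ gives some $v'\in V_{b,c}-v$ with $w\sim v'$ (and $u_1,u_2\sim v'$). I would treat three sub-cases. If $w\sim u_1$, then $u_1$ is adjacent to each of $a,u_2,v,v',w$, while the other five vertices induce the path $a-u_2-v'-w$ together with the isolated vertex $v$; hence $G[\{a,u_1,u_2,v,v',w\}]\simeq K_1\vee(P_4+K_1)=\Gam$. If $w\not\sim u_1$ and $w$ is adjacent to no further vertex of $V_{b,c}$, then $a-b-v'-u_2-a$ is an induced $4$-cycle carrying the pendants $v$ (on $b$) and $w$ (on $v'$), so $G[\{a,b,u_2,v,v',w\}]\simeq\Gaj$. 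If instead $w$ is adjacent to two vertices $v',v''\in V_{b,c}-v$, then $\{v',v''\}$ is a non-adjacent pair joined to $b,c,u_1,w$, and one checks $G[\{a,b,c,u_1,v',v'',w\}]\simeq\Gbk$. All three are forbidden, so $w\sim v$, and then the Remark also gives $w\sim u_2$.

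With $w\sim v$ secured, the same two devices show $w\not\sim v''$ for every $v''\in V_{b,c}-v$: if moreover $w\sim u_1$ then $\{w,u_1,u_2,v''\}$ is a $K_4$, and if $w\not\sim u_1$ then $G[\{a,b,c,u_1,v,v'',w\}]\simeq\Gbk$. Thus $w$ meets $V_a\cup V_{b,c}$ exactly in $v$, $u_2$, and possibly $u_1$. If $w\not\sim u_1$ this is the first alternative. If $w\sim u_1$, I would force $V_{b,c}=\{v\}$: a further $v''\in V_{b,c}-v$ would make $u_1$ dominate $\{a,u_1,u_2,v,v'',w\}$, whose remaining five vertices induce a fork (center $u_2$, non-adjacent leaves $a,v''$, and pendant path $u_2-w-v$), giving $G[\{a,u_1,u_2,v,v'',w\}]\simeq K_1\vee\text{fork}=\Gaq$, a contradiction; hence $|V_{b,c}|=1$ and $E(w,V_a\cup V_{b,c})$ is complete, the second alternative. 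The hard part will be exactly this bookkeeping --- correctly matching each partial-attachment pattern of $w$ to the right member of Figure~\ref{figure:FamilyGraphF3}, in particular recognizing the join graphs $\Gam=K_1\vee(P_4+K_1)$ and $\Gaq=K_1\vee\text{fork}$, since several of the relevant $6$- and $7$-vertex subgraphs share a degree sequence and are distinguished only by whether the low-degree vertex sees an adjacent or a non-adjacent pair.
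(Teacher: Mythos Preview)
Your proof is correct and in fact more explicit than the paper's. The paper argues differently: after invoking Remark~\ref{claim:VbcupVc0AndVanot0:WVaVbc0thenEV0VaVbcnot0} to tie $u_2$ and $v$, it supposes $w\sim u_1$ and uses $G[\{a,b,u_1,u_2,v,w\}]\simeq\Gal$ to force $w\sim u_2,v$ as well; then it applies Claim~\ref{claim:VbcupVc0AndVanot0:Vbcnot0EV_aV_bcComplere21} (with $u_1$ playing the role of $V_a$) to make $w$ adjacent to \emph{all} of $V_{b,c}$, and finally reads off $|V_{b,c}|=1$ from $G[\{a,u_1,u_2,v,v',w\}]\simeq\Gas$.

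Your route differs in two respects. First, you handle the case $w\not\sim v$ (hence $w\not\sim u_2$) with $w$ attached only inside $V_{b,c}-v$, via $\Gam$, $\Gaj$, $\Gbk$; the paper's write-up does not treat this configuration explicitly (its opening ``suppose $w$ is adjacent only with $u_1$'' sits awkwardly with the hypothesis $E(w,V_{b,c})\neq\emptyset$), so your argument is the cleaner one here. Second, in the endgame you prune $w$'s neighbours in $V_{b,c}-v$ \emph{before} testing $w\sim u_1$, and close with $\Gaq$; the paper instead lets $w$ absorb all of $V_{b,c}$ and closes with $\Gas$. The paper's endgame is a line shorter but leans on an informal reuse of Claim~\ref{claim:VbcupVc0AndVanot0:Vbcnot0EV_aV_bcComplere21} (whose stated hypothesis is that $E(V_a,V_{b,c})$ be complete bipartite, not just $E(u_1,V_{b,c})$). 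Your version is the more self-contained of the two.
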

\begin{proof}
First case follows by Remark \ref{claim:VbcupVc0AndVanot0:WVaVbc0thenEV0VaVbcnot0}.
Now we prove that if $w$ is adjacent with $u_1$, then $w$ is adjacent with $u_2$ and $v$.
Suppose $w$ is adjacent only with $u_1$.
Since the induced subgraph $G[\{a,b,u_1,u_2,v,w\}]$ is isomorphic to $\Gal$, then we have a contradiction and $w$ is adjacent also with $u_2$ or $v$.
Thus by Remark \ref{claim:VbcupVc0AndVanot0:WVaVbc0thenEV0VaVbcnot0}, $w$ is adjacent with $u_1$, $u_2$ and $v$.
Finally, suppose $|V_{b,c}|\geq 2$.
Let $v'\in V_{b,c}$.
By applying Claim \ref{claim:VbcupVc0AndVanot0:Vbcnot0EV_aV_bcComplere21} to the induced subgraph $G[\{w,u_1\}\cup V_{b,c}]$, we get that $w$ also is adjacent with each vertex in $V_{b,c}$.
Thus $w$ is adjacent with each vertex in $V_{a}\cup V_{b,c}$.
But since $G[\{a,u_1,u_2,v,v',w\}]$ is isomorphic to $\Gas$, then we get a contradiction, and $|V_{b,c}|=1$.
\end{proof}

Let $u\in V_a$ and $v\in V_{b,c}$ such that $uv\notin E(G)$.
Therefore, in case (13), by Claims \ref{claim:VbcupVc0AndVanot0:Vbcnot0EV_aV_bcComplere2}, \ref{claim:VbcupVc0AndVanot0:VaVbcKminuseThenV0:1} and \ref{claim:VbcupVc0AndVanot0:VaVbcKminuseThenV0:2}, we have that one of the following cases holds:
\begin{itemize}
\item $V_\emptyset$ is a clique of cardinality at most 2, and each vertex in $V_\emptyset$ is adjacent with $u$ and $v$, 
\item $V_a=\{u\}$, $|V_{b,c}\setminus\{v\}|\leq 1$, $V_\emptyset$ is trivial and each vertex in $V_\emptyset$ is adjacent with each vertex in $\{u\}\cup V_{b,c}$, or
\item $|V_a|=2$, $V_\emptyset$ is trivial and $V_{b,c}=\{v\}$ and  each vertex in $V_\emptyset$ is adjacent with $u_1,u_2$ and $v$.
\end{itemize}
It can be checked that each of these graphs is isomorphic to an induced subgraph or a graph in $\mathcal F_1^1$.

\begin{Claim}\label{claim:VbcupVc0AndVanot0:VaVbcKminuseeThenV0:1}
Let $w\in V_\emptyset$ and $v\in V_{b,c}$ such that $v$ is not adjacent with $u_1$ and $u_2$.
Suppose $|V_{b,c}|\geq 2$ and $E(V_a,V_{b,c}-v)$ induces a complete bipartite graph.
If $E(w,V_{b,c})\neq\emptyset$, then $V_\emptyset=\{w\}$, and $w$ is adjacent with $u_1$, $u_2$ and $v$.
\end{Claim}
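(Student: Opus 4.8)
The plan is to show first that $w$ must attach to the distinguished triple $\{u_1,u_2,v\}$, and then that no second vertex of $V_\emptyset$ can exist. Recall the standing hypotheses of this case: $G[\{a,b,c\}]=K_3$, $V_a=\{u_1,u_2\}$ with $u_1u_2\in E(G)$, $V_b=V_c=V_{a,b}=V_{a,c}=\emptyset$, the set $V_{b,c}$ is stable with $|V_{b,c}|\ge 2$, the vertex $v$ satisfies $vu_1,vu_2\notin E(G)$, and every $t\in V_{b,c}-v$ is adjacent to both $u_1$ and $u_2$. The first thing I would record is the key equivalence: since $u_1v\notin E(G)$ and $u_2v\notin E(G)$, applying Remark \ref{claim:VbcupVc0AndVanot0:WVaVbc0thenEV0VaVbcnot0} to the pairs $(u_1,v)$ and $(u_2,v)$ shows that for every $x\in V_\emptyset$ one has $xu_1\in E(G)\iff xv\in E(G)\iff xu_2\in E(G)$; that is, $x$ is adjacent either to all three of $u_1,u_2,v$ or to none of them.

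Next I would prove that $w$ is adjacent to $v$ (and hence, by the equivalence, to $u_1$ and $u_2$). Suppose not; then $w$ is adjacent to none of $u_1,u_2,v$, yet $E(w,V_{b,c})\ne\emptyset$, so there is $v'\in V_{b,c}-v$ with $wv'\in E(G)$. Since $v'\in V_{b,c}-v$ we have $v'u_1,v'u_2,v'b,v'c\in E(G)$ and $v'a,v'v\notin E(G)$, while $w$ is adjacent only to $v'$ among $\{a,b,c,u_1,u_2,v,v'\}$. I would then exhibit a forbidden induced subgraph: the six vertices $\{a,b,c,u_1,v',w\}$ induce the graph consisting of the triangle $abc$, the edge $au_1$, the edges $bv',cv',u_1v'$, and the pendant edge $v'w$, which is isomorphic to a member of $\mathcal F_3$ — a contradiction. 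Hence $wv\in E(G)$, and therefore $wu_1,wu_2\in E(G)$. Finally, $w$ cannot be adjacent to any $t\in V_{b,c}-v$, for otherwise $\{u_1,u_2,t,w\}$ would induce $K_4$, contradicting $\omega(G)=3$; so the only neighbour of $w$ in $V_{b,c}$ is $v$.

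It remains to prove $V_\emptyset=\{w\}$. Let $w'\in V_\emptyset$ with $w'\ne w$. Every vertex of $V_\emptyset$ is adjacent to some vertex of $V_a\cup V_{a,b}\cup V_{a,c}\cup V_{b,c}=\{u_1,u_2\}\cup V_{b,c}$ (otherwise a shortest path to $\{a,b,c\}$ contains $\Gaa$). If $w'$ is adjacent to $u_1$ or $u_2$, the equivalence gives that $w'$ is adjacent to $u_1,u_2,v$; if instead $w'$ is adjacent to some vertex of $V_{b,c}$, the previous step applied to $w'$ yields the same conclusion. Thus $w$ and $w'$ are both adjacent to the edge $u_1u_2$; since $ww'\in E(G)$ would make $\{u_1,u_2,w,w'\}$ a $K_4$, we must have $ww'\notin E(G)$. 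Now $w$ and $w'$ are nonadjacent with the three common neighbours $u_1,u_2,v$, and I would finish by exhibiting a forbidden induced subgraph on $\{a,u_1,u_2,v,w,w'\}$ (or alternatively on $\{b,c,v,u_1,w,w'\}$), again a member of $\mathcal F_3$; this contradiction shows no such $w'$ exists, so $V_\emptyset=\{w\}$.

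The main obstacle is the final verification in each of the two steps: correctly matching the small induced subgraphs that arise (the $(4,3,3,3,2,1)$ configuration in the attachment step and the $(4,4,3,3,2,2)$ configuration in the uniqueness step) to the correct forbidden graphs in the large family $\mathcal F_3$. This pattern matching is routine but delicate — the same kind of case check carried out by computer algebra elsewhere in the proof — and identifying the right $\mathcal F_3$ member, rather than an accidental non-member, is the only genuinely fiddly part; everything else reduces to the equivalence of Step~0 and the $\omega(G)=3$ constraint.
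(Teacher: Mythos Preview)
Your overall strategy matches the paper's: use Remark~\ref{claim:VbcupVc0AndVanot0:WVaVbc0thenEV0VaVbcnot0} to force $w$ onto the triple $\{u_1,u_2,v\}$, then rule out a second vertex $w'\in V_\emptyset$. Your $K_4$ argument for excluding neighbours of $w$ in $V_{b,c}-v$ (once $wu_1,wu_2\in E(G)$ is known) is actually cleaner than the paper's use of $\Gau$ at that point.

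The gap is exactly where you flagged it: your witness subgraphs are wrong. In the attachment step, the graph $G[\{a,b,c,u_1,v',w\}]$ has degree sequence $(4,3,3,3,2,1)$, and no member of $\mathcal F_3$ on six vertices has that sequence; moreover it contains no $P_5$, so it is genuinely $\mathcal F_3$-free and cannot give a contradiction. The paper instead uses $G[\{b,v,v',u_1,u_2,w\}]\simeq\Gag$: here $v'$ has degree~$4$, $b,u_1,u_2$ have degree~$2$, and $v,w$ are pendants, matching $\Gag$ exactly. The point is that you need \emph{both} $v$ and $v'$ (and both $u_1,u_2$) in the witness to exploit the non-edge $u_iv$ against the edge $u_iv'$.

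Likewise in the uniqueness step, neither $\{a,u_1,u_2,v,w,w'\}$ (degree sequence $(4,4,3,3,2,2)$, but not isomorphic to $\Gar$ or $\Gaw$) nor $\{b,c,v,u_1,w,w'\}$ is a member of $\mathcal F_3$, and neither contains $P_5$. The paper takes $G[\{a,b,w,w',v,v'\}]$, where $v'\in V_{b,c}-v$; since $w,w'$ are adjacent only to $v$, and $a,v'$ are adjacent only to $b$, one gets two degree-$3$ vertices $b,v$ joined by an edge, each carrying two pendants --- precisely $\Gab$. Again the key omission in your vertex sets is $v'$: without a second vertex of $V_{b,c}$ you cannot produce the required forbidden configuration.
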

\begin{proof}
By Remark \ref{claim:VbcupVc0AndVanot0:WVaVbc0thenEV0VaVbcnot0}, we have that if $w$ is adjacent with one vertex of $\{v,u_1,u_2\}$, then $w$ is adjacent with $v,u_1$ and $u_2$.
Now we see that $w$ is not adjacent to any other vertex.
Suppose $w$ is adjacent with $v'\in V_{b,c}-v$.
First consider the case when $w$ is not adjacent with $v,u_1$ and $u_2$.
Since $G[\{b,v,v',u_1,u_2,w\}]$ is isomorphic to $\Gag$, then we get a contradiction and thus $w$ is adjacent with $v,v',u_1,$ and $u_2$.
But if $w$ is adjacent with $v,v',u_1,$ and $u_2$, then $G[\{b,v,v',u_1,u_2,w\}]$ is isomorphic to $\Gau$; which is impossible.
Then $w$ is not adjacent with any vertex in $V_{b,c}-v$.

Suppose there exist another vertex $w'\in V_\emptyset$.
By previous discussion $w'$  is adjacent with $v,u_1$ and $u_2$.
Since $G[\{a,b,w,w',v,v'\}]$ is isomorphic to $\Gab$, then we get a contradiction.
Thus $V_\emptyset$ has cardinality at most 1.
\end{proof}

Let $v\in V_{b,c}$ such that $u_1v,u_2v\notin E(G)$.
Therefore, in case (14), by Claims \ref{claim:VbcupVc0AndVanot0:Vbcnot0EV_aV_bcComplere2} and \ref{claim:VbcupVc0AndVanot0:VaVbcKminuseeThenV0:1}, we have that $V_{\emptyset}=\{w\}$ and $w$ is adjacent with $u_1,u_2$ and $v$.
Then $G$ is isomorphic to an induced subgraph of $\mathcal F_1^1$.

\begin{Claim}\label{claim:VbcupVc0AndVanot0:EVaVbcperfect mathcingThenV0:1}
Let $V_{b,c}=\{ v,v'\}$ such that $vu_1,v'u_2\in E(G)$ and $vu_2,v'u_1\notin E(G)$.
If $w\in V_\emptyset$ and $E(w,V_a\cup V_{b,c})\neq\emptyset$, then $w$ is adjacent with $u_1,u_2,v$ and $v'$.
\end{Claim}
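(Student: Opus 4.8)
The plan is to reduce the whole statement to two applications of Remark~\ref{claim:VbcupVc0AndVanot0:WVaVbc0thenEV0VaVbcnot0} together with a single appearance of the forbidden graph $\Gaj$. Recall that in this case $V_a=\{u_1,u_2\}$ is a clique, $V_{b,c}=\{v,v'\}$ is a stable set, and the only non-edges between $V_a$ and $V_{b,c}$ are $u_2v$ and $u_1v'$. Since $E(w,V_a\cup V_{b,c})=E(w,\{u_1,u_2,v,v'\})$, the first step is to apply the Remark to the non-edge $u_2v$ and to the non-edge $u_1v'$: this yields that $w$ is adjacent to $u_2$ if and only if it is adjacent to $v$, and adjacent to $u_1$ if and only if it is adjacent to $v'$. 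In particular, because $w$ has some neighbour among $u_1,u_2,v,v'$, these equivalences already force $w$ to be adjacent to at least one of $v,v'$.

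Next I would observe that the two possibilities ``$w$ adjacent to $v$'' and ``$w$ adjacent to $v'$'' are interchanged by swapping the labels $(u_1,v)$ with $(u_2,v')$, a relabelling that preserves the clique $V_a$, the stable set $V_{b,c}$ and the matching; hence it suffices to treat the case where $w$ is adjacent to $v$ (and therefore, by the equivalence above, to $u_2$). Assume towards a contradiction that $w$ is not adjacent to $u_1$; by the Remark $w$ is then not adjacent to $v'$ either. Now I would look at the six vertices $\{a,b,u_1,v,v',w\}$: their induced edges are exactly $ab$, $au_1$, $bv$, $bv'$, $u_1v$ and $vw$, which form a $4$-cycle $b$--$a$--$u_1$--$v$--$b$ carrying the two pendant vertices $v'$ (on $b$) and $w$ (on $v$) attached at adjacent cycle vertices---precisely $\Gaj$. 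This contradicts $\mathcal F_3$-freeness, so $w$ must be adjacent to $u_1$, and then the two equivalences return adjacency to $v'$ and to $u_2$ as well; hence $w$ is adjacent to all four of $u_1,u_2,v,v'$. The symmetric case uses the isomorphic copy of $\Gaj$ on $\{a,b,u_2,v,v',w\}$.

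The step I expect to be the main obstacle---and the one that makes the choice of vertices non-obvious---is that one must \emph{drop} $u_2$ from the tested six-tuple even though $w$ is adjacent to it, and import the two clique vertices $a,b$ instead. The reason is that $V_a$ is a clique, so the edge $u_1u_2$ is present, and any attempt to exhibit a forbidden subgraph that retains both $u_1$ and $u_2$ runs into this chord: a quick check shows that $\{u_1,u_2,v,v',w\}$ is only a banner ($C_4$ with a pendant) and $\{b,u_1,u_2,v,v',w\}$ is only a $C_5$ with a distance-two extra vertex, neither of which lies in $\mathcal F_3$. Routing the short path from $u_1$ to $v$ through the clique edge $ab$ rather than through $u_2$ is exactly what converts the configuration into $\Gaj$, and I would confirm the isomorphism simply by matching the two degree-three vertices $b,v$ together with their respective pendants $v',w$.
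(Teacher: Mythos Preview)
Your proof is correct, but it takes a different route from the paper's. After using Remark~\ref{claim:VbcupVc0AndVanot0:WVaVbc0thenEV0VaVbcnot0} exactly as you do, the paper treats the symmetric case ($w$ adjacent to $u_1,v'$ and not to $u_2,v$) and exhibits the forbidden graph $\Gal$ on the six vertices $\{a,u_1,u_2,v,v',w\}$: here $u_2$ has degree~$4$, $u_1$ has degree~$3$, and $v$ is the pendant. Your alternative configuration $\{a,b,u_1,v,v',w\}\simeq\Gaj$ is equally valid and arguably cleaner, since $\Gaj$ is the slightly smaller graph (six edges versus seven). Either forbidden subgraph finishes the argument in one stroke.

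One remark on your closing paragraph: your assertion that ``any attempt to exhibit a forbidden subgraph that retains both $u_1$ and $u_2$ runs into this chord'' is not accurate. You tested $\{b,u_1,u_2,v,v',w\}$, which indeed fails, but you did not test $\{a,u_1,u_2,v,v',w\}$. The latter \emph{does} work and is precisely the paper's choice: the clique edge $u_1u_2$ together with the two edges $au_1,au_2$ forms the triangle inside $\Gal$, and the remaining structure matches. So dropping $u_2$ is not forced; it is simply one of two valid options.
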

\begin{proof}
Since $w$ is adjacent with a vertex in $\{u_1,u_2,v,v'\}$, then $w$ is adjacent with $u_1$ and $v'$, or $w$ is adjacent with $u_2$ and $v$.
Suppose $w$ is adjacent with $u_1$ and $v'$, but $w$ is not adjacent with $u_2$ and $v$.
Since $G[\{a,u_1,u_2,v,v',w\}]$ is isomorphic to $\Gal$, then we get a contradiction, and therefore $w$ is also adjacent with $u_2$ and $v$.
\end{proof}

Let $V_{b,c}=\{ v,v'\}$ such that $vu_1,v'u_2\in E(G)$ and $vu_2,v'u_1\notin E(G)$.
Thus in case (15), by Claims \ref{claim:VbcupVc0AndVanot0:Vbcnot0EV_aV_bcComplere2} and \ref{claim:VbcupVc0AndVanot0:EVaVbcperfect mathcingThenV0:1}, we have that $V_{\emptyset}$ is trivial and each vertex in $V_\emptyset$ is adjacent only with $u_1,u_2,v$ and $v'$.
Therefore, $G$ is isomorphic to an induced subgraph of $\mathcal F_1^1$.


\end{document}